\theoremstyle{plain} \newtheorem{thm}{Theorem}[subsection]
\theoremstyle{plain} \newtheorem{cor}[thm]{Corollary}
\theoremstyle{plain} \newtheorem{prop}[thm]{Proposition}
\theoremstyle{plain} 
\theoremstyle{plain} \newtheorem{lem}[thm]{Lemma}
\theoremstyle{definition} \newtheorem{df}[thm]{Definition}
\theoremstyle{remark} \newtheorem{rmk}[thm]{Remark}
\theoremstyle{remark} 
\newcommand{\Bn}{B_{2n}}
\newcommand{\Kn}{K_{2n}}
\newcommand{\B}[1]{B_{#1} }
\newcommand{\K}[1]{K_{#1} }
\newcommand{\gena}{\sigma_{1}}
\newcommand{\genb}{\sigma_{2}\sigma_{1}^{2}\sigma_{2}}
\newcommand{\genc}{\sigma_{2i}\sigma_{2i-1}\sigma_{2i+1}\sigma_{2i}}
\newcommand{\DBCs}[1]{\Sigma(#1)\#(S^{2}\times S^{1})}
\newcommand{\SxS}{S^{2}\times S^{1}}
\newcommand{\DBC}[1]{\Sigma(#1)}
\newcommand{\SUM}[1]{#1 \#(S^{1}\times S^{2})}
\newcommand{\Ztil}{\tld{\mathcal{Z}}}
\newcommand{\Zcal}{\mathcal{Z}}
\newcommand{\Ccal}{\mathcal{C}}
\newcommand{\G}{\mathcal{G}}
\newcommand{\T}{\mathcal{T}}
\newcommand{\Fcal}{\mathcal{F}}
\newcommand{\DD}{\mathcal{D}}
\newcommand{\h}{\mathcal{H}}
\newcommand{\del}{\partial}
\newcommand{\tld}[1]{\widetilde{#1}}
\newcommand{\delh}{\widehat{\del}}
\newcommand{\Gtil}{\tld{\mathcal{G}}}
\newcommand{\confC}{Conf^{n}(\mathbb{C})}
\newcommand{\red}[1]{\underline{#1}}
\newcommand{\OS}{Ozsv\'ath and Szab\'o }
\newcommand{\Symg}{\text{Sym}^{g}(\Sigma)}
\newcommand{\Symn}[1]{\text{Sym}^{n}(#1)}
\newcommand{\HFM}{\widehat{HF}(M)}
\newcommand{\Us}{\mathfrak{U}_{\mathfrak{s}}}
\newcommand{\HFx}[1]{\widehat{HF}(\DBCs{#1})}
\newcommand{\CFxs}[1]{\widehat{CF}(\DBCs{#1}, \mathfrak{s})}
\newcommand{\HFxs}[1]{\widehat{HF}(\DBCs{#1}, \mathfrak{s})}
\newcommand{\CF}[1]{\widehat{CF}(\DBC{K})}
\newcommand{\HF}[1]{\widehat{HF}\left(\DBC{K}\right)}
\newcommand{\Scx}[1]{\text{Spin}^{c}(\DBCs{#1})}
\newcommand{\sz}[1]{\mathfrak{s}_{z}\left( #1 \right)}
\newcommand{\Sa}{S_{\ba}}
\newcommand{\Dab}[1]{\DD^{\ba\bb}_{#1}}
\newcommand{\Dbg}[1]{\DD^{\bb\bg}_{#1}}
\newcommand{\Dbd}[1]{\DD^{\bb\bd}_{#1}}
\newcommand{\Sb}{S_{\bb}}
\newcommand{\Pab}{\Pi_{\ba\bb}}
\newcommand{\Sab}{S_{\ba\bb}}
\newcommand{\Pabg}{\Pi_{\ba\bb\bg}}
\newcommand{\Sabg}{S_{\ba\bb\bg}}
\newcommand{\Pabgd}{\Pi_{\ba\bb\bg\bd}}
\newcommand{\Sabgd}{S_{\ba\bb\bg\bd}}
\newcommand{\ah}{\widehat{\alpha}}
\newcommand{\bh}{\widehat{\beta}}
\newcommand{\ba}{\boldsymbol{\alpha}}
\newcommand{\bb}{\boldsymbol{\beta}}
\newcommand{\bg}{\boldsymbol{\gamma}}
\newcommand{\bd}{\boldsymbol{\delta}}
\newcommand{\bah}{\widehat{\ba}}
\newcommand{\bbh}{\widehat{\bb}}
\newcommand{\bat}{\boldsymbol{\tld{\alpha}}}
\newcommand{\bbt}{\boldsymbol{\tld{\beta}}}
\newcommand{\Tah}{\mathbb{T}_{\bah}}
\newcommand{\Tbh}{\mathbb{T}_{\bbh}}
\newcommand{\Ta}{\mathbb{T}_{\ba}}
\newcommand{\Tb}{\mathbb{T}_{\bb}}
\newcommand{\Tg}{\mathbb{T}_{\bg}}
\newcommand{\Td}{\mathbb{T}_{\bd}}
\newcommand{\Tap}{\tor{\ba'}}
\newcommand{\Tbp}{\tor{\bb'}}
\newcommand{\bx}{\mathbf{x}}
\newcommand{\be}{\mathbf{e}}
\newcommand{\by}{\mathbf{y}}
\newcommand{\bbu}{\mathbf{u}}
\newcommand{\bv}{\mathbf{v}}
\newcommand{\bw}{\mathbf{w}}
\newcommand{\bu}{\mathbf{u}}
\newcommand{\bz}{\mathbf{z}}
\newcommand{\bxh}{\widehat{\mathbf{x}}}
\newcommand{\byh}{\widehat{\mathbf{y}}}
\newcommand{\Yab}{Y_{\ba\bb}}
\newcommand{\Ybg}{Y_{\bb\bg}}
\newcommand{\Ybbp}{Y_{\bb\bb'}}
\newcommand{\Yapb}{Y_{\ba'\bb}}
\newcommand{\Yag}{Y_{\ba\bg}}
\newcommand{\thetabb}{\boldsymbol{\theta}_{\bb \bb'}}
\newcommand{\thet}[1]{\boldsymbol{\theta}_{#1}}
\newcommand{\thetabpb}{\boldsymbol{\theta}_{\bb' \bb}}
\newcommand{\tor}[1]{\mathbb{T}_{#1}}
\newcommand{\Tspace}{\rule{0pt}{2.6ex}}
\newcommand{\Bspace}{\rule[-1.2ex]{0pt}{0pt}}
\newcommand{\Ks}[1]{Kh_{symp}(#1)}
\newcommand{\Kst}[1]{Kh_{symp,inv}(#1)}
\newcommand{\AD}{\nabla}
\newcommand{\Zcaltwo}{\mathbb{Z}/2\mathbb{Z}}
\newcommand{\tbt}{\tld{\theta}_{\beta}}
\newcommand{\tat}{\tld{\theta}_{\alpha}}
\newcommand{\ta}{\theta_{\alpha}}
\newcommand{\tb}{\theta_{\beta}}
\begin{document}
\title{On the anti-diagonal filtration for the Heegaard Floer chain complex of a branched double-cover} 
\author{Eamonn Tweedy} 
\maketitle
\begin{abstract} 
Seidel and Smith introduced the graded fixed-point symplectic Khovanov cohomology group $Kh_{symp,inv}(K)$ for a knot $K \subset S^{3}$, as well as a spectral sequence converging to the Heegaard Floer homology group $\widehat{HF}(\DBC{K}\#(S^{2}\times S^{1}))$ with $E_{1}$-page isomorphic to a factor of $Kh_{symp,inv}(K)$ \cite{ss:R2}.  There the authors proved that $Kh_{symp,inv}$ is a knot invariant.  We show here that the higher pages of their spectral sequence are knot invariants also.
\end{abstract}

\section{Introduction}\label{sec:intro}

Heegaard Floer homology was introduced by \OS in \cite{os:disk}, and has proven to be a very useful tool in   studying manifolds of dimensions three and four.  We'll be particularly interested in the invariant $\widehat{HF}$, which assigns to a 3-manifold $M$ an abelian group $\widehat{HF}(M)$.  Given a knot $K \subset S^{3}$, the present paper will study $\widehat{HF}\left(\DBCs{K}\right)$, where $\DBC{K}$ is the two-fold cover of the sphere $S^{3}$ branched along the knot $K$.  The Heegaard Floer homology of branched double-covers was studied in \cite{os:bc}, in which \OS constructedd a spectral sequence from the reduced Khovanov homology group $\tld{Kh}\left(-K; \Zcaltwo\right)$ to the group $\widehat{HF}\left(\DBC{K}; \Zcaltwo\right)$, where $-K$ denotes the mirror of $K$.

Given a presentation of a knot $K \subset S^{3}$ as the braid closure of a braid $b$, Seidel and Smith introduced in \cite{ss:R1} the symplectic Khovanov cohomology group $\Ks{b}$, which is defined by taking the Lagrangian Floer cohomology of two Lagrangian submanifolds inside an affine variety.  Clearly there may be different braids which have isotopic braid closures.  However, Seidel and Smith proved in \cite{ss:R1} that $Kh_{symp}$ is a knot invariant.  In \cite{reza:ss}, Rezazadegan proved the existence of a spectral sequence from $Kh(L)$ to $Kh_{symp}(L)$ with $\mathbb{Z}/2\mathbb{Z}$ coefficients.  Recent work-in-progress of Abouzaid and Smith \cite{AS} indicates that in fact $\text{rk}_{\mathbb{Q}}Kh(L) = \text{rk}_{\mathbb{Q}}Kh_{symp}(L)$.

Further, by studying the fixed-point sets of an involution on the variety, Seidel and Smith further define in \cite{ss:R2} the fixed-point symplectic Khovanov cohomology group $\Kst{b}$ for a braid $b$.  Via the choice of a particular holomorphic volume form, one obtains gradings (in the sense of \cite{s:GL}) on the totally-real submanifolds $\T$ and $\T'$ used to define $\Kst{b}$; the gradings on these submanifolds induce an absolute $\mathbb{Z}$-valued Maslov grading $\tld{R}$ on the set $\T \cap \T'$.

We'll consider braids in $\Bn$, the braid group on $2n$ strands (where $n \in \mathbb{N}$), and obtain knot diagrams by taking plat closures.  Although Seidel and Smith \cite{ss:R1},\cite{ss:R2} and Manolescu \cite{cm:R} considered braid closures instead, our convention will be chosen for computational reasons (note that Waldron illustrated in \cite{jw:plat} that $Kh_{symp}$ can be defined for bridge diagrams coming from such plat closures).  We'll recall the definition for the set $\G$ of \textit{Bigelow generators}, unordered $n$-tuples of distinct intersection points in a \textit{fork diagram} obtained from the braid $b$.  Following \cite{big:jones} and \cite{cm:R}, we'll then define functions $Q,  P, T: \G \rightarrow \mathbb{Z}$ which can be computed from this diagram in an elementary fashion.

In \cite{cm:R}, Manolescu used the fork diagram to give a description of the group $\Kst{b}$, and in particular showed a one-to-one correspondence between $\G$ and a set of generators for the Seidel-Smith cochain complex.  In this context, one can view the totally real submanifolds $\T$ and $\T'$ as admissible Heegaard tori for the manifold $\DBCs{K}$.  Thus the set $\G$ is also in one-to-one correspondence with a set of generators for the chain group  $\widehat{CF}(\DBCs{K})$.  This identification provides a function $\tld{R}: \G \rightarrow \mathbb{Z}$, and following \cite{cm:R} we have that $\tld{R} = T - Q +  P$.

The function $R$ is obtained from $\tld{R}$ by a rational shift $s_{R}$ which depends on some properties of the braid $b \in \Bn$ and the knot diagram $D$ which is its plat closure.  Let $e(b)$ be the signed count of braid generators in the word $b$ and let $w(D)$ be the writhe of the diagram $D$ for $K$ given by the plat closure of $b$.  Then define
\begin{equation*}
R = \tld{R} + s_{R}(b,D), \quad \text{where} \quad s_{R}(b,D) = \frac{e(b) - w(D) -2n}{4}.
\end{equation*}
Furthermore, for $\mathfrak{s} \in \text{Spin}^{c}(M)$ torsion, \OS used surgery cobordisms to define an absolute $\mathbb{Q}$-valued grading $\tld{gr}$ on the subcomplex $\widehat{CF}(M, \mathfrak{s})$ which is an absolute lift of the relative Maslov $\mathbb{Z}$-grading.  Then for torsion $\mathfrak{s}$, we define a filtration $\rho$ on the Heegaard Floer chain complex $\CFxs{K}$ by $\rho = R - \tld{gr}$.

Two braids with isotopic plat closures can be connected via a finite sequence of Birman moves \cite{bir:moves}, which in turn induce sequences of isotopies, handleslides, and stabilizations (and associated chain homotopy equivalences on the Heegaard Floer complexes).  We will prove the following theorem about the filtration $\rho$ in Section \ref{sec:Rmoves}:

\begin{thm}\label{thm:Rthm}
Let the braids $b \in B_{2n}$ and $b' \in B_{2m}$ have plat closures which are diagrams for the knot $K$.  Let $\h$ and $\h'$ be the pointed Heegaard diagrams for $\DBCs{K}$ induced by $b$ and $b'$,respectively, in the sense of Proposition \ref{prop:DBC} below.  Let $\mathfrak{s} \in \text{Spin}^{c}(\DBCs{K})$ be torsion.  Then the $\rho$-filtered chain complexes $\widehat{CF}(\h, \mathfrak{s})$ and $\widehat{CF}(\h', \mathfrak{s})$ have the same filtered chain homotopy type.
\end{thm}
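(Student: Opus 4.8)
The plan is to use the fact that any two braids $b\in B_{2n}$ and $b'\in B_{2m}$ with isotopic plat closures are connected by a finite sequence of Birman moves \cite{bir:moves}. Each such move induces, on the associated fork/bridge diagrams and hence on the Heegaard diagrams supplied by Proposition \ref{prop:DBC}, a composition of Heegaard isotopies, handleslides, and stabilizations; for each of these \OS \cite{os:disk} produce a chain homotopy equivalence of the $\widehat{CF}$-complexes in every torsion $\text{Spin}^{c}$ structure, together with a homotopy inverse and chain homotopies realizing the relevant compositions. Thus it suffices to show that each of these elementary structure maps is a filtered map for $\rho$, that the chosen homotopy inverses and chain homotopies are filtered as well, and then to compose: a finite composition of filtered chain homotopy equivalences is again one. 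Since $\tld{gr}$ is an absolute invariant of the pair $(\DBCs{K},\mathfrak{s})$ in the sense of \OS, the $\text{Spin}^{c}$-decomposition and the grading $\tld{gr}$ are matched up correctly throughout, and the resulting composite will exhibit $\widehat{CF}(\h,\mathfrak{s})$ and $\widehat{CF}(\h',\mathfrak{s})$ as having the same $\rho$-filtered chain homotopy type.

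Because each \OS structure map preserves $\tld{gr}$, and $\rho=R-\tld{gr}$, checking that such a map is $\rho$-filtered reduces to a statement purely about $R$: whenever a generator $\by$ occurs with nonzero coefficient in the image of $\bx$, one needs $R(\by)\le R(\bx)$ with the appropriate sign convention for the filtration. Writing $R=\tld{R}+s_{R}(b,D)$ with $\tld{R}=T-Q+P$, this splits into (i) tracking the constant shift $s_{R}$, which is exactly the bookkeeping of the signed crossing count $e(b)$, the writhe $w(D)$, and the braid index $2n$ that appear in the formula for $s_{R}$, and (ii) tracking how the combinatorial quantities $T$, $Q$, $P$ of Bigelow and Manolescu change under the move on the fork diagram. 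For a Heegaard isotopy the diagram changes by an ambient isotopy, none of these quantities is affected, and the map is filtered essentially on the nose. For a handleslide one uses the triangle-counting equivalence of \cite{os:disk}: a holomorphic triangle contributing to the map has Maslov index zero, which forces a precise relation between the $\tld{gr}$-shift and the change in $T-Q+P$ across the handleslide region, and that relation is exactly $\rho$-monotonicity. For a stabilization the new generators are of product type with a canonical small intersection point, and the $-2n$ term in $s_{R}$ is precisely what absorbs the change in braid index, so checking that the stabilization and destabilization maps and the associated homotopies are filtered becomes a direct computation. The homotopy inverses are maps of the same geometric type and are handled symmetrically, and the chain homotopies arise from counts of holomorphic rectangles, to which the analogous index/area bookkeeping applies.

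I expect the main obstacle to be the handleslide step, specifically the reconciliation of two a priori unrelated bookkeeping schemes: the holomorphic data (Maslov index, domain multiplicities, $\text{Spin}^{c}$-grading shift) underlying the \OS triangle map on the one hand, and the diagrammatic definition $\tld{R}=T-Q+P$ inherited from the symplectic Khovanov side via \cite{cm:R} on the other. One must show that the Maslov-index-zero condition on a counted triangle translates into exactly the inequality among $\tld{R}(\bx)$, $\tld{R}(\by)$, and the change in $\tld{gr}$ needed for $\rho$ to be non-increasing; this will require tracking the contributions of $T$, $Q$, and $P$ to the relative grading region by region in the handleslide diagram and comparing with the Maslov index formula for triangle classes. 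A secondary subtlety is ensuring that at each move the identification of torsion $\text{Spin}^{c}$ structures used is the one under which $\tld{gr}$ is genuinely preserved, so that the decomposition $\rho=R-\tld{gr}$ behaves additively as the elementary equivalences are composed.
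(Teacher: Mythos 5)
Your outline reproduces the paper's skeleton (Birman moves, induced isotopies/handleslides/stabilizations, the \OS equivalences, filteredness of each elementary map, then composition), but there is a genuine gap at the core step, and it is exactly where you predicted trouble. You treat $R$ as if its interaction with the holomorphic counts were governed by the Maslov index alone: ``a holomorphic triangle \ldots has Maslov index zero, which forces \ldots $\rho$-monotonicity.''  That is not correct as stated. The grading $\tld{R}=P-Q+T$ is a Maslov-type grading only in $W=Sym^{n}(\widehat{S})-\AD$; inside all of $Sym^{n}(\widehat{S})$ one has $R(\bx)-R(\by)=\mu(\phi)+2[\phi]\cdot[\AD]$, so $\rho(\bx)-\rho(\by)=2[\phi]\cdot[\AD]$, and the index-zero condition on a counted triangle says nothing about the sign of the $\AD$-intersection of the difference between that triangle and whatever class you implicitly use to compare $R$-values. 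The mechanism the paper uses, and which is absent from your proposal, is: (a) for each elementary Heegaard move construct an explicit \emph{triangle injection} $g$ (Definition \ref{df:trimap}) furnishing an index-zero reference $3$-gon that avoids $\AD$; (b) check combinatorially, via the grading loops for $Q^{*}$, $P^{*}$, $T$ in the local fork pictures together with the change in $s_{R}$, that $g$ preserves $R$ --- this is the case-by-case verification over the Birman generators $A^{\pm1}$, $C_{i}^{\pm1}$, $B^{\pm1}$ and stabilization in Section \ref{sec:moves}; (c) note that any triangle actually counted by the \OS map is $\text{Spin}^{c}$-equivalent to the reference one, so their difference is a $2$-gon plus triply-periodic domains, and by Proposition \ref{prop:tri} those periodic domains are sums of thin domains meeting neither $\AD$ nor $z$; positivity of intersections with $\AD$ then yields $[\phi]\cdot[\AD]\geq 0$, hence $R(\by)\leq R(g(\bx))=R(\bx)$ and $\rho$-filteredness (Lemmas \ref{lem:inv1}, \ref{lem:inv2}, \ref{lem:tri3}). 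The homotopies require the same structure for $4$-gons, using Proposition \ref{prop:quad} for quadruply-periodic domains; ``analogous index/area bookkeeping'' does not substitute for these $\AD$-avoidance statements.

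Two secondary gaps. First, Birman move (i) multiplies on the left as well as the right; a local analysis at the bottom of the plat does not see $b\mapsto gb$, and the paper disposes of it by the duality Lemmas \ref{lem:Rdual2} and \ref{lem:Rdual3}, which in turn need $f_{b}(\AD)=\AD$ (Lemma \ref{lem:AD}). Second, fork-diagram isotopies that create or annihilate intersection points are not filtered ``on the nose''; they need their own triangle injection and grading check (Proposition \ref{prop:iso}). Relatedly, the moves $A^{\pm1}$ and $C_{i}^{\pm1}$ act by surface diffeomorphisms under which $Q$, $P$, $T$, and even $\tld{R}$ individually shift, and only $R=\tld{R}+s_{R}$ is preserved, so the $s_{R}$ bookkeeping you mention in passing is where a substantial part of the verification actually lives.
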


More concisely, we can state the following:

\begin{cor}\label{cor:Rcor}
For each torsion $\mathfrak{s} \in \text{Spin}^{c}(\DBCs{K})$, the $\rho$-filtered chain homotopy type of the complex $\CFxs{K}$ is an invariant of $K$.
\end{cor}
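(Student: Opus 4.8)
The plan is to obtain Corollary \ref{cor:Rcor} as an immediate formal consequence of Theorem \ref{thm:Rthm}. First I would record that the set of admissible inputs is nonempty: every knot $K \subset S^{3}$ has a plat presentation, since if $n$ is its bridge number then $K$ is the plat closure of some braid $b \in \B{2n}$. By Proposition \ref{prop:DBC} such a $b$ induces a pointed Heegaard diagram $\h$ for $\DBCs{K}$, and for each torsion $\mathfrak{s} \in \text{Spin}^{c}(\DBCs{K})$ we equip the chain complex $\widehat{CF}(\h, \mathfrak{s})$ with the filtration $\rho = R - \tld{gr}$ described in the introduction, where the rational shift $s_{R}(b,D)$ is read off from $b$ and from the diagram $D$ that is its plat closure. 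The assertion of the corollary is precisely that the resulting $\rho$-filtered chain homotopy type does not depend on which plat presentation of $K$ was chosen.

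Next I would invoke Theorem \ref{thm:Rthm} directly: for any two braids $b \in \B{2n}$ and $b' \in \B{2m}$ whose plat closures are both diagrams for $K$, the filtered complexes $\widehat{CF}(\h, \mathfrak{s})$ and $\widehat{CF}(\h', \mathfrak{s})$ have the same filtered chain homotopy type. Since filtered chain homotopy equivalence is a genuine equivalence relation (in particular symmetric and transitive), it follows that the $\rho$-filtered chain homotopy type of $\CFxs{K}$ is well-defined, depending only on the pair $(K,\mathfrak{s})$ and not on the auxiliary braid; that is exactly the claim. So modulo Theorem \ref{thm:Rthm} there is essentially nothing more to do here.

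It is worth being explicit about where the real work lies, since the corollary itself is formal. The unfiltered chain homotopy type of $\CFxs{K}$ is already a topological invariant by the foundational work of \OS \cite{os:disk}, so the only genuinely new content is the compatibility of the filtrations, and that is handled inside the proof of Theorem \ref{thm:Rthm} in Section \ref{sec:Rmoves}. The main obstacle, therefore, is not the corollary but that proof: one uses Birman's theorem \cite{bir:moves} that two braids with isotopic plat closures are joined by a finite sequence of Birman moves, rewrites each move as a finite sequence of isotopies, handleslides, and stabilizations of the induced Heegaard diagrams, and then verifies move-by-move that the associated Ozsv\'ath--Szab\'o chain homotopy equivalences are filtered with respect to $\rho$ — which amounts to tracking the behavior of $R$ (through $\tld{R}$, the combinatorial quantities $T,Q,P$, and the shift $s_{R}(b,D)$) and of $\tld{gr}$ under each elementary move, and checking that the change in $s_{R}$ exactly absorbs the change in $\tld{R} - \tld{gr}$ so that $\rho$ is preserved. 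Once that bookkeeping is complete, Corollary \ref{cor:Rcor} follows by the transitivity argument above.
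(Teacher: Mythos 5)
Your proposal is correct and matches the paper: the corollary is stated there as a concise reformulation of Theorem \ref{thm:Rthm} with no separate argument, exactly as you derive it, with all substantive work deferred to the move-by-move analysis in Section \ref{sec:Rmoves}. Your additional remarks on plat presentations existing and on where the real content lies are accurate but not needed beyond the formal deduction.
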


In a standard way, the filtration $\rho$ provides a spectral sequence (whose pages we'll denote by $E_{k}$) computing the group $\HFx{K}$.  Furthermore, the page $E_{1}$ is isomorphic to the subgroup of $\Kst{b}$ obtained by taking cohomology of the subcomplex whose generators correspond to generators of $\widehat{CF}$ in the torsion $\text{Spin}^{c}$ structures on $\DBCs{K}$.  This spectral sequence is the same as the one defined by Seidel and Smith in \cite{ss:R2}.  There they proved that $Kh_{symp,inv}$ is a knot invariant, and so the the factor corresponding to $E_{1}$ is also.  Because higher pages are determined by the filtered chain homotopy type of $E_{0}$, Corollary \ref{cor:Rcor} implies the following.

\begin{cor}\label{cor:Rss}
For $k \geq 1$, the page $E_{k}$ is a knot invariant.
\end{cor}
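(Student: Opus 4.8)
The plan is to deduce Corollary \ref{cor:Rss} from Corollary \ref{cor:Rcor} by invoking the naturality of the spectral sequence attached to a filtered chain complex. Recall the setup: for each torsion $\sfr \in \text{Spin}^{c}(\DBCs{K})$ the (bounded) filtration $\rho$ on the finitely generated complex $\CFxs{K}$ produces a spectral sequence $\{E_{k}^{\sfr}, d_{k}\}$ with $E_{0}^{\sfr}$ the associated graded complex and $E_{k+1}^{\sfr} \cong H_{*}(E_{k}^{\sfr}, d_{k})$; summing over the finitely many torsion $\sfr$ gives the pages $E_{k}$ that compute $\HFx{K}$. Since the set of torsion $\text{Spin}^{c}$ structures on $\DBCs{K}$ depends only on $\DBC{K}$, and hence only on $K$, it is enough to show that each $E_{k}^{\sfr}$ with $k \geq 1$ is independent of the braid $b$ whose plat closure represents $K$.

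The homological input is the standard fact that a filtered chain homotopy equivalence $f \colon (C, \rho) \to (C', \rho')$ --- one for which the quasi-inverse $g$ and the chain homotopies $gf \simeq \mathrm{id}$, $fg \simeq \mathrm{id}$ can all be taken filtered --- induces an isomorphism of spectral sequences $E_{k}(C) \cong E_{k}(C')$ intertwining the differentials $d_{k}$, for every $k \geq 1$. (One begins at $k = 1$ rather than $k = 0$ because the associated graded complex $E_{0}$ is only preserved up to chain homotopy equivalence of complexes, so its homology $E_{1}$ is the first genuine invariant; see, e.g., McCleary's \emph{A User's Guide to Spectral Sequences} for this and for the functoriality of the later pages.) Given braids $b \in B_{2n}$ and $b' \in B_{2m}$ with plat closures equal to $K$, Theorem \ref{thm:Rthm} provides, for each torsion $\sfr$, exactly such a $\rho$-filtered chain homotopy equivalence between $\widehat{CF}(\h, \sfr)$ and $\widehat{CF}(\h', \sfr)$; applying the fact and summing over torsion $\sfr$ yields $E_{k}(b) \cong E_{k}(b')$ compatibly with differentials for all $k \geq 1$, which is the assertion of the corollary. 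For $k = 1$ this is, under the identification of $E_{1}$ with the corresponding factor of $\Kst{b}$, exactly the knot invariance of $Kh_{symp,inv}$ established in \cite{ss:R2}.

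The substantive work has already been carried out in Theorem \ref{thm:Rthm} and Corollary \ref{cor:Rcor}; what remains here is bookkeeping. The main point requiring care is to confirm that the chain homotopy equivalences extracted from the Birman-move analysis really are \emph{filtered} for $\rho$ --- that is, that the isotopy, handleslide, and stabilization maps, together with their homotopies, respect the combined grading data $R$ and $\tld{gr}$ entering the definition of $\rho$ --- so that the hypotheses of the homological-algebra fact are literally satisfied. Checking that the passage to the direct sum over torsion $\text{Spin}^{c}$ structures introduces no ambiguity is similarly routine.
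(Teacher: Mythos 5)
Your proposal is correct and follows essentially the same route as the paper: the paper likewise observes (Section \ref{sec:filt}) that the pages $E_{k}$ for $k \geq 1$ are determined by the $\rho$-filtered chain homotopy type of the complex, and then cites Corollary \ref{cor:Rcor} (i.e.\ Theorem \ref{thm:Rthm}), exactly as you do via the standard naturality of the spectral sequence of a filtered complex. Your additional remarks about summing over torsion $\text{Spin}^{c}$ structures and the identification of $E_{1}$ with a factor of $Kh_{symp,inv}$ are consistent with the paper's discussion and introduce no new gap.
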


Under certain degeneracy conditions of the spectral sequence, the function $R$ in fact provides a homological grading on Heegaard Floer theory.  We say that a knot $K$ is \textit{$\rho$-degenerate} if the spectral sequence collapses at $E_{1}$ and the induced filtration $\rho$ on $E_{\infty}$ is constant on each nontrivial factor $\HFxs{K}$.  The following is an easy consequence of the definitions.

\begin{prop}\label{prop:Rdeg}
Let $K \subset S^{3}$ be a knot.  Then the following are equivalent:
\begin{enumerate}[(i)]
\item $K$ is $\rho$-degenerate. \label{Rdeg:1}
\item The filtration $R$ is a grading and lifts the relative Maslov $\mathbb{Z}$-grading on each nontrivial factor $\HFxs{K}$. \label{Rdeg:2}
\end{enumerate}
Moreover, the grading $R$ is a knot invariant when the above hold.
\end{prop}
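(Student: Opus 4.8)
The plan is to unwind both conditions into statements about the spectral sequence of the $\rho$-filtration and then observe that they coincide. Throughout I would keep in mind the two defining relations: $R = \rho + \tld{gr}$, and that $\partial$ lowers $\tld{gr}$ by exactly $1$ while being $\rho$-nonincreasing; here $\tld{gr}$ is the absolute $\mathbb{Q}$-grading, which is an absolute lift of the relative Maslov $\mathbb{Z}$-grading and which is itself an invariant of the three-manifold $\DBCs{K}$. In particular, since $\partial$ always drops $\tld{gr}$ by exactly one, asking that $\partial$ be $R$-homogeneous (drop $R$ by exactly one) is the same as asking that $\partial$ preserve the filtration level $\rho$; thus, in a suitable filtered-homotopy-equivalent model, $R$ is a genuine grading on $\widehat{CF}$ precisely when the spectral sequence of $\rho$ collapses at $E_{1}$. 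On homology, $E_{\infty}$ is the associated graded of $\HFxs{K}$ with respect to the induced $\rho$-filtration, further refined by the $\tld{gr}$-grading.

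First I would prove \eqref{Rdeg:1} $\Rightarrow$ \eqref{Rdeg:2}. Assume $K$ is $\rho$-degenerate. Collapse at $E_{1}$ gives $E_{1} \cong E_{\infty}$, and because the induced $\rho$-filtration on each nontrivial factor $\HFxs{K}$ is constant (say equal to $p_{\mathfrak{s}}$), that filtration has no jumps, so there is no extension ambiguity and $\HFxs{K}$ is carried isomorphically by $E_{\infty}$ as a $\tld{gr}$-graded group. Hence $R = \rho + \tld{gr} = p_{\mathfrak{s}} + \tld{gr}$ on $\HFxs{K}$, which exhibits $R$ as an honest grading, equal to $\tld{gr}$ up to the overall constant $p_{\mathfrak{s}}$, and therefore a lift of the relative Maslov $\mathbb{Z}$-grading on $\HFxs{K}$.

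For the converse \eqref{Rdeg:2} $\Rightarrow$ \eqref{Rdeg:1}: if the filtration $R$ is a grading then, by the bookkeeping above, $\partial$ may be taken $R$-homogeneous, hence $\rho$-preserving, so the $\rho$-spectral sequence collapses at $E_{1}$. If moreover $R$ lifts the relative Maslov $\mathbb{Z}$-grading on a nontrivial factor $\HFxs{K}$, then on that factor $R$ and $\tld{gr}$ induce the same relative $\mathbb{Z}$-grading, so their difference $\rho = R - \tld{gr}$ is constant on $\HFxs{K}$; this is exactly the remaining clause in the definition of $\rho$-degeneracy. Finally, for the invariance statement: when these conditions hold we have $R = \tld{gr} + c_{\mathfrak{s}}$ on each nontrivial $\HFxs{K}$, where $c_{\mathfrak{s}}$ is the constant value of $\rho$ there. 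The grading $\tld{gr}$ is an invariant of $\DBCs{K}$, and the constant $c_{\mathfrak{s}}$ is read off from the induced $\rho$-filtration on $E_{\infty}$, which by Corollary \ref{cor:Rcor} depends only on the knot $K$; hence $R$ is a knot invariant.

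I expect the only real obstacle here to be expository rather than mathematical: pinning down the precise meaning of ``the filtration $R$ is a grading'' (equivalently, that one may pass to a filtered-homotopy-equivalent model in which $\partial$ is $R$-homogeneous) and of ``the induced filtration $\rho$ on $E_{\infty}$ is constant'' (that the induced filtration on $\HFxs{K}$ has a single jump, leaving no extension problem), and then keeping the grading/filtration/spectral-sequence conventions consistent so that these two conditions visibly reassemble into the two clauses of $\rho$-degeneracy. No geometric input beyond Corollary \ref{cor:Rcor} is required.
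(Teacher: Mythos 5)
Your unwinding is correct and matches the paper's intent: the paper offers no explicit argument for this proposition, stating only that it "is an easy consequence of the definitions," and your proof is precisely that unwinding — using $R=\rho+\tld{gr}$, the collapse at $E_{1}$, the constancy of the induced $\rho$ on each nontrivial $\HFxs{K}$, and Corollary \ref{cor:Rcor} plus the invariance of $\tld{gr}$ for the final claim. The interpretational point you flag (what exactly "the filtration $R$ is a grading" means at chain versus homology level) is indeed the only looseness, and it is inherited from the paper's own phrasing rather than a gap in your argument.
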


\section{Topological preliminaries}

In \cite{os:disk}, \OS define the Heegaard Floer homology group $\widehat{HF}(M)$ associated to a connected, closed, oriented 3-manifold $M$.  A genus-g Heegaard splitting for such a manifold can be described via a \textit{pointed Heegaard diagram} $\h = \left(\Sigma; \ba; \bb; z\right)$, where $\Sigma$ is the splitting surface, $\ba$ and $\bb$ are g-tuples of attaching curves for the handlebodies, and $z \in (\Sigma - \cup \alpha_{i} - \cup \beta_{i})$.

\begin{df}
Let $\left(\Sigma; \ba; \bb; z\right)$ be a pointed Heegaard diagram, and let $D_1, \ldots, D_m$ be the connected components of $\Sigma \setminus \left( \cup \alpha_i \right) \setminus \left( \cup \beta_i \right)$, where $z \in D_m$.  Then a two-chain
$$ \mathcal{P}:= \sum_{i =1}^{m-1}n_{i} D_i \quad \text{with} \quad n_i \in \mathbb{Z}$$
is called a \textit{periodic domain} if its boundary is a sum of $\alpha$ and $\beta$ circles.
\end{df}

\begin{df}
A Heegaard diagram $\left(\Sigma; \ba; \bb; z\right)$ is called \textit{admissible} if every periodic domain has both positive and negative coefficients.
\end{df}

If $\h$ is an admissible pointed Heegaard diagram, then one can compute the chain complex $\widehat{CF}(\h)$ and its homology group $\widehat{HF}(M)$ is the Lagrangian Floer homology of the tori $\Ta$ and $\Tb$ lying inside of the symplectic manifold $\text{Sym}^g \left( \Sigma \setminus z \right)$.

More precisely, the group $\widehat{CF}(\h)$ is generated by the set of intersections $\Ta \cap \Tb \subset \Symg$, and the differential is given by
$$\widehat{\partial}(\bx) =
\displaystyle\sum_{\by \in \Ta \cap \Tb} \left( \displaystyle\sum_{\{\phi \in \pi_{2}(\bx, \by)| \mu(\phi) = 1, n_{z}(\phi) = 0\}} \left(\# \widehat{\mathcal{M}}\left(\phi\right)\right)\right)\by,
$$
where $\widehat{\mathcal{M}}(\phi)$ denotes the reduced moduli space of pseudo-holomorphic representatives for the class $\phi$, $\mu(\phi)$ denotes the Maslov index of $\phi$, and $n_z(\phi):=\text{Im}(\phi) \cap \left( \{ z \} \cap \text{Sym}^{g-1}(\Sigma)\right).$

Recall that there is a function
\begin{equation*}
\mathfrak{s}_{z}: \Ta \cap \Tb \longrightarrow \text{Spin}^{c}(M)
\end{equation*}
partitioning $\Ta \cap \Tb$ into equivalence classes $\Us$.  In fact, this function $\mathfrak{s}_{z}$ induces decompositions
\begin{equation*}
\widehat{CF}(\h)  = \displaystyle \bigoplus_{\mathfrak{s} \in \text{Spin}^{c}(M)} \widehat{CF}(\h, \mathfrak{s}) \quad \text{and} \quad
\HFM = \displaystyle \bigoplus_{\mathfrak{s} \in \text{Spin}^{c}(M)} \widehat{HF}(M, \mathfrak{s}).
\end{equation*}

For each $\mathfrak{s} \in \text{Spin}^{c}(M)$ the chain complex $\widehat{CF}(M,\mathfrak{s})$ carries a relative grading $gr$ defined via the Maslov index.  For $\mathfrak{s} \in \text{Spin}^{c}(M)$ torsion, \OS use surgery cobordisms to construct in \cite{os:tri} an absolute $\mathbb{Q}$-valued grading $\tld{gr}$ on $\Us$ which lifts the relative grading in the following sense: if $\bx, \by \in \Us$, then
\begin{equation*}
\tld{gr}(\bx) - \tld{gr}(\by) = gr(\bx, \by). 
\end{equation*}
Whenever $b_{1}(M) = 0$, all $\text{Spin}^{c}$ structures on $M$ are torsion and so the group $\widehat{HF}(M)$ can be absolutely graded via $\tld{gr}$.  In particular, this holds for $M =\DBC{K}$ for a knot $K \subset S^{3}$.  However, although $\text{Spin}^{c}(\DBCs{K})$ contains non-torsion elements, the group $\HFxs{K}$ is nontrivial only if $\mathfrak{s}$ is torsion.  

\subsection{3-gon chain maps and 4-gon homotopies}
\label{sec:triangles}
\begin{rmk}
There can be some ambiguity surrounding terms like ``triangle" and ``quadrilateral", in particular when distinguishing between the polygons in the symmetric product $\text{Sym}^{g}(\Sigma)$ and the regions which are their shadows in the surface $\Sigma$.  We'll follow Sarkar's convention in \cite{sarkar:tri} in using neither of these words.  The Whitney polygons in symmetric products will be referred to as \textit{n-gons} and regions in surfaces will be referred to as \textit{n-sided regions}.
\end{rmk}

In \cite{os:disk} and \cite{os:tri}, maps between Floer homologies are constructed by counting pseudo-holomorphic 3-gons in a certain equivalence class.  We review these ideas below.

First recall the notion of a \textit{pointed Heegaard triple-diagram} $\left(\Sigma; \ba; \bb; \bg; z\right)$, where $\Sigma$ is an oriented two-manifold of genus $g$, $\ba$, $\bb$, and $\bg$ are complete $g$-tuples of attaching circles for handlebodies $U_{\alpha}$, $U_{\beta}$, and $U_{\gamma}$, respectively, and $z \in (\Sigma \setminus \cup \alpha_{i} \setminus \cup \beta_{i} \setminus \cup \gamma_{i})$.  We then have pointed Heegaard diagrams $\h_{\ba\bb} = \left(\Sigma; \ba; \bb; z\right)$, $\h_{\bb\bg} = \left(\Sigma; \bb; \bg; z\right)$, and $\h_{\ba\bg} = \left(\Sigma; \ba; \bg; z\right)$, depicting manifolds $\Yab$, $\Ybg$, and $\Yag$, respectively.  There is an analogous notion of a \textit{pointed Heegaard quadruple-diagram} $\left(\Sigma; \ba; \bb; \bg; \bd; z\right)$.

There are notions of \textit{triply-periodic domains} in triple-diagrams and \textit{quadruply-periodic domains} in quadruple-diagrams, and the definitions are analogous to that of a periodic domain.  Multi-diagrams also have analogous notions of admissibility.

\begin{df}
A pointed Heegaard triple-diagram (resp. quadruple-diagram) is \textit{admissible} if every triply-periodic domain (resp. quadruply-periodic domain) has both positive and negative coefficients.
\end{df}

If the pointed triple-diagram $\left(\Sigma; \ba; \bb; \bg; z\right)$ is admissible, then there is a chain map
\begin{equation*}
\widehat{f}_{\alpha\beta\gamma}: \widehat{CF}(\h_{\ba\bb}) \otimes \widehat{CF}(\h_{\bb\bg}) \rightarrow \widehat{CF}(\h_{\ba\bg})
\end{equation*}
given by the formula
\begin{equation*}
\widehat{f}_{\alpha\beta\gamma}(\bx \otimes \by) =
\displaystyle\sum_{\bw \in \Ta \cap \Tg} \left( \displaystyle\sum_{\{\psi \in \pi_{2}(\bx, \by, \bw)| \mu(\psi) = 0, n_{z}(\psi) = 0\}} \left(\# \mathcal{M}\left(\psi\right)\right)\right)\bw
\end{equation*}
where $\mathcal{M}\left( \psi \right)$ is the moduli space of pseudo-holomorphic representatives for the class $\psi$.  The induced map on homology will be denoted by $\widehat{F}_{\alpha\beta\gamma}$.

If the pointed quadruple-diagram $\left(\Sigma; \ba; \bb; \bg; \bd; z\right)$ is admissible, then one can define a map
\begin{equation*}
\widehat{h}_{\alpha\beta\gamma\delta}: \widehat{CF}(\h_{\ba\bb}) \otimes \widehat{CF}(\h_{\bb\bg}) \otimes \widehat{CF}(\h_{\bg\bd}) \rightarrow \widehat{CF}(\h_{\ba\bd})
\end{equation*}
by the formula
\begin{equation*}
\widehat{h}_{\alpha\beta\gamma\delta}(\bx \otimes \by \otimes \bw) =
\displaystyle\sum_{\bz \in \Ta \cap \Td} \left( \displaystyle\sum_{\{\psi \in \pi_{2}(\bx, \by, \bw, \bz)| \mu(\psi) = -1, n_{z}(\psi) = 0\}} \left(\# \mathcal{M}\left(\psi\right)\right)\right)\bz
\end{equation*}

A 4-gon map actually provides a chain homotopy between two compositions of 3-gon maps:

\begin{thm}[\cite{os:disk}]\label{thm:assoc}
Let $\left(\Sigma; \ba; \bb; \bg; \bd; z\right)$ be an admissible pointed Heegaard quadruple-diagram.  Then for $\xi \in \widehat{CF}(\h_{\ba\bb})$, $\eta \in \widehat{CF}(\h_{\bb\bg})$, and $\zeta \in \widehat{CF}(\h_{\bg\bd})$,
\begin{equation*}
\partial \widehat{h}_{\alpha\beta\gamma\delta}(\xi \otimes \eta \otimes \zeta) + \widehat{h}_{\alpha\beta\gamma\delta}(\partial(\xi \otimes \eta \otimes \zeta))
= \widehat{f}_{\alpha\gamma\delta}(\widehat{f}_{\alpha\beta\gamma}(\xi \otimes \eta) \otimes \zeta)
- \widehat{f}_{\alpha\beta\delta}(\xi \otimes \widehat{f}_{\beta\gamma\delta}(\eta \otimes \zeta))
\end{equation*}
\end{thm}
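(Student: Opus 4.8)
The plan is to establish this by the argument of \OS in \cite{os:disk}, namely by analyzing the ends of the one-dimensional moduli spaces of pseudo-holomorphic $4$-gons; I sketch the structure below.

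First I would fix intersection points $\bx$, $\by$, $\bw$ representing $\xi$, $\eta$, $\zeta$ (extending over the chain groups by bilinearity afterwards) and a point $\bz \in \Ta \cap \Td$, and consider homotopy classes $\psi \in \pi_2(\bx, \by, \bw, \bz)$ of $4$-gons with $n_z(\psi) = 0$ and $\mu(\psi) = 0$. Since the domain of a $4$-gon has a one-parameter family of conformal structures, the moduli space $\mathcal{M}(\psi)$ --- holomorphic representatives together with their domain conformal structure --- has expected dimension $\mu(\psi) + 1 = 1$ (consistent with $\widehat{h}_{\alpha\beta\gamma\delta}$ counting the $\mu = -1$ classes, whose moduli spaces are zero-dimensional). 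For a generic path of admissible almost complex structures $\mathcal{M}(\psi)$ is a smooth $1$-manifold, and admissibility of the quadruple-diagram $\left(\Sigma; \ba; \bb; \bg; \bd; z\right)$ guarantees that only finitely many $\psi$ contribute and that each $\mathcal{M}(\psi)$ has finitely many ends; hence $\coprod_{\psi} \mathcal{M}(\psi)$ is a compact $1$-manifold-with-boundary, whose count of boundary points (signed, or mod $2$) is zero.

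The substance of the proof is to identify those boundary points, which I would do via Gromov compactness together with the relevant gluing theorems. There are two types of ends. In the first, the conformal parameter of the domain square runs to one end of its interval of moduli and the square degenerates, along one of its two diagonals, into a pair of triangles joined at a vertex; additivity of $n_z$ and of $\mu$ together with positivity of domains force each triangle to carry $n_z = 0$ and Maslov index $0$, so degeneration across the ``$\ba\bg$'' diagonal is counted by $\widehat{f}_{\alpha\gamma\delta}\bigl(\widehat{f}_{\alpha\beta\gamma}(\xi \otimes \eta) \otimes \zeta\bigr)$ and degeneration across the ``$\bb\bd$'' diagonal by $\widehat{f}_{\alpha\beta\delta}\bigl(\xi \otimes \widehat{f}_{\beta\gamma\delta}(\eta \otimes \zeta)\bigr)$. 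In the second type, a holomorphic bigon bubbles off at one of the four corners of the square: at the output corner this contributes $\del\,\widehat{h}_{\alpha\beta\gamma\delta}(\xi \otimes \eta \otimes \zeta)$, and at the three input corners it contributes $\widehat{h}_{\alpha\beta\gamma\delta}\bigl(\del(\xi \otimes \eta \otimes \zeta)\bigr)$ after applying the Leibniz rule for $\del$ on $\widehat{CF}(\h_{\ba\bb}) \otimes \widehat{CF}(\h_{\bb\bg}) \otimes \widehat{CF}(\h_{\bg\bd})$. I would also need to verify that nothing else contributes to the boundary: sphere bubbles and boundary degenerations are of codimension at least two or are excluded outright by $n_z(\psi) = 0$, and bubbling of a triangle or square at a corner is impossible by the index count. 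Equating the total boundary count to zero then yields the stated identity (with the minus sign appearing once orientations are tracked).

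The main obstacle will be the compactness-and-gluing analysis: showing that near each of the broken configurations above $\mathcal{M}(\psi)$ is genuinely a half-open interval, and conversely that such configurations exhaust the ends of $\mathcal{M}(\psi)$ (in particular excluding the spurious limits). This is the technical heart of the matter; since it is carried out in full generality in \cite{os:disk}, I would cite that treatment rather than reproduce it.
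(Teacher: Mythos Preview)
Your sketch is correct and follows precisely the approach of \OS in \cite{os:disk}: study the ends of the one-dimensional moduli spaces of index-zero $4$-gons, identify the diagonal degenerations with the two triangle compositions and the strip-breaking at corners with the $\del h + h\del$ terms, and rule out other ends. Note that the present paper does not supply its own proof of this theorem; it is simply quoted from \cite{os:disk}, so there is nothing further to compare against.
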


Classes of Whitney $n$-gons can be studied by examining their `shadows' in the Heegaard surface $\Sigma$.  We recall the definition of the domain of a $2$-gon class, though there are analogous notions of domains of $n$-gon classes.

\begin{df}
Let $\left( \Sigma; \ba ; \bb; z \right)$ be a pointed Heegaard diagram, and denote by $\mathcal{D}_0, \mathcal{D}_1, \ldots, \mathcal{D}_N$ the connected components of $\Sigma \setminus \left( \cup_i \alpha_i\right) \setminus \left( \cup_i \beta_i \right),$ where $\mathcal{D}_0$ is the component containing the basepoint $z$.  Then for $0 \leq j \leq N$, choose a point $z_j$ in the interior of $\mathcal{D}_j$.  For some class $\phi \in \pi_2(\bx, \by)$ for $\bx, \by \in \Ta \cap \Tb$, the \textit{domain of $\phi$}  is the 2-chain
$$\mathcal{D}(\phi):= \sum_{j = 0}^{N} n_j \mathcal{D}_j \quad \text{where} \quad n_j:= \text{Im}(\phi) \cap \left(\left \{ z_j \right\} \times \text{Sym}^{g-1}(\Sigma) \right).$$
We'll say that $\phi$ \textit{avoids the basepoint} if $n_0 = 0$ (equivalently, $n_z(\phi) = 0$).
\end{df}

\subsubsection{Some index-zero 3-gon classes}\label{sec:ztri}

We're interested in 3-gon classes of Maslov index zero.  To calculate index, we'll follow Sarkar's work in \cite{sarkar:tri} on Whitney $n$-gons, which we'll review here.  Some labeling conventions have been modified to fit our notation, and we'll specialize to the $n=3$ case for this discussion.

Let $\left(\Sigma; \ba; \bb; \bg; z\right)$ be an admissible pointed Heegaard triple-diagram, and let $\psi$ be a 3-gon class connecting $\bx$, $\by$, and $\bw$ as defined above.  Denote by $a(\psi)$, $b(\psi)$, and $c(\psi)$ the boundaries $\partial \mathcal{D}(\psi)|_{\alpha}$, $\partial \mathcal{D}(\psi)|_{\beta}$, and $\partial \mathcal{D}(\psi)|_{\gamma}$, respectively.

Now given some 1-chains $a$ supported on $\alpha$ and $b$ supported on $\beta$, Sarkar defines the number $b.a$ as follows.  Assuming some orientation on the $\alpha$ and $\beta$ circles and on $\Sigma$, we have four well-defined directions in which we can translate $b$ so that no endpoint of $a$ lies on the translate $b'$ and no endpoint of $b'$ lies on $a$.  These can be thought of as `northeast", ``northwest", ``southeast", and ``southwest".  After a small translation in some direction, we can calculate the intersection number of $b'$ with $a$.  Then $b.a$ is defined to be the average of these numbers over the four possible translation directions.

Some element $\bx \in \Ta \cap \Tb$ is an unordered $g$-tuple $ \left\{ x_{1}, x_{2}, \ldots, x_{g} \right \}$.  Define the number $\mu_{x}(\psi) = \sum \mu_{x_{i}}(\psi)$, where $\mu_{x_{i}}(\psi)$ is the average of the local coefficients of the 2-chain $\mathcal{D}(\psi)$ over the four quadrants around $x_{i} \in \Sigma$.

The Euler measure of $\mathcal{D}(\psi)$ will be denoted by $e(\psi)$.  The Euler measure is additive, and it is enough for our purposes to know that the measure of an $n$-sided region is $(1-n/4)$.

Equipped with these concepts, we present the following formula of Sarkar:

\begin{thm}[\cite{sarkar:tri}]\label{maslovformula}
Let $\left(\Sigma; \ba; \bb; \bg; z\right)$ be a pointed Heegaard triple-diagram, and let $\psi \in \pi_2(\bx,\by,\bw)$ be a 3-gon class connecting $\bx \in \tor{\ba}\cap\tor{\bb}$, $\by \in \tor{\bb}\cap\tor{\bg}$, and $\bw \in \tor{\ba} \cap \tor{\bg}$.  Then the Maslov index $\mu (\psi)$ satisfies the formula
\begin{equation*}
\mu (\psi) = e(\psi) + \mu_{\bx}(\psi) + \mu_{\by} (\psi) + a(\psi).c(\psi) - g/2.
\label{eq:sarkar}
\end{equation*}
\end{thm}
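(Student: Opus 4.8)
The plan is to recall the argument of \cite{sarkar:tri}, specialized from general Whitney $n$-gons to the triangle case. The first step is to replace the count of holomorphic triangles in $\Symg$ by Lipshitz's cylindrical reformulation: a $3$-gon class $\psi \in \pi_{2}(\bx, \by, \bw)$ is represented by homotopy classes of maps $u \colon (S, \partial S) \to (\Sigma \times \Delta,\, (\ba \cup \bb \cup \bg) \times \partial \Delta)$, where $\Delta$ is a fixed disk with three boundary punctures, $S$ is a (possibly disconnected) bordered Riemann surface whose three families of boundary arcs map to $\ba$, $\bb$, $\bg$ times the three edges of $\Delta$ and whose three punctures limit to $\bx$, $\by$, $\bw$ times the three vertices, and the projection $\pi_{\Delta} \circ u \colon S \to \Delta$ is a degree-$g$ branched cover. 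Because the moduli space of conformal structures on a three-punctured disk is a single point, there is no modulus correction, and $\mu(\psi)$ equals the expected dimension of the moduli space of such $u$, which is the Fredholm index $\mathrm{ind}(D_{u})$ of the linearized $\overline{\partial}$-operator. (For $n$-gons with $n > 3$ one would add $n - 3$ here, accounting for the modulus of the $n$-punctured disk.)

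The next step is to compute $\mathrm{ind}(D_{u})$ by the doubling-and-Riemann--Roch bookkeeping Lipshitz uses for disks. Splitting $u^{*}T(\Sigma \times \Delta) = u_{\Sigma}^{*}T\Sigma \oplus u_{\Delta}^{*}T\Delta$ splits the index into a $\Delta$-factor, controlled entirely by the branched-cover data (hence by $\chi(S)$ and the total branching), and a $\Sigma$-factor, which is a $\overline{\partial}$-operator on $S$ with totally real boundary conditions cut out by the $\alpha$-, $\beta$-, and $\gamma$-curves. Doubling $S$ along its boundary, applying Riemann--Roch on the closed double, and correcting at the three punctures expresses $\mathrm{ind}(D_{u})$ as an explicit linear combination of $\chi(S)$, the total branching, the genus $g$, and local winding data at each of the three corners.

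The last step, which I expect to be the crux, is to translate this topological expression into the combinatorial quantities of \eqref{eq:sarkar}. The definitions of the Euler measure $e(\psi)$, of $\mu_{\bx}(\psi)$ and $\mu_{\by}(\psi)$ as four-quadrant averages, and of $a(\psi).c(\psi)$ as a four-direction-averaged intersection number are engineered precisely so that each agrees with its analytic counterpart with no genericity hypothesis on the domain $\mathcal{D}(\psi)$: $e(\psi)$ repackages $\chi(S)$ minus the branching through the local model of the branched cover, the two quadrant averages absorb the corner contributions at $\bx$ and $\by$, and the cross term $a(\psi).c(\psi)$ together with the shift $-g/2$ accounts for the remaining corner at $\bw$ and for the ``Maslov index of a loop of Lagrangian subspaces'' defect that distinguishes a triangle from a bigon (for which the analogous formula has neither a cross term nor a shift). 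Pinning down the signs and the half-integer shift is the delicate part; I would do so by first proving the more symmetric identity in which all three averages $\mu_{\bx}, \mu_{\by}, \mu_{\bw}$ appear, then invoking reciprocity and ``wrap-around'' relations among the boundary $1$-chains $a(\psi), b(\psi), c(\psi)$ of a domain (consequences of $\partial^{2}\mathcal{D}(\psi) = 0$) to recast it in the stated asymmetric form, and finally sanity-checking against a model such as the small triangle of a stabilized Heegaard triple, for which both sides must vanish. Everything upstream of this translation is a routine adaptation of Lipshitz's disk computation.
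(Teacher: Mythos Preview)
The paper does not prove this theorem at all: it is quoted verbatim from Sarkar's paper \cite{sarkar:tri} and invoked as a black box to verify that the two model 3-gon classes in Figures \ref{fig:tri1} and \ref{fig:tri2} have Maslov index zero. So there is no ``paper's own proof'' to compare your attempt against.

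That said, your outline is a fair summary of how Sarkar actually proceeds. He works in Lipshitz's cylindrical picture, identifies $\mu(\psi)$ with the Fredholm index of a linearized $\overline{\partial}$-operator on a source surface $S$ branched-covering the $n$-punctured disk, and then carries out exactly the Riemann--Roch-with-boundary computation you describe. Your step three is where the real content lies, and your instinct is right that the asymmetric form \eqref{eq:sarkar} (two point-measures instead of three, one cross term, a $-g/2$ shift) is obtained from a more symmetric identity via the relation $\partial\mathcal{D}(\psi) = a(\psi) + b(\psi) + c(\psi)$ and the resulting identities among the pairwise products $a.b$, $b.c$, $a.c$ and the point-measures $\mu_{\bx}$, $\mu_{\by}$, $\mu_{\bw}$. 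Sarkar in fact proves several equivalent asymmetric formulas and relates them by precisely these ``wrap-around'' identities. One caution: your parenthetical that for $n>3$ one adds $n-3$ has the correct magnitude but you should double-check the sign against Sarkar's conventions; he includes the moduli dimension in the index rather than subtracting it, and the bookkeeping depends on whether $\mu$ is defined as the expected dimension of the parametrized or unparametrized moduli space.
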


Here we'll discuss two types of 3-gon classes in $\Symg$.

A 3-gon $\psi$ of the first type has domain $\mathcal{D}(\psi)$ given by the sum of $g$ disjoint 3-sided regions, each with coefficient $+1$.  A 3-gon $\psi$ of the second type has domain $\mathcal{D}(\psi')$ given by the sum of $(g-1)$ disjoint regions, consisting of $(g-2)$ 3-sided regions and a single 6-sided region with one angle larger than $\pi$, each with coefficient $+1$.  Components of $\ba$, $\bb$, and $\bg$ are solid, dashed, and dotted arcs, respectively.  Components of $\bx$, $\by$, and $\bw$ are dark gray, white, and light gray dots, respectively.

The reader can verify that $\mu\left(\psi\right)=0$ in either case (in the second, it will help to split the obtuse hexagonal component of the domain as seen in Figure \ref{fig:trisplit}).

\begin{figure}[h!]
\centering
\begin{minipage}[c]{.45\linewidth}
\labellist
\pinlabel* $=$ at 550 195
\pinlabel* $+$ at 1000 195
\endlabellist
\includegraphics[height = 20mm]{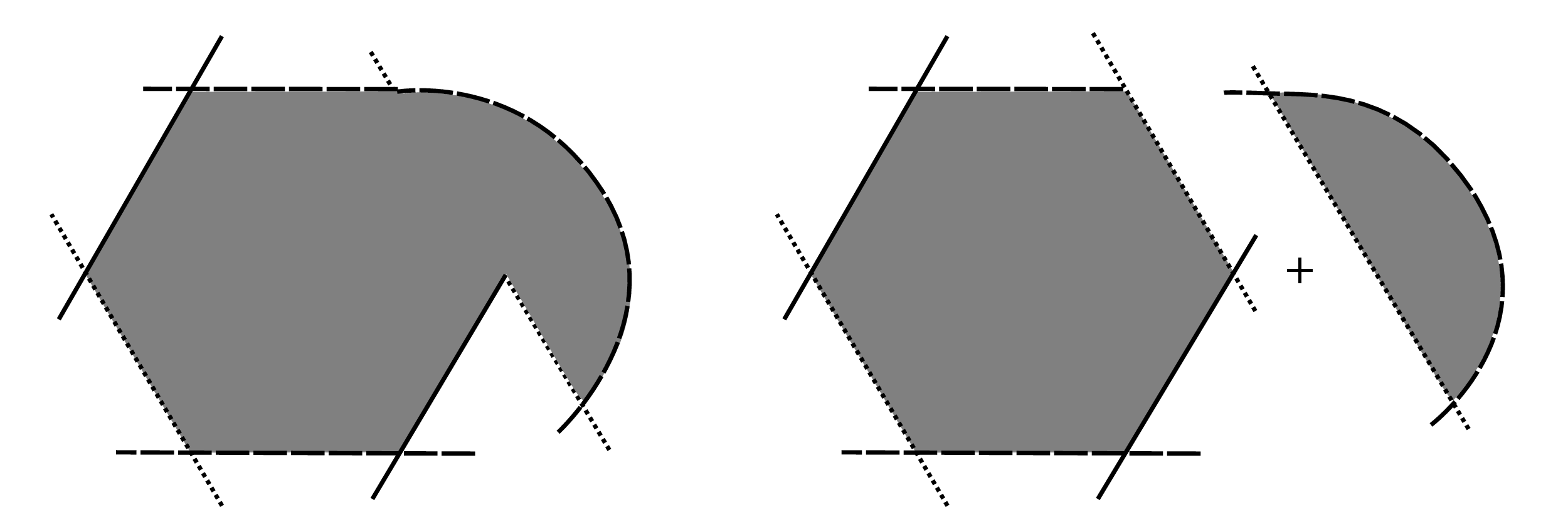}
\end{minipage}
\begin{minipage}[c]{.53\linewidth}
\caption[Type II 3-gon decomposition]{Decomposing the obtuse 6-sided component of $\mathcal{D}(\psi)$
\label{fig:trisplit}}
\end{minipage}
\end{figure}

\subsubsection{3-gons and 4-gons in Heegaard moves}\label{sec:movegons}

\begin{df}\label{def:moves}
Let $\left(\Sigma; \ba; \bb; \bb'; z\right)$ be a pointed Heegaard triple-diagram.
\begin{enumerate}[(i)]
\item Let $\beta'_j$ differ from $\beta_j$ by an isotopy (avoiding $z$) such that $\beta'_j$ intersects $\beta_j$ transversely in two canceling points and $\beta_j \cap \beta'_i = \emptyset$ when $i \neq j.$  Then we say that $\bb'$ differs from $\bb$ by a \textit{pointed isotopy}.  A pointed isotopy which preserves the set of intersection points $\tor{\ba}\cap\tor{\bb}$ in the obvious way will be called a \textit{small pointed isotopy}.
\item Instead let $\beta_{1}$, $\beta_{2}$, and $\beta_{1}'$ bound an embedded pair of pants disjoint from $z$ such that $\beta'_{1}$ intersects $\beta_{1}$ transversely in two points.  Assume also that $\beta_j \cap \beta'_i$ for $i \neq j$, and that for $i > 1$, $\beta'_{i}$ relates to $\beta_i$ as $(i)$ above.  Then we say that $\bb'$ differs from $\bb$ by a \textit{pointed handleslide}.
\end{enumerate}
\end{df}

In either of the cases above, $\left( \Sigma; \bb; \bb'; z \right)$ is an admissible pointed Heegaard diagram for $\#^{g}(S^{1} \times S^{2})$ and there is a canonical intersection point $\thetabb \in \Tb \cap \Tbp$ representing the top-degree homology class in $\widehat{HF}(\Ybbp)$.  If the triple-diagram is admissible, we have a well-defined chain map 
\begin{equation*}
\widehat{f}_{\alpha\beta\beta'}(\cdot \otimes \thetabb):
\widehat{CF}(\h_{\ba\bb}) \rightarrow \widehat{CF}(\h_{\ba\bb'}).
\end{equation*}

Note that in the original proof of invariance in \cite{os:disk}, isotopies weren't treated in terms of chain maps which count pseudo-holomorphic 3-gons.  Lipshitz proves in Proposition 11.4 from  \cite{lip:cyl} that this can be done.

Now let $\left(\Sigma; \ba; \bb; \bb'; \bbt\right)$ be an admissible pointed Heegaard quadruple-diagram, where $\bbt$ differs from $\bb$ by a small pointed isotopy, and $\bb'$ differs from $\bb$ (and necessarily from $\bbt$) by a pointed handleslide or a pointed isotopy.  We can identify $\tor{\ba}\cap\tor{\bb}$ with $\tor{\ba}\cap \tor{\tld{\bb}}$ via the canonical \textit{nearest-neighbor map} $N_{\bb\tld{\bb}}:\bx \mapsto \tld{\bx}$, and extend this linearly to a chain complex isomorphism $N_{\bb \tld{\bb}}: \widehat{CF}(\h_{\ba \bb}) \rightarrow \widehat{CF}(\h_{\ba \tld{\bb}})$ (notice that $\left( N_{\bb \tld{\bb}} \right)6{-1} = N_{\tld{\bb} \bb} $).  We then have that
\begin{equation*}
\widehat{f}_{\ba\bb\bbt} \left( \bx \otimes \widehat{f}_{\bb\bb'\bbt} \left(  \thet{\bb\bb'} \otimes \thet{\bb'\bbt} \right) \right) =
\widehat{f}_{\ba\bb\bbt} \left( \bx \otimes \thet{\bb\bbt} \right) = \tld{\bx} \quad \text{for all} \quad \bx \in \Ta \cap \Tb,
\end{equation*}
where the last equality is due to Lemma 9.28 of \cite{jt:nat} (cf. Proposition 9.8 of \cite{os:disk}).
Then by Theorem \ref{thm:assoc}, we have that
\begin{align*}
&\widehat{f}_{\ba\bb'\bbt} \left( \widehat{f}_{\ba\bb\bb'} \left( \bx \otimes \thet{\bb\bb'} \right) \otimes \thet{\bb'\bbt} \right)
-  \tld{\bx}= \\
&\delh \left( \widehat{h}_{\ba\bb\bb'\bbt} \left(\bx \otimes \thet{\bb\bb'} \otimes \thet{\bb'\bbt} \right) \right)
+ \widehat{h}_{\ba\bb\bb'\bbt}  \left( \delh \left( \bx \otimes \thet{\bb\bb'} \otimes \thet{\bb'\bbt} \right) \right).
\end{align*}


Letting $\bbt'$ differ from $\bb'$ by a pointed isotopy and studying the admissible pointed Heegaard quadruple-diagram $\left(\Sigma; \ba; \bb'; \bb; \bbt'; z\right)$, one finds that for $\bx \in \Ta \cap \Tbp$,
\begin{align*}
&\widehat{f}_{\ba\bb\bbt'} \left( \widehat{f}_{\ba\bb'\bb} \left( \bx \otimes \thet{\bb'\bb} \right) \otimes \thet{\bb\bbt'} \right)
- \tld{\bx} = \\
&\delh \left( \widehat{h}_{\ba\bb'\bb\bbt'} \left(\bx \otimes \thet{\bb'\bb} \otimes \thet{\bb\bbt'} \right) \right)
+ \widehat{h}_{\ba\bb'\bb\bbt'}  \left( \delh \left( \bx \otimes \thet{\bb'\bb} \otimes \thet{\bb\bbt'} \right) \right).
\end{align*}

Therefore, we see that when $\bb'$ differs from $\bb$ by a pointed isotopy or a pointed handleslide, the chain map $\widehat{f}_{\alpha\beta\beta'}(\cdot \otimes \thetabb)$ is a chain homotopy equivalence with homotopy inverse given by $\widehat{f}_{\alpha\beta'\beta}(\cdot \otimes \thetabpb)$.  Furthermore, the associated homotopies relating their compositions to the appropriate identity maps are given by
\begin{equation}\label{eqn:beta}
\begin{aligned}
N_{\tld{\bb}\bb} &\circ \widehat{h}_{\ba\bb\bb'\bbt} \left(\cdot \otimes \thet{\bb\bb'} \otimes \thet{\bb'\bbt} \right) \quad \text{and}\\
N_{\tld{\bb}'\bb'} &\circ \widehat{h}_{\ba\bb'\bb\bbt'} \left(\bx \otimes \thet{\bb'\bb} \otimes \thet{\bb\bbt'} \right).
\end{aligned}
\end{equation}

\begin{rmk}
Given admissible pointed Heegaard quadruple-diagrams $\left(\Sigma; \bat; \ba'; \ba; \bb; z\right)$ and $\left(\Sigma; \bat'; \ba; \ba'; \bb; z\right)$, where $\ba'$ differs from $\ba$ by a pointed isotopy or a pointed handleslide (with $\bat$ and $\bat'$ analogous to $\bbt$ and $\bbt'$), one can define the chain maps $\widehat{f}_{\ba', \ba,\bb}(\thet{\ba'\ba} \otimes \cdot)$ and $\widehat{f}_{\ba, \ba',\bb}(\thet{\ba\ba'} \otimes \cdot)$.  These two maps are chain homotopy inverses to one another, and the associated chain homotopies are
\begin{equation}\label{eqn:alpha}
\begin{aligned}
N_{\tld{\ba}\ba} &\circ  \widehat{h}_{\bat\ba'\ba\bb} \left( \thet{\bat\ba'} \otimes \thet{\ba'\ba} \otimes \cdot \right) \quad \text{and}\\
N_{\tld{\ba}' \ba'} &\circ  \widehat{h}_{\bat'\ba\ba'\bb} \left( \thet{\bat'\ba} \otimes \thet{\ba\ba'} \otimes \cdot \right).
\end{aligned}
\end{equation}
\end{rmk}

\subsection{Periodic domains}\label{sec:pd}

Recall that a periodic domain in a pointed Heegaard diagram $\left(\Sigma; \ba; \bb; z\right)$ is a domain avoiding the basepoint $z$ whose boundary is a sum of the $\ba$ and $\bb$ circles.  Denote by $\Pab \subset H_{2}\left(\Sigma; \mathbb{Z}\right)$ the group of such periodic domains and let $\Sab = \Sa + \Sb \subset H_{1}\left(\Sigma; \mathbb{Z}\right)$ be the span of the $\ba$ and $\bb$ circles.

Recall also the analogous notions of triply- and quadruply-periodic domains in Heegaard triple-diagrams and quadruple-diagrams.

In \cite{cmoz:thin}, it is shown that if $\left(\Sigma; \ba; \bb; z\right)$ is a pointed Heegaard diagram, then $\Pab$ is a free Abelian group of rank $2g - \text{rank}(\Sab)$.  It can be shown in a completely analogous way that for a triple-diagram (respectively quadruple-diagram), the group $\Pabg$ (respectively $\Pabgd$) is free Abelian of rank $3g - \text{rank}(\Sabg)$ (respectively $4g - \text{rank}(\Sabgd)$).  One should note that because we don't permit periodic domains in a pointed Heegaard diagram to intersect the basepoint, our ranks are 1 lower than those stated in \cite{cmoz:thin}.

Let $\ba$ and $\bb$ be two $g$-tuples of attaching circles on a genus-$g$ surface $\Sigma$ such that $\bb$ differs from $\ba$ by a pointed isotopy.  Then for each $i$, the circles $\alpha_{i}$ and $\beta_{i}$ are separated by two 2-sided regions, and we denote by $\Dab{i}$ the periodic domain which is their difference - these domains look like the ones shown in Figure \ref{fig:pdhs1}.

Now instead let $\ba$ and $\bb$ be two $g$-tuples of attaching circles on a genus-$g$ surface $\Sigma$ such that $\bb$ differs from $\ba$ by a pointed handleslide of $\alpha_{1}$ over $\alpha_{2}$.  For $i > 1$, the circles $\alpha_{i}$ and $\beta_{i}$ are separated by two thin 2-sided regions, and we denote by $\Dab{i}$ the periodic domain which is their difference.  The circles $\alpha_{1}$ and $\beta_{1}$ are separated by a thin 2-sided region, and we denote by $\Dab{1}$ the periodic domain which is the difference between this region and the annular region bounded by $\alpha_{1}$, $\alpha_{2}$, and $\beta_{1}$.  These domains can be seen in Figure \ref{fig:pdhs}.

\begin{figure}
\centering
\subfloat[The handleslide domains $\Dab{i}$ for $i > 1$.]{
\labellist
\small
\pinlabel* $+1$ at 202 60
\pinlabel* $-1$ at 256 111
\pinlabel* $+1$ at 389 60
\pinlabel* $-1$ at 444 111
\pinlabel* $a$ at 27 91
\pinlabel* $b$ at 229 91
\pinlabel* $c$ at 416 91
\pinlabel* \reflectbox{$a$} at 130 91
\pinlabel* \reflectbox{$b$} at 332 91
\pinlabel* \reflectbox{$c$} at 518 91
\endlabellist 
\includegraphics[height = 35mm]{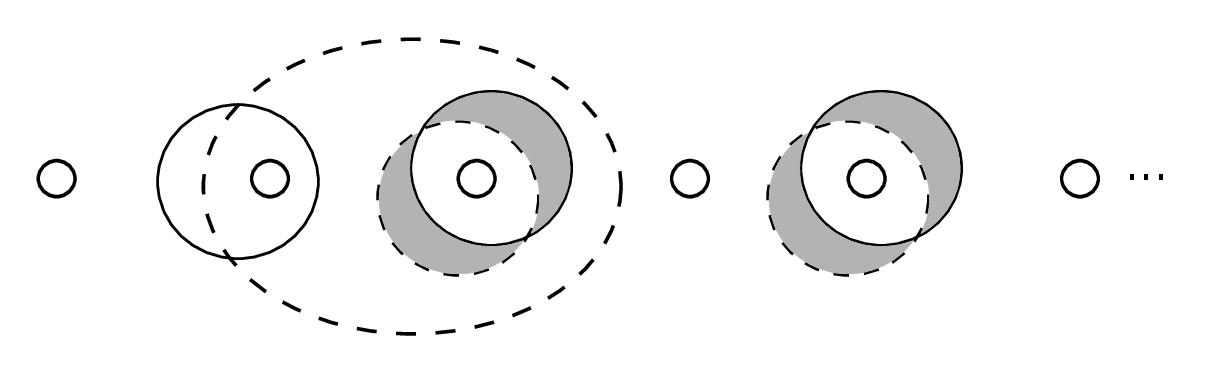}\label{fig:pdhs1}}\\
\subfloat[The handleslide domain $\Dab{1}$.]{
\labellist 
\small
\pinlabel* +1 at 87 89
\pinlabel* -1 at 170 125
\pinlabel* $a$ at 27 91
\pinlabel* $b$ at 229 91
\pinlabel* $c$ at 417 91
\pinlabel* \reflectbox{$a$} at 130 91
\pinlabel* \reflectbox{$b$} at 332 91
\pinlabel* \reflectbox{$c$} at 519 91
\endlabellist 
\includegraphics[height = 35mm]{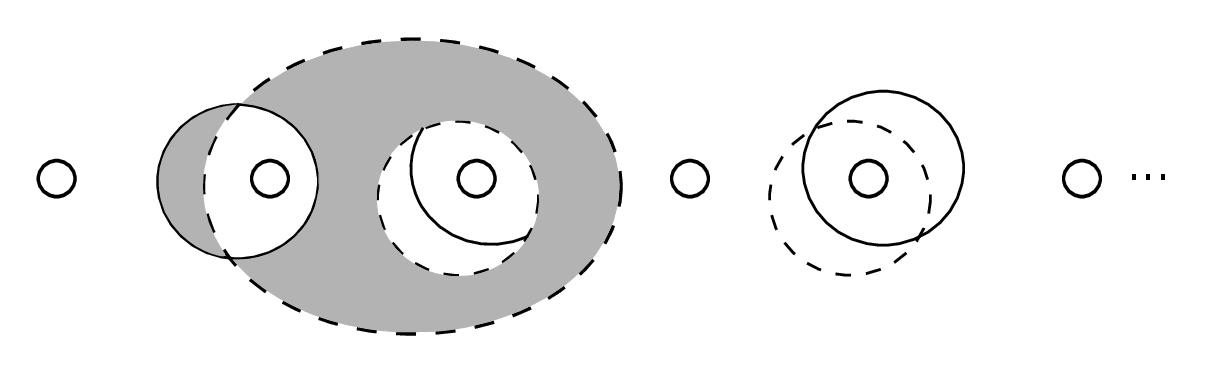}\label{fig:pdhs2}}
\caption[Periodic domains for a handleslide]{The periodic domains $\Dab{i}$ for the handleslide of $\alpha_{1}$ over $\alpha_{2}$.  The $\ba$ circles are solid and the $\bb$ circles are dashed.  The domains of interest are shaded and local coefficients are labelled.}
\label{fig:pdhs}
\end{figure}

The following facts are exercises in linear algebra:

\begin{prop}\label{prop:bi}
Let $\left(\Sigma; \ba; \bb; z\right)$ be a pointed Heegaard diagram of genus $g$ such that $\bb$ is obtained from $\ba$ via a pointed isotopy or pointed handleslide.  Then the set $\{ \Dab{1} , \ldots, \Dab{g}\}$ is a generating set for the group $\Pab$.
\end{prop}

\begin{prop}\label{prop:tri}
Let $\left(\Sigma; \ba; \bb; \bg; z\right)$ be a pointed Heegaard triple-diagram of genus $g$ such that $\bg$ is obtained from $\bb$ via a pointed isotopy or pointed handleslide.  Then the set $\{ \Dbg{1} , \ldots, \Dbg{g}\}$ is a generating set for the group $\Pabg$.
\end{prop}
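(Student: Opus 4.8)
The plan is to reduce the statement to the vanishing of $\Pab$. First I would record the homological bookkeeping. Since $H_{1}(\Yab;\mathbb{Z})\cong H_{1}(\Sigma;\mathbb{Z})/\Sab$, the hypothesis that $\Yab$ is a rational homology sphere says precisely that $\Sab$ has finite index in $H_{1}(\Sigma;\mathbb{Z})\cong\mathbb{Z}^{2g}$, i.e.\ $\mathrm{rank}(\Sab)=2g$. By the rank count quoted above from \cite{cmoz:thin}, $\Pab$ is then a free abelian group of rank $2g-2g=0$, hence $\Pab=0$. Likewise, since $\bg$ is obtained from $\bb$ by an isotopy or a handleslide, each class $[\gamma_{i}]$ lies in $\Sb$ (in the handleslide case $[\gamma_{1}]=\pm[\beta_{1}]\pm[\beta_{2}]$, since $\beta_{1},\beta_{2},\gamma_{1}$ bound a pair of pants), so $\Sg\subseteq\Sb$, $\Sabg=\Sab$ has rank $2g$, and $\Pabg$ is free abelian of rank $3g-2g=g$. (We will not actually need this last rank computation, but it shows that the generating set produced below is in fact a basis.)

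Next I would verify that each $\Dbg{i}$ really lies in $\Pabg$ and record the shape of its boundary. Each $\Dbg{i}$ is supported in a small neighbourhood of $\beta_{i}\cup\gamma_{i}$ (together with $\beta_{2}$ when $i=1$ in the handleslide case) and, being a periodic domain, satisfies $n_{z}(\Dbg{i})=0$; thus $\Dbg{i}\in\Pabg$. Moreover, $\partial\Dbg{i}$ equals $\pm\gamma_{i}$ plus an integer combination of the $\beta$ circles, with no other $\gamma$ circle appearing. In the isotopy case this is immediate from the description of $\Dbg{i}$ as the difference of two thin bigons; in the handleslide case one checks that the pair-of-pants region bounded by $\beta_{1},\beta_{2},\gamma_{1}$ contributes only $\pm\gamma_{1}$ and a combination of $\beta_{1},\beta_{2}$ to $\partial\Dbg{1}$.

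The heart of the argument is a cancellation. Let $D\in\Pabg$ and write $\partial D=\sum_{i}a_{i}\alpha_{i}+\sum_{i}b_{i}\beta_{i}+\sum_{i}c_{i}\gamma_{i}$. Since the coefficient of $\gamma_{j}$ in $\partial\Dbg{i}$ is $\pm1$ if $j=i$ and $0$ otherwise, the matrix recording these coefficients is diagonal with entries $\pm1$ and hence invertible over $\mathbb{Z}$, so there are integers $n_{i}$ for which $\partial D$ and $\sum_{i}n_{i}\,\partial\Dbg{i}$ have the same $\gamma$-part. Set $D'=D-\sum_{i}n_{i}\Dbg{i}$. Then $D'$ is a $2$-chain on $\Sigma$ with $n_{z}(D')=0$, and $\partial D'$ is an integer combination of the $\ba$ and $\bb$ circles only, so $D'\in\Pab$. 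By the first paragraph $\Pab=0$, whence $D=\sum_{i}n_{i}\Dbg{i}$. Therefore $\{\Dbg{1},\dots,\Dbg{g}\}$ generates $\Pabg$.

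I do not expect a real obstacle here: the only genuine inputs are the identification $\Pab=0$ forced by the rational-homology-sphere hypothesis, together with a little care over $\partial\Dbg{1}$ in the handleslide case. As the surrounding text indicates, once these are in hand the proof is essentially an exercise in linear algebra.
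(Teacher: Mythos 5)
Your argument is correct, and since the paper offers no written proof---it records Proposition \ref{prop:tri} only as an ``exercise in linear algebra'' following the rank formula $\mathrm{rank}(\Pab)=2g-\mathrm{rank}(\Sab)$ quoted from \cite{cmoz:thin}---your write-up is precisely the intended filling-in: the rational-homology-sphere hypothesis forces $\Pab=0$, and the diagonal $\pm1$ matrix of $\gamma$-coefficients of the $\Dbg{i}$ lets you reduce any triply-periodic domain to an element of $\Pab$. In particular you correctly supply the one point a bare rank count would miss (rank alone would only give a finite-index subgroup, not generation), so no gaps.
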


\begin{prop}\label{prop:quad}
Let $\left(\Sigma; \ba; \bb; \bg; \bd; z\right)$ be a pointed Heegaard quadruple-diagram of genus $g$ such that $\bg$ is obtained from $\bb$ via a pointed isotopy or pointed handleslide, and  $\bd$  is obtained from $\bb$ via a small pointed isotopy.  Then the set $\{ \Dbg{1} , \ldots, \Dbg{g}\} \cup \{ \Dbd{1} , \ldots, \Dbd{g}\}$ is a generating set for the group $\Pabgd$.
\end{prop}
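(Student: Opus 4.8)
The plan is to reduce Proposition \ref{prop:quad} to Proposition \ref{prop:tri} by peeling off the $\bd$ curves, using crucially that $\bd$ comes from $\ba$ by a \emph{small} isotopy. First I would record the easy inclusion: each $\Dbg{i}$ and each $\Dad{i}$ has boundary a sum of the relevant attaching circles and misses $z$, so all $2g$ of them lie in $\Pabgd$, and the content of the statement is that they \emph{generate}. I would also note the boundary formulas that will be used: $\partial\Dad{i}=\pm(\delta_i-\alpha_i)$ for each $i$, so $\partial\Dad{i}$ has coefficient $\pm1$ along $\delta_i$ and $0$ along every other $\delta_j$.

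Now take an arbitrary $P\in\Pabgd$ and write $\partial P=\sum_i n_i\alpha_i+\sum_i m_i\beta_i+\sum_i p_i\gamma_i+\sum_i q_i\delta_i$. Because of the boundary formula above we may choose integers $d_i$ so that $P_1:=P-\sum_i d_i\Dad{i}$ has $\partial P_1$ free of $\delta$-terms. The key claim is that $P_1$ in fact lies in $\Pabg$ (viewed inside the quadruple diagram by refining regions). Indeed, since the $\bd$-isotopy is small, each region of $\left(\Sigma;\ba;\bb;\bg;\bd;z\right)$ sits inside a unique region of $\left(\Sigma;\ba;\bb;\bg;z\right)$, and a triple-diagram region is a union of quadruple-diagram regions glued along arcs lying on the $\delta$ curves; as $\partial P_1$ has vanishing coefficient along every such (generic) arc, the local multiplicity of $P_1$ cannot jump across it, so $P_1$ is constant on each triple-diagram region. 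Hence $P_1$ descends to a periodic domain of $\left(\Sigma;\ba;\bb;\bg;z\right)$ that still avoids $z$, i.e. $P_1\in\Pabg$.

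The hypotheses of Proposition \ref{prop:tri} for the triple $\left(\Sigma;\ba;\bb;\bg;z\right)$ --- namely $\Yab$ a rational-homology sphere and $\bg$ an isotopy or handleslide of $\bb$ --- are precisely those assumed here, so that proposition gives $P_1=\sum_i c_i\Dbg{i}$ for integers $c_i$, where each $\Dbg{i}$ is identified with its refinement in the quadruple diagram. Therefore $P=\sum_i c_i\Dbg{i}+\sum_i d_i\Dad{i}$, proving that $\{\Dbg{1},\dots,\Dbg{g}\}\cup\{\Dad{1},\dots,\Dad{g}\}$ generates $\Pabgd$. As a sanity check, this is consistent with the rank count $\operatorname{rank}\Pabgd=4g-\operatorname{rank}(\Sabgd)$: the classes $[\gamma_i]$ and $[\delta_i]$ lie in $\Sab$, and $\Yab$ being a rational-homology sphere forces $\operatorname{rank}(\Sab)=2g$, so $\operatorname{rank}\Pabgd=2g$; the listed $2g$ domains are moreover linearly independent, since the $\delta_i$- and $\gamma_i$-coefficients of their boundaries are $\pm1$ on distinct circles.

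The one genuinely delicate point --- everything else being diagram bookkeeping --- is the claim that cancelling the $\delta$-part of $\partial P$ yields an element of $\Pabg$ rather than just some quadruply-periodic domain whose boundary happens to avoid the $\delta$ curves; this is exactly where the smallness of the $\bd$-isotopy, and the resulting clean nesting of the triple- and quadruple-diagram region decompositions, is used.
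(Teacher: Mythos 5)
Your proof is correct. There is actually no written proof in the paper to compare against: Propositions \ref{prop:tri} and \ref{prop:quad} are stated after the remark that they are ``exercises in linear algebra,'' with only the rank formula $\mathrm{rank}\,\Pabgd = 4g - \mathrm{rank}(\Sabgd)$ supplied as a hint. The implicit direct argument would be: for $P \in \Pabgd$, subtract multiples of the $\Dad{i}$ and $\Dbg{i}$ to kill all $\delta$- and $\gamma$-coefficients of $\partial P$ (possible because each has coefficient $\pm 1$ on exactly one $\delta$-, respectively $\gamma$-, circle and $0$ on the others), and then note that the remainder has boundary a null-homologous combination of $\alpha$- and $\beta$-circles, which must vanish since the rational-homology-sphere hypothesis makes $[\alpha_{1}],\ldots,[\beta_{g}]$ independent in $H_{1}(\Sigma;\mathbb{Q})$; a 2-chain with zero boundary and multiplicity $0$ at $z$ is zero. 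Your route instead peels off only the $\bd$-coefficients and invokes Proposition \ref{prop:tri} for what remains; this is a perfectly valid way to organize the same bookkeeping, and your key descent step is sound: a quadruply-periodic domain whose boundary has zero coefficient on every $\delta$-circle has no jump in local multiplicity across $\delta$-arcs, hence is pulled back from a triply-periodic domain of $\left(\Sigma;\ba;\bb;\bg;z\right)$ avoiding $z$, and the $\Dbg{i}$ are literally the same 2-chains in either diagram. Two small corrections of emphasis: the nesting of quadruple-diagram regions inside triple-diagram regions has nothing to do with the isotopy being \emph{small} --- it is automatic, since the quadruple diagram's curve collection contains the triple diagram's; what the smallness (admissibility) of the $\ba$-to-$\bd$ isotopy actually buys is the existence of the thin domains $\Dad{i}$ with $\partial\Dad{i} = \pm(\delta_{i} - \alpha_{i})$, which is what lets you cancel the $\delta$-part circle by circle. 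And, as you rightly treat it, the rank computation is only a consistency check: full rank plus independence would not by itself give generation over $\mathbb{Z}$, so the subtraction argument is the part carrying the proof.
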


The above facts imply the following useful fact about admissibility of multi-diagrams:

\begin{prop}\label{prop:adm}
Let $\left(\Sigma; \ba; \bb; \bg; \bd; z\right)$ be a pointed Heegaard quadruple-diagram of genus $g$  such that $\bg$ is obtained from $\bb$ via a pointed isotopy or pointed handleslide, and  $\bd$  is obtained from $\bb$ via a small pointed isotopy.  Then if the six pointed diagrams formed by choosing any two tuples out of $\ba$, $\bb$, $\bg$, and $\bd$ are all admissible, so is the quadruple-diagram.  Moreover, each of the four triple-diagrams composed of three of the tuples is also admissible.
\end{prop}

\begin{proof}
Let $\mathcal{D}_0, \mathcal{D}_1, \ldots, \mathcal{D}_N$ denote the connected components of
$$ \Sigma \setminus \left( \cup_{i} \alpha_i \right)\setminus \left( \cup_{i} \beta_i \right)\setminus \left( \cup_{i} \gamma_i \right)\setminus \left( \cup_{i} \delta_i \right),$$
where $\mathcal{D}_0$ is the component containing $z$.
Consider some nontrivial quadruply-periodic domain
$$ \mathcal{P} = \sum_{j = 1}^N c_j \mathcal{D}_j.$$
Then by Proposition \ref{prop:quad}, we can write
\begin{equation}\label{eqn:pd}
\mathcal{P} = \sum_{j = 1}^g  n_j \Dbg{j} + \sum_{j=1}^{g} m_j \Dbd{j}.
\end{equation}
Now at least one of $n_1, \ldots, n_g, m_1, \ldots, m_g$ is nonzero - without loss of generality, let it be $n_1$.  Now the domain $\Dbg{1}$ is the sum of two regions, which have coefficients $+1$ and $-1$, respectively.  Neither can be cancelled by any other terms in the right side of Equation \ref{eqn:pd}, and so there are both positive and negative numbers among the $c_i$.

The argument for triple diagrams is similar, making use of Proposition \ref{prop:tri}.
\end{proof}

\subsection{Filtrations and spectral sequences}\label{sec:filt}

Let $(\Ccal_*, \del)$ be a chain complex generated by $\{ x_{i} \}_{i = 1}^{n}$ and equipped with a filtration grading $f:\{x_{i}\} \rightarrow \mathbb{Z}$.  We can view the filtration as the nested family of subcomplexes $\{ \Fcal_{k} \}_{k \in \mathbb{Z}}$, with
\begin{equation*}
\Fcal_{k} = \text{span}\{ x_{i} : f(x_{i}) \leq k\}.
\end{equation*}

\begin{df}\label{df:fmap}
Let $(\Ccal, \del)$ and $(\Ccal', \del')$ be chain complexes with filtrations $\{ \Fcal_{k} \}$ and $\{ \Fcal'_{k} \}$.
\begin{enumerate}[(a)]
\item A chain map $F: (\Ccal, \del) \rightarrow (\Ccal', \del')$ is called a \textit{filtered chain map} if for all $k$, $F(\Fcal_{k}) \subset \Fcal'_{k}.$
\item Let $H: (\Ccal, \del) \rightarrow (\Ccal', \del')$ be a chain homotopy connecting two maps $F,G:(\Ccal, \del) \rightarrow (\Ccal', \del')$.  We call $H$ a \textit{filtered chain homotopy} if for all $k$, $H(\Fcal_{k}) \subset \Fcal'_{k+1}.$
\item Let $F: (\Ccal, \del) \rightarrow (\Ccal', \del')$ be a chain homotopy equivalence with homotopy inverse map $G: (\Ccal', \del') \rightarrow (\Ccal, \del)$ and associated homotopies $H: (\Ccal, \del) \rightarrow (\Ccal, \del)$ from $G \circ F$ to $id_{\Ccal}$ and $H': (\Ccal', \del') \rightarrow (\Ccal', \del')$ from $F \circ G$ to $id_{\Ccal'}$.  We say that $F$ is a \textit{filtered chain homotopy equivalence} if both $F$ and $G$ are filtered maps and both $H$ and $H'$ are filtered chain homotopies.
\end{enumerate}
\end{df}

For each $i,k \in \mathbb{Z}$, let $\Fcal_k\Ccal_i := \Fcal_k \cap \Ccal_i$.  Now notice that the filtration on $\Ccal_*$ induces a filtration on the homology of $\Ccal_*$ given by
$$ \Fcal_k H_i \left( \Ccal_* \right) :=  \left\{ \alpha \in H_i \left( \Ccal_* \right) \big| \alpha = [x] \text{ for some } x \in \Fcal_k\Ccal_i \ \right\}.$$

One can associate to a filtered complex a spectral sequence, which is defined recursively.  First, for each $p,q \in \mathbb{Z}$, define the \textit{associated graded module} by
$$ E^{0}_{p,q} := \Fcal_p\Ccal_{p+q} / \Fcal_{p-1}\Ccal_{p+q}.$$
The differential $\del$ induces a differential $\del_0: E^0_{p,q} \rightarrow E^0_{p,q-1}$, and we refer to the chain complex $\left( E^0, \del_0  \right)$ as the \textit{$E^0$-page} of the spectral sequence.  The homology of this associated graded complex is denoted by
$$ E^1_{p,q} := H_{p+q} \left( \Fcal_p\Ccal_* / \Fcal_{p-1} \Ccal_*  \right),$$
and $\del$ induces a differential $\del_1: E^1_{p,q} \rightarrow E^1_{p-1,q}$ (yielding the \textit{$E^1$-page} $\left( E^1, \del_1 \right)$).

Continuing this process, one obtains a sequence of chain complexes $\left( E^k, \del_k \right)$ (the \textit{$E^k$-pages}), where $\del_k: E^k_{p,q} \rightarrow E^k_{p-k,q+k-1}$ and
$$ E^k_{p,q} := \frac{\text{Ker} \left( \del_k : E^k_{p,q} \rightarrow E^k_{p-k,q+k-1} \right)}{\text{Im} \left( \del_k:E^r_{p-r,q-r+1} \rightarrow E^k_{p,q} \right)}.$$

Since $\Ccal_*$ was finitely-generated, eventually these pages stabilize and are isomorphic to the homology of $\Ccal_*$.  More precisely, for $k$ sufficiently large,
$$ E^k_{p,q} \cong \Fcal_p H_{p+q} \left( \Ccal_* \right) / \Fcal_{p-1} H_{p+q} \left( \Ccal_* \right) \quad \text{and} \quad \del_k \equiv 0.$$
If $K$ denotes the smallest such $k$ such that the above holds, we say that the spectral sequence \textit{collapses at $E^K$}.

One should notice that the spectral sequence will collapse at $E^1$ if $\del$ preserves the filtration, i.e. if for each $j$,
$$ \del(x_j) = \sum_{f(x_i) = f(x_j)} a_i x_i.$$

%

\section{Braids and the Bigelow picture}

Let $\Bn$ denote the braid group on $2n$ strands.  This group is generated by $\{ \sigma_{1} , \ldots, \sigma_{2n-1}\}$, where $\sigma_{k}$ denotes a half-twist of the $k^{th}$ strand over the $(k+1)^{st}$ strand.  Given a braid $b \in \Bn$, we can obtain a diagram of a knot or a link (the \textit{plat closure} of $b$) by connecting ends of consecutive strands with segments at the top and bottom, as shown in Figure \ref{fig:explat}.

\begin{figure}[h!]
\centering
\begin{minipage}[c]{.47\linewidth}
\centering\includegraphics[height = 24mm]{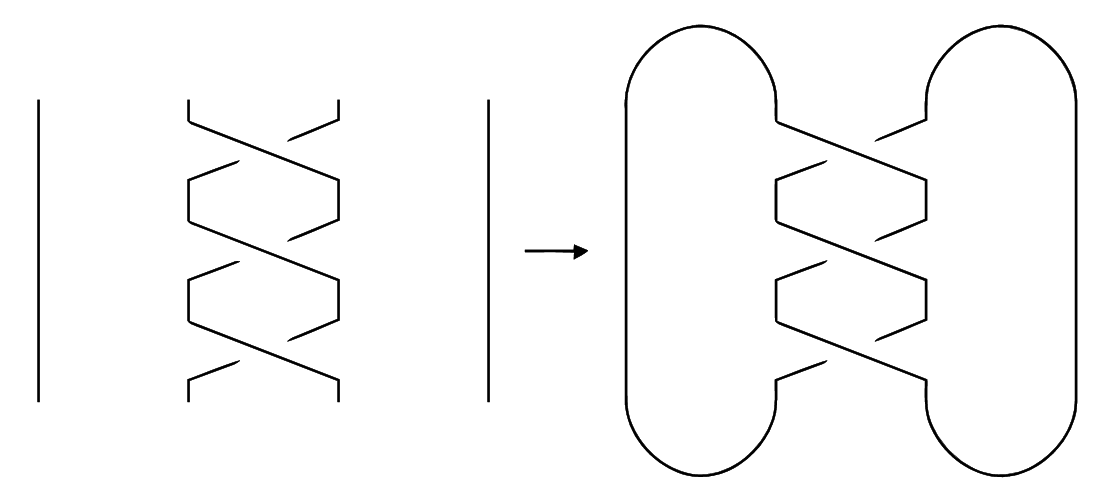}
\caption{The left-handed trefoil}\label{fig:explat}
\end{minipage}
\begin{minipage}[c]{.52\linewidth}
\centering\includegraphics[height = 24mm]{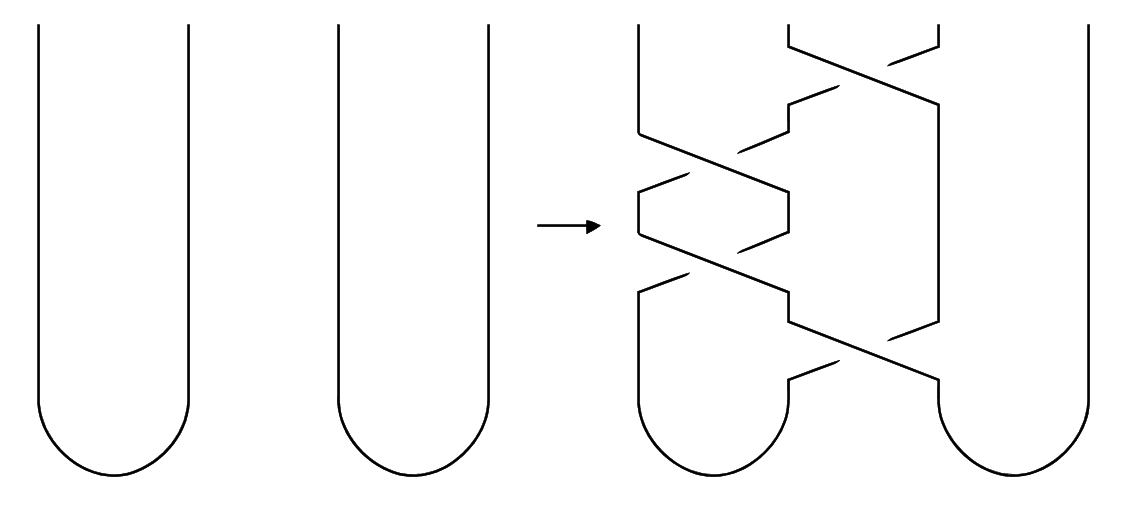}
\caption{The Birman move $b \mapsto b B$}\label{fig:birman}
\end{minipage}
\end{figure}

Any knot $K$ can be presented as the plat closure of an element in  $\Bn$.  Many distinct braid elements can have isotopic plat closures, but such braids are related.

\begin{df}
Let $\Kn$ be the subgroup of the braid group $\Bn$ generated by $A = \gena$, $B = \genb$, and $C_{i} = \genc$ for $i=1,2, \ldots ,2n$.
\end{df}

\begin{thm}[Theorem 1 from \cite{bir:moves}]\label{thm:birman}
Let $b \in \B{2n}$ and $b' \in \B{2n'}$ be two oriented braids.  The braids $b$ and $b'$ have isotopic plat closures if and only if they are related by a finite sequence of the following moves:
\begin{enumerate}[(i)]
  \item $b \mapsto g b h \text{ where } b \in \B{2n} \text{ and } g, h \in \K{2n}$
  \item $b \leftrightarrow \sigma_{2n} b \text{ where } b \in \B{2n} \text{ and } \sigma_{2n} b \in \B{2n+2}$
\end{enumerate}
\end{thm}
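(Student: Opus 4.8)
The statement is Birman's stable-equivalence theorem for plats, and the natural plan is to prove it by reducing an ambient isotopy of links to a finite list of elementary moves on height-function (bridge) presentations.

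\emph{The easy direction.} First I would verify that each move preserves the isotopy type of the plat closure. Move (ii) is the plat analogue of a Markov stabilization: passing from $b \in \B{2n}$ to $\sigma_{2n}b \in \B{2n+2}$ perturbs the bridge surface by introducing a single trivial additional bridge --- the two new strands are joined to each other by a cup and a cap and are clasped onto strand $2n$ in a way that unclasps by isotopy --- so the underlying link does not change. For move (i) it suffices to check that $A = \gena$, $B = \genb$, and $C_i = \genc$ each lie in the subgroup of $\B{2n}$ whose plat closure with any $b$ reproduces that of $b$; concretely $A$ is absorbed by rotating the cap over strands $1$ and $2$, $B$ passes that cap across the neighboring strand $3$, and $C_i$ transposes two adjacent caps, all of which are manifestly trivial on the plat. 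Hence $gbh$ and $b$ have isotopic plats for all $g,h \in \K{2n}$. One should note that $\K{2n}$ is in fact the \emph{full} subgroup of braids extending to homeomorphisms of the bridged $3$-ball that fix the system of cups (the Hilden subgroup), but for this direction only the containment is needed.

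\emph{The hard direction.} Next I would fix a standard height function $h$ on $S^3$ and observe that a braid $b \in \B{2n}$ with plat closure $K$ is exactly the data of an $n$-bridge presentation of $K$ with respect to $h$ --- all $n$ maxima on a top level, all $n$ minima on a bottom level, the braiding in between recording $b$ --- together with a choice of identification of the cups and caps with the standard ones, the indeterminacy of which is precisely $\K{2n}$; so braids modulo $\K{2n}$ on the left and on the right correspond bijectively to $n$-bridge presentations of $K$ up to bridge-preserving isotopy. Then, given an ambient isotopy $\{K_t\}$ from the plat of $b$ to the plat of $b'$, I would perturb it so that $t \mapsto h|_{K_t}$ is a generic one-parameter family of functions on a circle. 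By Cerf theory this family is Morse off finitely many parameter values, at each of which one of exactly two codimension-one degenerations occurs: (a) the birth or death of a cancelling maximum--minimum pair, or (b) two critical values coinciding and then exchanging order. On each interval of regular parameters, $K_t$ stays in bridge position with fixed bridge number and the corresponding braid traces a path inside a single $\K{2n}$-double coset, i.e.\ an instance of move (i); a birth raises the bridge number by one and inserts a trivial clasped bridge into the braid, i.e.\ move (ii), with a death being its inverse; and a critical-value interchange of type (b), after the maxima and minima are pushed back to their respective levels, is realized by a cup-handleslide over an adjacent strand or a cup-isotopy, again a move of type (i). Concatenating the moves produced at all of the special parameter values exhibits $b$ and $b'$ as related by a finite sequence of moves (i) and (ii).

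\emph{The main obstacle.} The technical core will be the Cerf-theoretic step --- checking that each codimension-one degeneration is realized by the allowed moves, and in particular that the cup-slides and cup-rotations used to reshuffle critical values truly lie in $\K{2n}$, equivalently that $\K{2n}$ is the \emph{entire} stabilizer of the standard plat (an identification of the handlebody subgroup of the hyperelliptic mapping class group of the $2n$-punctured sphere with $\langle A,B,C_i \rangle$). A variant that avoids explicit Cerf theory would pass to the double branched cover: a plat presentation determines a Heegaard splitting of $\DBCs{K}$, or of $\DBC{K}$ in the reduced picture, with move (i) corresponding to a handleslide or isotopy of the splitting and move (ii) to a stabilization, after which one invokes an equivariant Reidemeister--Singer theorem; there the difficulty migrates to the equivariant stabilization statement and the bookkeeping of the covering involution and the branch locus.
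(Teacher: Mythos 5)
This statement is not proved in the paper at all: it is imported verbatim as Theorem~1 of Birman's paper \cite{bir:moves}, and the author uses it as a black box (only the moves themselves are analyzed later, in Section~\ref{sec:moves}, for their effect on the filtration). So there is no internal proof to compare your proposal against; what you have written is a reconstruction of Birman's own argument, and in outline it is the right one: the easy direction is the observation that $A$, $B$, $C_{i}$ and the stabilization each preserve the plat closure, and the hard direction is a general-position/Cerf argument on the height function during the ambient isotopy, normalizing to bridge position and converting births/deaths and critical-value exchanges into the two moves. Two cautions about where the real content sits. First, your claim that on an interval of regular parameter values the braid moves only within a single double coset $\K{2n} b \K{2n}$ is not a consequence of the Cerf step --- it is exactly the statement that $\K{2n}$ is the full stabilizer of the standard system of cups and caps, i.e.\ Hilden's theorem that the ``plat-trivial'' subgroup is generated by $\gena$, $\genb$, $\genc$. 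That generation theorem is an independent, nontrivial input (Birman cites Hilden for it), so your argument must import it rather than produce it; as sketched, the hard direction silently assumes it in the step you describe as its conclusion. Second, the codimension-one analysis needs more care than ``cup-handleslide or cup-isotopy'': one must separately handle exchanges of two maxima, of two minima, and of a maximum past a minimum, and must re-normalize so that all maxima stay above all minima, which is where stabilizations can be forced to appear in combination with moves of type (i). With those two points filled in (essentially by quoting Hilden and carrying out Birman's case analysis), your plan becomes Birman's proof; your alternative suggestion via equivariant Reidemeister--Singer in $\DBC{K}$ is a genuinely different route, but the equivariant stabilization statement it requires is at least as hard as the theorem itself and is not what \cite{bir:moves} does.
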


\subsection{The Bigelow generators}\label{sec:biggen}
Let $D_{2n} \subset \mathbb{C}$ denote the unit disk with $2n$ punctures $\mu_{1} , \ldots, \mu_{2n}$ evenly spaced along $\mathbb{R} \cap D$.  We can view the braid group $\B{2n}$ as the mapping class group of $D_{2n}$, where the generator $\sigma_{k}$ is a diffeomorphism which is the identity outside of a neighborhood of the $k^{th}$ and $(k+1)^{st}$ punctures and exchanges these two punctures by a counter-clockwise half-twist.  Any braid can be written as a word in the $\sigma_{k}'s$, and we view them as operating on $D_{2n}$ in this way, read from left to right.

Let $b \in \Bn$ be an oriented braid on $2n$ strands.  We'll establish some terminology, following Bigelow in \cite{big:jones}.
\begin{df}\label{defforks}
Let $D \subset \mathbb{C}$ be the unit disk.
\begin{enumerate}[(i)]
\item Let the \textit{standard fork diagram} in $D_{2n}$ be a collection of maps $\alpha_{1}, \ldots ,\alpha_{n}:I \rightarrow D$ and $h_{1}, \ldots , h_{n}:I \rightarrow D$ called \textit{tine edges} and \textit{handles}, respectively, such that the following hold:
\begin{enumerate}[(a)]
  \item The segments $\alpha_{i}|_{(0,1)}$ are disjoint embeddings of $(0,1)$ into $D_{2n}$ such that for each $k$, $\alpha_{k}(0) = \mu_{2k-1}$, $\alpha_{k}(1) = \mu_{2k}$, and $\alpha_{k}(t) \in \mathbb{R}$ for all $t \in I$.
\item The segments $h_{i}|_{(0,1)}$ are disjoint embeddings of $(0,1)$ into $D_{2n}$ such that that for each $k$, $h_{k}(1) = d_{k} \in \partial D$, $h_{k}(0) = m_{k}$ is the midpoint of the segment $\alpha_{k}$, and the segment $h_{k}$ is vertical.
\end{enumerate}
\item Let a \textit{fork diagram for b} be the standard fork diagram along with the compositions $b \circ \alpha_{1}, \ldots, b \circ \alpha_{n}$ and $b \circ h_{1}, \ldots, b \circ h_{n}$.  We'll let $\beta_{k} = b \circ \alpha_{k}$.
\item Let an \textit{augmented fork diagram for b} be obtained from a fork diagram by replacing each arc $\beta_{k}$ with $bE_{k}$, where $E_{k}$ is a figure-eight encircling $\mu_{2k-1}$ and $\mu_{2k}$, where $E_{k}$ is oriented such that it winds counter-clockwise about $\mu_{2k}$.
\end{enumerate}
\end{df}

\begin{figure}[h!]
\centering
\subfloat[Tine edges $\alpha_{k}$]{
\includegraphics[height=30mm]{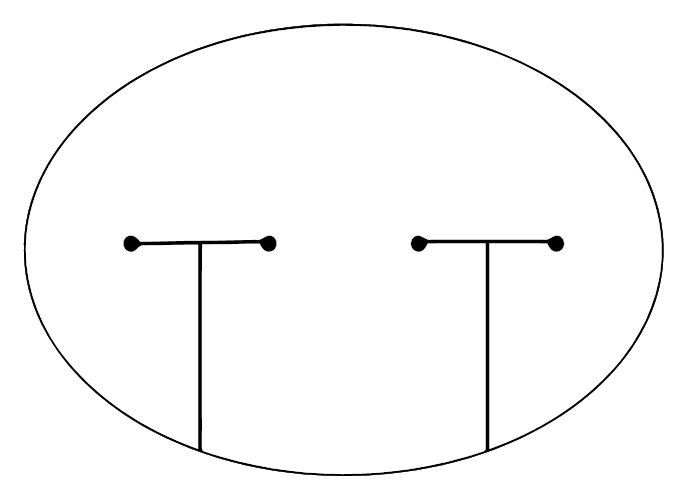}}\quad
\subfloat[Figure-eights $E_{k}$]{
\includegraphics[height=30mm]{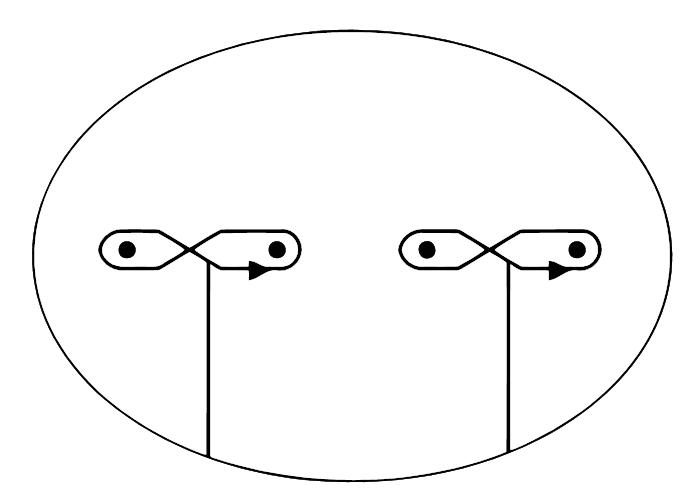}}
\caption{Structures in fork diagrams \label{fig:standardfork}}
\end{figure}

The reader should notice that by drawing a picture containing just the $\alpha$ and $\beta$ arcs and treating the $\alpha$ arcs as undercrossings at each intersection, we get a diagram of the plat closure of $b$.

We'll define some notation.  Let $Conf^{n}(\mathbb{C})$ denote the configuration space of $\mathbb{C}$, i.e. the set of unordered $n$-tuples of distinct points in $\mathbb{C}$.  Let $\Ztil$ be the set of intersections between $\alpha \text{ and } \beta$ arcs.  Then if $\tau$ denotes the set of puncture points, we see that $\tau \subset \Ztil$.  Then we construct a set $\Zcal$ by doubling the points in $\Ztil \setminus\tau$ by introducing for each $x \in \tau$ one element $e_{x} \in \Zcal$ and for each $x \in \Ztil \setminus \tau$ two elements $e_{x},e'_{x} \in \Zcal$.  The set $\Zcal$ can then be seen as the intersections points between $\alpha$ arcs and figure-eights $bE_{k}$.  We distinguish between $e_{x} \text{ and } e'_{x}$ by requiring that the loop traveling along a figure-eight from $e_{x} \text{ to } e'_{x}$ and back to $e_{x}$ along an $\alpha$ arc has winding number +1 around the puncture point.

\begin{rmk} Via an abuse of notation, we'll often refer to the points corresponding to $x \in \Ztil$ as $x \in \Zcal$ (if $x \in \tau$) or $x,x' \in \Zcal$ (if $x \in \Ztil \setminus \tau$).
\end{rmk}

We then define $\Gtil = (\alpha_{1} \times \ldots \times \alpha_{n}) \cap (\beta_{1} \times \ldots \times \beta_{n})\subset \confC$,
the set of unordered $n$-tuples of points in $\Ztil$ such that no two points are on the same $\alpha$ or  $\beta$ arcs.

Similarly, define $\G = (\alpha_{1} \times \ldots \times \alpha_{n}) \cap (bE_{1} \times \ldots \times bE_{n}) \subset \confC$, whch will be referred to as the set of \textit{Bigelow generators} for the diagram.

\begin{rmk}
From this point forward, something of the form $x\by$ will denote an element in $\G$ or $\Gtil$ such that $x \in \Zcal$ or $x \in \Ztil$ is some component of the $n$-tuple and $\by$ is the rest of the $n$-tuple.
\end{rmk}

\subsection{Gradings on the Bigelow generators}\label{sec:gradings}

We will define some gradings $Q,T, P: \G \rightarrow \mathbb{Z}$ based on loops in the configuration space of the disk.  Our definitions of $Q$ and $T$ are identical to Bigelow's in \cite{big:jones}, while our definition for $P$ is adapted from Manolescu's definition for $\tld{P}$ in \cite{cm:R} (in which he used braid closures).

For the sake of concreteness, a sample calculation will accompany the description of the gradings.  We'll study the left-handed trefoil knot depicted as the plat closure of $\sigma_{2}^{3} \in  \B{4}$, as seen in Figure \ref{fig:explat}.

\begin{figure}[h!]
\centering
\begin{minipage}[c]{.36\linewidth}
\centering\includegraphics[height = 35mm]{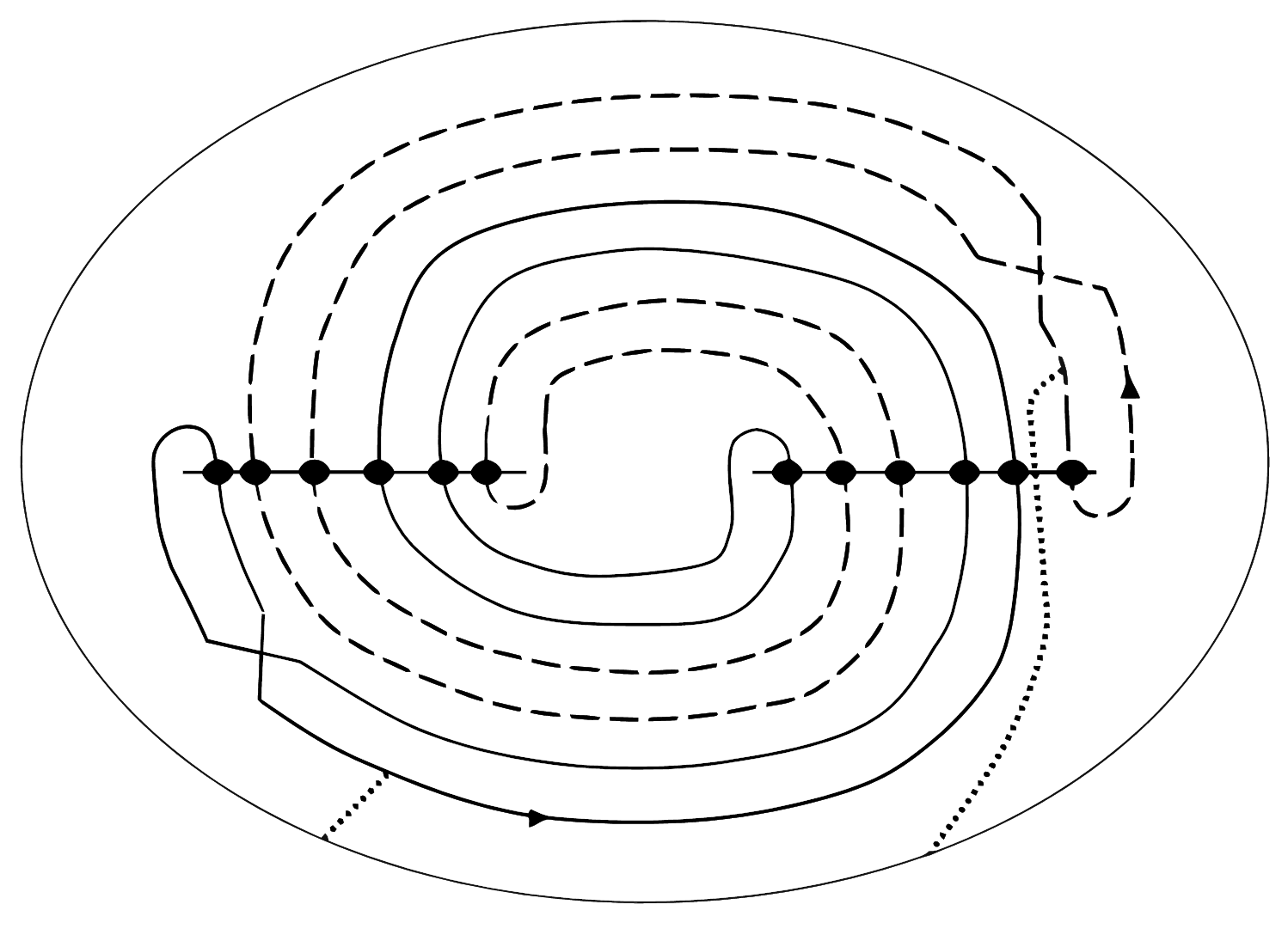}
\end{minipage}
\begin{minipage}[c]{.62\linewidth}
\centering \caption[An augmented fork diagram]{The augmented fork diagram induced by $\sigma_{2}^{3} \in  \B{4}$} \label{fig:exfork}
\end{minipage}
\end{figure}

Figure \ref{fig:exfork} depicts the augmented fork diagram for our example.  Label the elements of $\Zcal$ from left to right in the diagram as $\Zcal = \{x_{1},  s,  s',t, t', x_{2}, x_{3},  u', u, v', v, x_{4}\}$.

One can verify that the set of Bigelow generators is given by
\begin{equation*}
\G = 
\begin{Bmatrix}
 x_{1}x_{4}, & x_{1}u, & x_{1}u', & x_{2}v, & x_{2}v', & x_{2}x_{3}, & sx_{3}, & s'x_{3}, & tx_{4},\\ t'x_{4}, & sv, & s'v, &sv', & s'v', & tu, & t'u, & tu', & t'u'
 \end{Bmatrix}.
\end{equation*}

We'll turn to defining various gradings on the set $\G$.  Grading distributions for our trefoil example can be found in Table \ref{tab:QTP}.  Figure \ref{fig:exQTP} illustrates how to compute the gradings in practice.

\begin{figure}[h!]
\centering
\subfloat[$Q^{*} (x_{3}) = 2$.]{\includegraphics[height = 25mm]{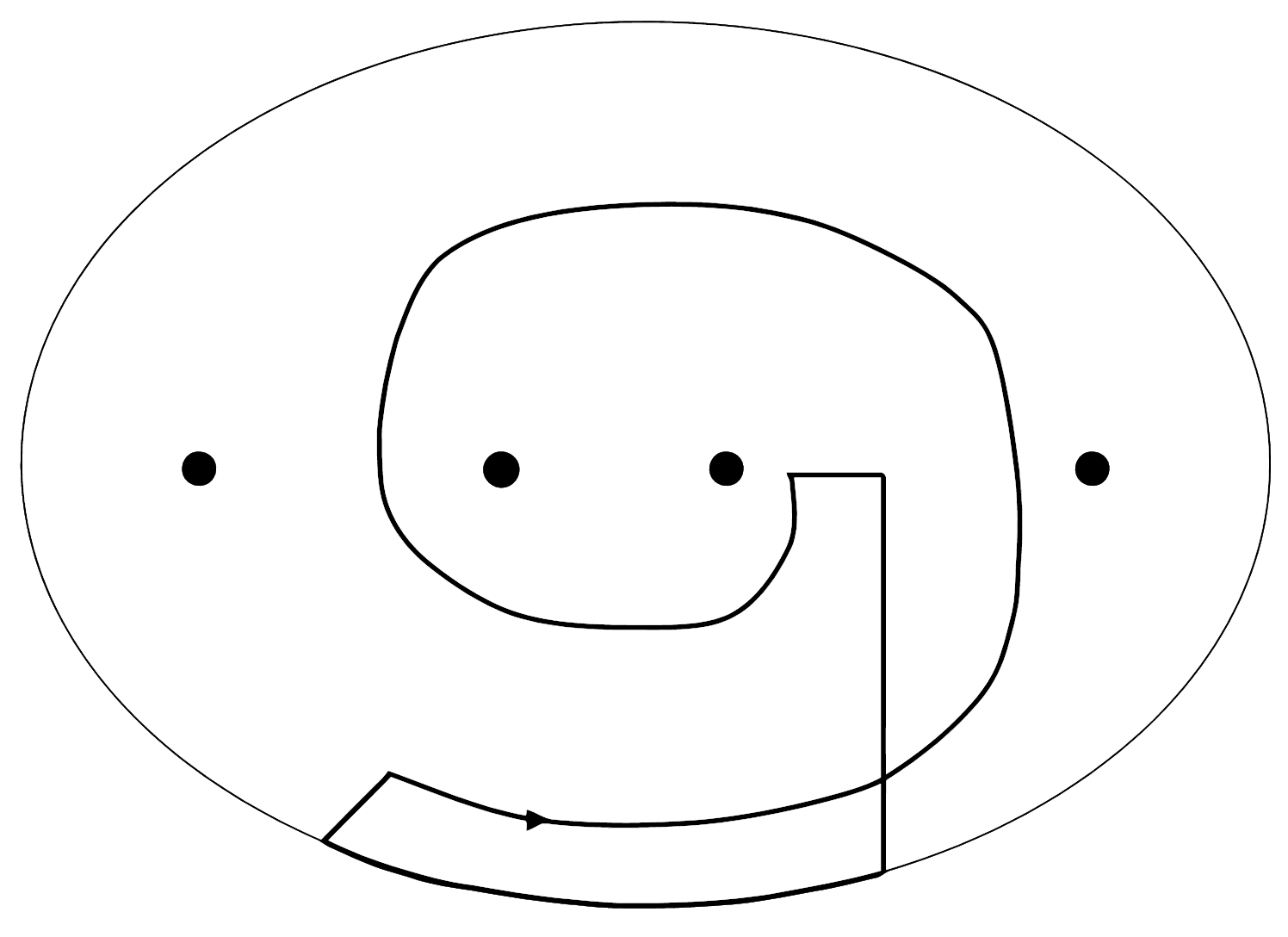}}\quad
\subfloat[$T(x_{2}x_{3}) = 3$.]{\includegraphics[height = 25mm]{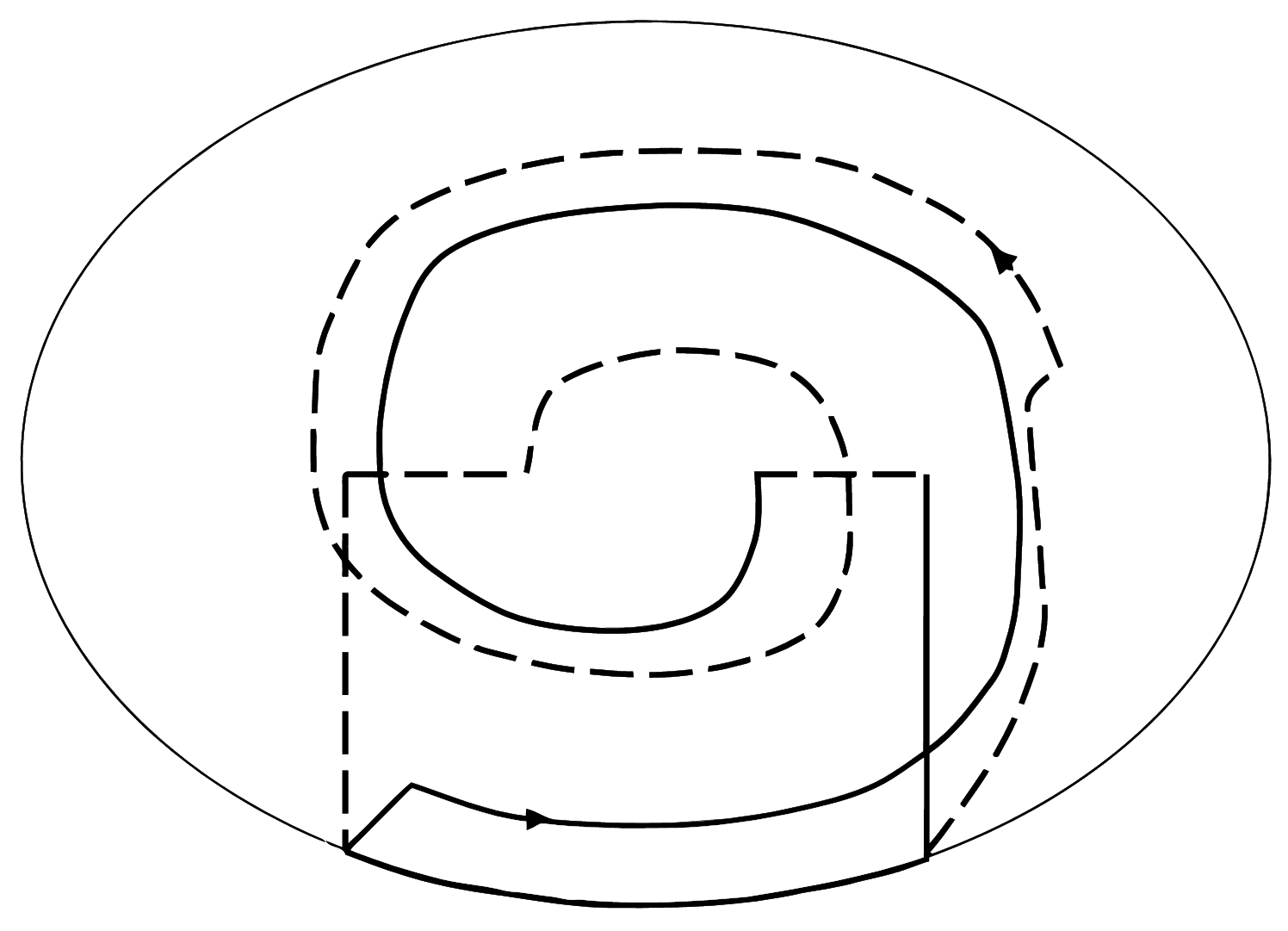}}\quad
\subfloat[$ P^{*}(x_{3}) = 3$ and $ P^{*}(x_{1}) = 0$.]{\includegraphics[height = 25mm]{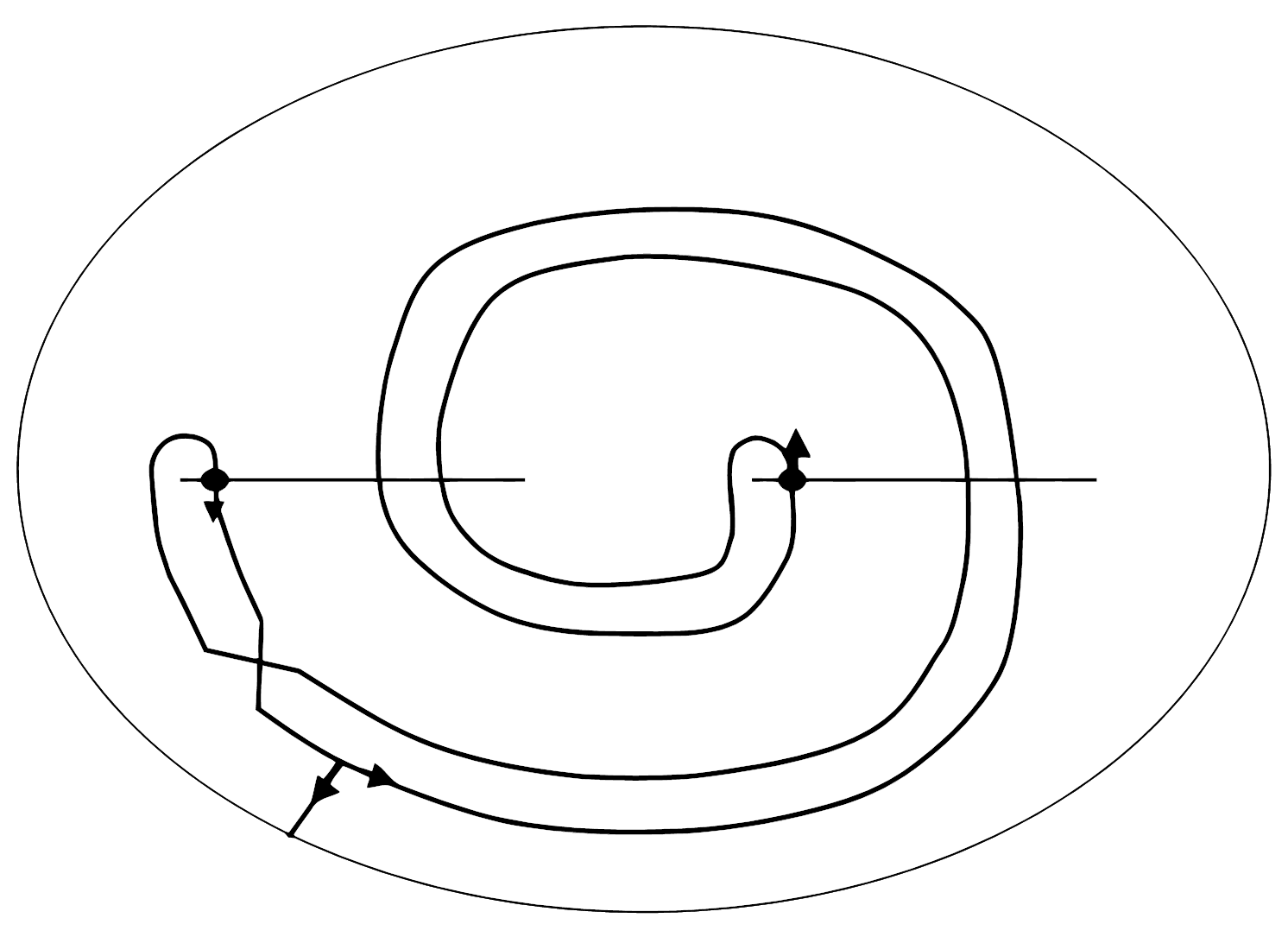}}
\caption{Computing the gradings $Q$, $T$, and $P$ \label{fig:exQTP}}
\end{figure}

\begin{table}[h!]
\centering
\subfloat{
\begin{tabular}{|c|l|}
\hline
\Tspace$T$ \Bspace& $\G$ elements\\
\hline
\hline
0 & $x_{1}x_{4}$, $x_{1}u$, $x_{1}u'$, $tx_{4}$, $t'x_{4}$\\
\hline
1 & $x_{2}v$, $x_{2}v'$, $sx_{3}$, $s'x_{3}$, $sv$, $s'v$, $sv'$, $s'v'$\\
\hline
2 & $tu$, $t'u$, $tu'$, $t'u'$\\
\hline
3 & $x_{2}x_{3}$\\
\hline
\end{tabular}
}
\subfloat{
\begin{tabular}{|c|l|}
\hline
$\Tspace P^{*}\Bspace$ & $\Zcal$ elements\\
\hline
\hline
0 & $x_{1}$, $x_{4}$\\
\hline
1 & $s$, $v$\\
\hline
2 & $s'$, $t$, $u$, $v'$\\
\hline
3 & $t'$, $x_{2}$, $x_{3}$, $u'$\\
\hline
\end{tabular}
}\\
\subfloat{
\begin{tabular}{|c|l|}
\hline
$\Tspace P\Bspace$ & $\G$ elements\\
\hline
\hline
0 & $x_{1} x_{4}$\\
\hline
2 & $x_{1}u$, $tx_{4}$, $sv$\\
\hline
3 & $x_{1}u'$, $t'x_{4}$, $s'v$, $sv'$\\
\hline
4 & $x_{2} v$, $sx_{3}$, $s'v'$, $tu$\\
\hline
5 & $x_{2} v'$, $s'x_{3}$, $t'u$, $tu'$\\
\hline
6 & $x_{2} x_{3}$, $t'u'$\\
\hline
\end{tabular}
}
\subfloat{
\begin{tabular}{|c|l|}
\hline
\Tspace$\text{ }Q^{*}$ \Bspace& $\Zcal$ elements\\
\hline
\hline
0 & $x_{1}$, $v$, $x_{4}$\\
\hline
1 & $v'$ \\
\hline
2 & $x_{3}$, $t$, $u$, $s$\\
\hline
3 & $t'$, $u'$, $s'$\\
\hline
4 & $x_{2}$\\
\hline
\end{tabular}
}
\subfloat{
\begin{tabular}{|c|l|}
\hline
\Tspace$Q$ \Bspace& $\G$ elements\\
\hline
\hline
0 & $x_{1}x_{4}$\\
\hline
2 & $x_{1}u$, $tx_{4}$, $sv$\\
\hline
3 & $x_{1}u'$, $t'x_{4}$, $s'v$, $sv'$\\
\hline
4 & $x_{2}v$, $sx_{3}$, $s'v'$, $tu$\\
\hline
5 & $x_{2}v'$, $s'x_{3}$, $t'u$, $tu'$\\
\hline
6 & $x_{3}x_{3}$, $t'u'$\\
\hline
\end{tabular}
}
\caption{Distributions of $T$, $P^{*}$, $P$, $Q^{*}$, and $Q$ for $\sigma_{2}^{3} \in \B{4}$}
\label{tab:QTP}
\end{table}

\subsubsection{The $Q$ grading}\label{sec:Qdef}

The grading $Q$ on $\G$ will be computed additively from a grading $Q^{*}:\Zcal \rightarrow \mathbb{Z}$.  Consider some $x \in \Zcal$, where $x \in \alpha_{i} \cap bE_{j}$.

Define an arc $\gamma_{x}$ in the disk by starting at $d _{j}$, traveling along $-bh_{j}$ to $bm_{j}$, traveling along $bE_{j}$ to $x$, traveling along $\alpha_{i}$ to $m_{i}$, and traveling along $h_{i}$ to $d_{i}$.  Then let $\gamma_{ij}$ be the arc traveling along the lower portion of $\partial D$ from $d_{i}$ to $d_{j}$.  Then $\gamma_{x}\gamma_{ij}$ is an arc from $d_{j}$ to itself, and we define $Q^{*}(x)$ to be the winding number of this loop around the set of punctures.

Then for each $\be =  e_{1} e_{2} \ldots e_{n}  \in \G$, define
\begin{equation*}
Q(\be) = \sum_{i = 1}^{n} Q^{*}(e_{i}).
\end{equation*}

\subsubsection{The $T$ grading}\label{sec:Tdef}
Given $\be = e_{1} e_{2} \ldots e_{n}  \in \G$, we have that for each $k$, $e_{k} = e_{x_{k}} \text{ or } e_{k}=e'_{x_{k}}$ for some $x_{k} \in \Ztil$.  Now let $\bx = x_{1} x_{2} \ldots x_{n} \in \Gtil$.

Then denote by $\tilde{\gamma}_{x_{k}}$  the arc obtained by replacing the figure-eight segments of $\gamma_{x_{k}}$ with the corresponding $\beta$ arc segments.  Then $T(\bx)$ can be computed as twice the sum of the pairwise winding of the $\tilde{\gamma}_{x_{k}}$ around each other.  In other words, if 
$\tilde{\gamma}_{x_{k}}$ and $\tilde{\gamma}_{x_{m}}$ make a half-twist counter-clockwise around each other for $k \neq m$, this contributes +1 to the value of $T(\bx)$.  Define $T:\G \rightarrow \mathbb{Z}$ by letting $T(\be) = T(\bx)$.

\subsubsection{The $P$ grading}\label{sec:Pdef}
 
This grading will be computed additively from a grading $ P^{*} : \Zcal \rightarrow \mathbb{Z}$, which measures twice the relative winding number of the tangent vectors to the figure eights $E'_{k}$ at the points in $\Zcal$.
 
For $x \in \Zcal$, where $x \in \alpha_{i} \cap bE_{j}$, we define $ P^{*}(x)$ in the following way. We view the arc $bh_{j}$ as being oriented downward at the point where it intersects $\partial D$.  Let $bE_{j}$ have the orientation induced by the orientation on $E_{j}$ in the standard fork diagram.  Then we let $ P^{*}(x)$ be twice the winding number of the tangent vector relative to the downward-pointing tangent vector at the point $h'_{j} \cap \partial D$.  In other words, if the tangent vector makes $k$ counter-clockwise half-revolutions and $m$ clockwise half-revolutions as we travel first along $bh_{j}$ from $bh_{j} (0)$ to $bh_{j} (1)$ then along $bE_{j}$ to $x$, then we set $ P^{*}(x) = m-k$.  This number is an integer because we assume that at any point $x \in \Zcal$, the figure-eight intersects the $\alpha$ arc at a right angle.

Then for $\be = e_{1} e_{2} \ldots e_{n} \in \G$, we define
\begin{equation*}
 P(\be) = \sum_{i = 1}^{n}   P^{*}(e_{i}).
\end{equation*}

\section{The anti-diagonal filtration}\label{sec:AD}

We review here how one obtains from the above picture a filtration on the Heegaard Floer complex, following Manolescu in \cite{cm:R} and Seidel and Smith in \cite{ss:R2}.

We'll first recall in Section \ref{sec:totreal} a formal construction involving graded totally-real submanifolds, as discussed by Manolescu in \cite{cm:R}.  This repeats the construction of graded Lagrangians by Seidel in  \cite{s:GL}, following the ideas of Kontsevich in \cite{kont:GL}.

Then we'll apply the formalism in Section \ref{sec:totreal} to define Seidel gradings on two particular totally real tori in the $n$-fold symmetric product of a Riemann surface $\Sigma$.  It is illustrated in \cite{cm:R} that by taking the Lagrangian Floer cohomology of these tori in the complement of a certain divisor $\AD \subset \text{Sym}^{n}(\Sigma)$, one obtains the fixed-point symplectic Khovanov homology group $\Kst{K}$.  However, Manolescu also showed that these tori can be viewed as Heegaard tori $\Tah, \Tbh$ for the manifold $\DBCs{K}$.  A holomorphic volume form on $W = \text{Sym}^{n}(\Sigma) \setminus \AD$ induces an absolute Maslov grading $\tld{R}$ on intersections of these tori when viewed inside $W$.

Further, we have an identification of the set of Bigelow generators $\G$ with a generating set for the Heegaard Floer chain groups.  This allows us to view $\tld{R}$ as a function on $\G$, and it in fact coincides with $ P - Q + T$.  However, when we view these tori inside all of $\text{Sym}^{n}(\Sigma)$, this grading is no longer a priori consistent with Maslov index calculations (but rather also records intersections of 2-gons with the factor $\AD$).

We can use $R$ (a shifted version of $\tld{R}$) to define a filtration $\rho$ on $\CFxs{K}$ for each torsion $\mathfrak{s} \in \text{Spin}^{c}(\DBCs{K})$.  The definition for $\rho$ will appear to depend heavily on the braid $b$ chosen to represent the knot $K$.  However, we'll obtain an invariance result for this filtration in Section \ref{sec:Rmoves} in the form of Theorem \ref{thm:Rthm}.

\subsection{Graded totally real submanifolds} \label{sec:totreal}

First recall the following definition:

\begin{df}
A real subspace $V \subset \mathbb{C}^{n}$ is called \textit{totally real} (with respect to the standard complex structure if $dim_{\mathbb{R}}V = n$ and $V \cap iV = 0.$  A half-dimensonal submanifold $\mathcal{T}$ of an almost complex manifold $(Y,J)$ is called \textit{totally real} if $T_{x}\mathcal{T} \cap J(T_{x}\mathcal{T}) = 0$ for all $x \in \mathcal{T}$.
\end{df}
We'll first work in the setting of a K\"ahler manifold $(Y,\Omega)$ such that $\Omega$ is exact and $c_{1}(Y) = 0$.  Furthermore, let $\mathcal{T}$ and $\mathcal{T}'$ be two totally real submanifolds of $Y$, intersecting transversely.

Under these conditions, there is a well-defined abelian group $HF^{*}(\mathcal{T}, \mathcal{T}') = H(CF^{*}(\mathcal{T}, \mathcal{T}'),d)$ with a relative $\mathbb{Z}$-grading given by a Maslov index calculation.  However, by a construction of Seidel in \cite{s:GL}, this relative grading can be improved to an absolute $\mathbb{Z}$-grading.

Let $\mathfrak{T} \rightarrow Y$ be the natural fiber bundle whose fibers $\mathfrak{T}_{x}$ are the manifolds of totally real subspaces of $T_{x}Y$.  Choosing a complex volume form $\Theta$ on $Y$ determines a square phase map $\theta: \mathfrak{T} \rightarrow \mathbb{C}^{*} / \mathbb{R}_{+} \cong S^{1}$ defined by $\theta(V) = \Theta(e_{1} \wedge \ldots \wedge e_{n})^{2}$, where $e_{1}, \ldots , e_{n}$ is any orthonormal basis for $V \subset T_{x}Y.$

Let $\tld{\mathfrak{T}} \rightarrow \mathfrak{T}$ be the infinite cyclic covering obtained by pulling back the covering $\mathbb{R} \rightarrow S^{1}$ via the map $\theta$.  Consider the canonical section $s_{\mathcal{T}}: \mathcal{T} \rightarrow \mathfrak{T}$ given by $s_{\mathcal{T}}(x) = T_{x}\mathcal{T}$.  This section induces a $S^{1}$-valued map $\theta_{\mathcal{T}} = \theta \circ s_{\mathcal{T}}$.  In some cases, the section $s_{\mathcal{T}}$ can be lifted to a section $\tld{s}_{\mathcal{T}}: \mathcal{T} \rightarrow \tld{\mathfrak{T}}$ (inducing a lift $\tld{\theta}_{\mathcal{T}} :\mathcal{T} \rightarrow \mathbb{R}$ of the map $\theta_{\mathcal{T}}$).  Let's assume such a lift exists.

\begin{df}
A \textit{grading} on $\mathcal{T}$ is a choice of lift $\tld{\theta}_{\mathcal{T}}: \mathcal{T} \rightarrow \mathbb{R}$.
\end{df}
Given such gradings on the submanifolds $\mathcal{T}$ and $\mathcal{T}'$, one can define the absolute Maslov index $\mu(x) \in \mathbb{Z}$ for each element $x \in \mathcal{T} \cap \mathcal{T}'$ \cite{s:GL}.  This index is constructed using the Maslov index of paths in $\mathfrak{T}_{x}$, which is discussed in \cite{rs:paths}.  We'll sometimes refer to the grading structure on $\mathcal{T}$ as $\tld{\mathcal{T}}$, and we'll refer to the absolutely-graded Lagrangian Floer groups for the graded Lagrangians $\tld{\mathcal{T}}$ and $\tld{\mathcal{T}}'$ as $HF^{*}(\tld{\mathcal{T}}, \tld{\mathcal{T}}').$

If $\Phi: Y \rightarrow Y$ is a symplectic automorphism, let $\Phi^{\mathfrak{T}}: \mathfrak{T} \rightarrow \mathfrak{T}$ denote the map given by  $\Phi^{\mathfrak{T}}(V) = D\Phi(V)$.  We recall the following definition:

\begin{df}
Let $\Phi: Y \rightarrow Y$ be a symplectic automorphism, and suppose that there is a $\mathbb{Z}$-equivariant diffeomorphism $\tld{\Phi}: \tld{\mathfrak{T}} \mapsto \tld{\mathfrak{T}}$ which is a lift of $\Phi^{\mathfrak{T}}$.  Then the pair $(\Phi, \tld{\Phi})$ is called a \textit{graded symplectic automorphism}.
\end{df}
A graded symplectic automorphism $(\Phi, \tld{\Phi})$ acts on a graded Lagrangian submanifold $(L, \tld{L})$ by
$$ (\Phi, \tld{\Phi})(L, \tld{L}) = ( \Phi(L),  \tld{\Phi} \circ \tld{L} \circ \Phi^{-1}).$$
\begin{rmk}
We'll often write $\tld{\Phi}$ to refer to the pair $(\Phi, \tld{\Phi})$ (and thus $\tld{\Phi}(\tld{L})$ will denote $(\Phi, \tld{\Phi})(L, \tld{L})$).
\end{rmk}

As discussed in \cite{s:GL}, many Lagrangian Floer identities can be extended to the absolutely-graded case.  For instance, as absolutely-graded complexes, $CF(\tld{\mathcal{T}}, \tld{\mathcal{T}}') \cong \left(CF(\tld{\mathcal{T}}, \tld{\mathcal{T}}')\right)^{\vee}$, where $``\vee"$ denotes the dual complex.

Further, if $\tld{\Phi}$ is a graded symplectic automorphism, then there is a natural isomorphism of absolutely-graded complexes $CF(\tld{\Phi}(\tld{\mathcal{T}}), \tld{\Phi} (\tld{\mathcal{T}}')) \cong CF(\tld{\mathcal{T}}, \tld{\mathcal{T}}')$.
\subsection{From fork diagrams to Heegaard Floer homology}\label{sec:bighf}

We summarize Manolescu's work in \cite{cm:R}, describing a connection between Bigelow's fork diagram and a Heegaard diagram for the manifold $\DBCs{K}$.

We represent a knot $K$ as the plat closure of a braid $b \in \B{2n}$, the braid group on $2n$ strands, and obtain a fork diagram for $b$ by following the action the braid on the standard fork diagram, as described in Section \ref{sec:biggen}.

Now let $P_{\mu} \in \mathbb{C}[t]$ be a polynomial with set of roots $\{ \mu_{1} , \ldots \mu_{2n}\}$, which is exactly the set of punctures in $\mathbb{C}$.  We define an affine space $\widehat{S} = \{ (u, z) \in \mathbb{C}^{2} : u^{2} + P_{\mu}(z) = 0 \} \subset \mathbb{C}^{2}$.

Also , for $k = 1, \ldots , n$, define the subspaces $\ah_{k}$ and $\bh_{k}$ of $\widehat{S}$ by
\begin{align*}
\ah_{k} &= \{ (u,z) \in \mathbb{C} : z = \alpha_{k}(t),\text{ for some } t\in [0,1]; u = \pm \sqrt{-P_{\mu}(z)}\} \text{ and}\\
\bh_{k} &= \{ (u,z) \in \mathbb{C} : z = \beta_{k}(t),\text{ for some } t\in [0,1]; u = \pm \sqrt{-P_{\mu}(z)} \}.
\end{align*}

Notice that the map $\widehat{S} \rightarrow \mathbb{C}$  defined by $(u,z) \mapsto z$ is a double branched covering with branch set equal to $\{\mu_{1}, \ldots, \mu_{2n}\} \subset \mathbb{C}$.  This means that $\widehat{S}$ can be seen as $\Sigma_{n-1} - \{ \pm \infty \}$, where $\Sigma_{n-1}$ is a Riemann surface of genus $(n-1)$.  Furthermore, the $\ah_{k}$ and $\bh_{k}$ are simple closes curves in $\widehat{S}$ which induce totally real tori $\Tah = \ah_{1} \times \ldots \times \ah_{n}, \Tbh = \bh_{1}\times\ldots\times\bh_{n} \subset \text{Sym}^{n}\left( \widehat{S} \right)$.  We want a Heegaard diagram, so we stabilize this surface as shown in Figure \ref{fig:HA} to acquire $\Sigma_{n} - \{ \pm \infty \}$.
 
\begin{figure}[h]
\centering
\labellist 
\small
\pinlabel* {$+\infty$} at 475 115
\pinlabel* {$-\infty$} at 475 50
\endlabellist 
\includegraphics[height = 25mm]{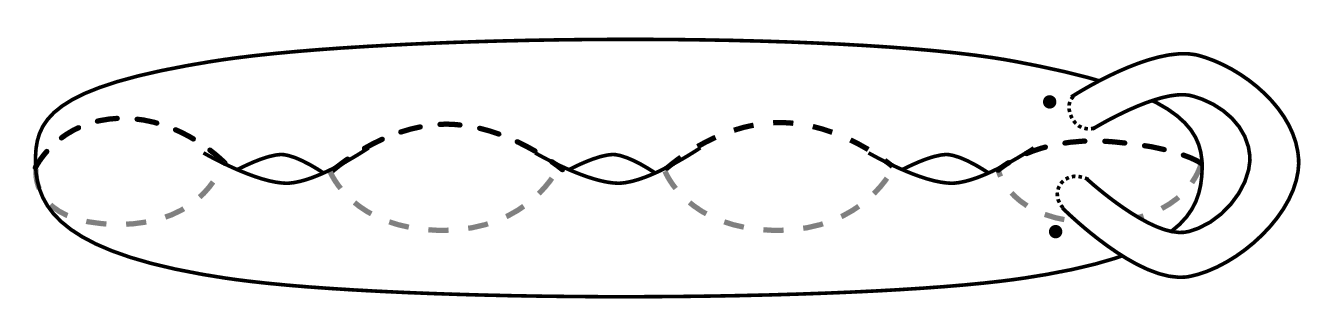}
\caption{Stabilizing near $\pm \infty$
\label{fig:HA}}
\end{figure}
 
\begin{prop}[Proposition 7.4 from \cite{cm:R}]\label{prop:DBC}
The collection of data $\h = (\Sigma_{n}; \ah_{1}, \ldots , \ah_{n} ; \bh_{1}, \ldots , \bh_{n} ; +\infty  )$ is an admissible pointed Heegaard diagram for $\DBCs{K}$.
\end{prop}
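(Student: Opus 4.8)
The plan is to follow Manolescu's argument in \cite{cm:R}.

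\textbf{Step 1: the branched-cover splitting.} First I would realize $S^{3}$ as $B^{3}_{+} \cup_{S^{2}} B^{3}_{-}$ with $K$ in bridge position with respect to the splitting sphere, so that $K \cap S^{2} = \{\mu_{1}, \dots, \mu_{2n}\}$, $K \cap B^{3}_{+}$ consists of the $n$ boundary-parallel plat caps, and $K \cap B^{3}_{-}$ is the tangle cut out by the braid $b$ (closed off by $n$ further caps). Taking branched double covers then gives $\DBC{K} = \widehat{V}_{\alpha} \cup_{F} \widehat{V}_{\beta}$, where $F$ is the double cover of $(S^{2}, 2n \text{ pts})$ — a closed surface of genus $n-1$ — and $\widehat{V}_{\alpha}$, $\widehat{V}_{\beta}$ are the double covers of the two balls branched over their respective systems of $n$ unknotted, unlinked arcs. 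Such a cover of $B^{3}$ is a genus-$(n-1)$ handlebody, and the preimages of the obvious boundary-parallel spanning disks of the $n$ arcs form a meridian system subject to a single relation; on the $\alpha$-side these meridians are bounded by the curves $\ah_{1}, \dots, \ah_{n}$ (the lifts of the tine edges $\alpha_{k}$), and on the $\beta$-side by $\bh_{1}, \dots, \bh_{n}$. Identifying $F$ with the closed surface underlying $\widehat{S}$ recovers precisely the curve systems in the statement.

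\textbf{Step 2: the stabilization and the $\SxS$ summand.} Next I would track the effect of the handle addition of Figure \ref{fig:HA}. On the punctured surface $\widehat{S} = F \setminus \{\pm\infty\}$ the classes $[\ah_{1}], \dots, [\ah_{n}]$ are actually independent: cutting $\widehat{S}$ along $\bigcup \ah_{k}$ produces the two lifts of the simply connected complement of the arcs $\alpha_{1}, \dots, \alpha_{n}$, one containing $+\infty$ and the other $-\infty$, so the single relation $\sum \pm [\ah_{k}] = 0$ holding on the closed surface $F$ becomes, on $\widehat{S}$, the identity $\sum \pm [\ah_{k}] = [\lambda_{+}]$ with $\lambda_{+}$ a small loop around $+\infty$, which is nonzero; the same holds for the $\bh_{k}$ with the same class. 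Adding the handle joining neighborhoods of $+\infty$ and $-\infty$ produces the closed genus-$n$ surface $\Sigma_{n}$, whose core circle $\ell$ carries $[\lambda_{+}]$ and has a dual circle $c$ meeting it once. Hence on $\Sigma_{n}$ the two systems $\{[\ah_{k}]\}$ and $\{[\bh_{k}]\}$ each have full rank $n$, so $\h$ is a bona fide genus-$n$ pointed Heegaard diagram (with basepoint $+\infty$ off all curves). To pin down the manifold, I would handleslide $\ah_{n}$ over $\ah_{1}, \dots, \ah_{n-1}$ and likewise $\bh_{n}$ over $\bh_{1}, \dots, \bh_{n-1}$ until both become isotopic to $\ell$ and disjoint from all other curves and from each other; a regular neighborhood of $\ell \cup c$ then splits $\h$ as a connected sum of $(\Sigma_{n-1}; \ah_{1}, \dots, \ah_{n-1}; \bh_{1}, \dots, \bh_{n-1})$, which by Step 1 is a diagram for $\DBC{K}$, with the standard genus-one diagram $(T^{2}; \ell; \ell')$ for $\SxS$. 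This identifies $\h$ with a diagram for $\DBCs{K}$.

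\textbf{Step 3: admissibility.} Finally, since $\DBC{K}$ is a rational homology sphere, $b_{1}(\DBCs{K}) = 1$, so by the rank count of \cite{cmoz:thin} (lowered by one for our basepoint convention) the group of periodic domains $\Pi_{\bah\bbh}$ is infinite cyclic. Its generator $\mathcal{P}_{0}$ is Poincar\'e dual to the $S^{2}$-factor of the $\SxS$ summand and is essentially supported near the stabilizing handle, and I would check directly that it takes both signs — performing, if necessary, a small winding of one $\bh$-curve transverse to the others near $+\infty$ to create cancelling multiplicities — so that the diagram is weakly admissible. Since $\widehat{HF}(\DBCs{K}, \mathfrak{s})$ is nontrivial only when $\mathfrak{s}$ is torsion (as recorded in Section 2), weak admissibility is all that the construction requires.

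The hard part is Step 2: one must verify that the stabilizing handle interacts with the a priori redundant curve systems exactly as claimed — that its core circle carries the ``missing'' homology class and that the prescribed handleslides genuinely peel off a connect-summand $(T^{2}; \ell; \ell')$ — rather than introducing some other relation or producing a different $\SxS$-free manifold. The homological bookkeeping on $\widehat{S}$ against its two compactifications, together with making the handleslides explicit, is where the real work lies; by comparison the admissibility check in Step 3 is routine.
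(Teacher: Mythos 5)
The paper does not prove this statement at all: Proposition \ref{prop:DBC} is quoted directly as Proposition 7.4 of \cite{cm:R}, so there is no internal argument to compare yours against. What you have written is a reconstruction of Manolescu's proof, and in outline it is the right one: plat (bridge) position, the branched double covers of the two balls over $n$ trivial arcs are genus-$(n-1)$ handlebodies whose meridians are the lifts $\ah_{k}$, $\bh_{k}$, and the handle of Figure \ref{fig:HA} is what produces the $\SxS$ summand.

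Two steps need repair. First, in Step 2 your device for splitting off the summand --- a regular neighborhood of $\ell\cup c$ --- requires the dual circle $c$ to be made disjoint from the remaining $\ah_{k},\bh_{k}$, and that is not automatic: any circle running once through the tube projects to a loop based near $\infty$ that must cross the branch cuts an odd number of times, so it cannot obviously avoid the other lifted arcs. It is cleaner to stop once $\ah_{n}$ and $\bh_{n}$ have been slid (inside the two planar ``sheet'' regions) to disjoint parallel copies of the core circle of the tube: the compressing disks they bound in $U_{\alpha}$ and $U_{\beta}$, together with the annulus between them inside the tube, form an embedded sphere $S$; cutting along $S$ and capping yields exactly the destabilized genus-$(n-1)$ diagram $(\Sigma_{n-1};\ah_{1},\ldots,\ah_{n-1};\bh_{1},\ldots,\bh_{n-1})$ for $\DBC{K}$, which is connected, so $S$ is non-separating and the manifold is $\DBCs{K}$. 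Second, in Step 3 the generator of the rank-one periodic-domain group is not ``supported near the stabilizing handle'': it is the difference of the two lifted complementary sheets (the $+$-sheet for the $\ah$'s minus the $+$-sheet for the $\bh$'s), a domain with multiplicities in $\{-1,0,+1\}$ that avoids both points $\pm\infty$; it is nonzero whenever the two arc systems differ, and it has both signs because the deck involution interchanges the two sheets, so its signed area vanishes. This gives weak admissibility of the diagram exactly as given, and you should drop the fallback of winding a $\bh$-curve: altering the curves would break the identification of $\Tah\cap\Tbh$ with the Bigelow generators on which the rest of the paper depends.
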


Now notice that with respect to the covering map above, each puncture $\mu_{k} \in \mathbb{C}$ has a single point as its preimage.  However, the preimage of a point $x \in \beta_{j} \cap \text{int}(\alpha_{i})$ consists of a pair of points upstairs.  This gives a bijection between the intersection $\Tah \cap \Tbh$ and the set $\G$ of Bigelow generators as defined in Section \ref{sec:biggen}.  However, this identification isn't canonical, since for some $x \in \Ztil - \tau$ it is only required that the pair $\{ e_{x}, e'_{x} \}$ is identified with the two preimages of $x$ upstairs.  In any case, the grading function $R$ defined below will satisfy $R( e_{x}) = R( e'_{x})$.

\subsection{A grading induced by a volume form}

Define a subset $W = \text{Sym}^{n}(\widehat{S}) - \nabla$, where the anti-diagonal $\nabla$ is defined by
\begin{equation*}
\nabla = \{(u_{k} , z_{k} ), k = 1, \ldots, n : u_{k}^{2} + P_{\mu}(z_{k} ) = 0 , (u_{i} , z_{i} ) = (-u_{j} , z_{j} )\text{ for some }i\neq j \}
\end{equation*}

When we restrict to $W$, the Maslov grading on $\Tah \cap \Tbh$ can be lifted to an absolute Maslov $\mathbb{Z}$-grading by endowing the tori $\Tah$, $\Tbh$ with gradings in the sense of Section \ref{sec:totreal} via the choice of a particular holomorphic volume form.

\begin{prop}[Proposition 7.5 from \cite{cm:R}]\label{prop:cmR}
There exists a complex volume form $\Theta$ on $W$ so that we can endow $\Tah$ and $\Tbh$ with gradings on the sense of Section \ref{sec:totreal}.  The resulting absolute Maslov grading (in W) on the elements of $\Tah \cap \Tbh$ is $ P - Q + T$.
\end{prop}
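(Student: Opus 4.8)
I would follow the argument for Proposition~7.5 of \cite{cm:R}, verifying along the way that replacing braid closures by plat closures changes nothing, since the whole computation is local near the intersection points. First, the meromorphic $1$-form $\omega = dz/u$ is in fact holomorphic and nowhere vanishing on $\widehat{S}$: away from the branch points $z$ is a local coordinate and $u \neq 0$, while near $z = \mu_k$ one may take $u$ as a local coordinate with $z - \mu_k$ equal to a unit times $u^{2}$, so that $\omega$ is a unit times $du$; the only zeros of $\omega$ lie over $z = \infty$, the two points that have been removed. The external product of $n$ copies of $\omega$, namely $dz_1 \wedge \cdots \wedge dz_n/(u_1 \cdots u_n)$ on $\widehat{S}^n$, is $S_n$-anti-invariant, so multiplying by the (also anti-invariant) Vandermonde $\prod_{i<j}(z_i - z_j)$ produces an $S_n$-invariant form
\[
\Theta \;=\; \frac{\prod_{i<j}(z_i - z_j)}{u_1 \cdots u_n}\; dz_1 \wedge \cdots \wedge dz_n
\]
descending to $Sym^n(\widehat{S})$; equivalently, $\Theta$ is the pull-back of the standard volume form $de_1 \wedge \cdots \wedge de_n$ on $Sym^n(\mathbb{C})$ divided by $u_1 \cdots u_n$. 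I would then check that $\Theta$ is nowhere vanishing on $W$: away from the branch points the Vandermonde precisely compensates for the anti-invariance, so $\Theta$ is a local coordinate volume form and does not vanish along the big diagonal; near a branch point the pole of $1/u_i$ is absorbed by the simple zero of $dz_i$ (equivalently, by switching to $u$ as local coordinate); and the only locus where $\Theta$ could still degenerate is where $z_i = z_j$ with $(u_i, z_i) = (-u_j, z_j)$, which is exactly the deleted anti-diagonal $\bigtriangledown$. (On the stabilisation region $\Theta$ is extended in the standard way.) This trivialises $K_W$ and defines the square-phase map $\theta$ together with its infinite cyclic cover.

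Next I would check that $\Tah$ and $\Tbh$ admit gradings. Each is a product of $n$ embedded circles, so its $H^{1}$ is the direct sum of the $H^{1}$ of the factors, and it suffices to show that the phase map $\theta_{\Tah}$ has winding number zero around each circle $\ah_k$ (holding the other coordinates fixed), and similarly for the $\bh_k$. Since $\ah_k$ is the preimage of the tine edge $\alpha_k$ under the hyperelliptic projection --- a circle double-covering an arc joining two branch points --- substituting the explicit $\Theta$ into the definition of $\theta$ and following the tangent line of $\ah_k$ once around shows that the two half-turns contributed near the two branch points cancel, giving net winding zero. Hence lifts $\tld{\theta}_{\Tah}$ and $\tld{\theta}_{\Tbh}$ exist; I fix these lifts exactly as in \cite{cm:R}, which is what pins down the absolute normalisation of $\tld{R}$.

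The substance of the proof is the computation of the absolute Maslov index $\tld{R}(\be)$ of an intersection point $\be = e_1 \cdots e_n \in \Tah \cap \Tbh$, and the organising principle is that the three factors of $\Theta$ account for the three combinatorial gradings. The factor $1/(u_1 \cdots u_n)$, built from the hyperelliptic coordinate, contributes $-Q(\be)$ --- the grading Bigelow reads off from the winding of the arcs $\gamma_x$ about the branch points. The factor $dz_1 \wedge \cdots \wedge dz_n$, evaluated on the tangent directions of the two tori, contributes $+P(\be)$ --- the grading measuring the winding of the figure-eight tangent vectors relative to the downward reference framing. And the Vandermonde $\prod_{i<j}(z_i - z_j)$, which couples the $n$ coordinates even though $Sym^n(\widehat{S})$ is locally a product near $\be$, contributes $+T(\be)$ --- twice the pairwise winding of the arcs $\tld{\gamma}$ about one another. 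Making this rigorous amounts to computing the lifted phases $\tld{\theta}_{\Tah}(\be)$ and $\tld{\theta}_{\Tbh}(\be)$ together with the Robbin--Salamon index \cite{rs:paths} of a canonical path of totally real subspaces from $T_{\be}\Tah$ to $T_{\be}\Tbh$, and matching the outcome against the combinatorial definitions of $Q^{*}$, $T$, and $P^{*}$ from Sections~\ref{sec:Qdef}--\ref{sec:Pdef}; summing the three contributions gives $\tld{R} = P - Q + T$. Every quantity involved is read off in a neighbourhood of a single component $e_i$ or a single pair $e_i, e_j$, so passing from a braid closure to a plat closure changes nothing essential.

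The delicate point --- and the main obstacle --- is the normalisation bookkeeping in this last step: one must reconcile the chosen lifts of the square-phase maps with the basepoint $d_j$ and the downward reference tangent vector used in the definitions of $Q^{*}$ and $P^{*}$, keep careful track of how the winding of the Vandermonde factor is apportioned between the two tori (which is where the behaviour of $\Theta$ near, but not on, the anti-diagonal enters), and deal with the stabilisation region. Once these local identifications are secured, additivity of the Maslov index over the $n$ strands and the $\binom{n}{2}$ pairs completes the argument.
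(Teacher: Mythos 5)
Your overall route is the same as the paper's: import Manolescu's proof of Proposition 7.5 from \cite{cm:R}, observing that the computation is local enough that replacing braid closures by plat closures causes no trouble, so that $P-Q+T$ and the Maslov grading agree as \emph{relative} gradings. Your detailed description of $\Theta$ and of how its three factors track $-Q$, $P$, and $T$ is a reasonable unpacking of that computation, which the paper simply cites. The issue is the absolute statement, which is exactly the one step the paper adds beyond the citation and exactly the step you leave open. You propose to ``fix these lifts exactly as in \cite{cm:R}'', but \cite{cm:R} works with braid closures, whereas in the plat setting the gradings $Q^{*}$, $P^{*}$, $T$ are defined through the handles $h_{j}$ (the basepoints $d_{j}$ and the downward reference tangent vector), so there is no lift convention to copy verbatim; reconciling the lifts with those handle-based definitions is the substance of the absolute claim, and you explicitly flag it as ``the main obstacle'' without resolving it. The paper resolves it with a concrete convention: the grading on $\Tbh$ is taken to be the image of the grading on $\Tah$ under the graded automorphism induced by $b$, normalized so that for the tuple $\bx$ of midpoints of the $\alpha_{i}$ and its image $\by$ under $b$, the lifted phases $\tld{\theta}_{\ah}(\bx)$ and $\tld{\theta}_{\bh}(\by)$ lie on the same sheet of the cover $\mathbb{R}\rightarrow S^{1}$; this is precisely what accounts for the handles and produces $P-Q+T$ on the nose.

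A cheaper way to close your argument would be to note that once relative agreement is established, the discrepancy $(P-Q+T)-\tld{R}$ is a constant integer on each diagram, so shifting the lift $\tld{\theta}_{\bh}$ by a deck transformation makes it zero, which suffices for the existence statement as phrased. As written, though, your proposal neither invokes this shortcut nor carries out the normalization bookkeeping, so the absolute part of the proposition is not yet established.
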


\begin{proof}
One can describe points in $\text{Sym}^n(\widehat{S})$ by their coordinates $\{ (u_j,z_j) \}_{j}$, where $u_j^2 + P_{\mu}(z_j) = 0$.  Following Manolescu, we let the $\mathbb{C}$-valued $n$-form $\Theta$ on $\text{Sym}^n(\widehat{S})$ be given by
$$ \Theta = \prod_{1 \leq i < j \leq n} \left( z_i - z_j \right)\cdot  \prod_{k = 1}^n \frac{dz_k}{u_k}.$$
By using a basis of symmetric functions in the $z_j$ near any point on the diagonal $\Delta \subset \text{Sym}^n(\widehat{S})$, Manolescu shows that $\Theta$ in fact gives a well-defined volume form on $W:=\text{Sym}^n(\widehat{S}) \setminus \nabla$.

As described in Section \ref{sec:totreal}, one can obtain from $\Theta$ two functions $\theta_{\alpha}:\Tah \rightarrow S^1$ and $\theta_{\beta}:\Tbh \rightarrow S^1$.  A point $\bx \in \Tbh$ has coordinates $\{ (u_j, z_j)\}_j$ where $z_j = \beta_j (t_j)$ for some $t_j \in [0,1]$, and $u_j = \pm \sqrt{-P_{\mu}(\beta_j(t_j))}$.  So, we can write
$$ \theta_{\beta}(\bx) = \prod_{1 \leq i < j \leq n} \left( \beta_i(t_i) - \beta_j(t_j) \right)^2\cdot  \prod_{k = 1}^n \frac{\beta_j'(t_j)^2}{-P_{\mu}(\beta_j(t_j))},$$
and write $\theta_{\alpha}(\bx)$ similarly for $\bx \in \Tah$.

A choice of $\mathbb{R}$-valued lifts $\tat$ and $\tbt$ of $\ta$ $\tb$ will induce an absolute Maslov grading on $\Tah \cap \Tbh \subset W$.  Notice that $\ta$ has a constant value of $1 \in S^1$; it is shown in \cite{cm:R} (by examination of the function $\tb$) that any choice of lifts $\tat$, $\tbt$ will induce a Maslov grading which agrees with $P-Q+T$ upto an overall shift, and the same argument applies here.

In \cite{cm:R}, the absolute Maslov grading is fixed to be exactly $P - Q + T$ by choosing $\tbt$ to be obtained continuously from $\tat$ by following the family of crossingless matchings (induced by the braid action) starting at $\{ \alpha_1, \ldots, \alpha_n \}$ and ending at $\{ \beta_1, \ldots, \beta_n \}$; this effectively sets $\tld{R}(\bx_0) = 0$ for a distinguished generator $\bx_0 \in \Tah \cap \Tbh$.  In our case, there is no such distinguished generator.  Instead, we set $\tat \equiv 0$ and then choose the lift $\tbt$ in a way that the induced Maslov grading $\tld{R}$ satisfies $\tld{R}(\bx) = (P - Q + T)(\bx)$ for some choice of generator $\bx \in \Tah \cap \Tbh$.  Necessarily we'll then have that $\tld{R} = P - Q + T$.


\end{proof}

We can now view $\tld{R}$ as a function both on $\Tah \cap \Tbh$ and on the set $\G$ of Bigelow generators.  Now we define a rational number $s_{R}(b,D)$, which will depends on properties of the braid $b$ and of the oriented link diagram $D$ which is its plat closure. Denote by $\epsilon(b)$ the sum of the powers (with sign) of the braid group generators making up the word $b$, and denote by $w$ the writhe of the diagram $D$.  Then let
\begin{equation*}
s_{R}(b,D) = \frac{\epsilon(b) - w(D) - 2n}{4} \in \mathbb{Q}.
\end{equation*}
Then for $\be \in \G$, define $R(\be) = \tld{R}(\be) + s_{R} =  P(\be) - Q(\be) + T(\be) + s_{R}$.

One should notice that for any $x \in \Ztil \setminus\tau$, we have that $Q^{*}(e'_{x}) = Q^{*}(e_{x}) + 1$ and $ P^{*}(e'_{x}) =  P^{*}(e_{x}) + 1$.  As a result, $R(e'_{x}\by) = R(e_{x}\by)$; following \cite{cm:R}, we say that the grading $R$ is \emph{stable}.

\subsection{Computing R for the left-handed trefoil}\label{sec:Rdef}

Here we have that $n=2$, $\epsilon = 3$, and $w = -3$, and so
\begin{equation*}
s_{R}(b,D) = \frac{(3) - (-3) - 2(2)}{4} = \frac{1}{2}.
\end{equation*}
Combining this with Table \ref{tab:QTP}, one obtains the distributions of $\tld{R}$ and $R$ seen in Table \ref{tab:R}.

\begin{table}[h!]
\centering
\subfloat{
\begin{tabular}{|  c  |  p{4.5cm}  |}
\hline
$\Tspace \tld{R} \Bspace$ & $\G$ elements\\
\hline
\hline
0 & $x_{1} x_{4}$, $x_{1} u$, $x_{1} u'$, $t x_{4}$, $t' x_{4}$\\
\hline
1 & $x_{2} v$, $x_{2} v'$, $s x_{3}$, $s' x_{3}$, $sv$, $s'v$, $sv'$, $s'v'$\\
\hline
2 & $tu$, $t'u$, $tu'$, $t'u'$\\
\hline
3 & $x_{2} x_{3}$\\
\hline
\end{tabular}
}
\qquad
\subfloat{
\begin{tabular}{| c  |  p{4.5cm} |}
\hline
$\Tspace R \Bspace$ & $\G$ elements\\
\hline
\hline
$1/2$ & $x_{1} x_{4}$, $x_{1} u$, $x_{1} u'$, $t x_{4}$, $t' x_{4}$\\
\hline
$3/2$ & $x_{2} v$, $x_{2} v'$, $s x_{3}$, $s' x_{3}$, $sv$, $s'v$, $sv'$, $s'v'$\\
\hline
$5/2$ & $tu$, $t'u$, $tu'$, $t'u'$\\
\hline
$7/2$ & $x_{2} x_{3}$\\
\hline
\end{tabular}
}
\caption{Distributions of $\tld{R}$ and $R$ for $\sigma_{2}^{3} \in \B{4}$}
\label{tab:R}
\end{table}

\subsubsection{Drawing Heegaard diagrams}\label{sec:draw}

Given a fork diagram, it is straightforward (albeit sometimes tedious) to construct the admissible pointed Heegaard diagram discussed in Proposition \ref{prop:DBC}.  Figure \ref{fig:forkcut} shows a standard fork diagram with six punctures (where the handle arcs are omitted).  Cutting along the dashed arcs produces three disks, each with two punctures.  The double cover of each such disk branched over the punctures is an annulus, as shown in Figure \ref{fig:hdcut}.  One can reglue the annuli to form a genus-two surface with two boundary components, as shown in Figure \ref{fig:hdglue}.  Capping off the boundary components and stabilizing the surface with a handle whose feet lie near $\pm \infty$ yields the required pointed Heegaard diagram.

\begin{figure}[h!]
\centering
\subfloat[A fork diagram cut along dashed arcs]{
\includegraphics[height = 30mm]{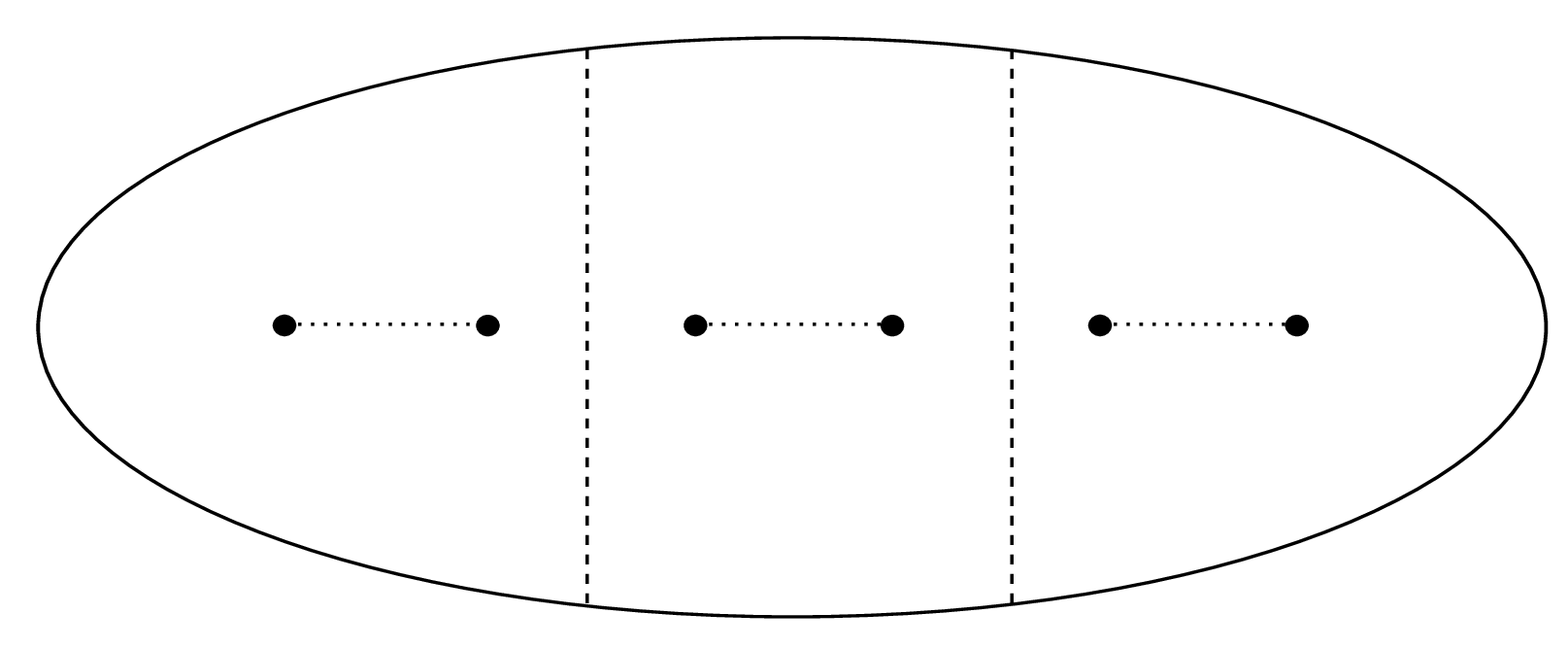}\label{fig:forkcut}}\\
\subfloat[The annuli covering the three pieces from Figure \ref{fig:forkcut}]{
\includegraphics[height = 30mm]{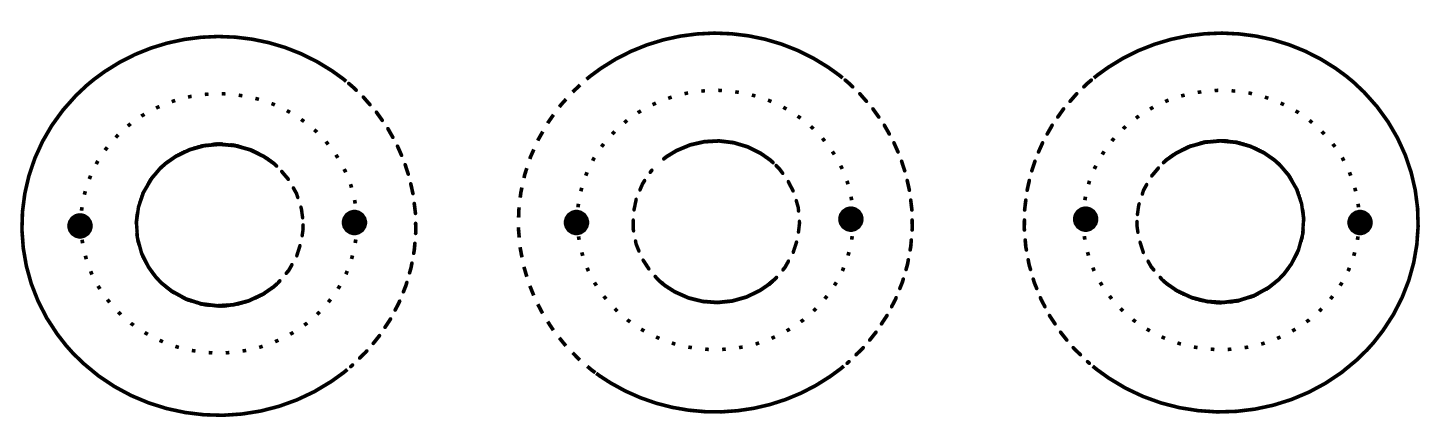}\label{fig:hdcut}}\\
\subfloat[The glued-up genus-two surface]{
\labellist 
\small
\pinlabel* $a$ at 310 220
\pinlabel* \rotatebox{180}{\reflectbox{$a$}} at 310 95
\pinlabel* $b$ at 490 220
\pinlabel* \rotatebox{180}{\reflectbox{$b$}} at 490 95
\endlabellist 
\includegraphics[height = 30mm]{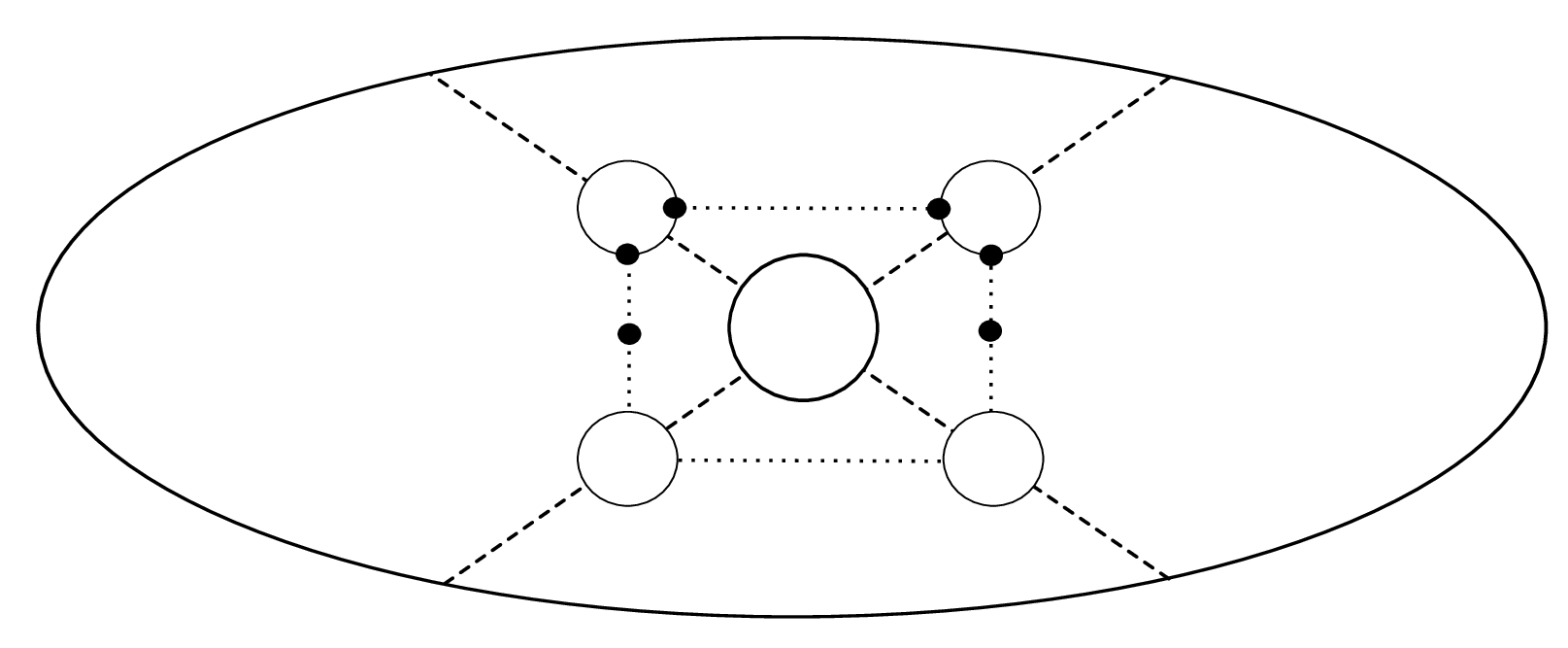}\label{fig:hdglue}}
\caption[Producing a Heegaard diagram]{Producing a pointed Heegaard diagram from a fork diagram.  The dotted arcs (resp. circles) represent the $\alpha_i$ (resp. their covers $\ah_i$) and black dots represent punctures.}
\end{figure}

\begin{figure}[h!]
\centering
\subfloat[A region of a fork diagram cut along dashed arcs]{
\includegraphics[height = 30mm]{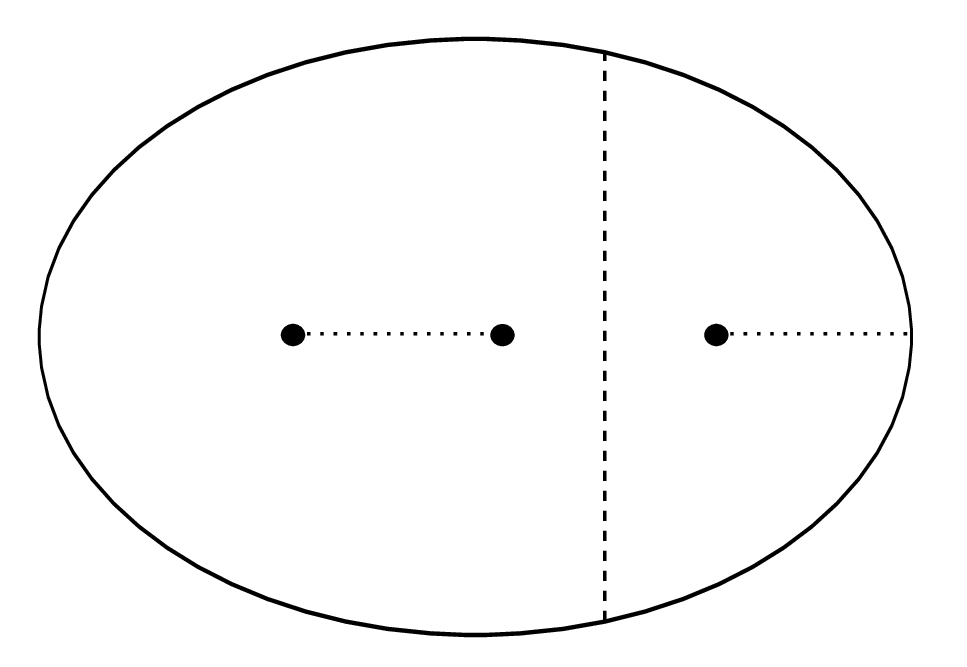}\label{fig:forkcut2}} \quad
\subfloat[The annulus and disk covering the two pieces from Figure \ref{fig:forkcut2}]{
\includegraphics[height = 30mm]{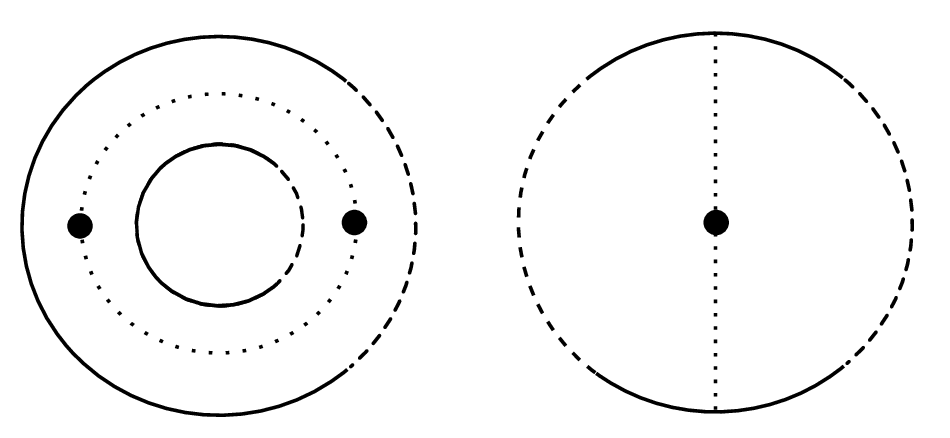}\label{fig:hdcut2}}\\
\subfloat[The glued-up genus-one surface]{
\labellist 
\small
\pinlabel* $a$ at 180 160
\pinlabel* \reflectbox{$a$} at 340 160
\endlabellist 
\includegraphics[height = 30mm]{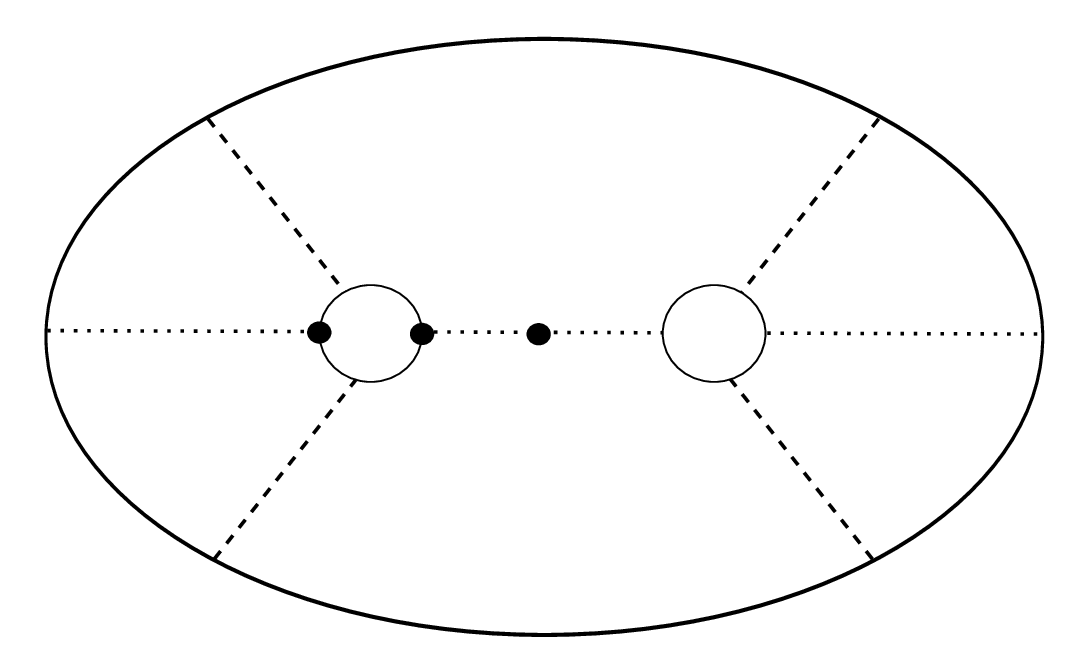}\label{fig:hdglue2}}
\caption[Producing a Heegaard diagram]{A local region of a pointed Heegaard diagram covering a thrice-punctured region of a fork diagam.}
\end{figure}

During the invariance proof, we'll exhibit local pictures of Heegaard diagrams covering local pictures of fork diagrams with three punctures.  In this case, one should cut the fork diagram into two disks (one with two punctures and one with one), as in Figure \ref{fig:forkcut2}.   The branched covers of these pieces are an annulus and a disk, respectively; gluing yields a genus-one surface with one boundary component, as seen in Figure \ref{fig:hdglue2}

\subsection{Intersections with the anti-diagonal}

However, as observed in \cite{ss:R2}, the volume form $\Theta$ has an order-one zero along the antidiagonal $\AD$.  Therefore, $R$ isn't compatible with Maslov index counts in all of $\text{Sym}^{n}(\widehat{S})$.  
%

Let $\phi \in \pi_{2}(\bx , \by)$ be counted by a term in $\widehat{\partial}(\bx)$.  If $\phi$ intersects $\AD$ with multiplicity $k \in \mathbb{Z}$ (it can be arranged that $k \geq 0$, with equality only if $\phi$ completely avoids $\AD$), then \cite{ss:R2} gives that $R(\bx) - R(\by) = 2k + 1$.

More generally, one can say that if $\phi \in \pi_{2}(\bx , \by)$ with $n_{+\infty}(\phi) = 0$, then
\begin{equation*}
R(\bx) - R(\by) = \mu(\phi) + 2\left( [\phi]\cdot[\AD] \right)= gr(\bx, \by) + 2\left( [\phi]\cdot[\AD]\right).
\end{equation*}

Now for each torsion $\mathfrak{s} \in \text{Spin}^{c}(\DBCs{K})$ define $\rho: \Us \rightarrow \mathbb{Q}$ by $\rho(\bx) = R(\bx) - \tld{gr}(\bx)$.
Then we have that if $\bx, \by \in \Us$ for $\mathfrak{s}$ torsion and $\phi \in \pi_{2}(\bx , \by)$ with $n_{+\infty}(\phi) = 0$, $\rho(\bx) - \rho(\by) = 2[\phi]\cdot[\AD]$.

Now $\rho$ provides a filtration grading on the factor $\widehat{CF}(\h, \mathfrak{s})$ for each torsion $\mathfrak{s}$.

\subsubsection{A schematic example of non-trivial intersection}

For the sake of concreteness, let's see an example of a 2-gon whose intersection number with the anti-diagonal is nonzero.  Figure \ref{fig:ADfork} shows a portion of a fork diagram induced by some braid in $\B{6}$.  Let $\bx, \by \in \G$ be the Bigelow generators whose components are indicated in Figure \ref{fig:ADfork}.

\begin{figure}[h!]
\centering
\labellist
\small
\pinlabel* $x_{1}$ at 75 125
\pinlabel* $y_{1}$ at 125 125
\pinlabel* $x_{2}=y_{2}$ at 400 110
\pinlabel* $y_{3}$ at 485 165
\pinlabel* $x_{3}$ at 535 165
\endlabellist
\includegraphics[height = 35mm]{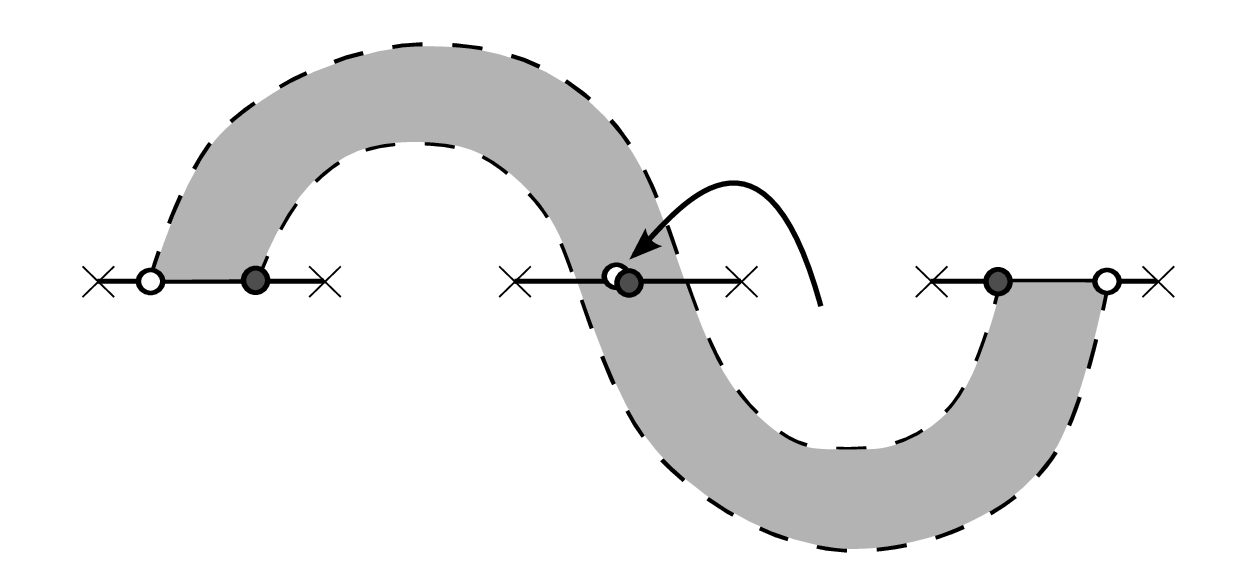}
\caption[A fork diagram depicting non-trivial intersection with $\AD$]{A portion of a fork diagram.  The domain $\pi(\mathcal{D}(\phi))$ is shaded, the $\ba$ arcs are solid, and the $\bb$ arcs are dashed.\label{fig:ADfork}}
\end{figure}

Figure \ref{fig:ADHD} shows the Heegaard diagram of genus 3 obtained from the fork diagram via Theorem \ref{prop:DBC}, and let $\pi$ denote the branched covering map.  Let $\bxh, \byh \in \Tah \cap \Tbh$ have components as indicated in the Heegaard diagram, where $\pi(\widehat{x_{i}}) = x_{i}$ and $\pi(\widehat{y_{i}}) = y_{i}$ for $i = 1, 2, 3$.  The shaded region in Figure \ref{fig:ADHD} is the domain $\mathcal{D}(\phi)$ of a 2-gon $\phi \in \pi_{2}(\bxh, \byh)$ and the shaded region in Figure \ref{fig:ADfork} is its image $\pi(\mathcal{D}(\phi))$.

\begin{figure}[h!]
\centering
\labellist 
\small
\pinlabel* $a$ at 112 144
\pinlabel* \reflectbox{$a$} at 223 144
\pinlabel* $b$ at 393 144
\pinlabel* \reflectbox{$b$} at 505 144
\pinlabel* $c$ at 673 144
\pinlabel* \reflectbox{$c$} at 785 144
\pinlabel* $\widehat{y}_{1}$ at 145 170
\pinlabel* $\widehat{x}_{1}$ at 200 120
\pinlabel* $\widehat{x}_{2}=\widehat{y}_{2}$ at  535 60
\pinlabel* $\widehat{y}_{3}$ at 420 260
\pinlabel* $\widehat{x}_{3}$ at 500 255
\endlabellist 
\includegraphics[height = 45mm]{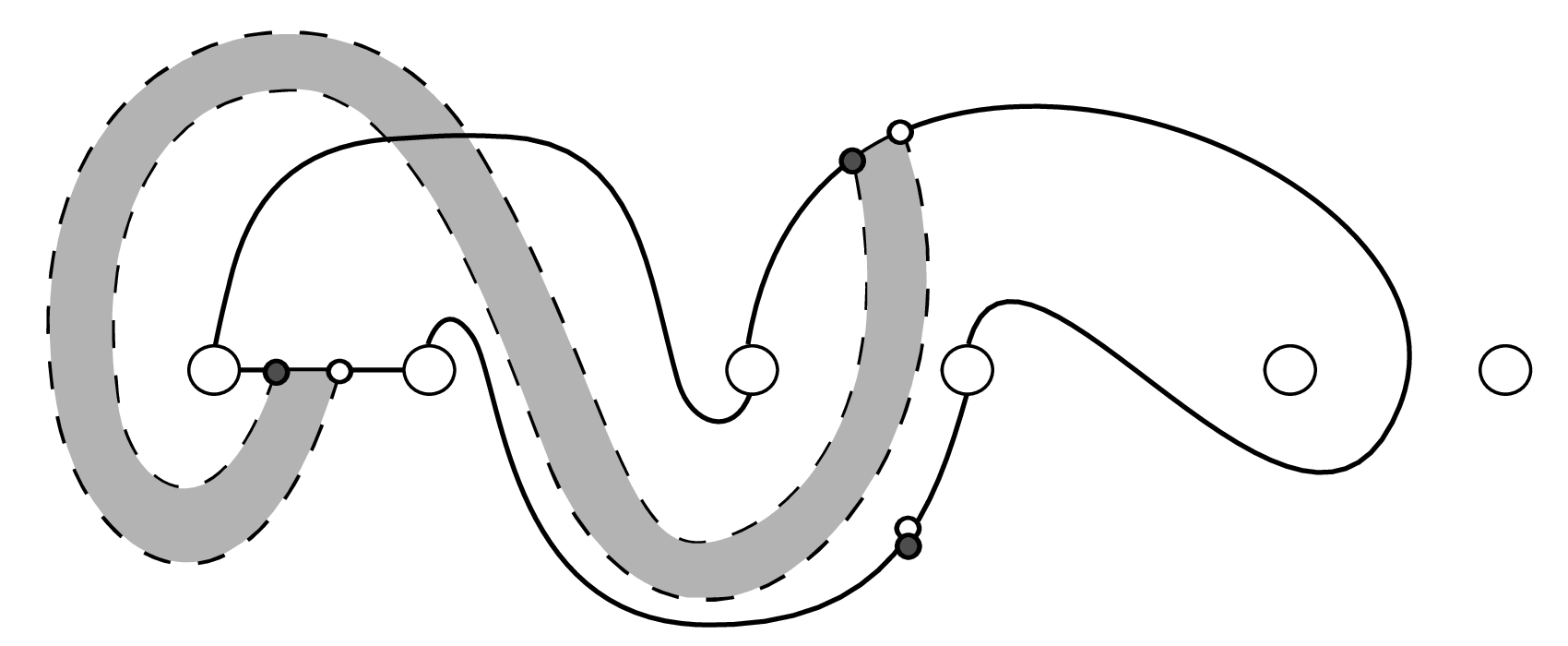}
\caption[A Heegaard diagram depicting non-trivial intersection with $\AD$]{A Heegaard diagram induced by Figure \ref{fig:ADfork}.  The domain $\mathcal{D}(\phi)$ is shaded.\label{fig:ADHD}}
\end{figure}

Notice that for each $i$, $\pi^{-1}(x_{i})$ contains two points; the point $\widehat{x}_{2}$ and $\widehat{y}_{2}$ are both chosen to be the preimage of $x_{2} = y_{2}$ which lies outside of the domain $\mathcal{D}(\phi)$.

One can see that $\tld{gr}(\bxh) - \tld{gr}(\byh) = \mu(\phi) = 1$.  However, $[\phi]\cdot[\AD] = 1$, and one can verify from the fork diagram that indeed $R(\bx) - R(\by) = 3.$

\subsubsection{The anti-diagonal and Heegaard multi-diagrams}\label{sec:multiad}

Throughout the rest of Section \ref{sec:AD}, we'll assume that $\Sigma$ is a genus-$n$ Heegaard surface arising as the double branched cover of $S^2$, as described in the discussion preceding Proposition \ref{prop:DBC}, with basepoint $+\infty \in \Sigma$.  Further, assume that $\ba$, $\bb$, and $\bb'$ be n-tuples of attaching curves on $\Sigma$ such that $\bb'$ differs from $\bb$ by a pointed handeslide or pointed isotopy.

We'll find in Section \ref{sec:inv} that Birman moves will induce sequences of Heegaard moves  such that only the initial and final $\alpha$ and $\beta$ circles are lifts of arcs from fork diagrams.  However, one should consider $\AD \subset \text{Sym}^{n}(\Sigma)$ as being determined by the branched covering map $\pi: \Sigma \rightarrow \mathbb{C}$ (and thus being a well-defined feature of intermediate Heegaard diagrams).

We'll analyze several types of 3-gons in the invariance proof in Section \ref{sec:inv}.  For $\bx \in \Ta \cap \Tb$ and $\by \in \Ta \cap \tor{\bb'}$, let $\psi \in \pi_{2}(\bx,\thet{\bb,\bb'},\by)$ be a 3-gon class avoiding the basepoint with $\mu(\psi) = 0$, where the domain $\mathcal{D}(\psi)$ has one of the two types discussed in Section \ref{sec:ztri}.  If $\mathcal{D}(\psi)$ is of the first type (a sum of $n$ disjoint 3-sided regions $\mathcal{D}_1, \ldots, \mathcal{D}_n$), a point in $\text{Im}(\psi) \subset \text{Sym}^n(\Sigma)$ is of the form $\bx = \{ x_1, \ldots, x_n\}$, where each $x_i \in \mathcal{D}_i$.  Further assume that at least $n-1$ of the regions $\mathcal{D}_i$ are small 3-sided regions of the type appearing in Figure \ref{fig:smalltri}.  It can easily be arranged that
\begin{equation}\label{eqn:adtri}
\mathcal{D}_i \cap \pi^{-1} \left( \pi \left( \mathcal{D}_j \right) \right) = \emptyset \quad \text{for} \quad i \neq j,
\end{equation}
and so $\bx \notin \AD.$

\begin{figure}[h!]
\centering
\begin{minipage}[c]{.38\linewidth}
\labellist
\small
\pinlabel* {$\theta_{\bb \bb'}$} at 84 94
\pinlabel* {$x_i$} at 240 50
\pinlabel* {$y_i$} at 240 140
\endlabellist
\includegraphics[height = 20mm]{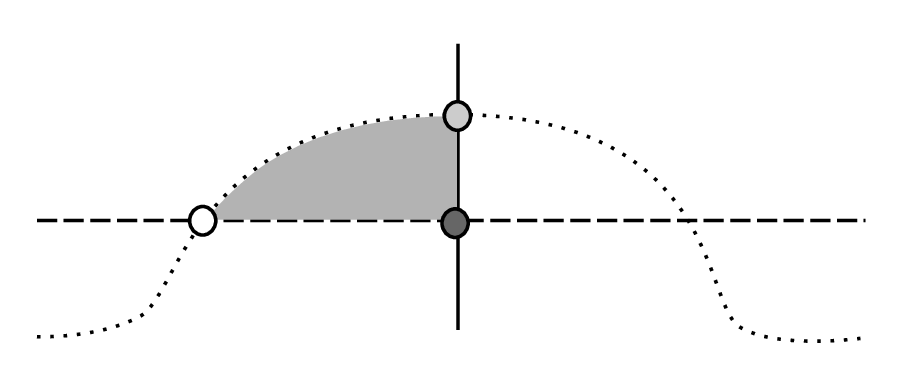}
\end{minipage}
\begin{minipage}[c]{.60\linewidth}
\caption[A small triangle]{A small 3-sided region appearing in a small isotopy}
\label{fig:smalltri}
\end{minipage}
\end{figure}

However, in Section \ref{subsec:stab}, we'll encounter a case in which $\mathcal{D}(\psi)$ is of the second type (a sum of $(n-1)$ disjoint regions $\mathcal{D}_1, \ldots, \mathcal{D}_{(n-1)}$, where the first $(n-2)$ are 3-sided regions and the last is a 6-sided region with one obtuse angle).  Additionally, assume that $\mathcal{D}_1, \ldots, \mathcal{D}_{(n-2)}$ are as shown in Figure \ref{fig:smalltri}.  In this case, a point in $\text{Im}(\psi) \subset \text{Sym}^n(\Sigma)$ is of the form $\bx = \{ x_1, \ldots, x_n\}$, where $x_i \in \mathcal{D}_i$ for $1 \leq i \leq n-2$ and $x_{(n-1)}, x_n \in \mathcal{D}_{(n-1)}$.  An analog of Equation \ref{eqn:adtri} can also be achieved here; as a result, $\bx \notin \AD$ as long as it isn't the case that $x_{g} \neq x_{(n-1)}$ and $\pi(x_{n}) = \pi(x_{(n-1)})$.  We'll show that this is impossible by arranging that $\pi^{-1} \left( \pi \left( \mathcal{D}_{(n-1)} \right) \right)$ has two connected components (one of which is $\mathcal{D}_{(n-1)}$ itself).

\subsubsection{The anti-diagonal and periodic domains}
Let $\alpha_i \cap \beta_i = \{ x_i, y_i\}$, as labelled in Figure \ref{fig:pdhs}, and let $\bx = \{ x_1, \ldots, x_n \}$ and $\by = \{ y_1, \dots, y_n\}$.  Assume without loss of generality that such a handleslide is of $\alpha_1$ over $\alpha_2$.  Then let the domains $\mathcal{D}_i^{\ba\bb}$ be the ones defined in Section \ref{sec:pd}.  In this context, we have the following fact:

\begin{lem}\label{lem:pdad}
Let $\left( \Sigma; \ba; \bb; +\infty \right)$ be the pointed Heegaard diagram mentioned above.  Let $\phi \in \pi_2\left( \bv, \bv \right)$, where $\bv = \bx$ or $\bv = \by.$  Then $\phi$ avoids the anti-diagonal $\nabla$.
\end{lem}

\begin{proof}
Without loss of generality, let $\bv = \bx$.  Letting $\mathcal{D}_i^{\ba\bb}$ be as in Section \ref{sec:pd}, one can see that for each $i$, there is a class $\phi_i \in \pi_2(\bx, \bx)$ with $\mathcal{D}(\phi_i) = \mathcal{D}_i^{\ba\bb}$.

First assume that $\bb$ differs from $\ba$ by a pointed isotopy.  Then every point in $\text{Im}(\phi_1) \subset \text{Sym}^n(\Sigma)$ is of the form $\{x, x_2, \ldots, x_n\}$, where $x \in \mathcal{D}_1^{\ba}$.  It can be arranged that $\mathcal{D}_1^{\ba\bb}$ avoids the branched covering pre-image ``twin'' of $x_j$ for each $j \geq 2$, and so $\phi_1$ avoids $\nabla$.

Instead assume that $\bb$ differs from $\ba$ by a pointed handleslide of $\alpha_1$ over $\alpha_2$, as in Figure \ref{fig:pdhs}.  The argument for the isotopy case above implies that $\phi_i$ avoids $\nabla$ for $i \geq 2$.  Notice that we can write $\phi_1 = \phi_{1,1} + \phi_{1,2}$, where $\phi_{1,1} \in \pi_2(\bx, \by)$ has domain given by the annular component of $\Dab{1}$ (with local coefficient $+1$) and $\phi_{1,2,} \in \pi_2(\by, \bx)$ has domain given by the small two-sided component of $\Dab{1}$ (with local coefficient $-1$).  By an argument analogous to that in the isotopy case, $\phi_{1,2}$ avoids $\nabla$.  Notice that any point in $\text{Im}(\phi_{1,1})$ is of the form $\{ x, x_2, \ldots, x_n\}$, where $x$ lies in the annular component.  It can be arranged that this annular component avoids the branched covering pre-image ``twin'' of $x_j$ for each $j \geq 2$, and so $\phi_{1,1}$ avoids $\nabla$ as well.

In either case, $\phi$ can be written as a sum (concatenation) of the classes $\phi_i$, and so it can be arranged that $\phi$ avoids $\nabla$. 
\end{proof}

\subsubsection{The anti-diagonal and the homotopy for a small pointed isotopy}

\section{Invariance of the filtration}\label{sec:inv}

Here we'll prove a few facts that we'll use in our invariance proofs.
\begin{rmk}\label{rmk:hats}
From now on, we'll suppress the hat when discussing a lift $\ah$ of an arc $\alpha$ unless the distinction isn't obvious from the context.
\end{rmk}

Now let $\left(\Sigma; \ba'; \ba; \bb; z\right)$ be a pointed Heegaard triple-diagram, where the set of attaching circles $\ba'$ is obtained from $\ba$ a pointed handleslide or pointed isotopy.  Then $\left(\Sigma; \ba; \bb; z\right)$ and $\left(\Sigma; \ba'; \bb; z\right)$ are two pointed Heegaard diagrams for the same 3-manifold $M$.  Recall that in fact $\left(\Sigma; \ba'; \ba; z\right)$ is an admissible pointed Heegaard diagram for $\#_{n}(S^{1} \times S^{2})$.  There is a natural choice of top-degree generator $\thet{\ba'\ba} \in \tor{\ba'} \cap \tor{\ba}$.    We also assume that $\h_{\ba \bb} := \left( \Sigma; \ba; \bb; z \right)$ and $\h_{\ba' \bb} := \left( \Sigma; \ba'; \bb; z \right)$ are admissible - Proposition \ref{prop:adm} thus implies that the triple-diagram is admissible also.  Recall that there is a 3-gon counting chain homotopy equivalence
$$\widehat{f}_{\ba', \ba,\bb}(\thet{\ba'\ba} \otimes \cdot): \widehat{CF} \left( \h_{\ba \ba} \right) \rightarrow\widehat{CF} \left( \h_{\ba' \ba} \right).$$

Now for any $\bx \in \Tap \cap \Ta$, $\by \in \Ta \cap \Tb$, and $\bz \in \Tap \cap \Tb$, there is a well-defined map
\begin{equation*}
\mathfrak{s}_{z}: \pi_{2}\left( \bx, \by, \bz \right) \rightarrow \text{Spin}^{c}(X),
\end{equation*}
where $X$ is the cobordism induced by the Heegaard move.  Since $X$ is induced by a handleslide or any isotopy, the cobordism is in fact a cylinder.  Therefore, if $\thet{\ba'\ba} \in \Tap \cap \Ta$ represents the top-degree generator of $\widehat{CF}(\#^{n}(\SxS))$, then for some $\psi \in \pi_{2}\left( \thet{\ba'\ba}, \by, \bz \right)$, $\mathfrak{s}_{z}(\psi)$ is completely determined by either restriction $\sz{\by}$ or $\sz{\bz}$.

We'll need the following fact about the absolute grading $\tld{gr}$:

\begin{lem}\label{lem:gr}
Let $\left( \Sigma; \ba'; \ba; \bb; z \right)$ be an admissible pointed Heegaard triple-diagram such that $\ba'$ differs from $\ba$ by a pointed isotopy or pointed handleslide.  Then if $\bx \in \Us \subset \tor{\ba} \cap \tor{\bb}$ for $\mathfrak{s} \in \text{Spin}^{c} \left( \Yab \right) $ torsion and $\by \in \tor{\ba'} \cap \tor{\bb}$ is a generator appearing with nonzero coefficient in the expansion of $\widehat{f}_{\ba', \ba,\bb}(\thet{\ba'\ba} \otimes \bx)$, $\tld{gr}(\by) = \tld{gr}(\bx).$
\end{lem}

\begin{proof}
Let $X$ be the cobordism induced by the Heegaard move and choose some $\mathfrak{t} \in \text{Spin}^{c}(X)$ restricting to $\mathfrak{s}$ on $\Yab$ and $\mathfrak{s}'$ on $\Yapb$, where $\by \in \mathfrak{U}_{\mathfrak{s}'} \subset  \tor{\ba'} \cap \tor{\bb}$.  The absolute grading $\tld{gr}$ is uniquely characterized in \cite{os:abs} by several properties, one of which implies that
$$ \tld{gr}(\by) - \tld{gr}(\bx) = \frac{c_1^2(\mathfrak{t}) - 2 \xi(X) - 3 \sigma(X)}{4}.$$
Since $X$ is in fact a cylinder, the right hand side vanishes.
\end{proof}

Letting $\tld{\ba}$ differ from $\ba$ by a small pointed isotopy and working in the pointed Heegaard quadruple-diagram $\left(\Sigma; \tld{\ba}; \ba'; \ba; \bb; z\right)$, one can make an analogous observation regarding $\text{Spin}^{c}$ structures associated to 4-gons.

Recall that the filtration $\rho$ is only well-defined on the summands $\CFxs{K}$ with $\mathfrak{s}\in \Scx{K}$ torsion.  However, due to the observations above, everything in sight will be $\text{Spin}^{c}$-equivalent and we will suppress $\text{Spin}^{c}$ structures in notation when proving many of the lemmas in this section.

Below we discuss Heegaard diagrams obtained from braids.  The Birman stabilization move on braids induces in the Heegaard diagram a Heegaard stabilization followed by two handleslides.  For a Heegaard diagram $\h$, stabilization amounts to taking a connected sum with $\h_{0}$, the standard genus-one pointed Heegaard diagram for $S^{3}$ with $\ba \cap \bb = \{ x \}$, where the connected sum is performed near the respective basepoints of the diagrams.  \OS showed in \cite{os:disk} that as chain complexes, $\widehat{CF}(\h) \cong \widehat{CF}(\h \# \h_{0})$.  If $\h$ is a Heegaard diagram for $\DBCs{K}$ obtained from a braid $b$, we extend $R$ and $\rho$ to $\widehat{CF}(\h \# \h_{0}, \mathfrak{s} \# \mathfrak{s}_{0})$ for each torsion $\mathfrak{s}$ by setting $R(x\by) = R(\by)$.

Let's first establish some terminology that will be used in Lemma \ref{lem:inv1} to follow.
\begin{df}\label{df:trimap}
Let $\left(\Sigma; \ba; \bb; z\right)$ and $\left(\Sigma; \ba'; \bb'; z\right)$ be two admissible Heegaard diagrams of genus $n$ appearing in some sequence of Heegaard moves connecting two diagrams covering fork diagrams, and let $\AD \subset \text{Sym}^n(\Sigma)$ denote the anti-diagonal.
\begin{enumerate}[(i)]
\item If $\bb' = \bb$ and $\ba'$ differs from $\ba$ by a pointed isotopy or pointed handleslide, then a $\ba$\textit{-triangle injection} is a function $g: \Ta \cap \Tb \hookrightarrow \tor{\ba'} \cap \Tb$ such that the following hold:
\begin{enumerate}[(a)]
\item There is a Heegaard triple-diagram $\left(\Sigma; \ba^{+}; \ba; \bb; z\right)$ (where for each $k$, $\alpha_{k}^{+}$ is isotopic to $\alpha'_{k}$ and intersects $\alpha_{k}$ transversely in two points) such that for each $\bx \in \Ta \cap \Tb$, there is a 3-gon class $\psi_{g}^{+} \in \pi_{2}(\thet{\ba^{+}\ba}, \bx, \by^{+})$ with $\mu(\psi_{g}^{+}) = 0$, $\psi_{g}^{+} \cap \AD = \emptyset$, and $n_{z}(\psi_{g}^{+}) = 0$, where $\by^{+} \in \Ta \cap \tor{\ba^{+}}$ is the nearest neighbor to $g(\bx)$.
\item There is a Heegaard triple-diagram $\left(\Sigma; \ba; \ba^{-}; \bb; z\right)$ (where for each $k$, $\alpha_{k}^{-}$ is isotopic to $\alpha'_{k}$ and intersects $\alpha_{k}$ transversely in two points)  such that for each $\bx \in \Ta' \cap \Tb$, there is a 3-gon class $\psi_{g}^{-} \in \pi_{2}(\thet{\ba\ba^{-}}, \by^{-}, \bx)$ with $\mu(\psi_{g}^{-}) = 0$, $\psi_{g}^{-} \cap \AD = \emptyset$, and $n_{z}(\psi_{g}^{-}) = 0$, where $\by^{-} \in \Ta \cap \tor{\ba^{-}}$ is the nearest neighbor to $g(\bx)$.
\end{enumerate}
\item If $\ba' = \ba$ and $\bb'$ differs from $\bb$ by a pointed isotopy or pointed handleslide, then a $\bb$\textit{-triangle injection} is a function $g: \Ta \cap \Tb \hookrightarrow \Ta \cap \tor{\bb'}$ such that the following hold:
\begin{enumerate}[(a)]
\item There is a Heegaard triple-diagram $\left(\Sigma; \ba; \bb; \bb^{+}; z\right)$ (where for each $k$, $\beta_{k}^{+}$ is isotopic to $\beta'_{k}$ and intersects $\beta_{k}$ transversely in two points) such that for each $\bx \in \Ta \cap \Tb$, there is a 3-gon class $\psi_{g}^{+} \in \pi_{2}(\bx, \thet{\bb\bb^{+}}, \by^{+})$ with $\mu(\psi_{g}^{+}) = 0$, $\psi_{g}^{+} \cap \AD = \emptyset$, and $n_{z}(\psi_{g}^{+}) = 0$, where $\by^{+} \in \Tb \cap \tor{\bb^{+}}$ is the nearest neighbor to $g(\bx)$.
\item There is a Heegaard triple-diagram $\left(\Sigma; \ba; \bb^{-}; \bb; z\right)$ (where for each $k$, $\beta_{k}^{-}$ is isotopic to $\beta'_{k}$ and intersects $\beta_{k}$ transversely in two points) such that for each $\bx \in \Ta \cap \Tb'$, there is a 3-gon class $\psi_{g}^{-} \in \pi_{2}(\by^{-}, \thet{\bb^{-}\bb}, \bx)$ with $\mu(\psi_{g}^{-}) = 0$, $\psi_{g}^{-} \cap \AD = \emptyset$, and $n_{z}(\psi_{g}^{-}) = 0$, where $\by^{-} \in \Tb \cap \tor{\bb^{-}}$ is the nearest neighbor to $g(\bx)$.
\end{enumerate}
\end{enumerate}
\end{df}


%

\begin{rmk}\label{rmk:nn}
Recall that when constructing chain homotopies associated to triangle-counting chain homotopy equivalences in Section \ref{sec:movegons}, we composed them with nearest neighbor maps so that compositions were honestly chain-homotopic to identity maps.

For instance, if $\left( \Sigma; \ba; \bb; z \right)$ and $\left( \Sigma; \ba'; \bb; z \right)$ are two admissible pointed Heegaard diagrams such that $\ba'$ is obtained from $\ba$ via a pointed isotopy or handle slide, then
\begin{align*}
\widehat{f}_{\ba\ba'\bb} \left( \thet{\ba\ba'} \otimes \widehat{f}_{\ba'\ba\bb} \left(  \thet{\ba'\ba}  \otimes \cdot \right) \right)  - id_{\widehat{CF}(\h_{\ba\bb})} &= \delh H + H\delh \quad \text{and}\\
\widehat{f}_{\ba'\ba\bb} \left( \thet{\ba' \ba} \otimes \widehat{f}_{\ba\ba'\bb} \left( \thet{\ba\ba'} \otimes \cdot \right) \right) - id_{\widehat{CF}(\h_{\ba'\bb})} &= \delh G + G\delh,
\end{align*}
where $H$ and $G$ are given by the expressions in Equation \ref{eqn:alpha}.
\end{rmk}

\begin{lem}\label{lem:inv1}
Let $\h = \left(\Sigma; \ba; \bb; z\right)$ and $\h' = \left(\Sigma; \ba'; \bb; z\right)$ be two admissible pointed Heegaard diagrams for the manifold $\DBCs{K}$ which are obtained from braids $b$ and $b'$ (possibly after Heegaard stabilization).  Assume that there is a sequence of pointed isotopies or handeslides
$$ \h = \h^0 \rightarrow \h^1 \rightarrow \ldots \rightarrow \h^n = \h'$$
where each pointed Heegaard diagram $\h^k := \left( \Sigma; \ba^k; \bb^k; z\right)$ is admissible.

Also, let $g = g^{n} \circ \ldots \circ g^{1}$ be a composition of triangle injections
\begin{equation*}
g^{k}: \tor{\ba^{k-1}} \cap \tor{\bb^{(k-1)}} \rightarrow \tor{\ba^{k}} \cap \tor{\bb^k}
\end{equation*}
and assume that $R(g(\bx)) = R(\bx)$ for each $\bx \in \tor{\ba}\cap\tor{\bb}$.
Now for $1 \leq k \leq n$, let $f^{k}: \widehat{CF}(\h^{(k-1)}) \rightarrow \widehat{CF}(\h^k)$ denote the 3-gon-counting chain homotopy equivalence induced by the $k^{th}$ Heegaard move in the sequence, let $h^{k}: \widehat{CF}(\h^k) \rightarrow \widehat{CF}(\h^{(k-1)})$ denote its homotopy inverse, and let $H^{k}:\widehat{CF}(\h^{(k-1)}) \rightarrow \widehat{CF}(\h^{(k-1)})$ and $G^{k}:\widehat{CF}(\h^k) \rightarrow \widehat{CF}(\h^k)$ be associated homotopies as described in Remark \ref{rmk:nn}.

Now let $H: \widehat{CF}(\h) \rightarrow \widehat{CF}(\h)$ and $G: \widehat{CF}(\h') \rightarrow \widehat{CF}(\h')$ be given by
\begin{equation}\label{eqn:hom}
\begin{aligned}
H:= H^{1} + \sum_{i=1}^{n-1} \left( h^{1} \circ \ldots \circ h^{i}\right) \circ H^{i+1} \circ \left( f^{i} \circ \ldots \circ f^{1}\right) \quad \text{and}\\
G:= G^{n} + \sum_{i=1}^{n-1} \left( f^{n} \circ \ldots \circ f^{(i + 1)}\right) \circ G^{i} \circ \left( h^{(i + 1)} \circ \ldots \circ h^{n}\right),
\end{aligned}
\end{equation}
so that
\begin{equation*}
hf - id_{\widehat{CF}(\h)} = \delh H + H\delh \quad \text{and} \quad fh - id_{\widehat{CF}(\h')} = \delh G + G \delh.
\end{equation*}
Then for each torsion $\mathfrak{s} \in \Scx{K}$, the following hold:
\begin{enumerate}[(i)]
\item
If $\by \in \tor{\ba'} \cap \tor{\bb'}$ is a term in the sum $f(\bx)$ for some $\bx \in \Us \subset \Ta \cap \Tb $ and if $\bw\in \Ta \cap \Tb$ is a term in the sum $h(\bz)$ for some $\bz \in \Us' \subset \tor{\ba'} \cap \tor{\bb'} $, then $\rho(\by) \leq \rho(\bx)$ and $\rho(\bw) \leq \rho(\bz)$.
\item 
If $\by \in \Ta \cap \Tb$ is a term in the sum $H(\bx)$ for some $\bx \in \Us \subset \Ta \cap \Tb$ and if $\bw \in \tor{\ba'} \cap \tor{\bb'}$ is a term in the sum $G(\bz)$ for some $\bz \in \Us' \subset \tor{\ba'} \cap \tor{\bb'}$, then $\rho(\by) \leq \rho(\bx)$ and $\rho(\bw) \leq \rho(\bz)$.
\end{enumerate}
\end{lem}

\begin{proof}[Proof of part (i)]
Let $\bx \in \Us \subset \Ta \cap \Tb$ and let $\by \in \tor{\ba'} \cap \tor{\bb'}$ be a term in the sum $f(\bx)$.  Then there are two sequences
$$ \bx = \by^0, \by^1, \ldots, \by^n = \by \quad \text{and} \quad \bx = \bx^0, \bx^1, \ldots, \bx^n = g(\bx)$$
such that $\by^j, \bx^j \in \tor{\ba^j \cap \bb^j}$, $\by^j$ is a term in the sum $f^j(\by^{(j-1)})$, and $\bx^j = g^j(\bx^{(j-1)})$ for each $j$.

We proceed by induction.  Without loss of generality, assume that the $j^{th}$ Heegaard move in the sequence is one among the $\ba$ curves. i.e. that $\bb^j = \bb^{(j-1)}$ and $g^j$ is a $\ba$-triangle injection.

Recall that we have a class $\psi_{f^{j}} \in \pi_{2}(\thet{\ba^{j} \ba^{(j-1)}}, \by^{(j-1)}, \by^{j})$ with a pseudo-holomorphic representative such that $\mu(\psi_{f^{j}}) = n_z(\psi_{f^{j}}) =  0$.  Furthermore, the triangle injection $g^{j}$ provides a class $\psi_{g^{j}} \in \pi_{2}(\thet{\ba^{j} \ba^{(j-1)}}, \bx^{(j-1)}, \bx^j)$ avoiding $\AD$ such that $\mu(\psi_{g^{j}}) =  n_z(\psi_{g^{j}})  = 0$.

Assume that we've already obtained a 2-gon class $\eta_{\ba^{(j-1)}\bb} \in \pi_{2}(\bx^{(j-1)}, \by^{(j-1)})$ with $\mu(\eta_{\ba^{(j-1)}\bb}) = n_z(\eta_{\ba^{(j-1)}\bb})= [\eta_{\ba^{(j-1)}\bb}]\cdot [\AD] = 0$.  For the base case, we notice that $\bx^0 = \by^0 = \bx$ and we let $\eta_{\ba^{0}\bb} \in \pi_2(\bx, \bx)$ be the class with trivial domain.  Then the concatenation $\tld{\psi}_j := ( \eta_{\ba^{(j-1)}\ba} + \psi_{f^{j}})$ is an element of $\pi_{2}(\thet{\ba^{j} \ba^{(j-1)}}, \bx^{(j-1)},  \by^{j})$.  The classes $\tld{\psi}_j$ and $\psi_{g^{j}}$ are $\text{Spin}^{c}$-equivalent, and thus there are 2-gons $\eta_{(j-1)} \in \pi_{2}(\bx^{(j-1)}, \bx^{(j-1)})$, $\eta_{\ba^{j}\bb} \in \pi_{2}(\bx^j,  \by^{j})$, and $\eta_{\ba^{j}\ba^{(j-1)}} \in \pi_{2}(\thet{\ba^{j}\ba^{(j-1)}},\thet{\ba^{j}\ba^{(j-1)}})$ such that
$$\tld{\psi}_{j} = \psi_{g^{j}} + \eta_{(j-1)} + \eta_{\ba^{j}\bb} + \eta_{\ba^{j}\ba^{(j-1)}}.$$

Now $\DD(\eta_{(j-1)}) + \DD(\eta_{\ba^{j}\ba^{(j-1)}})$ can be viewed as a triply-periodic element of  $\Pi_{\ba^{(j-1)}\bb^j\ba^{j}}$, and so by Proposition \ref{prop:tri} can be written as a sum of the doubly-periodic domains $\DD^{\ba^{(j-1)}\ba^{j}}_{k} \in \Pi_{\ba^{(j-1)}\ba^{j}}$.  For each $k$, let $\eta_{k}^{j} \in \pi_{2}(\thet{\ba^{j}\ba^{(j-1)}}, \thet{\ba^{j}\ba^{(j-1)}})$ denote the 2-gon whose domain is $\DD^{\ba^{(j-1)}\ba^{j}}_{k}$ - by Lemma \ref{lem:pdad}, $\eta_{k}^j$ avoids the anti-diagonal $\AD$ (and of course the basepoint).  The class $\xi_j := \eta_{(j-1)} + \eta_{\ba^{j}\ba^{(j-1)}}$ can be written as a sum of elements of classes in the set $\{  \eta_{1}^{j}, \ldots, \eta_{g}^{j} \}$, and thus $[\xi_j]\cdot [\AD] = n_{z}(\xi_j)= 0$.  Thus,
\begin{align*}
\mu(\eta_{\ba^{j}\bb}) &=\mu(\psi_{f^{j}} + \eta_{\ba^{(j-1)}\ba}) - \mu(\psi_{g^{j}}) - \mu(\xi_j) =0,\\
[\eta_{\ba^{j}\bb}]\cdot [\AD] &= [\psi_{f^{j}} + \eta_{\ba^{(j-1)}\ba}]\cdot [\AD] - [\psi_{g^{j}}]\cdot [\AD] - [\xi_j]\cdot [\AD]\\
&= [\psi_{f^{j}}]\cdot [\AD] \geq 0, \quad \text{and} \quad n_z(\eta_{\ba^{j}\bb}) = 0.
\end{align*}
where the last inequality follows from the fact that $\psi_{f^{j}}$ has a pseudo-holomorphic representative.  After $n$ steps, we obtain the required class $\eta_{\ba^n\bb} \in \pi_2 (g(\bx), \by)$.

Now since $R(\bx) = R(g(\bx)) \geq R(\by)$, $n$ iterations of Lemma \ref{lem:gr} give that
\begin{equation*}
\rho(\bx) = R(\bx) - \tld{gr}(\bx) \geq R(\by) - \tld{gr}(\bx) = R(\by) - \tld{gr}(\by) = \rho(\by).
\end{equation*}

On the other hand, let $\bz \in \Us' \subset \tor{\ba'} \cap \tor{\bb'}$ and let $\bw$ be a term in the sum $(h^1 \circ \ldots \circ h^{n})(\bz)$.  A similar induction argument provides a 2-gon class $\eta \in \pi_2 \left( g(\bw), \bz) \right)$ such that
$$ \mu(\eta) = n_z(\eta) = 0 \quad \text{and} \quad [\eta] \cdot [\AD] \leq 0.$$
Now we have that $R(\bw) = R(g(\bw)) \leq R(\bz)$, and so
$$ \rho(\bw) = R(\bw) - \tld{gr}(\bw) = R(\bz) - \tld{gr}(\bw) \leq R(\bz) - \tld{gr}(\bz) =\rho(\bz)$$
\end{proof}

\begin{proof}[Proof of part (ii)]
Without loss of generality, assume that the $k^{th}$ Heegaard move is among the $\ba$ curves (so that $\bb^{k} = \bb^{(k-1)}$ and $g^k$ is a $\ba$-triangle injection).  We work in the pointed Heegaard quadruple-diagram $\left(\Sigma;  \tld{\ba}^{(k-1)}; \ba^{k}; \ba^{(k-1)}; \bb^k; z\right)$, where $\bat^{(k-1)}$ is a set of attaching circles obtained from $\ba^{(k-1)}$ by a small admissible isotopy.  Suppose that $\by \in \Ta \cap \Tb$ appears as a term in the sum
$$\left( h^{1} \circ \ldots \circ h^{k}\right) \circ H^{(k+1)} \circ \left( f^{k} \circ \ldots \circ f^{1}\right)(\bx),$$
then $\rho(\by) \leq \rho(\bx).$

Then there are $\bz^{j}, \bbu^{j} \in \tor{\ba^{j}} \cap \tor{\bb^j}$ for $j = 0, \ldots, k$ with $\bz^{0} = \bx$, $\bbu^{0} = \by$, $\bz^{i}$ a term in $f^{i}(\bz^{(i-1)})$, $\bbu^{k}$ a term in $H^{(k+1)}(\bz^{k})$, and $\bbu^{(i-1)}$ a term in $h^{i}(\bbu^{i})$ for $i = 1, \ldots, k$.

Recall that
$$H^{(k+1)} := N_{\tld{\ba}^{k}\ba^{k}} \circ \tld{H}^{(k+1)},$$
where $N_{\tld{\ba}^{k}\ba^{k}}: \tld{\bv} \rightarrow \bv$ is the nearest neighbor isomorphism and
$$ \tld{H}^{(k+1)}:=\widehat{h}_{\tld{\ba}^{k}\ba^{(k+1)}\ba^{k}\bb} \left( \thet{\tld{\ba}^{k}\ba^{(k+1)}} \otimes \thet{\ba^{(k+1)}\ba^{k}} \otimes \cdot \right)$$
is a map counting pseudo-holomorphic representatives of 4-gon classes.  There is then such a 4-gon class $\sigma \in \pi_{2}(\thet{\bat^{k}\ba^{(k+1)}}, \thet{\ba^{(k+1)}\ba^{k}}, \bz^{k}, \tld{\bbu}^{k})$ such that $\mu(\sigma) = -1$ and $[\sigma] \cdot [\AD] \geq 0$. 

Now for $0 \leq j \leq k$, $\mathfrak{s}_{z}(\bbu^{j}) = \mathfrak{s}_{z}(\bz^{j})$ and by Lemma \ref{lem:gr},
$$\tld{gr}(\bz^{j}) - \tld{gr}(\bbu^{j}) = \tld{gr}(\bz^{k}) - \tld{gr}(\bbu^{k}) = \tld{gr}(\bz^{k}) - \tld{gr}(\tld{\bbu^{k}}) = \mu(\sigma) = -1$$

As a result, there are 2-gon classes $\zeta^{j} \in \pi_{2}( \bz^{j}, \bbu^{j})$ such that $\mu(\zeta^{j}) = -1$.  
 
There are also index-zero 3-gon classes $\tld{\psi} \in \pi_{2}(\thet{\tld{\ba}^{k}\ba^{k}}, \bbu^{k}, \tld{\bbu}^{k})$ and $\psi_{\theta} \in \pi_{2}(\thet{\tld{\ba}^{k}\ba^{(k+1)}}, \thet{\ba^{(k+1)}\ba^{k}}, \thet{\tld{\ba}^{k}\ba^{k}})$; it can be arranged that these classes have small domains, so that $\tld{\psi} \cap \AD = \psi_{\theta} \cap \AD = \emptyset$.

Notice that $\tld{\psi} + \psi_{\theta} + \zeta^{k} \in  \pi_{2}(\thet{\bat^{k}\ba^{(k+1)}}, \thet{\ba^{(k+1)}\ba^{k}}, \bz^{k}, \tld{\bbu}^{k})$, and so there is some 4-gon $\eta$ with quadruply-periodic domain such that $\tld{\psi} + \psi_{\theta} + \zeta^{k} = \sigma + \eta.$

But recall that $\eta$ can be written as the concatenation of 2-gons which avoid the anti-diagonal $\AD$ and the basepoint $z$, and so
\begin{equation*}
[\zeta^{k}]\cdot[\AD] = [\sigma]\cdot[\AD] + [\eta]\cdot[\AD] - [\tld{\psi}]\cdot[\AD] - [\psi_{\theta}]\cdot[\AD] = [\sigma]\cdot[\AD] + 0 - 0 - 0 \geq 0.
\end{equation*}

Consider the sequence $\bx^0:=\bx = \bz^0, \bx^1, \ldots, \bx^k$ with $\bx^j = g^j(\bx^{(j-1)})$ for each $j$ (here $\bx^j \in \tor{\ba^j} \cap \tor{\bb^j}$).  Recall that the proof of the first part of this lemma provided 2-gon classes $\phi^{i} \in \pi_{2}(\bx^{i}, \bz^{i})$ with $\mu(\phi^{i}) = 0$ and $[\phi^{i}] \cdot [\AD] \geq 0$ for $i = 1, \ldots, k.$  We'll show that for each $i$ with $1 \leq i \leq k$
\begin{equation}\label{eqn:hineq}
[\zeta^{(i-1)}]\cdot [\AD] + [\phi^{(i-1)}]\cdot [\AD] \geq [\zeta^i]\cdot [\AD] + [\phi^i]\cdot [\AD],
\end{equation}
where $\phi^0$ denotes the class with trivial domain connecting $\bx^0 = \bz^0 = \bx$ to itself.

Fix $i$ with $1 \leq i \leq k$ and assume (without loss of generality) that the $i^{th}$ Heegaard move is among the $\alpha$ curves.  The triangle injection $g^i$ provides a class $\psi_{g^{i}} \in \pi_{2}(\thet{\ba^{(i-1)}\ba^i}, \bx^{i}, \bx^{(i-1)})$ such that $\mu(\psi_{g^{i}}) =n_z(\psi_{g^i})=  0$ and $[\psi_{g^{i}}] \cdot [\AD] = 0$.  Additionally, there is a class $\psi_{h^{i}} \in \pi_{2}(\thet{\ba^{(i-1)}\ba^{i}}, \bu^{i}, \bu^{(i-1)})$ with pseudo-holomorphic representative such that $\mu(\psi_{h^{i}}) = n_z(\psi_{h^{i}}) =  0$.

Notice that $\psi_{g^{i}} + \zeta^{(i-1)} + \phi^{(i-1)}, \phi^{i} + \zeta^{i} + \psi_{h^{i}} \in \pi_{2}(\thet{\ba^{(i-1)}\ba^{i}}, \bx^{i}, \bu^{(i-1)})$.  Then there is some 3-gon $\eta^{i}$ with triply-periodic domain such that 
$$\psi_{g^{i}} + \zeta^{(i-1)} + \phi^{(i-1)} = \phi^{i} + \zeta^{i} + \psi_{h^{i}} + \eta^{i}.$$

Since $[\psi_{g^{i}}]\cdot[\AD] = [\eta^{i}]\cdot[\AD] = 0$ and $[\psi_{h^{i}}]\cdot[\AD]\geq 0$, Equation \ref{eqn:hineq} holds.  Therefore,
$$ [\zeta^0]\cdot [\AD] \geq [\zeta^1]\cdot [\AD] + [\phi^1]\cdot [\AD] \geq \ldots \geq [\zeta^k]\cdot [\AD] + [\phi^k]\cdot [\AD] \geq 0.$$

On the other hand, given some $\bz, \bw \in \tor{\ba'} \cap \tor{\bb'}$ such that $\bw$ is a term in $G(\bz)$, a similar argument produces a 2-gon class $\xi^{0} \in \pi_{2}(\bz, \bw)$ such that $[\xi^{0}]\cdot[\AD] \geq 0$ and $\mu(\xi^{0}) = -1.$
\end{proof}

We formulate the following restatement of Lemma \ref{lem:inv1}.

\begin{cor}\label{cor:tri3}
Let $\h = \left(\Sigma; \ba; \bb; z\right)$ and $\h' = \left(\Sigma; \ba'; \bb'; z\right)$ be two admissible pointed Heegaard diagrams for the manifold $\DBCs{K}$ which are obtained from braids $b$ and $b'$ (possibly after Heegaard stabilization), and related by handleslides and isotopies in the sense of Lemma \ref{lem:inv1}, where the intermediate pointed Heegaard diagrams are all admissible.  Assume also that there are  triangle injections corresponding to each of these Heegaard moves such that their composition $g: \tor{\ba} \cap \tor{\bb} \rightarrow \tor{\ba'} \cap \tor{\bb'}$ satisfies $R(g(\bx)) = R(\bx)$ for all $\bx \in \tor{\ba} \cap \tor{\bb}$.
Then for each $\mathfrak{s} \in \Scx{K}$ torsion, the following hold:
\begin{enumerate}[(i)]
\item The composition $f: \widehat{CF}(\h, \mathfrak{s}) \rightarrow \widehat{CF}(\h', \mathfrak{s})$ of chain homotopy equivalences induced by the moves and its homotopy inverse $h: \widehat{CF}(\h', \mathfrak{s}) \rightarrow \widehat{CF}(\h, \mathfrak{s})$ are $\rho$-filtered chain maps.
\item The homotopies $H$ from $g \circ f$ to $id_{\widehat{CF}(\h, \mathfrak{s})}$ and $G$ from $f \circ g$ to $id_{\widehat{CF}(\h', \mathfrak{s})}$ are $\rho$-filtered chain homotopies.
\end{enumerate}
In particular, the $\rho$-filtered complexes $\widehat{CF}(\h, \mathfrak{s})$ and $\widehat{CF}(\h', \mathfrak{s})$ have the same filtered chain homotopy type.
\end{cor}


We turn to a few lemmas which will later allow us to restrict our attention to multiplication of braids in $\B{2n}$ by elements of $\K{2n}$ on the right side only.  Notice first that the symplectic automorphism induced by the braid $b \in \Bn$ on the punctured disk induces a symplectic automorphism $f_{b}:\text{Sym}^{n}(\Sigma) \rightarrow \text{Sym}^{n}(\Sigma).$

One can see that there is an induced graded symplectic automorphism $\tld{f}_{b}$ with respect to gradings provided by the volume form.

\begin{lem}\label{lem:AD}
Let $f_{b}: \Symn{\Sigma} \rightarrow \Symn{\Sigma}$ be the automorphism discussed above and let $\AD \subset \Symn{\Sigma}$ denote the anti-diagonal.  Then $f_{b}(\AD) = \AD$.
\end{lem}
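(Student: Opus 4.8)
The plan is to realize $f_{b}$ as the map on $\Symn{\Sigma}$ induced by a diffeomorphism of $\Sigma$ that commutes with the deck involution of the branched double cover $\Sigma \to \mathbb{C}$, and then to observe that $\AD$ is defined entirely in terms of that involution, so it must be preserved.

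First I would unwind the construction of $f_{b}$. The braid $b$ is realized by a diffeomorphism $\phi_{b}$ of the punctured disk $D_{2n}$ which fixes the puncture set $\{\mu_{1},\dots,\mu_{2n}\}$ (setwise). Since $\pi:\widehat{S}\to\mathbb{C}$ is the double cover branched exactly over $\{\mu_{1},\dots,\mu_{2n}\}$ and $\phi_{b}$ preserves this branch locus, $\phi_{b}$ lifts to a diffeomorphism $\tld{b}$ of $\widehat{S}$ with $\pi\circ\tld{b}=\phi_{b}\circ\pi$; after extending compatibly over the stabilization handle of Figure \ref{fig:HA} we get a diffeomorphism of $\Sigma$, still written $\tld{b}$, and $f_{b}$ is the map it induces on the symmetric product, namely $f_{b}(\{p_{1},\dots,p_{n}\})=\{\tld{b}(p_{1}),\dots,\tld{b}(p_{n})\}$.

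Next I would check that $\tld{b}$ commutes with the deck involution $\iota$ of $\pi$ (given by $(u,z)\mapsto(-u,z)$ on $\widehat{S}$, extended over the added handle). The point is that $\tld{b}\circ\iota\circ\tld{b}^{-1}$ is again a deck transformation of $\pi$:
\begin{equation*}
\pi\circ(\tld{b}\circ\iota\circ\tld{b}^{-1})=\phi_{b}\circ\pi\circ\iota\circ\tld{b}^{-1}=\phi_{b}\circ\pi\circ\tld{b}^{-1}=\phi_{b}\circ\phi_{b}^{-1}\circ\pi=\pi.
\end{equation*}
The deck group of the double cover is $\{\mathrm{id},\iota\}\cong\mathbb{Z}/2$, and $\tld{b}\circ\iota\circ\tld{b}^{-1}$ is not the identity since $\iota$ is not; hence $\tld{b}\circ\iota\circ\tld{b}^{-1}=\iota$, i.e.\ $\tld{b}\circ\iota=\iota\circ\tld{b}$. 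Now a configuration $\{p_{1},\dots,p_{n}\}\in\Symn{\Sigma}$ lies in $\AD$ precisely when $p_{i}=\iota(p_{j})$ for some $i\neq j$ — this is exactly the defining condition $(u_{i},z_{i})=(-u_{j},z_{j})$. If $\{p_{1},\dots,p_{n}\}\in\AD$ with $p_{i}=\iota(p_{j})$, then $\tld{b}(p_{i})=\tld{b}(\iota(p_{j}))=\iota(\tld{b}(p_{j}))$, so $f_{b}(\{p_{1},\dots,p_{n}\})\in\AD$, giving $f_{b}(\AD)\subseteq\AD$. Applying the same reasoning to $b^{-1}$ (realized by $\phi_{b}^{-1}$ with lift $\tld{b}^{-1}$) yields $f_{b}^{-1}(\AD)\subseteq\AD$, hence $\AD\subseteq f_{b}(\AD)$, and therefore $f_{b}(\AD)=\AD$.

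The argument away from the stabilization region is immediate from the above, so the only point requiring care is the bookkeeping around the added handle: one must arrange the extensions of $\tld{b}$ and of $\iota$ over that handle so that the relation $\tld{b}\circ\iota=\iota\circ\tld{b}$ persists on all of $\Sigma$ (and so that $\AD$, which is cut out by $\iota$, is the same subset of $\Symn{\Sigma}$ used throughout). I expect this to be the main — and only mildly technical — obstacle; everything else is forced by the fact that $b$ preserves the branch locus downstairs.
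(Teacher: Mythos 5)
Your proof is correct and takes essentially the same approach as the paper: both arguments reduce the lemma to the fact that the map on $\Sigma$ inducing $f_{b}$ covers the braid diffeomorphism of the punctured disk and commutes with the sheet-exchange involution $(u,z)\mapsto(-u,z)$ that cuts out $\AD$, and both obtain the reverse inclusion by running the argument for $b^{-1}$. The only difference is in how that equivariance is verified — you use the $\mathbb{Z}/2$ deck-group argument, while the paper checks it directly in coordinates via injectivity of the lift, treating the branch-point case separately.
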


\begin{proof}
Let $\bx \in \AD$.  Then $\bx$ contains components $(u_{1}, z_{1})$ and $(u_{2}, z_{2})$ such that $z_{2} = z_{1}$ and $u_{2} = -u_{1}$.  Suppose that $u_{2} \neq 0$.  Then $f_{b}(\bx) = (v_{k},w_{k})$ contains components $(v_{1}, w_{1}) = f_{b}(u_{1}, z_{1})$ and $(v_{2}, w_{2}) = f_{b}(u_{2}, z_{2})$.  Since $f$ is induced by a map on the punctured disk, we have that $w_{2} = w_{1}$.  Therefore, $(v_{2})^{2} = (v_{1})^{2}$ and so $v_{2} = \pm v_{1}$.

Now let $\bx' = (u_{k}', z_{k}') \in \Delta$ be such that $u_{2}' = - u_{2}$, $u_{j}' = u_{j}$ for $j \neq 2$, and $z_{j}' = z_{j}$ for $j = 1, \ldots , n$.  Then if $f_{b}(\bx') = (v_{k}',w_{k}')$, we then have that $v_{j}' = v_{j}$ for $j \neq 2$ and $w_{j}' = w_{j}$ for $j = 1, \ldots , n$.  Further, $v_{2}' = v_{1}' = v_{1} = \pm v_{2}$.  But $\bx \neq \bx'$, so $v_{2}' \neq v_{2}$ and thus $v_{2} = -v_{1}$.

Now suppose that $u_{2} = u_{1} = 0$.  Then $z_{2}$ is a puncture point.  However, a braid element diffeomorphism on the punctured disk fixes the set of punctures, and so $v_{2} = v_{1} = 0$ also.  So, $f_{b}(\bx) \in \AD$ in this case also.

One can similarly show that $f_{b}^{-1}(\AD) \subset \AD$.
\end{proof}

As shorthand, let $\tld{f}_{b}\left(\Ta\right)$ be denoted by $b\Ta$ from now on.  Since $R$ provides an absolute grading on $CF_{*}\left(\Ta, b\Ta \right)$, when computed inside $\text{Sym}^{n}(\Sigma) - \AD$, then one can define a grading $R^{*}$ on $CF^{-*}\left(\Ta, b\Ta \right)$ by letting $R^{*}\left(\bx^{*}\right) = -R(\bx)$ for each $\bx \in \Ta \cap b\Ta$.

\begin{lem}\label{lem:Rdual1}
Let $b \in \B{2n}$ be a braid.  Then when computed inside $\text{Sym}^{n}(\Sigma) - \AD$, the complexes $CF_{*}\left(\Ta, b\Ta \right)$ and $CF^{-*}\left(\Ta, b^{-1} \Ta \right)$ are isomorphic as absolutely graded chain complexes equipped with the gradings $R$ and $R^{*}$, respectively.
\end{lem}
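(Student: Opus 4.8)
\emph{Proof proposal.} The plan is to realize the desired isomorphism as the composite of two isomorphisms of absolutely-graded complexes already recalled in Section~\ref{sec:totreal}: the invariance of the graded Floer complex under a graded symplectic automorphism, and the duality interchanging the two totally-real submanifolds and reversing the Maslov grading, which I write in the form $CF_{*}(\tld{\mathcal{T}}, \tld{\mathcal{T}}') \cong CF^{-*}(\tld{\mathcal{T}}', \tld{\mathcal{T}})$. All Floer complexes below are computed inside $W = \Symn{\Sigma} - \AD$, where by Proposition~\ref{prop:cmR} the grading $\tld{R} = P - Q + T$ (and hence $R = \tld{R} + s_{R}$) is the genuine absolute Maslov grading. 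By Lemma~\ref{lem:AD} the automorphism $f_{b}$, and therefore $f_{b^{-1}} = f_{b}^{-1}$, preserves $\AD$, so both restrict to symplectic automorphisms of $W$; as noted above each lifts to a graded symplectic automorphism $\tld{f}_{b}$, $\tld{f}_{b^{-1}}$ with respect to the chosen volume form $\Theta$. Write $b^{-1}\Ta$ for $\tld{f}_{b^{-1}}(\Ta)$; then $CF_{*}(\Ta, b^{-1}\Ta)$ carries its own absolute grading, which is the grading $\tld{R}$ built from the fork diagram of $b^{-1}$ via Proposition~\ref{prop:cmR}, and $R^{*}$ on $CF^{-*}(\Ta, b^{-1}\Ta)$ is defined by negating the associated $R$-grading, exactly as was done for $b$ just before the statement.

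First I would apply the graded automorphism $\tld{f}_{b}$ to the graded pair $(\tld{\Ta}, \widetilde{b^{-1}\Ta})$. Since $f_{b} \circ f_{b^{-1}} = \mathrm{id}$, this pair is carried to $(\widetilde{b\Ta}, \tld{\Ta})$, so invariance under graded symplectic automorphisms yields an isomorphism
\begin{equation*}
CF_{*}\!\left(b\Ta, \Ta\right) \;\cong\; CF_{*}\!\left(\Ta, b^{-1}\Ta\right)
\end{equation*}
of absolutely-graded complexes, intertwining the Maslov grading on the left with the grading $\tld{R}$ of $b^{-1}$ on the right. Next I would apply the duality of Section~\ref{sec:totreal} to the left-hand side of the lemma, getting
\begin{equation*}
CF_{*}\!\left(\Ta, b\Ta\right) \;\cong\; CF^{-*}\!\left(b\Ta, \Ta\right) \;\cong\; CF^{-*}\!\left(\Ta, b^{-1}\Ta\right),
\end{equation*}
where the second isomorphism is the previous display with all gradings negated. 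This is the required isomorphism of chain complexes; what remains is to check that the composite intertwines $R$ with \emph{exactly} $R^{*}$, rather than with $R^{*} + c$ for some constant $c$.

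That last verification is the only delicate point, and it is where the normalization $R = \tld{R} + s_{R}$ is used. The duality of Section~\ref{sec:totreal} shifts the absolute Maslov grading by a universal constant determined by $\dim_{\mathbb{C}}\Symn{\Sigma} = n$, so for $\bx \in \Ta \cap b\Ta$ and its image $\bx^{*} \in \Ta \cap b^{-1}\Ta$ one obtains a relation $\tld{R}_{b}(\bx) + \tld{R}_{b^{-1}}(\bx^{*}) = n$ at the level of the unshifted gradings. On the other hand, the plat closure of $b^{-1}$ is the mirror of the plat closure $D$ of $b$, so $\epsilon(b^{-1}) = -\epsilon(b)$ and the writhe changes sign, whence
\begin{equation*}
s_{R}(b, D) + s_{R}(b^{-1}, -D) = \frac{\epsilon(b) - w - 2n}{4} + \frac{-\epsilon(b) + w - 2n}{4} = -n.
\end{equation*}
Combining these, $R^{*}(\bx^{*}) = -R_{b^{-1}}(\bx^{*}) = -\tld{R}_{b^{-1}}(\bx^{*}) - s_{R}(b^{-1}, -D) = \bigl(\tld{R}_{b}(\bx) - n\bigr) + \bigl(s_{R}(b, D) + n\bigr) = R(\bx)$, so the grading shift coming from the duality cancels the difference of the rational shifts and the composite is grading-preserving. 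I expect this bookkeeping — pinning down the universal constant in the Floer duality and matching it against $s_{R}$ (together with the minor point that the graded lifts $\tld f_b$, $\tld f_{b^{-1}}$ can be chosen to compose to the identity) — to be the main obstacle, though it is routine; everything else is a formal concatenation of the two isomorphisms recalled in Section~\ref{sec:totreal}.
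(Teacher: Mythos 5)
Your proposal is correct and follows essentially the same route as the paper's proof: the paper likewise combines the graded duality $CF_{*}(\Ta, b\Ta) \cong CF^{n-*}(b\Ta, \Ta)$ with invariance under the graded symplectic automorphism to get $CF^{n-*}(\Ta, b^{-1}\Ta)$, and then closes with the identity $s_{R}(b) + s_{R}(b^{-1}) = -n$. Your explicit bookkeeping of the shift by $n$ and the cancellation against the $s_{R}$ terms is exactly the computation the paper leaves implicit in "the result follows."
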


\begin{proof}
The grading $\tld{R}$ arises as an absolute grading induced by gradings on totally-real submanifolds, and so as $\tld{R}$-graded complexes,
\begin{equation*}
CF_{*}\left(\Ta, b\Ta \right) \cong CF^{n-*}\left(b\Ta, \Ta \right)
\cong CF^{n-*}\left(\Ta, b^{-1}\Ta \right).
\end{equation*}
But since $s_{R}(b^{-1}) + s_{R}(b) = -n$, the result follows.
\end{proof}

When one computes these complexes inside all of $\text{Sym}^{n}(\Sigma)$, recall that $CF^{-*}\left(\Ta, b \Ta \right) = \widehat{CF}\left( \h_{b} \right),$ where $\h_{b}$ is the admissible Heegaard diagram for $\DBCs{K}$ provided by Proposition \ref{prop:DBC} and $K$ is the closure of $b$.  It is clear that $R^{*}$ provides a filtration on this complex.

\begin{lem}\label{lem:Rdual2}
Let $b\in \B{2n}$ be a braid and denote by $\h_{b}$ and $\h_{b^{-1}}$ the admissible Heegaard diagrams induced by the braid $b$ and $b^{-1}$, respectively.  Then the complexes $\widehat{CF}_{*}\left( \h_{b^{-1}}\right)$ and $\widehat{CF}^{-*}\left( \h_{b}  \right)$ are isomorphic as filtered chain complexes equipped with the filtrations $R$ and $R^{*}$, respectively.
\end{lem}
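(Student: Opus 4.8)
The plan is to bootstrap from Lemma~\ref{lem:Rdual1}, which already supplies the desired isomorphism once the Lagrangian Floer complexes are computed inside $W = \Symn{\Sigma} - \AD$, where $R$ and $R^{*}$ are honest $\mathbb{Z}$-gradings. What is left is to upgrade that statement to the full Heegaard Floer complexes computed inside all of $\Symn{\Sigma}$; there the differentials pick up extra terms from $\AD$-crossing disks, and the identity $R(\bx) - R(\by) = 2\left([\phi]\cdot[\AD]\right) + 1$ for a disk $\phi \in \pi_{2}(\bx,\by)$ counted in $\widehat{\partial}\bx$ shows that $R$, and dually $R^{*}$, is now merely a filtration (as already recorded above). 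So the goal is to produce a chain isomorphism $\widehat{CF}_{*}(\h_{b^{-1}}) \to \widehat{CF}^{-*}(\h_{b})$ which agrees on generators with the bijection underlying the isomorphism of Lemma~\ref{lem:Rdual1}, and then to observe that it is automatically filtered.

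First I would recall that the isomorphism of Lemma~\ref{lem:Rdual1} is built from two ingredients, each defined geometrically over all of $\Symn{\Sigma}$ and not merely over $W$. The first is the tautological duality relating $\widehat{CF}$ of a pointed Heegaard diagram to its dual: if $\h_{b} = \left(\Sigma; \ba; \bb; z\right)$ and $\h_{b}^{\mathrm{sw}} = \left(\Sigma; \bb; \ba; z\right)$ denotes the same data with the $\ba$- and $\bb$-tuples interchanged, then reversing Whitney disks (a $\phi \in \pi_{2}(\bx,\by)$ with $n_{z}(\phi)=0$ in $\h_{b}$ has a reverse $\bar{\phi} \in \pi_{2}(\by,\bx)$ with $n_{z}=0$, the same Maslov index, and the same signed count in $\h_{b}^{\mathrm{sw}}$) identifies $\widehat{CF}(\h_{b}^{\mathrm{sw}}) \cong \widehat{CF}(\h_{b})^{\vee} = \widehat{CF}^{-*}(\h_{b})$ at the chain level in $\Symn{\Sigma}$, with no reference to $\AD$. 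The second is the chain isomorphism induced by the diffeomorphism of $\Sigma$ underlying the symplectomorphism $f_{b}$ of Lemma~\ref{lem:AD}: it carries the $\ba$-tuple of $\h_{b^{-1}}$ to the $\bb$-tuple of $\h_{b}$ and the $\bb$-tuple of $\h_{b^{-1}}$ to the $\ba$-tuple of $\h_{b}$, hence identifies $\h_{b^{-1}}$ with $\h_{b}^{\mathrm{sw}}$, and by Lemma~\ref{lem:AD} it also takes $\AD$ to $\AD$, so it is a genuine isomorphism of the full Heegaard Floer complexes. Composing the two ingredients gives a chain isomorphism $\widehat{CF}_{*}(\h_{b^{-1}}) \to \widehat{CF}^{-*}(\h_{b})$.

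It then remains to see that this chain isomorphism is filtered. On generators it is exactly the bijection appearing in the proof of Lemma~\ref{lem:Rdual1} — duality composed with the graded-symplectomorphism identification — and the relation $R = R^{*}$ under that bijection, together with the shift identity $s_{R}(b) + s_{R}(b^{-1}) = -n$ already verified there, is a purely combinatorial statement about $P$, $Q$, $T$ and the fork diagrams of $b$ and $b^{-1}$; it makes no reference to which ambient space is used to count disks. So the chain isomorphism above carries the filtration $R$ on $\widehat{CF}_{*}(\h_{b^{-1}})$ precisely onto the filtration $R^{*}$ on $\widehat{CF}^{-*}(\h_{b})$, whence it and its inverse are filtered maps and it is an isomorphism of filtered chain complexes. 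The one delicate point — and the step I expect to be the main obstacle — is the claim that the two ingredients above really are chain isomorphisms over $\Symn{\Sigma}$ rather than merely over $W$: this is exactly what Lemma~\ref{lem:AD} (that $f_{b}$ preserves $\AD$), together with the fact that $f_{b}$ descends from an honest diffeomorphism of the pointed Heegaard surface, secures, since then every pseudo-holomorphic disk, $\AD$-crossing or not, is matched on the other side with one of the same index and the same local multiplicities, and no moduli-space contribution is dropped.
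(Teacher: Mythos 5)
Your proposal is correct and follows essentially the same route as the paper: the paper's proof likewise extends the chain isomorphisms from the proof of Lemma~\ref{lem:Rdual1} (duality together with the identification induced by $f_{b}$) to the full complexes over $\Symn{\Sigma}$, invoking Lemma~\ref{lem:AD} to see that the induced identifications of 2-gon classes preserve intersection counts with $\AD$, so the extra $\AD$-crossing differential terms are matched and the maps remain filtered. You simply spell out in more detail the two ingredients that the paper's one-line argument references implicitly.
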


\begin{proof}
When one extends the computation of the Floer complexes to all of $\text{Sym}^{n}(\Sigma)$, the differentials may have additional terms which count classes of 2-gons intersecting $\AD$.  By Lemma \ref{lem:AD}, the chain isomorphisms in the proof of Lemma \ref{lem:Rdual1} induce identifications between such classes which preserve intersection counts with $\AD$.
\end{proof}

Now given a braid word $b = \sigma_{i_{1}}^{k_{1}} \ldots \sigma_{i_{m}}^{k_{m}} \in \B{2n}$, let $-b$ denote the braid word $\sigma_{2n-i_{1}}^{-k_{1}} \ldots \sigma_{2n-i_{m}}^{-k_{m}} \in \B{2n}$.

\begin{lem}\label{lem:Rdual3}
Let $b\in \B{2n}$ be a braid whose closure is the knot $K$, let $\mathfrak{s} \in \text{Spin}^{c}(\DBCs{K})$ be torsion,  and denote by $\h_{b}$ and $\h_{-b}$ the admissible Heegaard diagrams induced by the braids $b$ and $-b$, respectively.  Then the complexes $\widehat{CF}_{*}\left( \h_{b}, \mathfrak{s} \right)$ and $\widehat{CF}^{-*}\left( \h_{-b}, \mathfrak{s}\right)$ are isomorphic as filtered chain complexes equipped with the filtrations $R$ and $R^{*}$, respectively.
\end{lem}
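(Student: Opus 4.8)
The plan is to realize $-b$ as a reflection of the fork diagram for $b$ and to lift that reflection to an anti-holomorphic identification of the two Heegaard Floer pictures, in the spirit of the proofs of Lemmas \ref{lem:Rdual1} and \ref{lem:Rdual2}. Let $r$ be the reflection of the punctured disk $D_{2n}$ across its vertical diameter. It is orientation-reversing and sends the puncture in position $j$ to the one in position $2n+1-j$, so that $r\sigma_i r^{-1}=\sigma_{2n-i}^{-1}$ and hence $-b=rbr^{-1}$ as mapping classes. Moreover $r$ permutes the tine edges of the standard fork diagram, $r(\alpha_k)=\alpha_{n+1-k}$, so $\{(-b)\circ\alpha_1,\ldots,(-b)\circ\alpha_n\}=\{r\circ\beta_1,\ldots,r\circ\beta_n\}$ as unordered families of arcs. (Note also that reflecting a diagram across a line in its plane mirrors the knot, so the plat closure of $-b$ is the mirror of $K$ and $\h_{-b}$ presents $-\DBCs{K}$; this is consistent with what follows.)

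First I would lift $r$ to an anti-holomorphic involution $\tld{r}$ of the branched double cover, arranging the branch points to be symmetric under $r$ so that $\tld{r}(u,z)=(\bar u,-\bar z)$ works, and extend $\tld{r}$ over the stabilization handle. One then checks that $\Symn{\tld{r}}$ carries the pointed Heegaard diagram $\h_b$ of Proposition \ref{prop:DBC}, and the Heegaard tori $\Tah,\Tbh$ for $b$, to those for $-b$, up to an isotopy and a basepoint move. Since $\Symn{\tld{r}}$ is anti-holomorphic it reverses the almost complex structure, so it turns the pseudo-holomorphic disks counted by $\delh$ on the $b$ side into pseudo-holomorphic disks with the roles of the two endpoints interchanged; this is exactly the classical orientation-reversal duality, and it yields a genuine isomorphism of chain complexes $\widehat{CF}_*(\h_b,\mathfrak{s})\cong\widehat{CF}^{-*}(\h_{-b},\mathfrak{s})$ once one observes, just as in Lemma \ref{lem:AD}, that $\Symn{\tld{r}}$ preserves the anti-diagonal $\AD$ (directly: $\tld{r}$ sends $(u_i,z_i)=(-u_j,z_j)$ to $(\bar u_i,-\bar z_i)=(-\bar u_j,-\bar z_j)$), so that differential terms crossing $\AD$ are matched with differential terms crossing $\AD$, and $\text{Spin}^{c}$ structures correspond under the canonical identification $\text{Spin}^{c}(\DBCs{K})=\text{Spin}^{c}(-\DBCs{K})$.

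The step I expect to be the main obstacle is verifying that this isomorphism carries the filtration $R$ to the filtration $R^{*}=-R$. On the $\AD$-avoiding complex one argues as follows: by Proposition \ref{prop:cmR}, $\tld{R}$ is the absolute Maslov grading determined by a volume form $\Theta$ on $W=\Symn{\Sigma}-\AD$; since $\Symn{\tld{r}}$ is anti-holomorphic, the pullback of $\Theta$ under $\Symn{\tld{r}}$ is a constant multiple of $\bar\Theta$, so the induced square-phase maps and the Seidel gradings of $\Tah,\Tbh$ are negated, and $\Symn{\tld{r}}$ therefore sends $\tld{R}$ to $-\tld{R}$ up to an overall additive constant $c$. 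One then has to pin down $c$: combined with $\epsilon(-b)=-\epsilon(b)$, $w(-D)=-w(D)$ (writing $D$, $-D$ for the plat closures of $b$, $-b$), and hence $s_R(b,D)+s_R(-b,-D)=\frac{-4n}{4}=-n$, the value $c=n$ is exactly what makes $R=\tld{R}+s_R(b,D)$ land on $-\tld{R}-s_R(b,D)=R^{*}$, and this is precisely the ``$n-\ast$'' grading reversal already exploited in the proof of Lemma \ref{lem:Rdual1}. Finally one checks that extending the computation from $W$ to all of $\Symn{\Sigma}$ preserves this matching, using (as in Lemma \ref{lem:Rdual2}) that $\Symn{\tld{r}}$ fixes $\AD$, and restricts everything to a fixed torsion $\mathfrak{s}$.

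As a consistency check on the statement, a weaker filtered chain homotopy equivalence version can also be assembled from results in hand: the plat closure of $(-b)^{-1}$ is $K$ again, so Theorem \ref{thm:Rthm} gives that $\widehat{CF}(\h_b,\mathfrak{s})$ and $\widehat{CF}(\h_{(-b)^{-1}},\mathfrak{s})$ have the same $\rho$-filtered homotopy type, while Lemma \ref{lem:Rdual2} applied to $-b$ identifies $\widehat{CF}_*(\h_{(-b)^{-1}})$ with its $R$-filtration and $\widehat{CF}^{-*}(\h_{-b})$ with its $R^{*}$-filtration. The genuinely new ingredient needed to upgrade this to the isomorphism in the statement is the anti-holomorphic map $\Symn{\tld{r}}$ and the resulting control over the volume form $\Theta$.
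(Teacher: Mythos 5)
Your overall route is the paper's route: reflect the fork diagram across the vertical axis (the paper uses $\iota(z)=-\overline{z}$, inducing $\widehat{\iota}:\Sigma\rightarrow-\Sigma$), observe that the plat closure of $-b$ is $-K$, invoke the Ozsv\'ath--Szab\'o orientation-reversal chain isomorphism $\Phi(\bx)=\left(\{\widehat{\iota}^{-1}(x_{i})\}\right)^{*}$, note as in Lemma \ref{lem:AD} that the identification respects $\AD$, and then do the $s_{R}$ bookkeeping $s_{R}(b,D)+s_{R}(-b,-D)=-n$, which you carry out correctly. The chain-level part of your argument is fine.

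The gap is in the filtration-matching step, which is the actual content of the lemma. Having argued that the anti-holomorphic map negates the Seidel gradings so that $\tld{R}$ goes to $-\tld{R}+c$, you ``pin down'' $c$ by saying that $c=n$ is exactly what makes $R$ land on $R^{*}$, citing the $n-\ast$ shift from Lemma \ref{lem:Rdual1}. That is circular: $c=n$ is what you must prove, and the $n-\ast$ shift in Lemma \ref{lem:Rdual1} comes from a different duality (interchanging the two Lagrangians $\Ta$ and $b\Ta$ for the \emph{same} braid), not from comparing the conventionally normalized gradings for $b$ with those for $-b$ under a reflection. The constant $c$ is sensitive precisely to the normalization in Proposition \ref{prop:cmR}, which is phrased in terms of the handles $h_{j}$, and the reflected diagram $\tld{\Fcal}$ is isotopic to the genuine fork diagram $\Fcal^{-}$ for $-b$ only after \emph{ignoring} the handles; the handles enter the definitions of $P^{*}$ and $Q^{*}$, and this discrepancy is exactly where the shift comes from. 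The paper closes this by a direct local computation: $\left(P^{*}-Q^{*}\right)(z^{-}_{j})=\left(P^{*}-Q^{*}\right)(\tld{z}_{j})+1=-\left(P^{*}-Q^{*}\right)(z^{+}_{j})+1$ and $T(\bz^{-})=-T(\bz^{+})$, giving $\left(P-Q\right)(\bz^{-})=-\left(P-Q\right)(\bz^{+})+n$ and hence $R(\bz^{-})=-R(\bz^{+})$. To complete your version you must either reproduce this $P^{*},Q^{*},T$ computation, or prove that the pushforward under $\Symn{\tld{r}}$ of the graded tori for $b$ agrees (including the handle-dependent normalization) with the conventionally graded tori for $-b$ up to the claimed shift; neither is supplied. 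Your closing ``consistency check'' via Theorem \ref{thm:Rthm} and Lemma \ref{lem:Rdual2} only yields a $\rho$-filtered homotopy equivalence, which, as you note, is weaker than the filtered isomorphism asserted, so it cannot substitute for the missing computation.
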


\begin{proof}
Let $\Fcal^{+}$ and $\Fcal^{-}$ be the fork diagrams induced by $b$ and $-b$, respectively.  Notice that if the closure of $b$ is the knot $K$, then the closure of $-b$ is $-K$, the mirror image of $K$.  Therefore, $\h_{\pm b}$ is a Heegaard diagram for $\pm \DBCs{K}$.  Let $\iota:\mathbb{C}\rightarrow\mathbb{C}$ be the map given by $z \mapsto -\overline{z}$, and let $\tld{\Fcal}$ denote the ``fork-like" diagram obtained by applying $\iota$ to $\Fcal^{+}$.  Notice that if one ignores the handles, then $\tld{\Fcal}$ is isotopic to $\Fcal^{-}$.  Figure \ref{fig:mirror} compares local pictures of these diagrams.

This map induces a diffeomorphism $\widehat{\iota}: \Sigma \rightarrow -\Sigma$, where $\h_{b} = \left( \Sigma, \ba, \bb, +\infty \right)$ and $\h_{-b} = \left( -\Sigma, \ba, \bb, +\infty \right)$.  Recall that for any closed, connected, oriented 3-manifold $Y$, $\text{Spin}^{c}(Y) \cong \text{Spin}^{c}(-Y)$.  For each torsion $\mathfrak{s} \in \text{Spin}^{c}(\DBCs{K})$, \OS described in \cite{os:disk2} a natural chain isomorphism
\begin{equation*}
\Phi:\widehat{CF}_{*}(\DBCs{-K},\mathfrak{s}) \rightarrow  \widehat{CF}^{-*}(\DBCs{K},\mathfrak{s}),
\end{equation*}
which in our case is realized by $\Phi(\bx) = \left(\{ \widehat{\iota}^{-1}(x_{i}) \} \right)^{*}$  for each generator $\bx$ for $\widehat{CF}_{*}(\DBCs{-K},\mathfrak{s})$.

\begin{figure}[h!]
\centering
\subfloat[The diagram $\Fcal^{+}$]{
\includegraphics[height = 20mm]{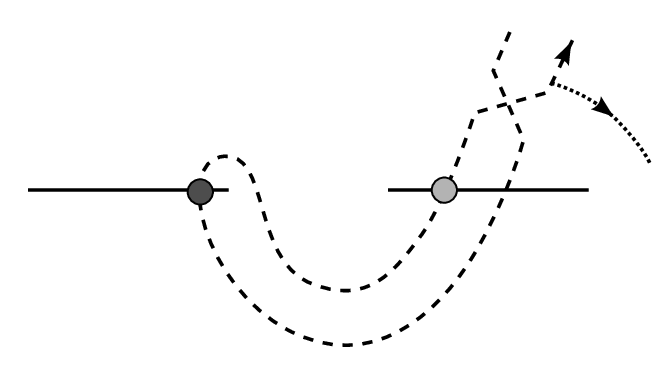}}
\quad
\subfloat[$\tld{\Fcal},$ the mirror of $\Fcal^{+}$]{
\includegraphics[height = 20mm]{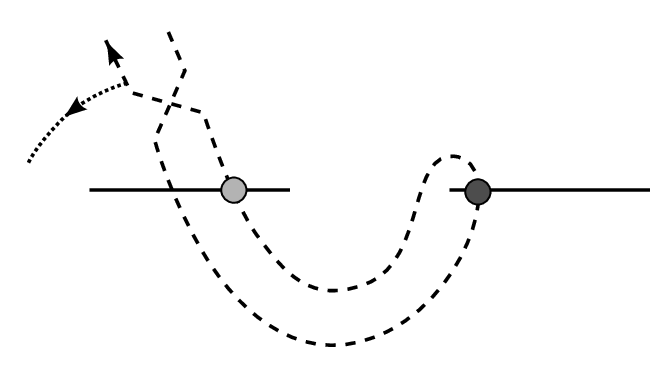}}
\quad
\subfloat[The diagram $\Fcal^{-}$]{
\includegraphics[height = 20mm]{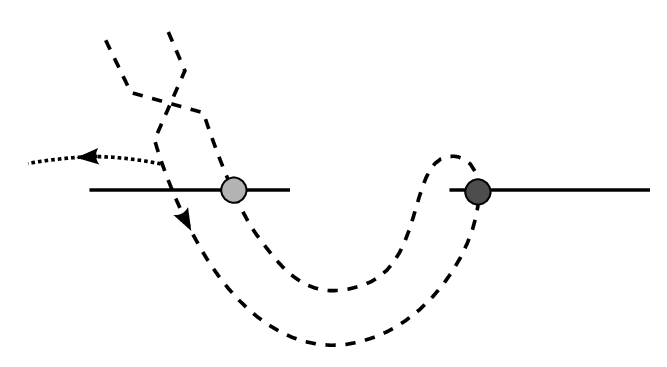}}
\caption[Fork diagrams related to a knot and its mirror]{Local pictures of fork diagrams, where corresponding points are marked with matching dots}
\label{fig:mirror}
\end{figure}

Figure \ref{fig:mirror} displays a suitably general local picture of the fork diagrams involved.  Let $\bz^{+}$ be a generator in $\Fcal^{+}$, let $\tld{\bz}$ be the tuple in $\tld{\Fcal}$ such that $\tld{z}_{j} = \iota(z_{j}^{+})$, and let $\bz^{-}$ be the corresponding generator in $\Fcal^{-}$.  Define the functions $Q$, $P$, $T$, and $\tld{R}$ on $\tld{\Fcal}$ in the obvious way.  One can verify that
\begin{align*}
\left(P^{*} - Q^{*}\right)(z^{-}_{j}) = 
&\left(P^{*} - Q^{*}\right)(\tld{z}_{j}) + 1 =
- \left(P^{*} - Q^{*}\right)(z^{+}_{j}) + 1,\\
\text{and so} \quad
& \left( P - Q \right)(\bz^{-}) = - \left( P - Q \right)(\bz^{+}) + n.
\end{align*}
Furthermore, $T(\bz^{-}) = -T(\bz^{+})$, and
\begin{align*}
s_{R}(b^{-}, D^{-})&= 
\frac{e(b^{-}) - w(D^{-}) - 2n}{4}
=\frac{-e(b^{+}) + w(D^{+}) - 2n}{4}\\
&=- \frac{e(b^{+}) - w(D^{+}) - 2n}{4} - n.
\end{align*}
Therefore, we have that $R(\bz^{-}) = -R(\bz^{+})$, and so both $\Phi$ and $\Phi^{-1}$ are filtered.
\end{proof}

\subsection{Fork diagram isotopy}\label{sec:Riso}

The identification of a braid $b$ with its associated fork diagram is only defined up to isotopy of the fork diagram.  We should verify the following:
\begin{prop}\label{prop:iso}
Let $\mathfrak{s} \in \Scx{K}$ be torsion.  Then the $\rho$-filtered chain homotopy type of $\widehat{CF}(\DBCs{K}, \mathfrak{s})$ is an invariant of the braid $b$.
\end{prop}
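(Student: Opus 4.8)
The plan is to realize a fork-diagram isotopy as a finite sequence of admissible isotopies of the induced Heegaard diagram and then to apply Lemma~\ref{lem:tri3}. Let $\Fcal$ and $\Fcal'$ be two fork diagrams associated to the same braid $b \in \B{2n}$. By Definition~\ref{defforks} they arise from isotopic representatives of the mapping class $b$, together with isotopic choices of handles and figure-eights, so there is an ambient isotopy of the punctured disk $D_{2n}$, supported away from $\partial D$ and fixing the set of punctures at each time, carrying $\Fcal$ to $\Fcal'$. Passing to the branched double cover and stabilizing as in Proposition~\ref{prop:DBC} turns this into an ambient isotopy of $\Sigma$ taking $\h = (\Sigma; \ba; \bb; +\infty)$ to $\h' = (\Sigma; \ba'; \bb'; +\infty)$; it is supported away from the stabilizing handle, so $+\infty$ is undisturbed, and because isotoping attaching curves does not alter the underlying manifold, every diagram occurring along the way is again an admissible pointed Heegaard diagram for $\DBCs{K}$.

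Next I would subdivide this isotopy of $\Sigma$ into finitely many admissible isotopies: first move $\ba$ to $\ba'$ with $\bb$ held fixed, then move $\bb$ to $\bb'$ with $\ba'$ held fixed, perturbing generically so that at each stage the family being moved is transverse to the others and the required two-point intersection condition holds. This expresses the passage $\h \to \h'$ as a composition of moves of the kind treated in Lemma~\ref{lem:inv2}, with no handleslides present. For each elementary move take the triangle injection to be the nearest-neighbor map; its associated $3$-gon class is of type~I, namely a sum of $n$ disjoint small $3$-sided regions each with multiplicity $+1$ (cf.\ Figure~\ref{fig:tri1}), so it has Maslov index $0$, misses $+\infty$, and --- after arranging each elementary isotopy to be in general position with respect to the branch locus, in the manner of the discussion of the anti-diagonal and Heegaard multi-diagrams preceding Section~\ref{sec:inv} --- misses the anti-diagonal $\AD$. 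Hence each elementary move carries a bona fide $\alpha$- or $\beta$-triangle injection as in Definition~\ref{df:trimap}, and I let $g\colon \tor{\ba}\cap\tor{\bb} \to \tor{\ba'}\cap\tor{\bb'}$ denote their composition.

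It remains to check that $g$ preserves the grading $R$; granting this, Lemma~\ref{lem:tri3} yields that $\widehat{CF}(\h,\mathfrak{s})$ and $\widehat{CF}(\h',\mathfrak{s})$ have the same $\rho$-filtered chain homotopy type, which proves Proposition~\ref{prop:iso}. By construction $g$ is the bijection $\G \to \G'$ of Bigelow generators induced by the fork-diagram isotopy. The function $\tld R = P - Q + T$ is computed from winding numbers of the loops $\gamma_{x}$ and $\tld\gamma_{x}$ and from the tangent-vector data used to define $Q^{*}$, $T$, and $P^{*}$, and an ambient isotopy of $D_{2n}$ rel the punctures preserves the homotopy classes of all of these; hence $\tld R(g(\bx)) = \tld R(\bx)$. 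The shift $s_{R}(b,D) = \bigl( e(b) - w(D) - 2n \bigr)/4$ is literally unchanged: $e(b)$ depends only on the braid word, $2n$ is fixed, and the plat closures of $\Fcal$ and $\Fcal'$ are planar-isotopic and so have equal writhe. Therefore $R(g(\bx)) = \tld R(\bx) + s_{R}(b,D) = R(\bx)$ for every $\bx$, as required. I expect the main obstacle to be the general-position argument guaranteeing that the type~I $3$-gons for the elementary isotopies can be taken disjoint from $\AD$ while still fitting the passage $\h \to \h'$ together into admissible pieces; this rests on the locality observations about branched-cover ``twins'' used earlier, together with the freedom to perturb each elementary isotopy.
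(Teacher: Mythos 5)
There is a genuine gap, and it sits exactly where the real work of this proposition lies. Your overall strategy (lift the fork-diagram isotopy to the Heegaard diagram, decompose into admissible isotopies, build triangle injections, invoke Lemma~\ref{lem:tri3}) matches the paper's, but your verification that the composed injection preserves $R$ is wrong in the only nontrivial case: isotopies that create or annihilate intersection points between $\alpha$ and $\beta$ arcs (the paper's Figure~\ref{fig:isotopy}). Such a finger move changes the cardinality of $\Zcal$ and hence of $\G$, so there is no ``bijection $\G \to \G'$ induced by the fork-diagram isotopy,'' and the statement that $\tld{R}(g(\bx)) = \tld{R}(\bx)$ because winding numbers are isotopy invariants does not apply to the map one must actually use. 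In the paper's local model the triangle injection sends $x\bz$ to $u\bz$, where $u$ is one of the \emph{newly created} intersection points, not the isotoped continuation $y$ of $x$; the equality $R(u\bz) = R(x\bz)$ is then established by explicitly comparing the $Q^{*}$-, $P^{*}$-, and $T$-loops for $u$ and $x$ (Figures~\ref{fig:isotopytri} and \ref{fig:isotopyloops}), not by any general isotopy-invariance principle.

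Relatedly, your claim that every elementary move admits the nearest-neighbor map as a triangle injection whose $3$-gon classes are type~I sums of $n$ small disjoint $3$-sided regions is only justified for small translates that do not cross the other family of curves. Once the moving $\beta$ circle sweeps across an $\alpha$ circle, the index-zero classes required by Definition~\ref{df:trimap} (avoiding $z$ and $\AD$) must be exhibited by hand, and in the paper's picture they join $\thet{\bb\bb'}$, $x$, and the new point $u$. Your argument as written does establish invariance for isotopies that preserve $\Zcal$ and $\G$, but that is precisely the easy case the paper disposes of in one sentence; the intersection-creating case needs the explicit local construction and grading computation that your proposal omits.
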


\begin{rmk}
The reader should note that in the original proof in \cite{os:disk} of the invariance of the group $\widehat{HF}(M)$ under isotopies of the Heegaard diagram for $M$, pseudo-holomorphic 3-gons were not used.  Lipshitz observed in \cite{lip:cyl} that the induced chain map could be defined in terms of counting 3-gons.
\end{rmk}

\begin{proof}
We omit explicit analysis of isotopies which don't introduce or annihilate intersection points between $\alpha$ and $\beta$ arcs (i.e. preserve $\Zcal$ and $\G$); these just induce intersection-preserving isotopies on the Heegaard diagram.  However, we should verify invariance under the type of isotopy shown in Figure \ref{fig:isotopy}.

\begin{figure}[h!]
\centering
\subfloat[Before isotopy]{
\labellist 
\small
\pinlabel* {$x$} at 400 300
\endlabellist 
\includegraphics[height = 30mm]{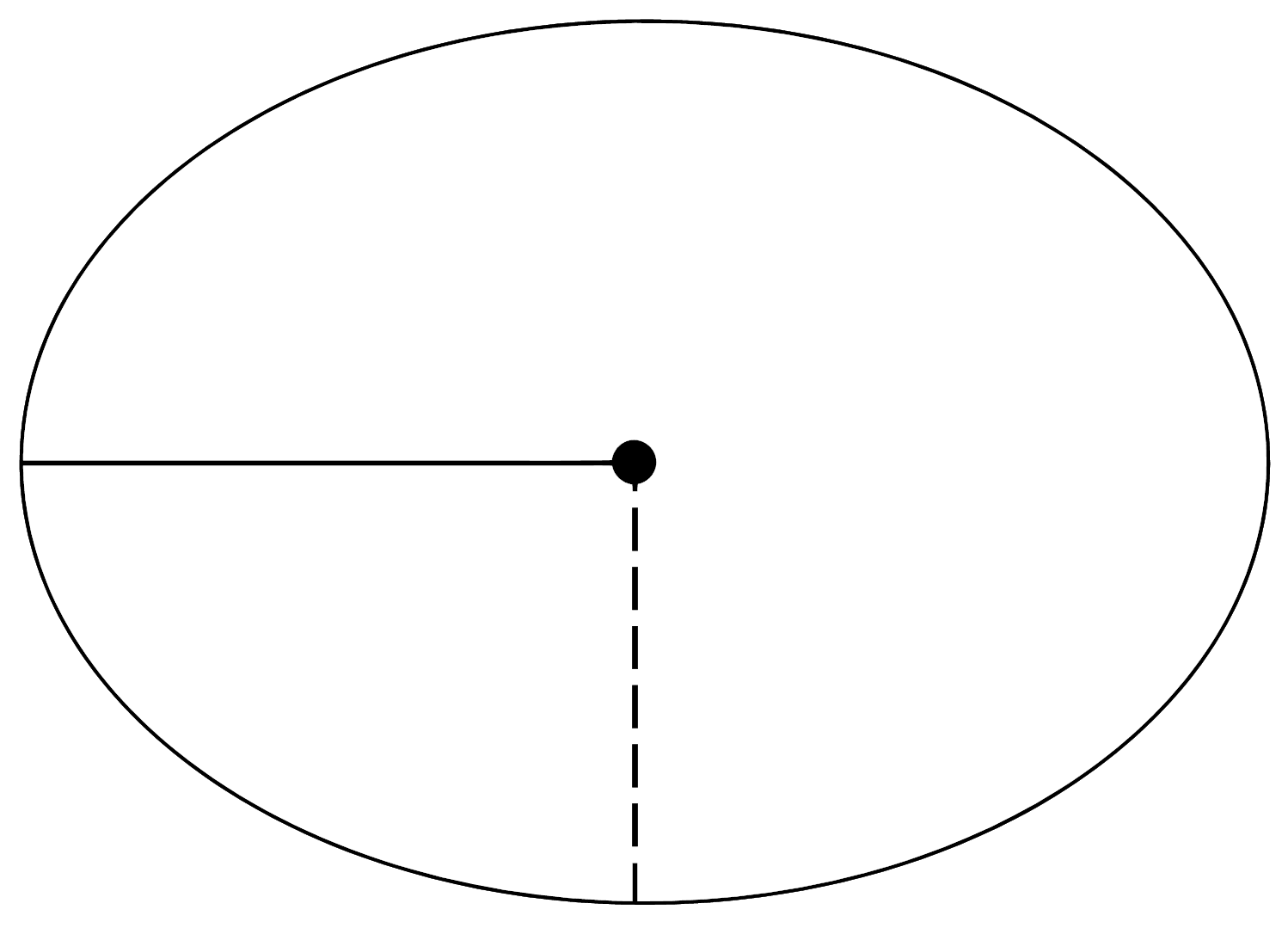}}\qquad
\subfloat[After isotopy]{
\labellist 
\small
\pinlabel* {$u$, $u'$} at 200 300
\pinlabel* {$y$} at 400 300
\endlabellist 
\includegraphics[height = 30mm]{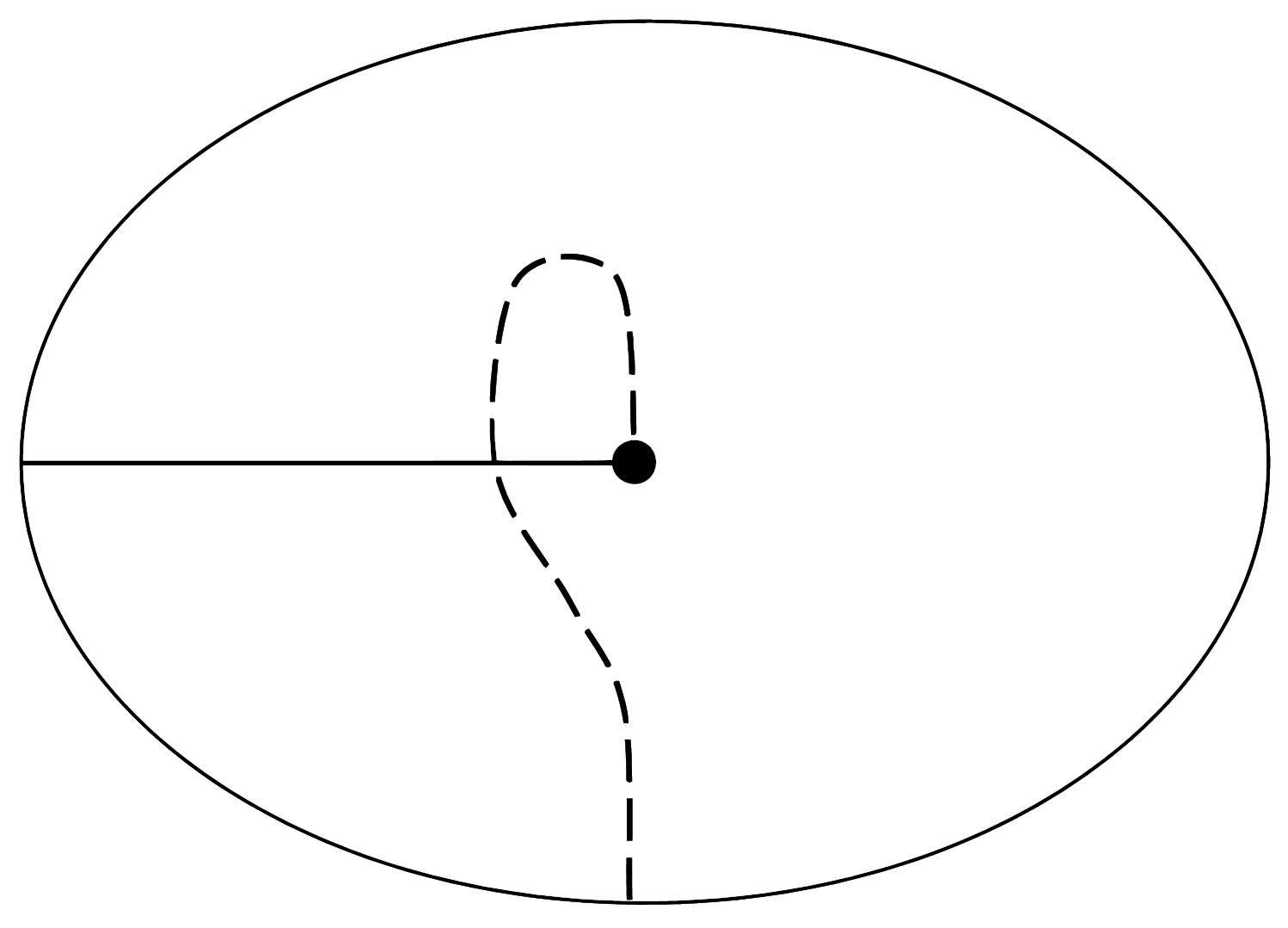}}
\caption{Introducing new intersections via an isotopy
\label{fig:isotopy}}
\end{figure}

\begin{figure}[h!]
\centering
\subfloat[Before isotopy]{
\labellist 
\small
\pinlabel* {$x$} at 390 300
\endlabellist 
\includegraphics[height = 30mm]{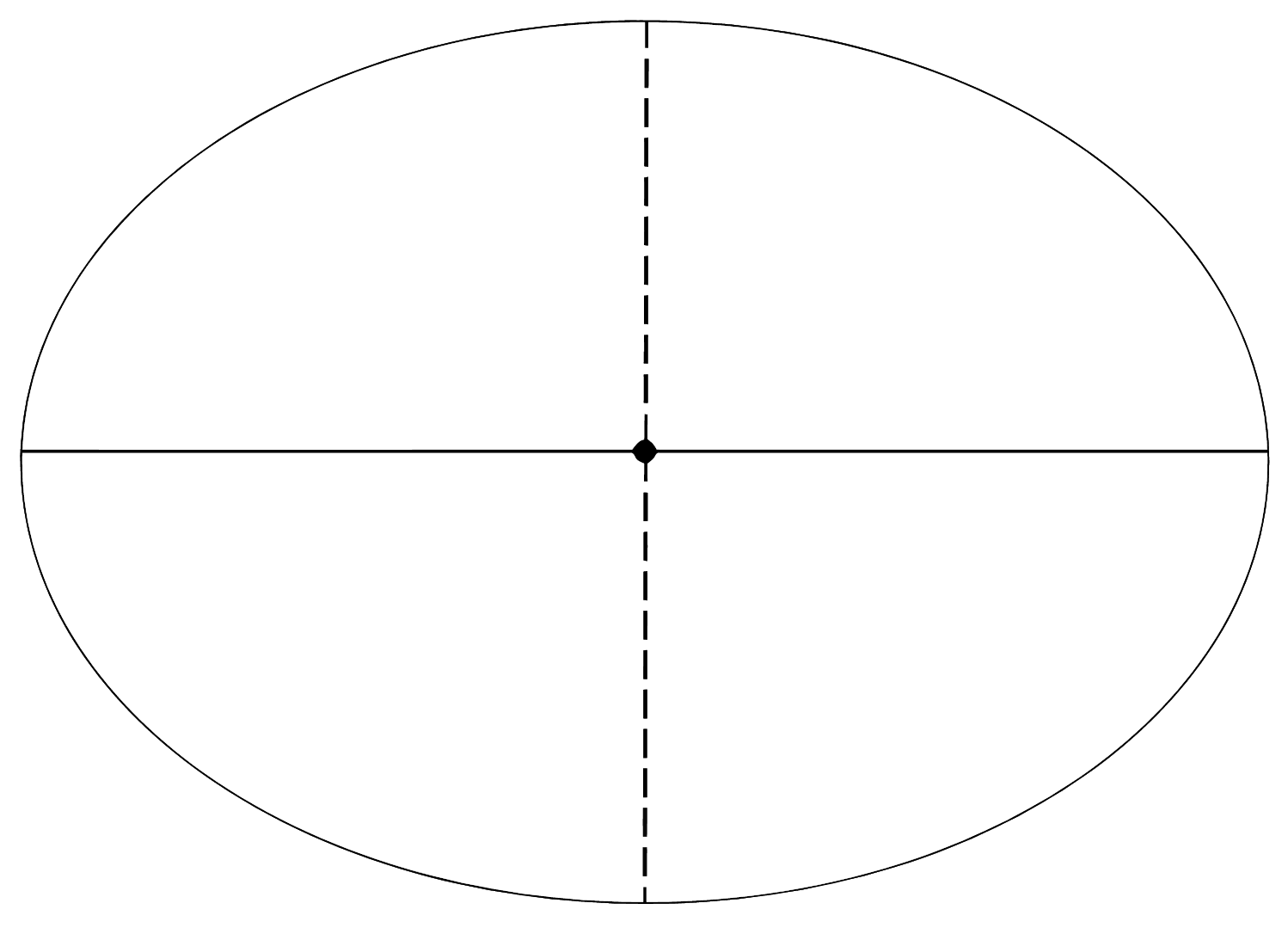}}\qquad
\subfloat[After isotopy]{
\labellist 
\small
\pinlabel* {$u'$} at 170 305
\pinlabel* {$y$} at 400 300
\pinlabel* {$u$} at 560 300
\endlabellist 
\includegraphics[height = 30mm]{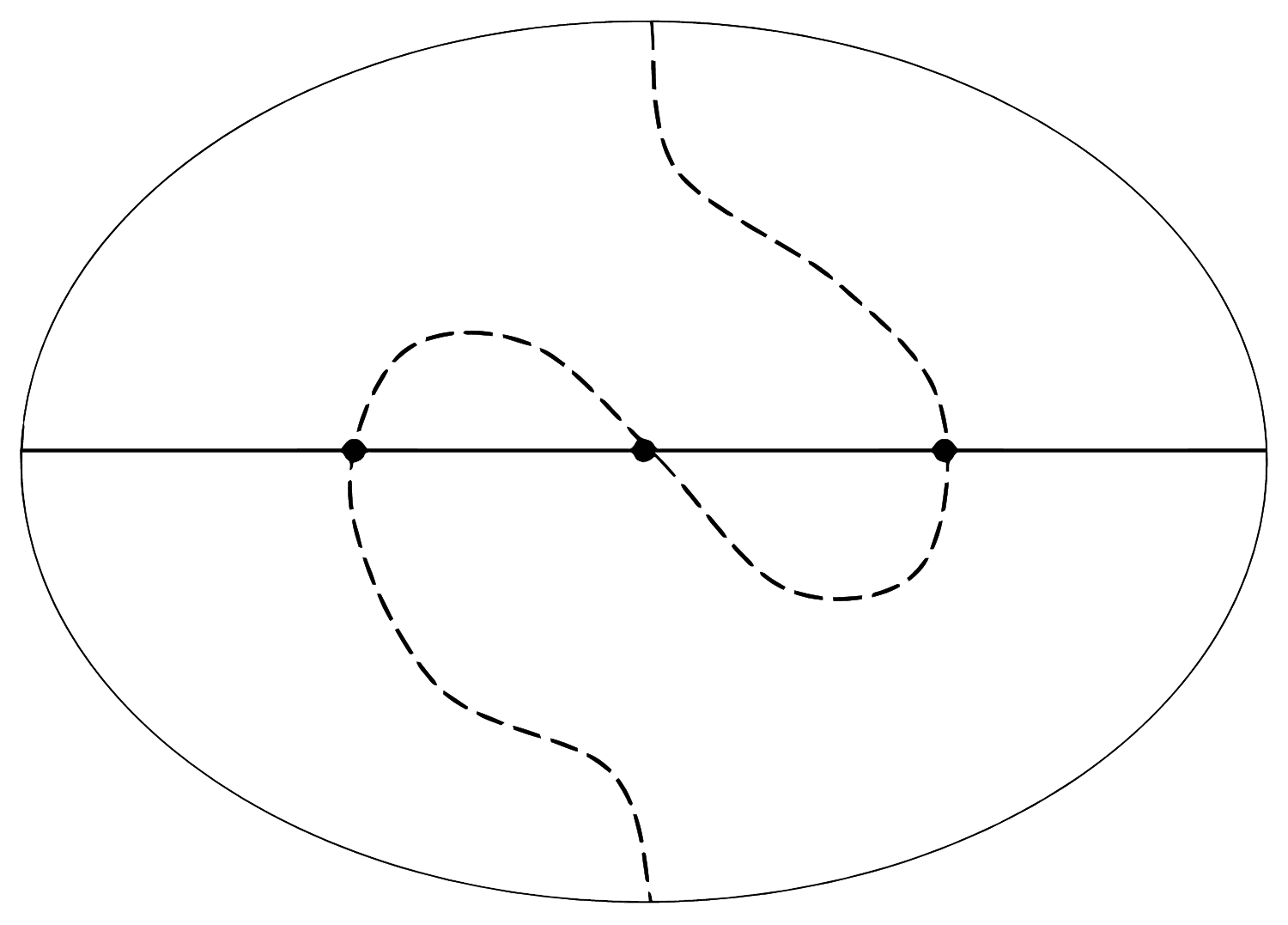}}
\caption[Heegaard diagram isotopy induced by fork diagram isotopy]{Isotopic Heegaard diagrams for $\DBCs{K}$ covering the fork diagrams in Figure \ref{fig:isotopy} \label{fig:isotopyhd}}
\end{figure}

Taking the two-fold cover of this local fork diagram branched on the one puncture gives the local Heegaard diagrams for $\DBCs{K}$ shown in Figure \ref{fig:isotopyhd}.  The isotopy on the fork diagram amounts to an isotopy on the Heegaard diagram, and by Lemma \ref{lem:tri3} it is sufficient to construct a $\bb$-triangle injection $g_{iso}$ and check that it preserves $R$.

We see from the 3-sided region in Figure \ref{fig:isotopytri} that we can define the injection $g_{iso}$ such that $g_{iso}(x\bz) = u\bz$.  Further, the loops in Figure \ref{fig:isotopyloops} show that $Q^{*}(u) = Q^{*}(x)$, $ P^{*}(u) =  P^{*}(x)$, and $T(u\bz) = T(x\bz)$.  Therefore, $R(u\bz) = R(x\bz)$.
\end{proof}

\begin{figure}[h]
\centering
\begin{minipage}[c]{.35\linewidth}
\labellist
\small
\pinlabel* {$\theta_{\bb \bb'}$} at 490 420
\pinlabel* {$\theta_{\bb' \bb}$} at 490 150
\pinlabel* {$x$} at 390 290
\pinlabel* {$u$} at 570 290
\endlabellist
\includegraphics[height = 35mm]{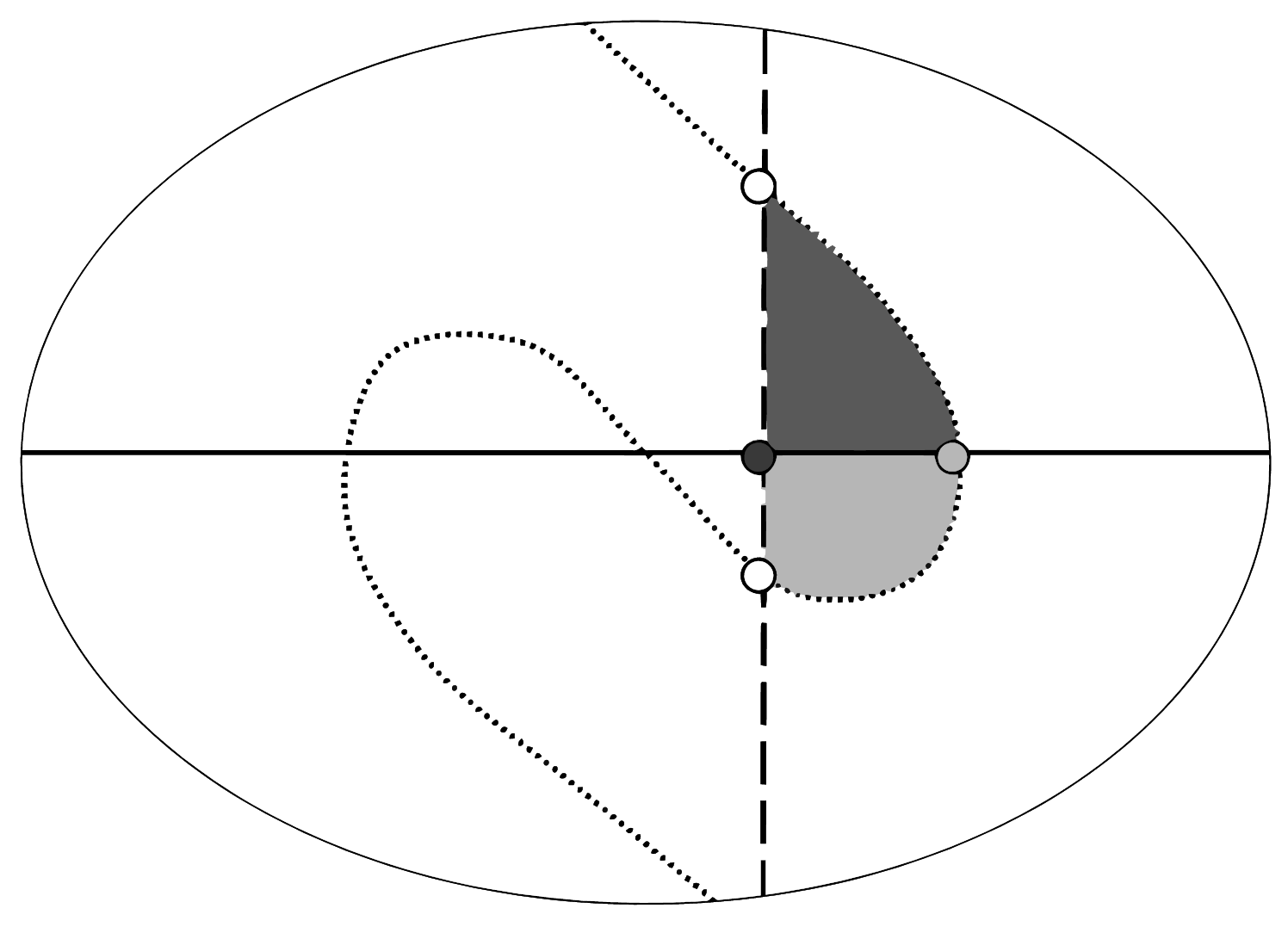} \end{minipage}
\begin{minipage}[c]{.63\linewidth}
\caption[Local components of 3-gon domains associated to isotopy]{Local regions in domains of the 3-gons $\psi^{+}_{g_{iso}}$ (dark gray) and $\psi^{-}_{g_{iso}}$ (light gray)}
\label{fig:isotopytri}
\end{minipage}
\end{figure}

\begin{figure}[h]
\centering
\subfloat[$x$]{\includegraphics[height = 30mm]{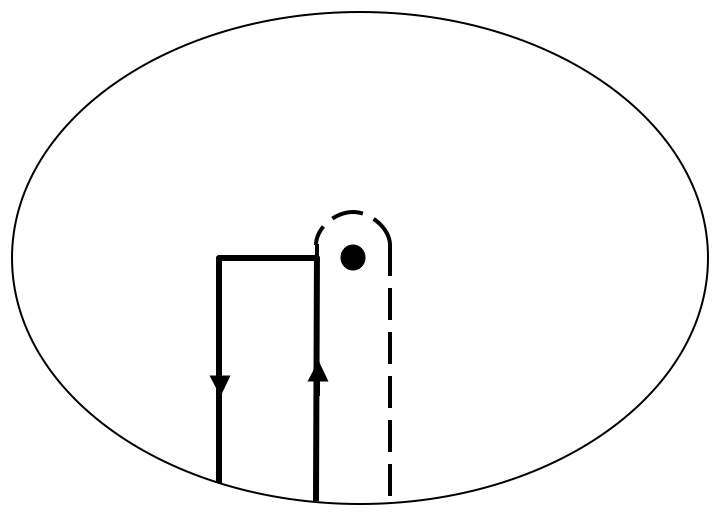}}\qquad
\subfloat[$u$]{\includegraphics[height = 30mm]{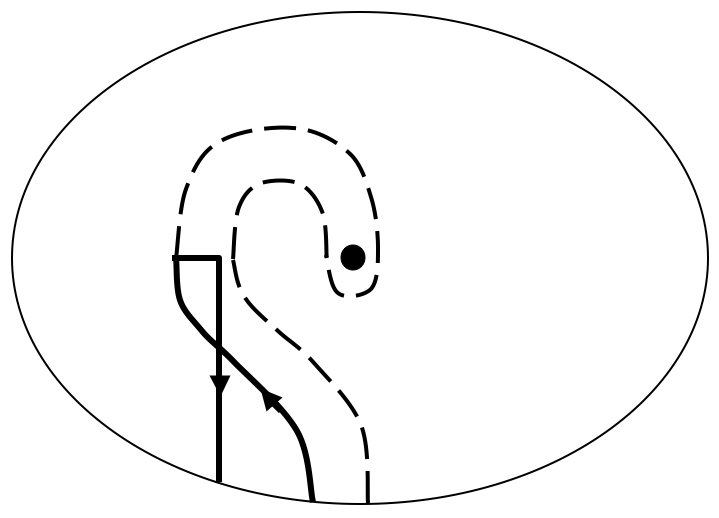}}
\caption[Grading loops associated to isotopy]{Loops used to compute gradings for $x$ and $u$ \label{fig:isotopyloops}}
\end{figure}

\subsection{Invariance under choice of braid}\label{sec:Rmoves}

\begin{proof}[Proof of Theorem \ref{thm:Rthm}]
It suffices to verify that if $b$ and $b'$ (inducing Heegaard diagrams $\h$ and $\h'$ for $\DBCs{K}$) are related by a Birman move, then the $\rho$-filtered complexes $\widehat{CF}(\h, \mathfrak{s})$ and $\widehat{CF}(\h', \mathfrak{s})$ have the same filtered chain homotopy type for each torsion $\mathfrak{s} \in \Scx{K}$.

We first claim that we don't need to explicitly examine Birman moves of the form $b \mapsto gb$, where $b \in \B{2n}$ and $g \in \K{2n}$.  Well, by Lemmas \ref{lem:Rdual2} and \ref{lem:Rdual3}, the $R$-filtered complexes$\widehat{CF}_{*}(\h_{gb}, \mathfrak{s})$ and $\widehat{CF}_{*}(\h_{bf}, \mathfrak{s})$ are filtered chain isomorphic, where $f = -g^{-1}$.  We won't explicitly analyze moves of the form $b \mapsto b(-g)$ for generators $g$ of $\K{2n}$, but the local pictures would be mirror images of those for $b \mapsto bg$ and the arguments would be completely analogous.

We'll see that each Birman move induces either a diffeomorphism of the Heegaard surface or a sequence of isotopies and handleslides relating Heegaard diagrams for $\DBCs{K}$ induced by the fork diagrams before and after the move (also preceded by a Heegaard stabilization in the Birman stabilization move).

For the case of a surface diffeomorphism, one obtains a chain complex isomorphism which will be shown to  preserve $R$.  By Lemma \ref{lem:AD}, such an isomorphism also preserves the filtration $\rho$.

We saw in Section \ref{sec:triangles} that isotopies and handleslides on Heegaard diagrams induce chain homotopy equivalences on $\widehat{CF}(\DBCs{K})$ which count pseudo-holomorphic 3-gon classes of index zero.  For each isotopy or handleslide taking $\ba$ and $\bb$ to $\ba'$ and $\bb'$, we'll define a triangle injection $g:\tor{\ba} \cap \tor{\bb} \hookrightarrow \tor{\ba'} \cap \tor{\bb'}$.  By Lemma \ref{lem:tri3},  it suffices to construct these injections $g$ and verify that $R(g(\bx)) = R(\bx)$ for each $\bx \in \G$.  Notice that since the moves induce local changes only, we'll only demonstrate local regions in the domains of the 3-gons lying in neighborhoods of the moves.  We exhibit such domains and check gradings in Section \ref{sec:moves}.
\end{proof}
\subsection{Local effects of Birman moves}\label{sec:moves}
\subsubsection{$b \mapsto b A^{\pm1} = b \sigma_{1}^{\pm1}$}\label{subsec:move1}

\begin{figure}[h]
\centering
\subfloat[Local diagram for $b$]{
\labellist 
\small
\pinlabel* {$x_{1}$} at 220 230
\pinlabel* {$u$, $u'$} at 415 235
\pinlabel* {$x_{2}$} at 490 290
\endlabellist 
\includegraphics[height = 35mm]{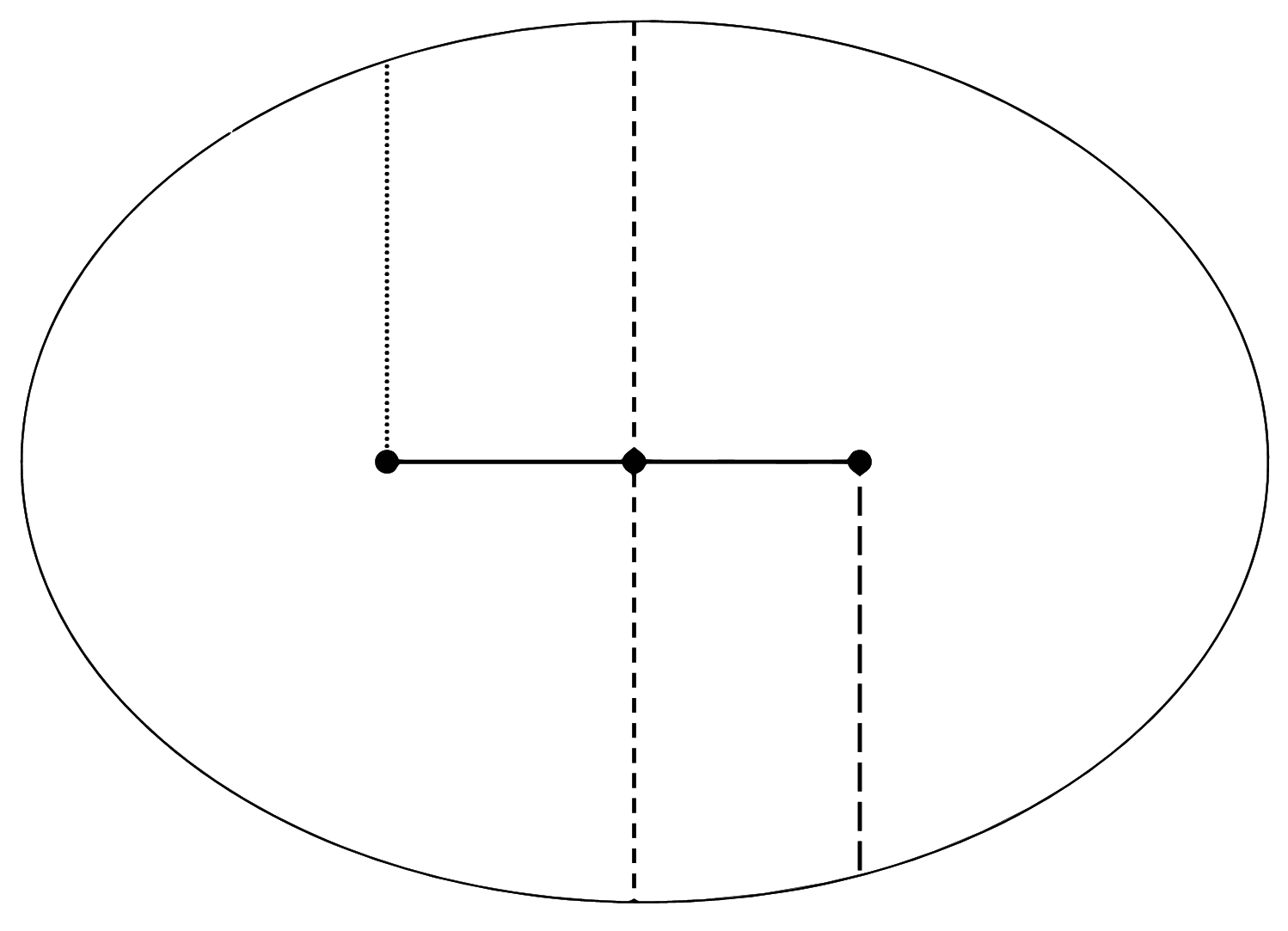}}\qquad
\subfloat[Local diagram for $b A$]{
\labellist
\small
\pinlabel* {$y_{1}$} at 220 230
\pinlabel* {$v$, $v'$} at 390 230
\pinlabel* {$y_{2}$} at 490 290
\endlabellist 
\includegraphics[height = 35mm]{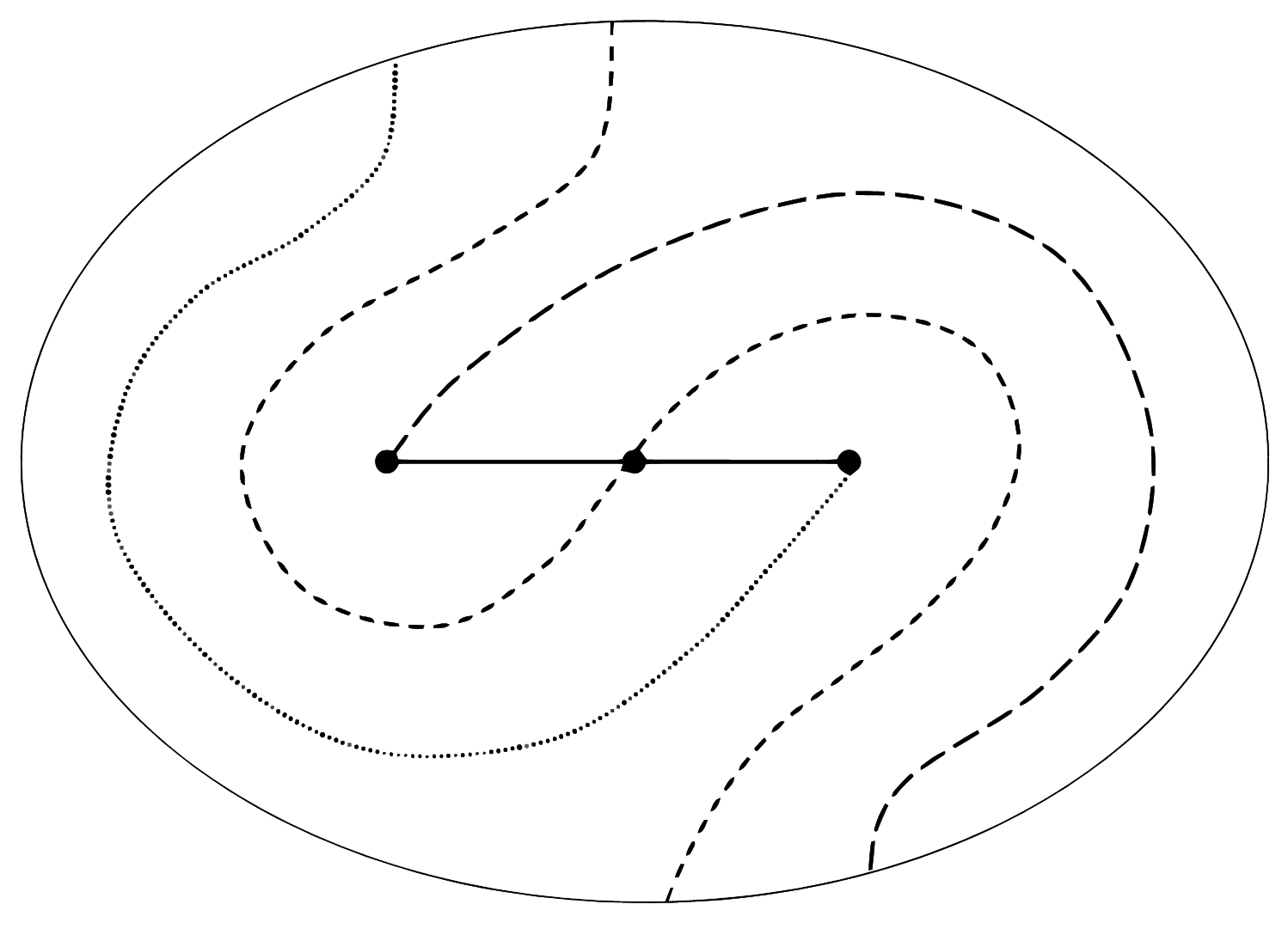}}
\caption{Fork diagrams associated to the move $b \mapsto b A $ \label{fig:move1}}
\end{figure}

The fork diagrams for $b \mapsto b A $ can be seen in Figure \ref{fig:move1}.  This move induces a diffeomorphism on the Heegaard surface for $\DBCs{K}$ (in fact, a single Dehn twist) which sends $x_{i} \mapsto y_{i}$ for $i = 1,2$ and sends $\{ u, u'\} \hookrightarrow \{ v,v' \}$.  One thus obtains a chain isomorphism $g_{A}$ on Heegaard Floer complexes, and we verify that for each $\bw \in \G$, $R(g_{A}(\bw)) = R(\bw)$.

Since $R$ is stable, we only need to examine one $\Zcal$ representative for each element of $\Ztil \setminus \tau$.  One can verify that $Q(g_{A}(\bw)) = Q(\bw)+1$ and $P(g_{A}(\bw)) =  P(\bw) + 1$ and all $\bw \in \G$.  Since moves are local and at most one component is modified, $T$ is preserved.  The number of strands $n$ is also preserved, but $\epsilon$ and $w$ each increase by 1.  So, $s_{R}(b A) = s_{R}(b)$, and thus $R(g_{A}(\bw)) = R(\bw)$ for all $\bw \in \G$.  The details for the move $b \mapsto b A^{-1}$ are analogous.

\begin{figure}[h!]
\centering
\subfloat[$x_{1}$]{\includegraphics[height = 20mm]{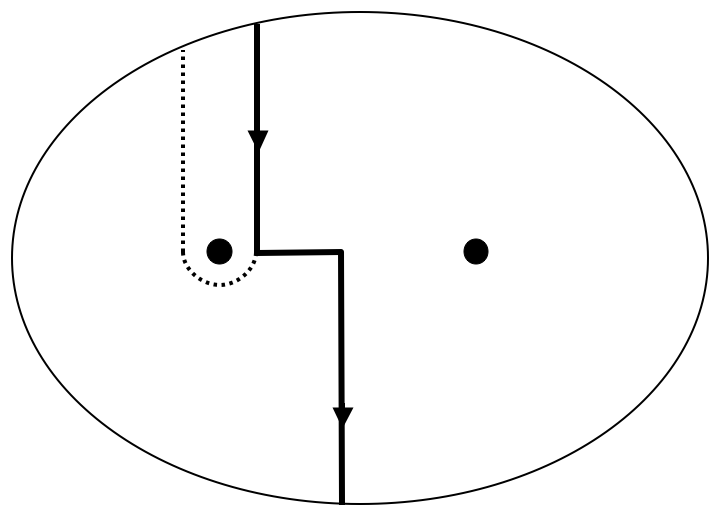}}\quad
\subfloat[$y_{2}$]{\includegraphics[height = 20mm]{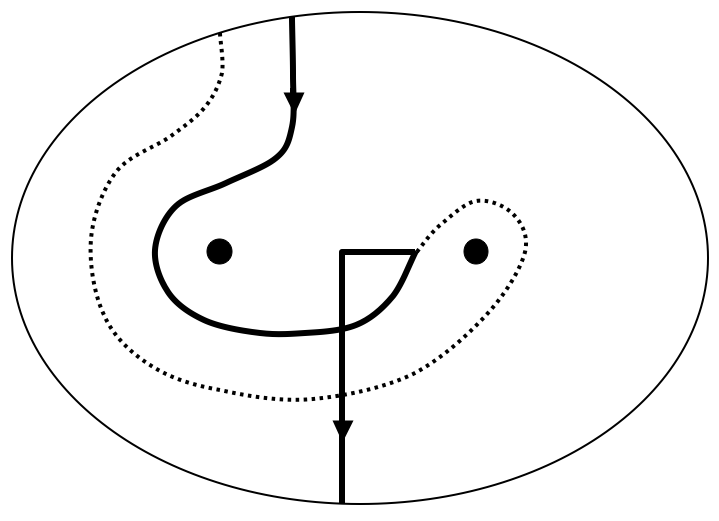}}\quad
\subfloat[$u$]{\includegraphics[height = 20mm]{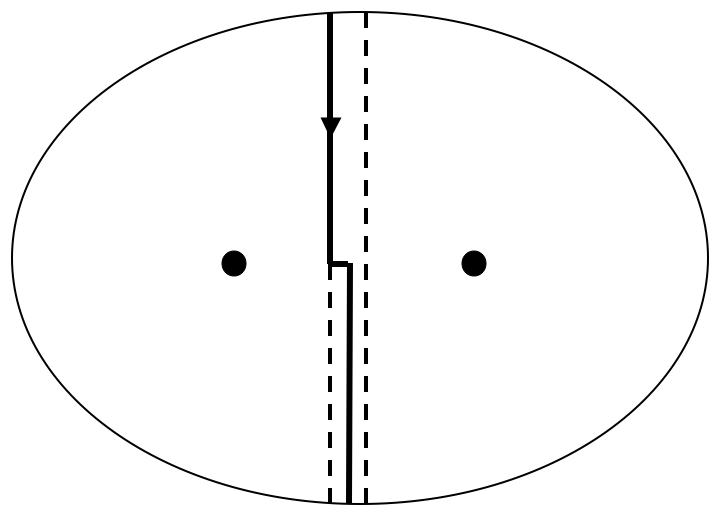}}\quad
\subfloat[$v$]{\includegraphics[height = 20mm]{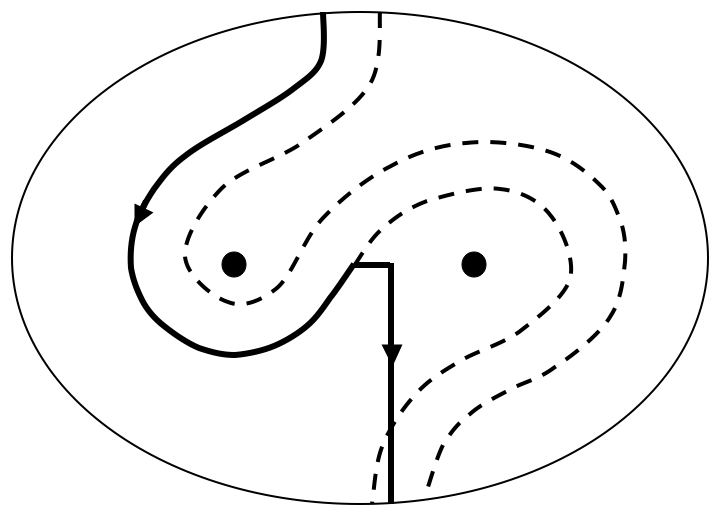}}\quad
\subfloat[$x_{2}$]{\includegraphics[height = 20mm]{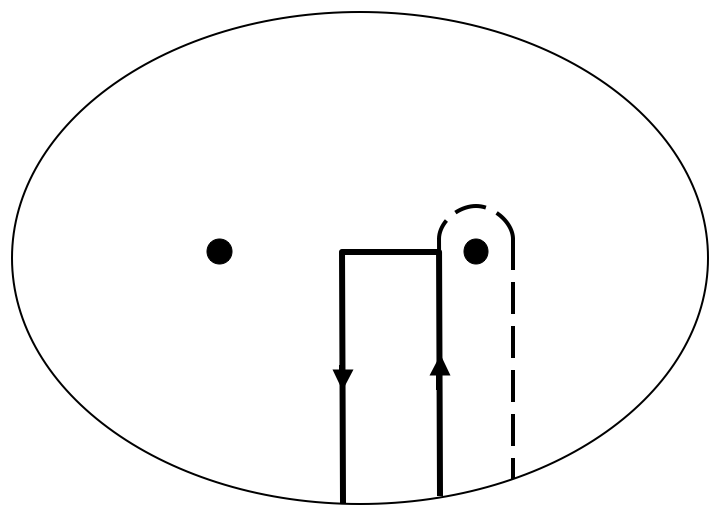}}\quad
\subfloat[$y_{1}$]{\includegraphics[height = 20mm]{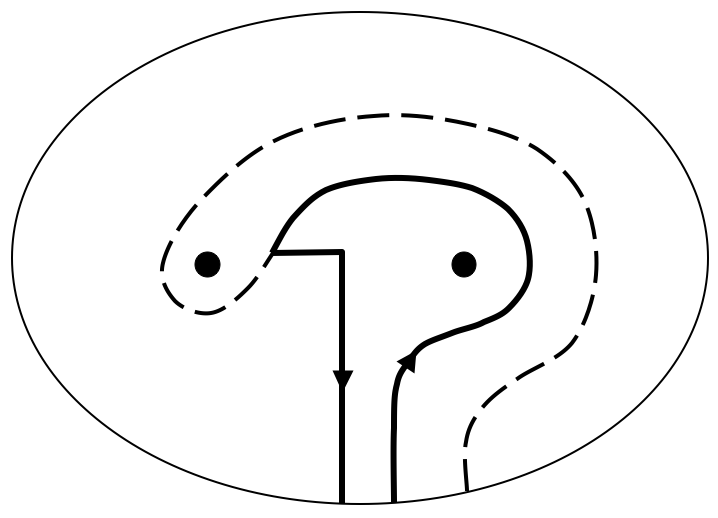}}
\caption[Loops associated to $b \mapsto b A$]{Loops associated to elements of $\Zcal$ affected by $b \mapsto b A$\label{fig:move1loops}}
\end{figure}

\subsubsection{$b \mapsto b C_{i}^{\pm 1} = 
b \left( \sigma_{2i}\sigma_{2i-1}\sigma_{2i+1}\sigma_{2i} \right)^{\pm1}$}

\begin{figure}[h]
\centering
\subfloat[Local diagram for $b$.]{
\labellist 
\small
\pinlabel* {$x_{1}$} at 95 240
\pinlabel* {$s,s'$} at 220 230
\pinlabel* {$x_{2}$} at 280 280
\pinlabel* {$x_{3}$} at 450 240
\pinlabel* {$u,u'$} at 590 290
\pinlabel* {$x_{4}$} at 640 240
\endlabellist 
\includegraphics[height = 40mm]{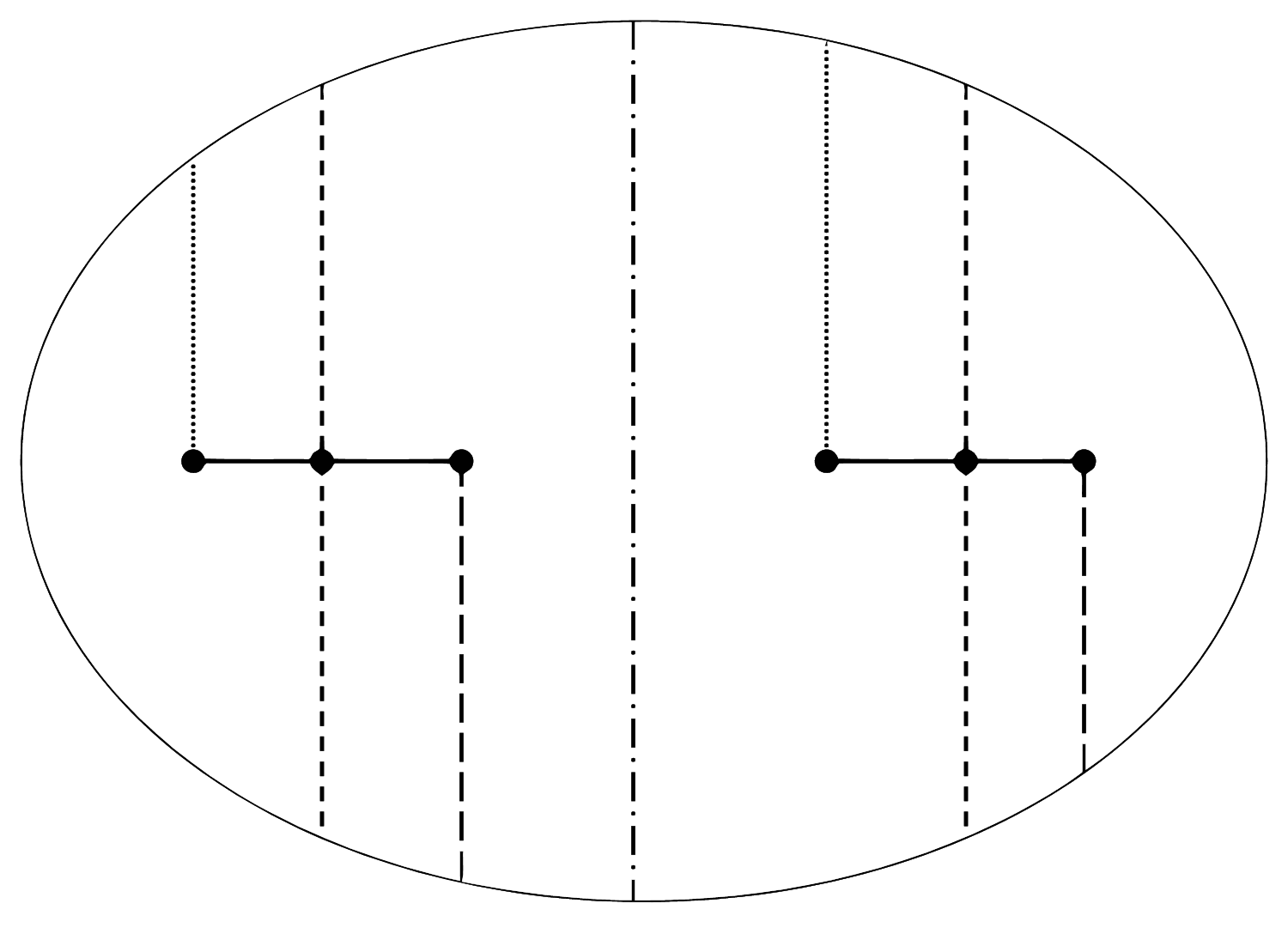}}
\subfloat[Local diagram for $b C_{i}$]{
\labellist
\small
\pinlabel* {$y_{1}$} at 115 235
\pinlabel* {$t,t'$} at 202 285
\pinlabel* {$y_{2}$} at 260 235
\pinlabel* {$y_{3}$} at 470 295
\pinlabel* {$v,v'$} at 530 235
\pinlabel* {$y_{4}$} at 595 295
\endlabellist 
\includegraphics[height = 40mm]{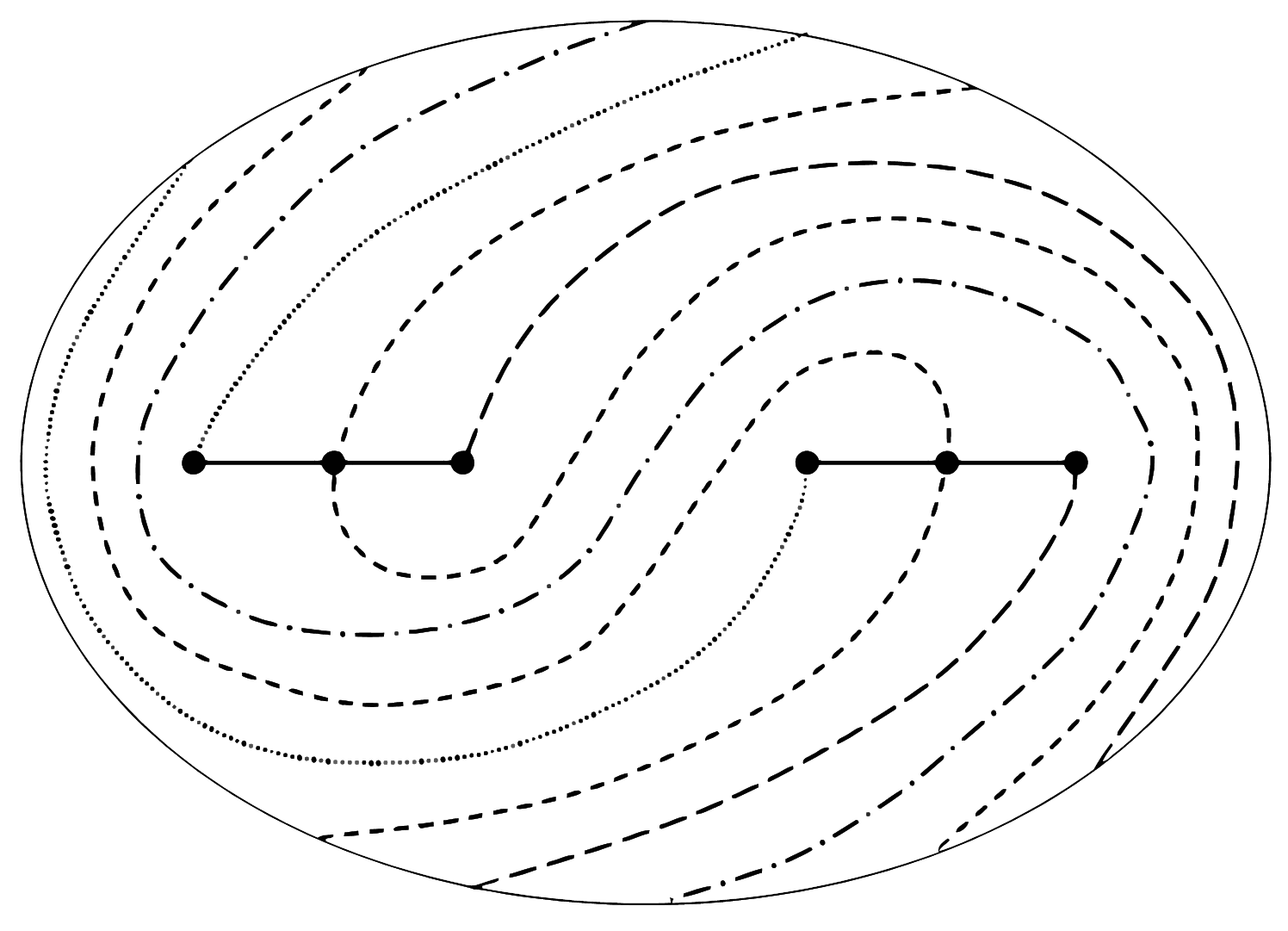}}
\caption{Fork diagrams associated to $b \mapsto b C_{i}$ \label{fig:move2}}
\end{figure}

This move induces a sequence of two Dehn twists, mapping intersections via $x_{1} \mapsto y_{3}$, $x_{2} \mapsto y_{4}$, $x_{3} \mapsto y_{1}$, $x_{4} \mapsto y_{2}$, $\{ s, s' \} \hookrightarrow \{ v, v'\}$, and $\{ u, u' \} \hookrightarrow \{ t, t'\}$.  One can verify that for each $\bw \in \G$, $Q(g_{C_{i}}(\bw)) = Q(\bw) + 2$, $P(g_{C_{i}}(\bw)) =  P(\bw)$, and $T(g_{C_{i}}(\bw)) = T(\bw) + 1$.  Here $n$ and $w$ are unchanged, but $\epsilon$ increases by 4.  So, $s_{R}(b C_{i}) = s_{R}(b) + 1$, and thus $R(g_{C_{i}}(\bw)) = R(\bw)$ for all $\bw \in \G$.  The proof associated to the move $b \mapsto b C_{i}^{-1}$ is analogous.

\begin{figure}[h!]
\centering
\subfloat[$x_{1}$]{\includegraphics[height = 20mm]{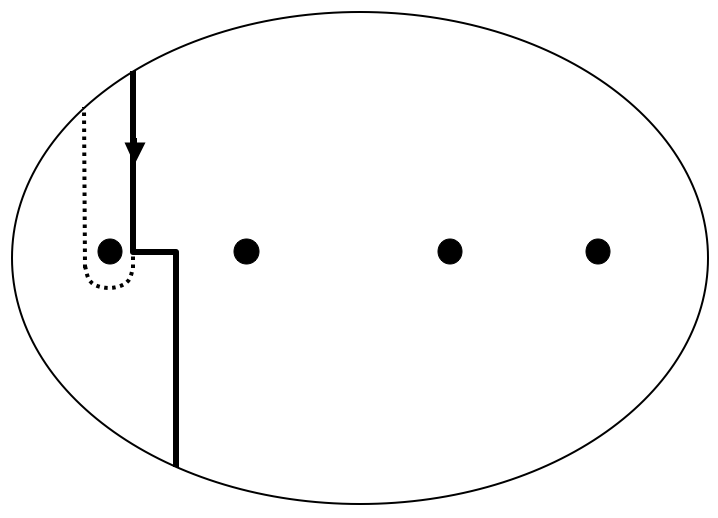}}\quad
\subfloat[$y_{3}$]{\includegraphics[height = 20mm]{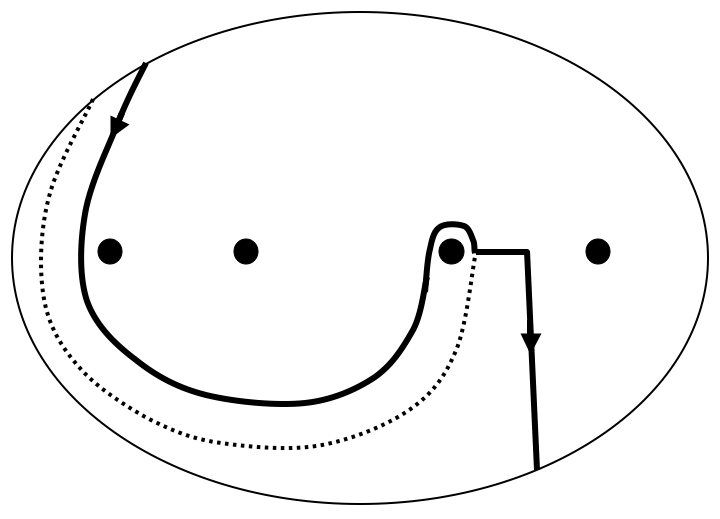}}\quad
\subfloat[$s$]{\includegraphics[height = 20mm]{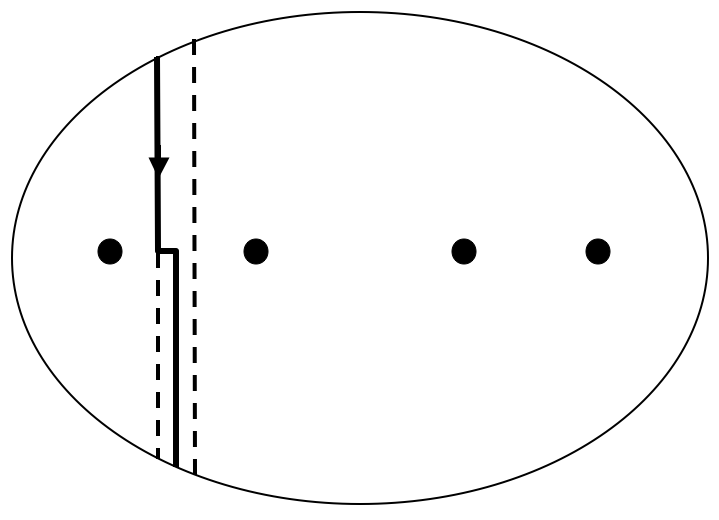}}\quad
\subfloat[$v$]{\includegraphics[height = 20mm]{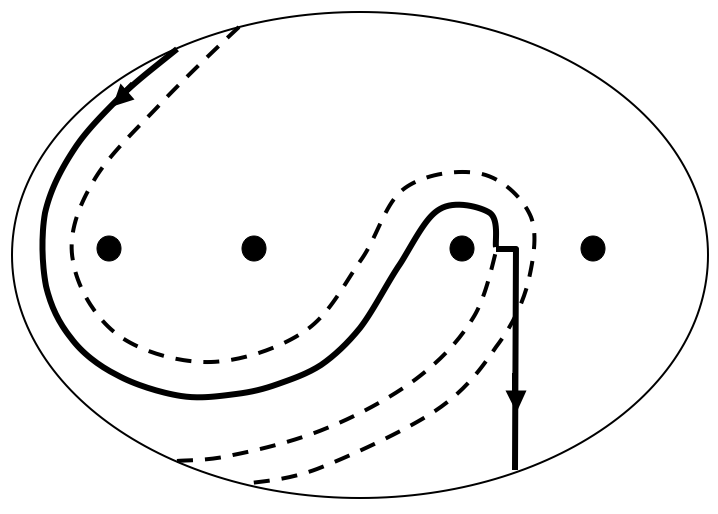}}\\
\subfloat[$x_{2}$]{\includegraphics[height = 20mm]{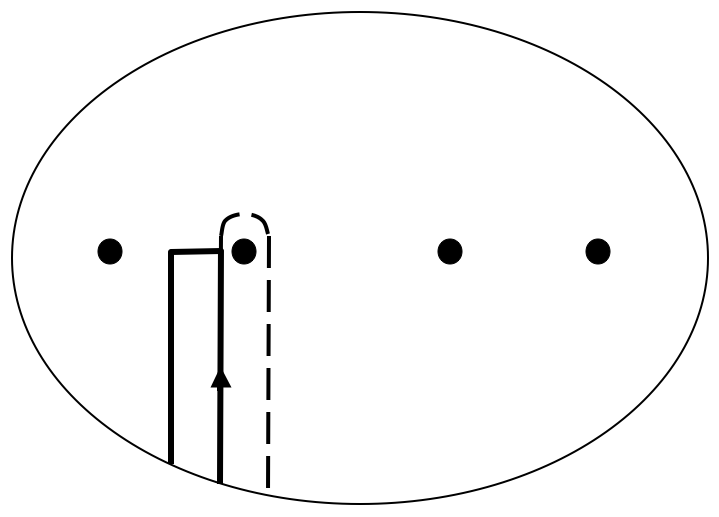}}\quad
\subfloat[$y_{4}$]{\includegraphics[height = 20mm]{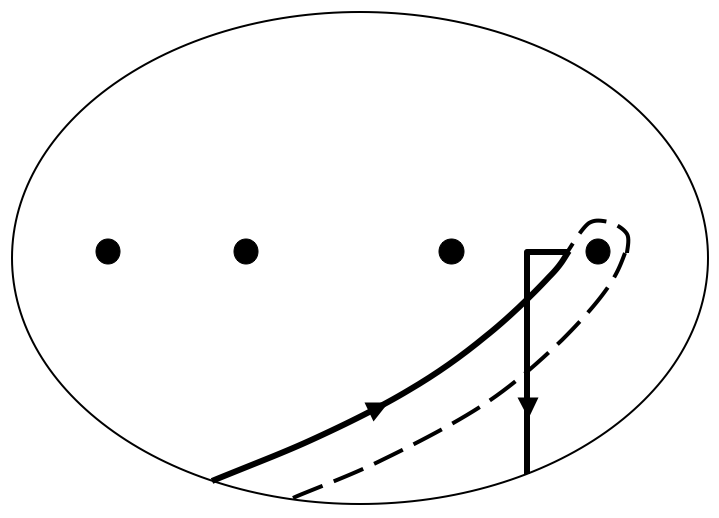}}\quad
\subfloat[$x_{3}$]{\includegraphics[height = 20mm]{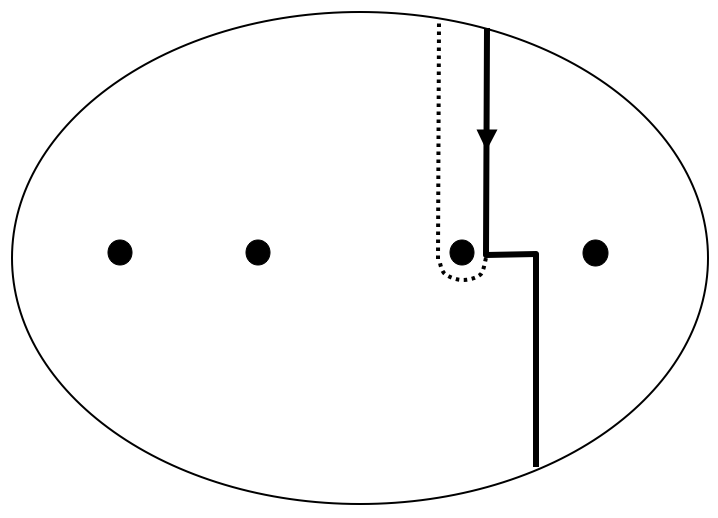}}\quad
\subfloat[$y_{1}$]{\includegraphics[height = 20mm]{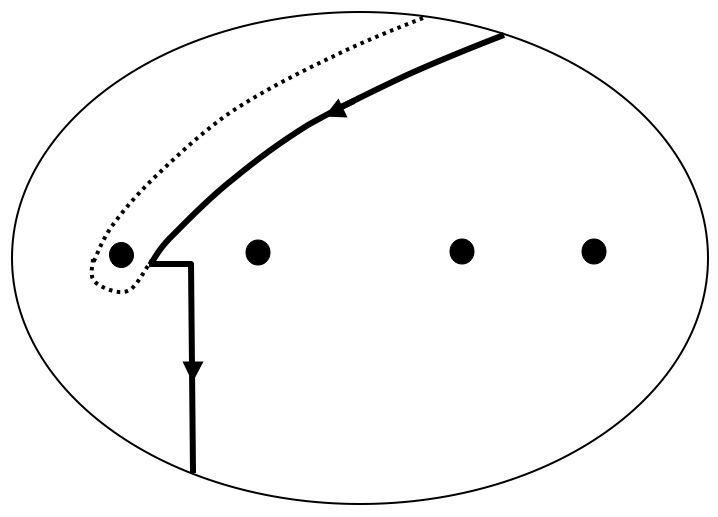}}\\
\subfloat[$u$]{\includegraphics[height = 20mm]{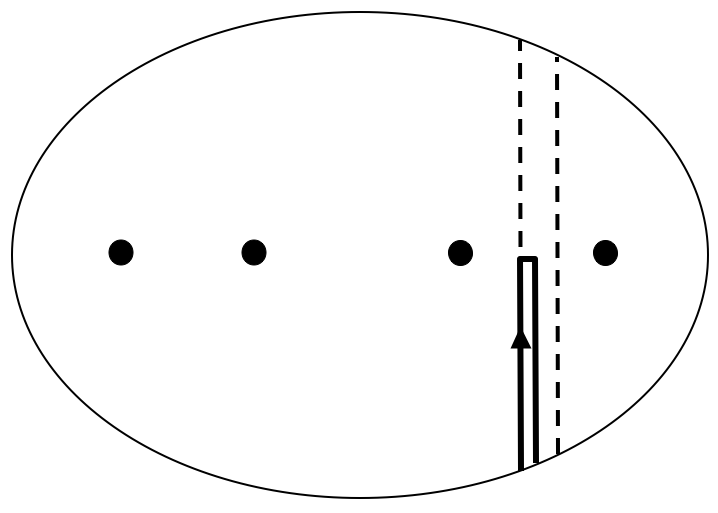}}\quad
\subfloat[$t$]{\includegraphics[height = 20mm]{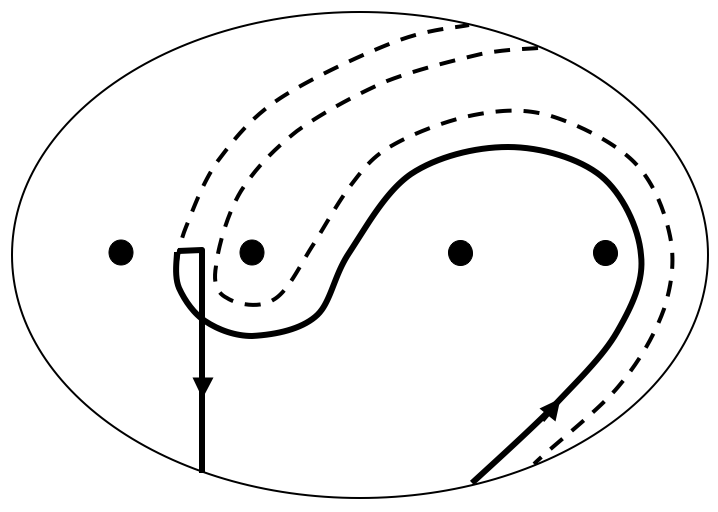}}\quad
\subfloat[$x_{4}$]{\includegraphics[height = 20mm]{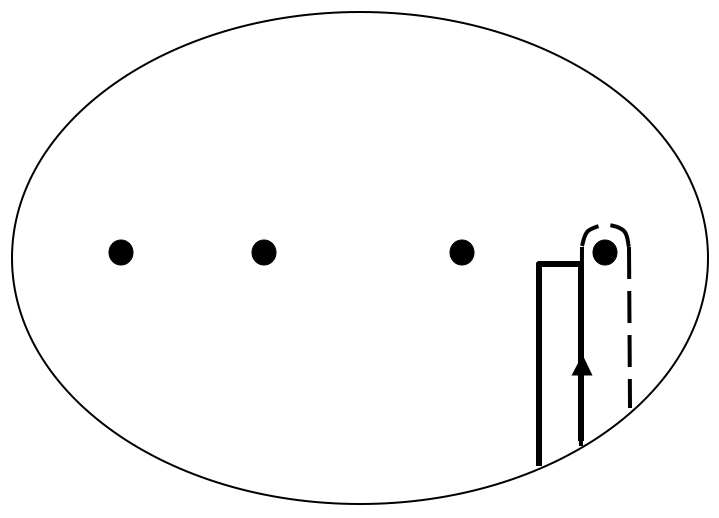}}\quad
\subfloat[$y_{2}$]{\includegraphics[height = 20mm]{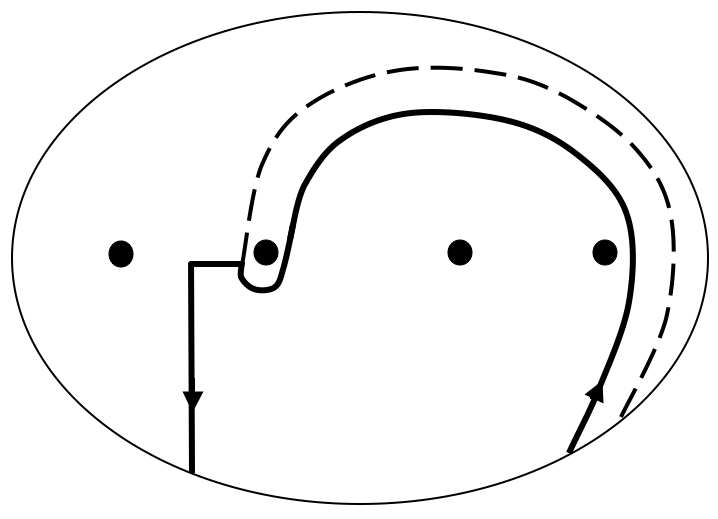}}
\caption[Loops associated to $b \mapsto b C_{i}$]{Loops associated to elements of $\Zcal$ affected by $b \mapsto b C_{i}$\label{fig:move2loops}}
\end{figure}

\subsubsection{$b \mapsto bB^{\pm 1} = b\left(\sigma_{2}\sigma_{1}^{2}\sigma_{2}\right)^{\pm1}$}\label{subsec:move3}

The fork diagrams before and after this move can be seen in Figure \ref{fig:move3}.  To better understand the fork diagram for $b B$, we perform the isotopy resulting in Figure \ref{fig:move3iso}.

\begin{figure}[h!]
\centering
\subfloat[Local diagram for $b$]{
\labellist 
\small
\pinlabel* {$x_{1}$} at 170 235
\pinlabel* {$u,u'$} at 315 230
\pinlabel* {$x_{2}$} at 365 285
\pinlabel* {$x_{3}$} at 540 235
\endlabellist 
\includegraphics[height = 40mm]{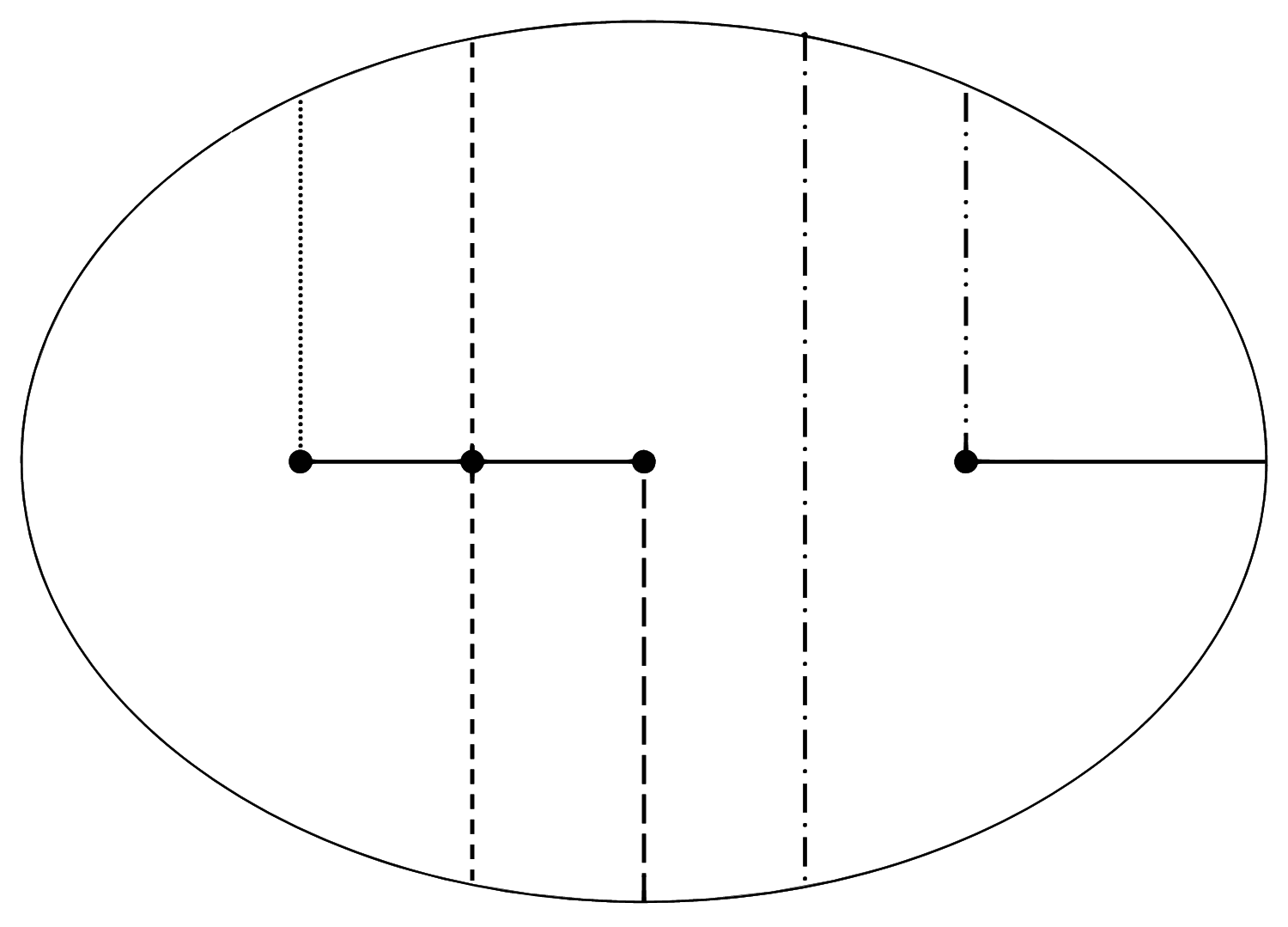}\label{fig:move3a}}
\subfloat[Local diagram for $b B$]{
\labellist
\small
\pinlabel* {$y_{1}$} at 220 330
\pinlabel* {$v,v'$} at 300 290
\pinlabel* {$y_{2}$} at 350 350
\pinlabel* {$y_{3}$} at 545 285
\endlabellist 
\includegraphics[height = 40mm]{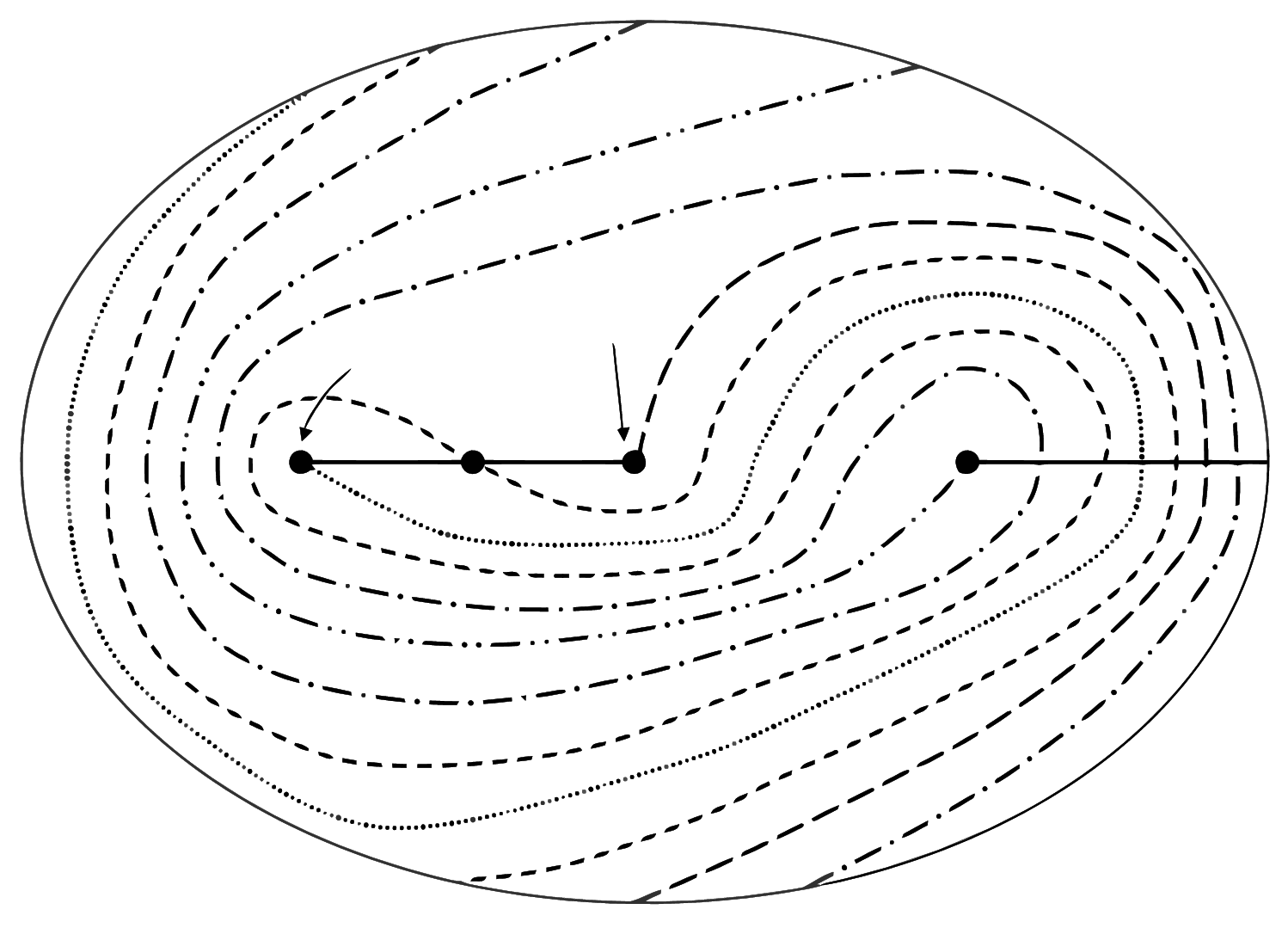}\label{fig:move3b}}\\
\subfloat[Isotopic to Figure \ref{fig:move3b}]{
\includegraphics[height = 40mm]{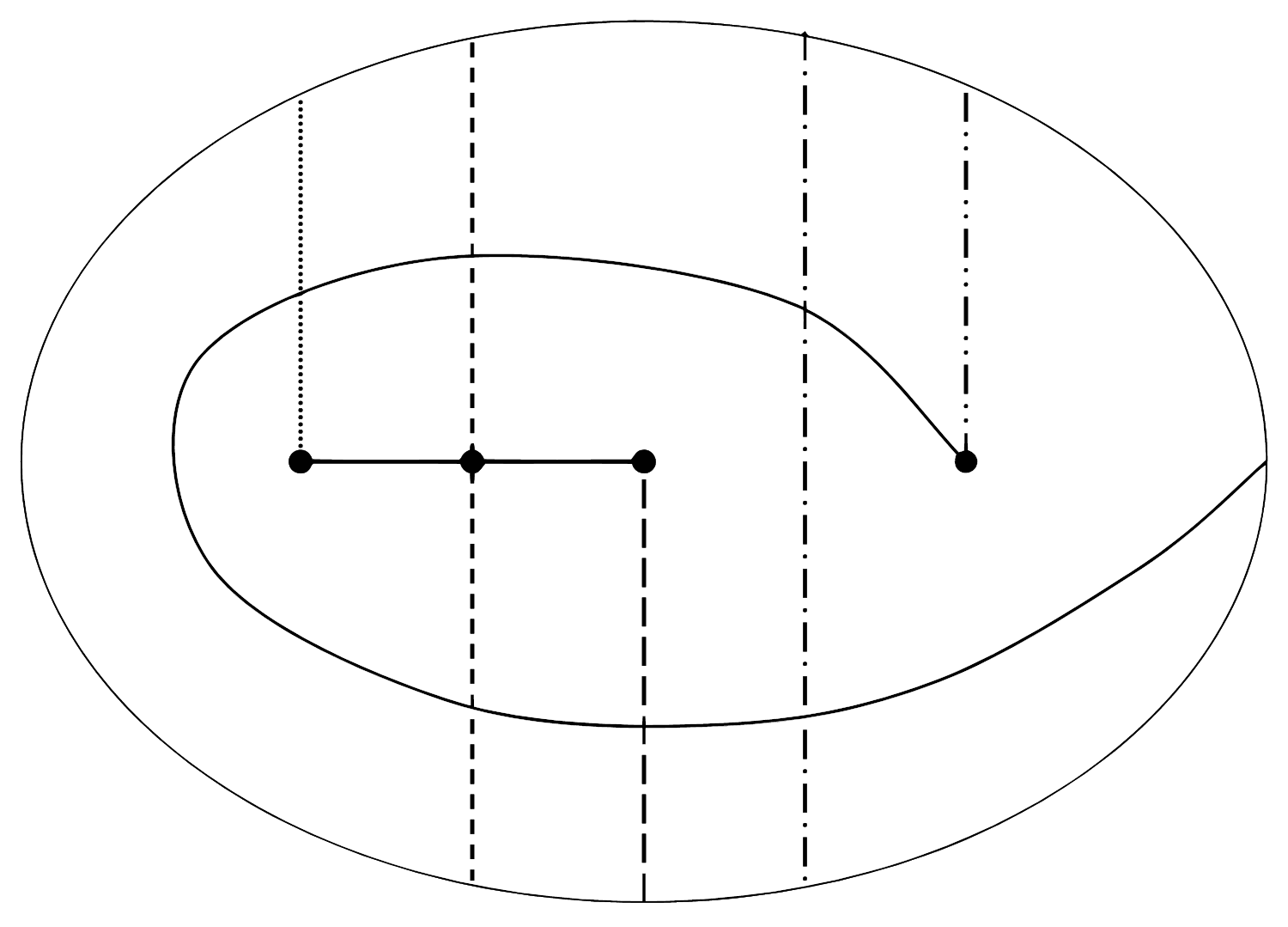} \label{fig:move3iso}}
\caption{Fork diagrams associated to $b \mapsto b B$ \label{fig:move3}}
\end{figure}

\begin{figure}[h!]
\centering
\subfloat[From $b$]{
\labellist 
\small
\pinlabel* {$x_{1}$} at 340 190
\pinlabel* {$u$} at 285 193
\pinlabel* {$u'$} at 385 200
\pinlabel* {$x_{2}$} at 450 385
\pinlabel* {$x_{3}$} at 610 240
\pinlabel* {$a$} at 248 263
\pinlabel* {\reflectbox{$a$}} at 470 263
\endlabellist 
\includegraphics[height = 40mm]{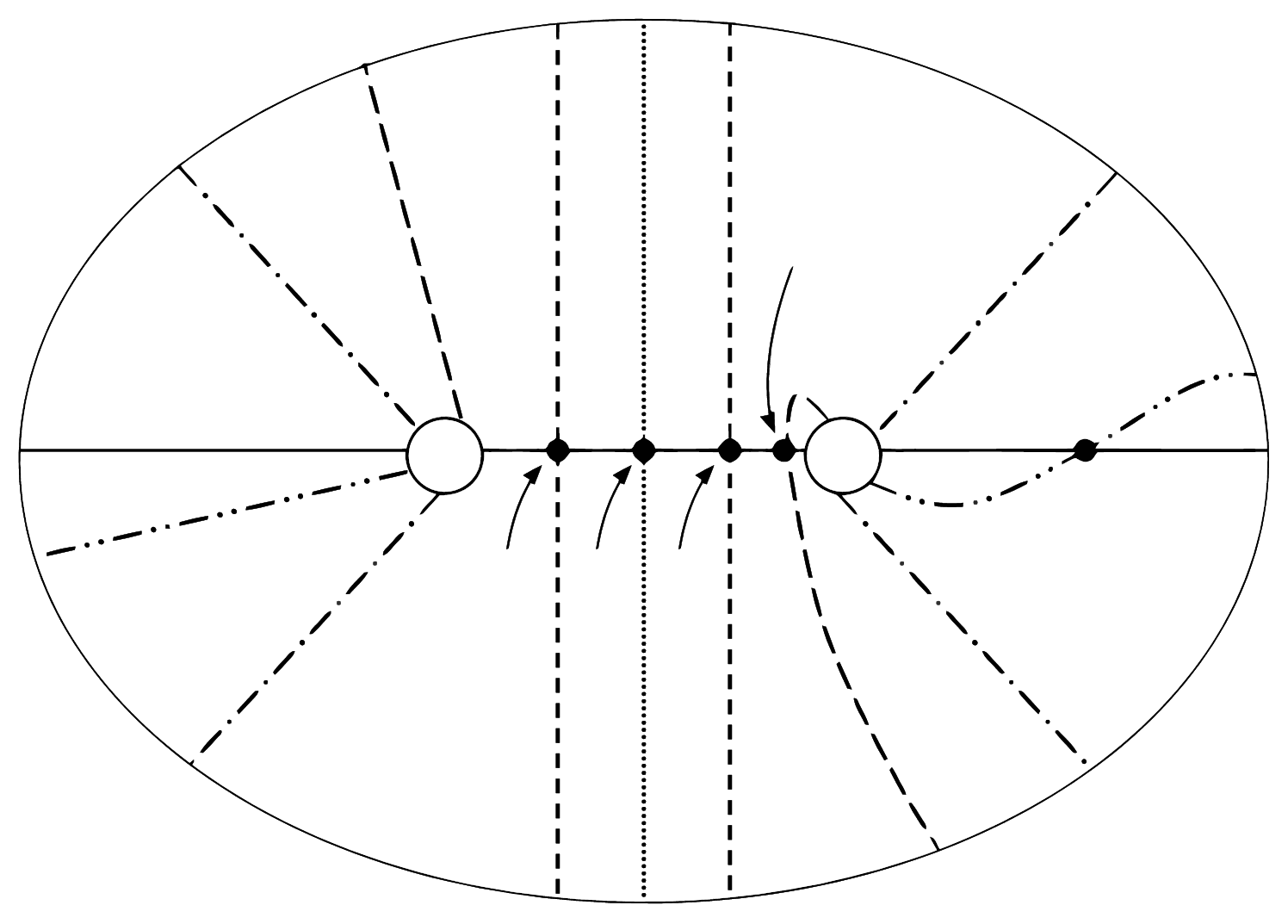}}\qquad
\subfloat[From $b B$]{
\labellist 
\small
\pinlabel* {$y_{1}$} at 340 340
\pinlabel* {$v$} at 280 340
\pinlabel* {$v'$} at 385 350
\pinlabel* {$y_{2}$} at 455 345
\pinlabel* {$y_{3}$} at 625 240
\pinlabel* {$a$} at 248 263
\pinlabel* {\reflectbox{$a$}} at 470 263
\endlabellist 
\includegraphics[height = 40mm]{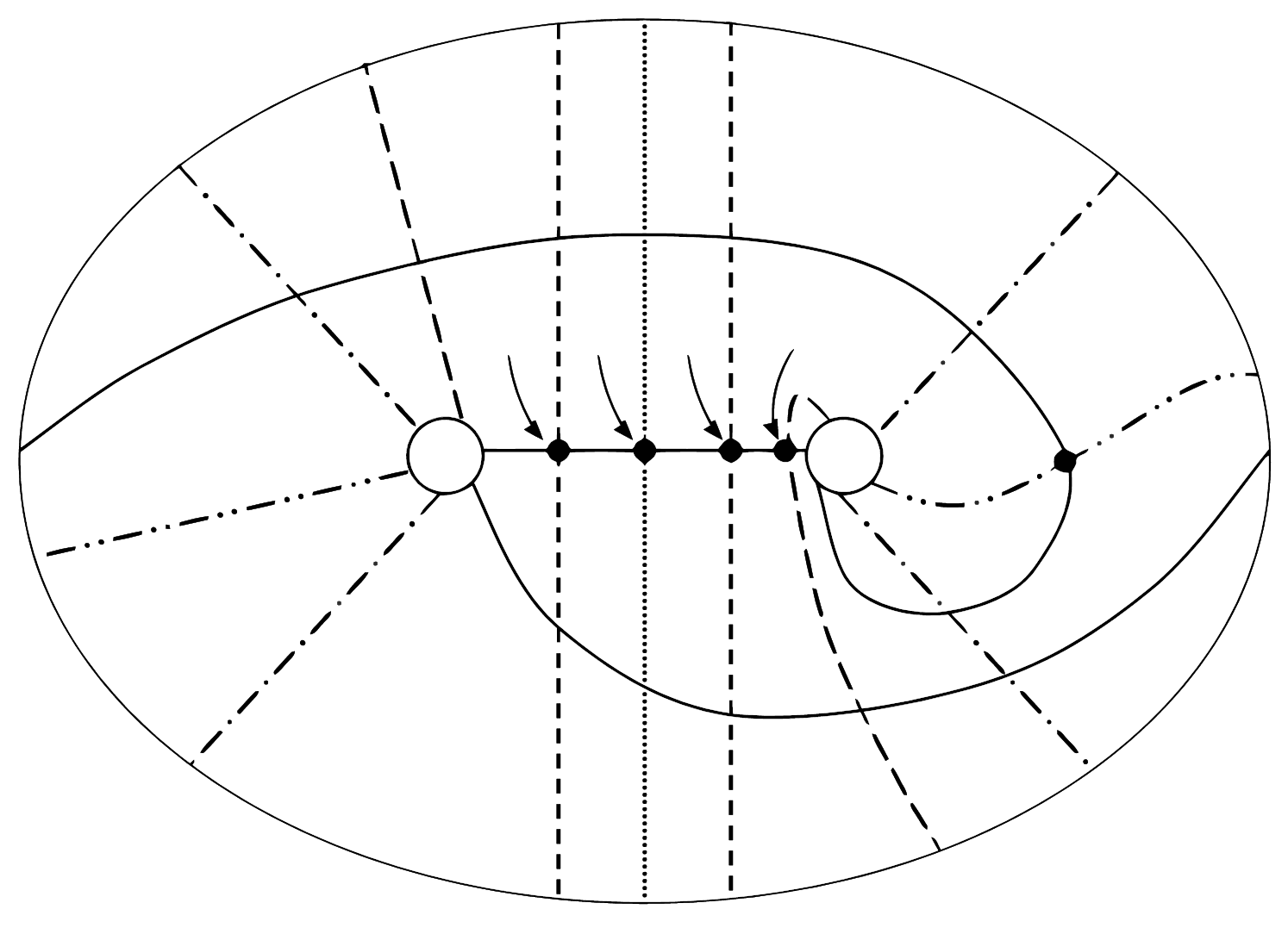}}
\caption[Heegaard diagrams covering Figure \ref{fig:move3}]{Heegaard diagrams for  $\DBCs{K}$ covering the fork diagrams in Figure \ref{fig:move3}}
\label{fig:move3hd}
\end{figure}

It is clear that only $\alpha_{2}$ is altered by the move.  To be more precise, we take a look at the Heegaard diagrams for $\DBCs{K}$ which are the branched double-covers of the fork diagrams for $b$ and $b B$.  These can be seen in Figure \ref{fig:move3hd}.

To get from the left diagram to the right, we can perform a sequence of two handleslides as shown in Figure \ref{fig:move3hs}.

\begin{figure}[h!]
\centering
\subfloat[$\ba = \ba^1\mapsto \ba^2$]{
\includegraphics[height = 40mm]{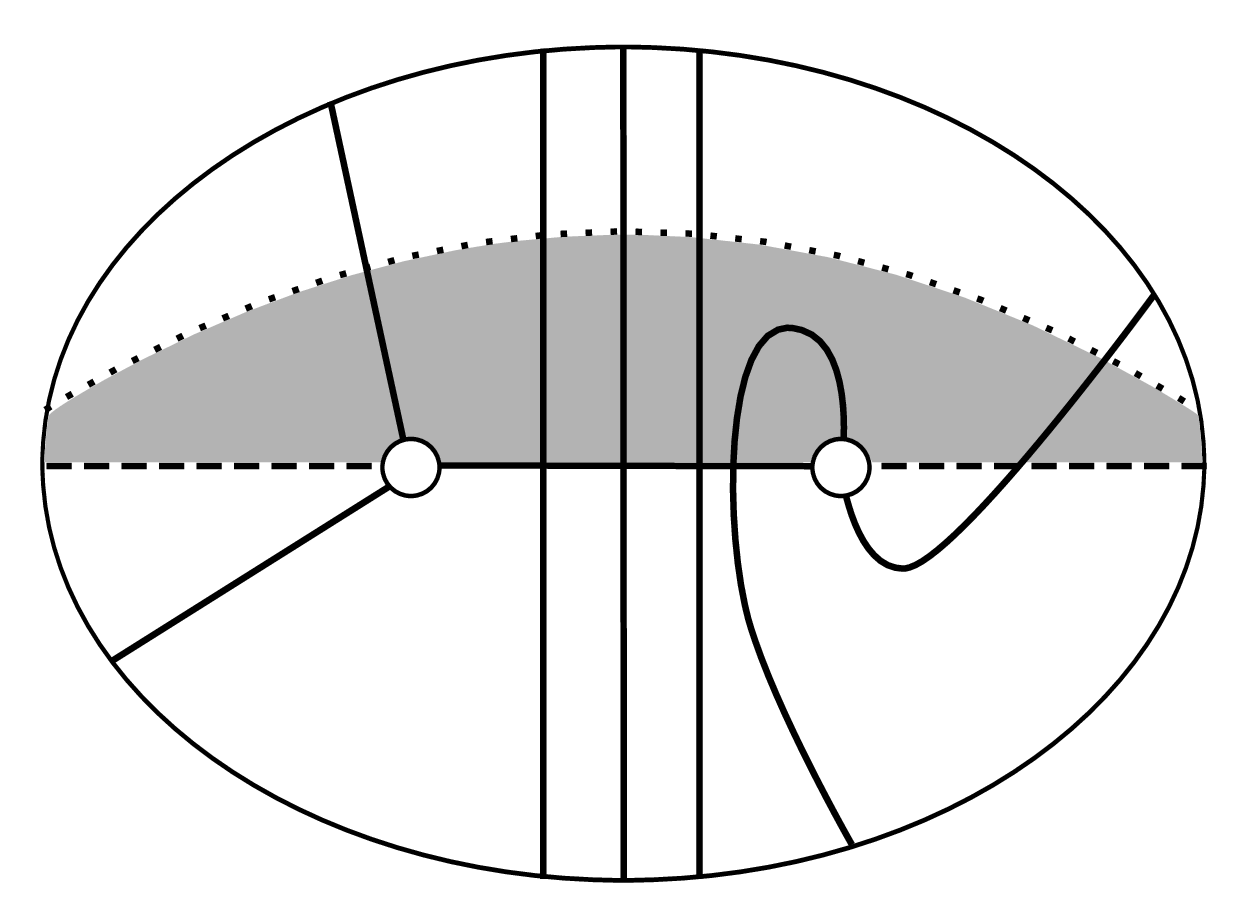}}\qquad
\subfloat[$\ba^2 \mapsto \ba^3=\ba'$]{
\includegraphics[height = 40mm]{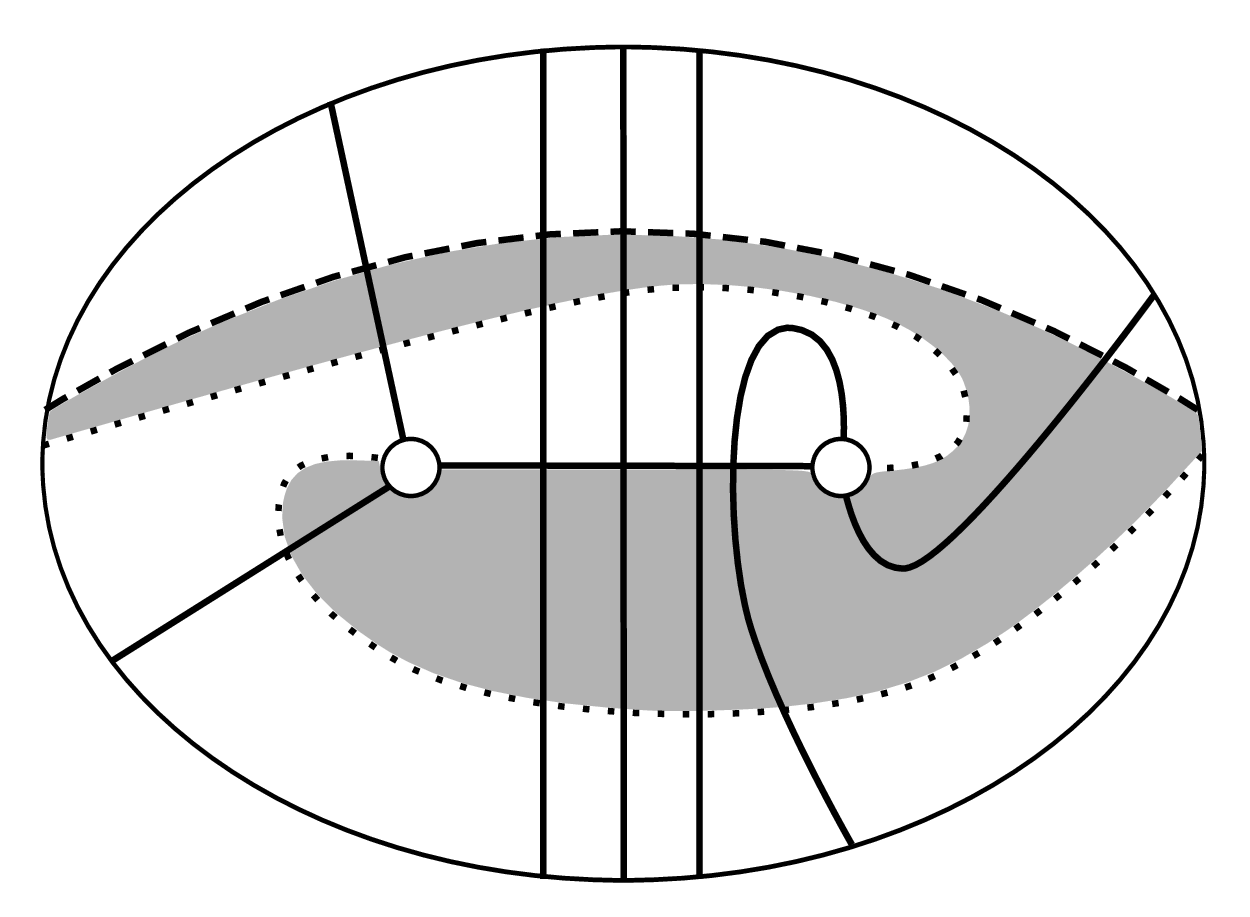}}
\caption[The handleslides induced by $b \mapsto b B$]{Two handleslides connecting Heegaard diagrams for $b$ and $b B$\label{fig:move3hs}.  In each picture, the pair of pants is shaded, the new circle is dotted, and the old one is dashed.}
\end{figure}

\begin{figure}[h!]
\centering
\subfloat[$u \mapsto w$ ]{\includegraphics[height = 24mm]{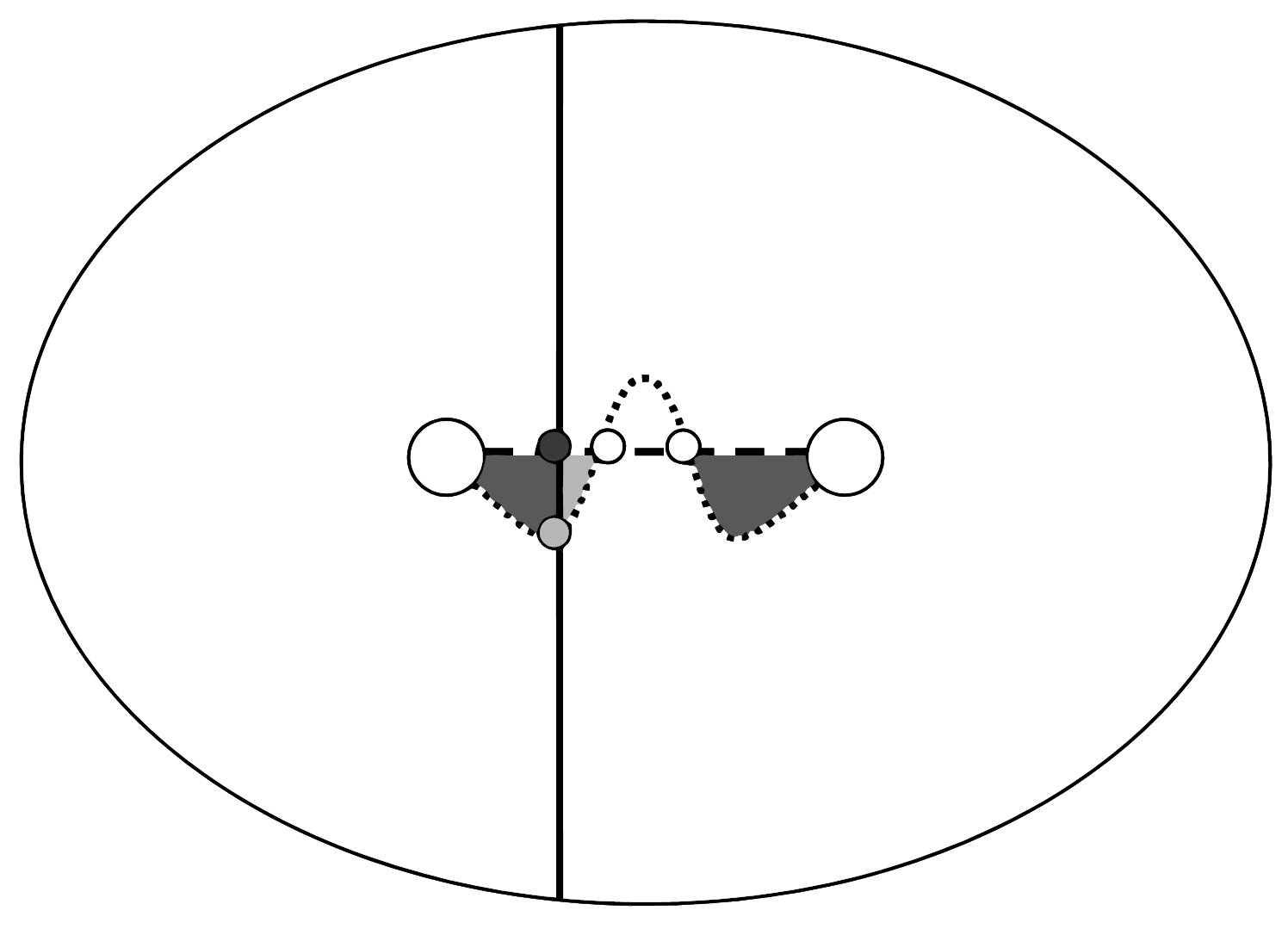}}
\subfloat[$x_{1} \mapsto s_{1}$]{\includegraphics[height = 24mm]{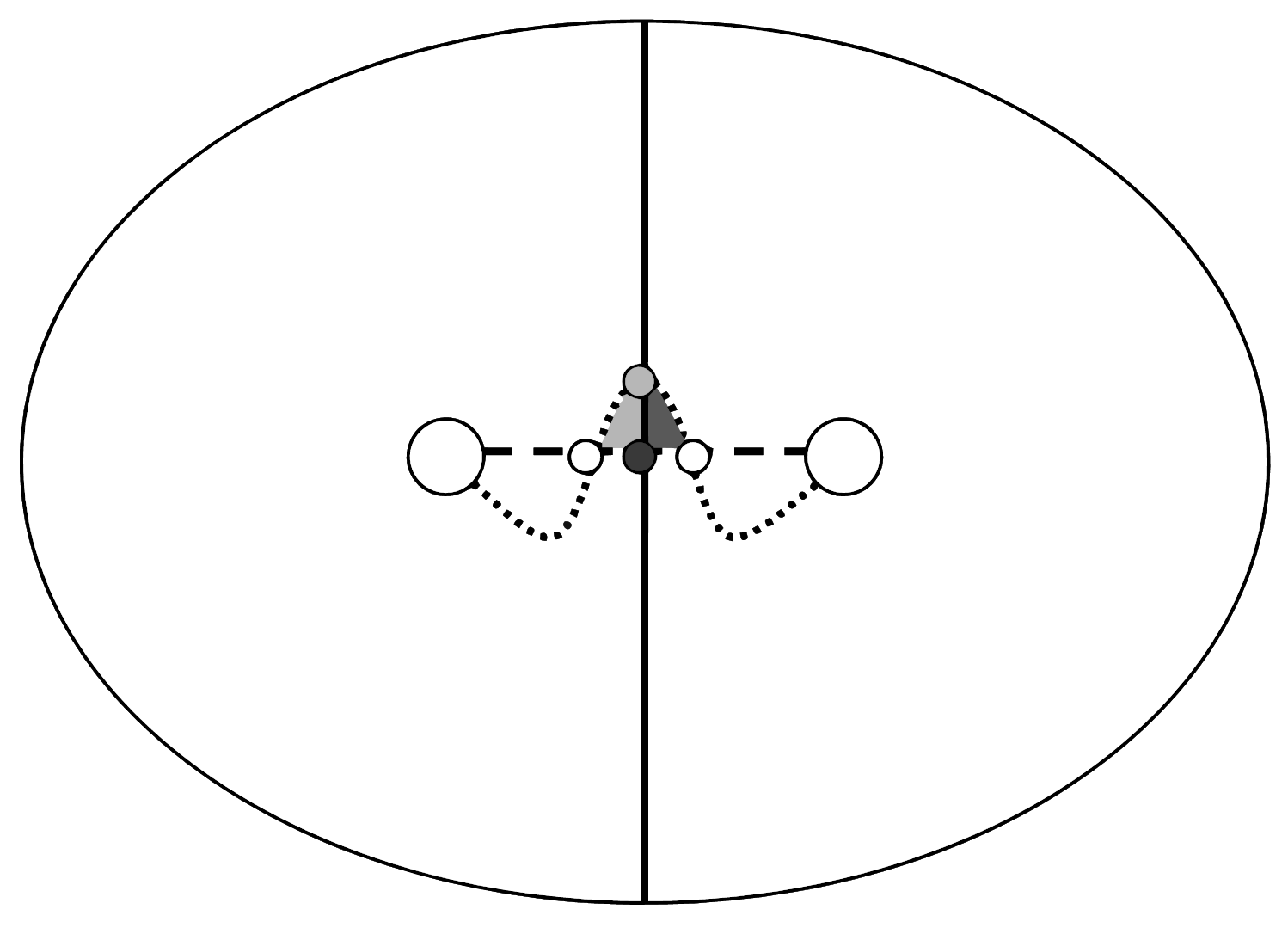}}
\subfloat[$u' \mapsto w'$]{\includegraphics[height = 24mm]{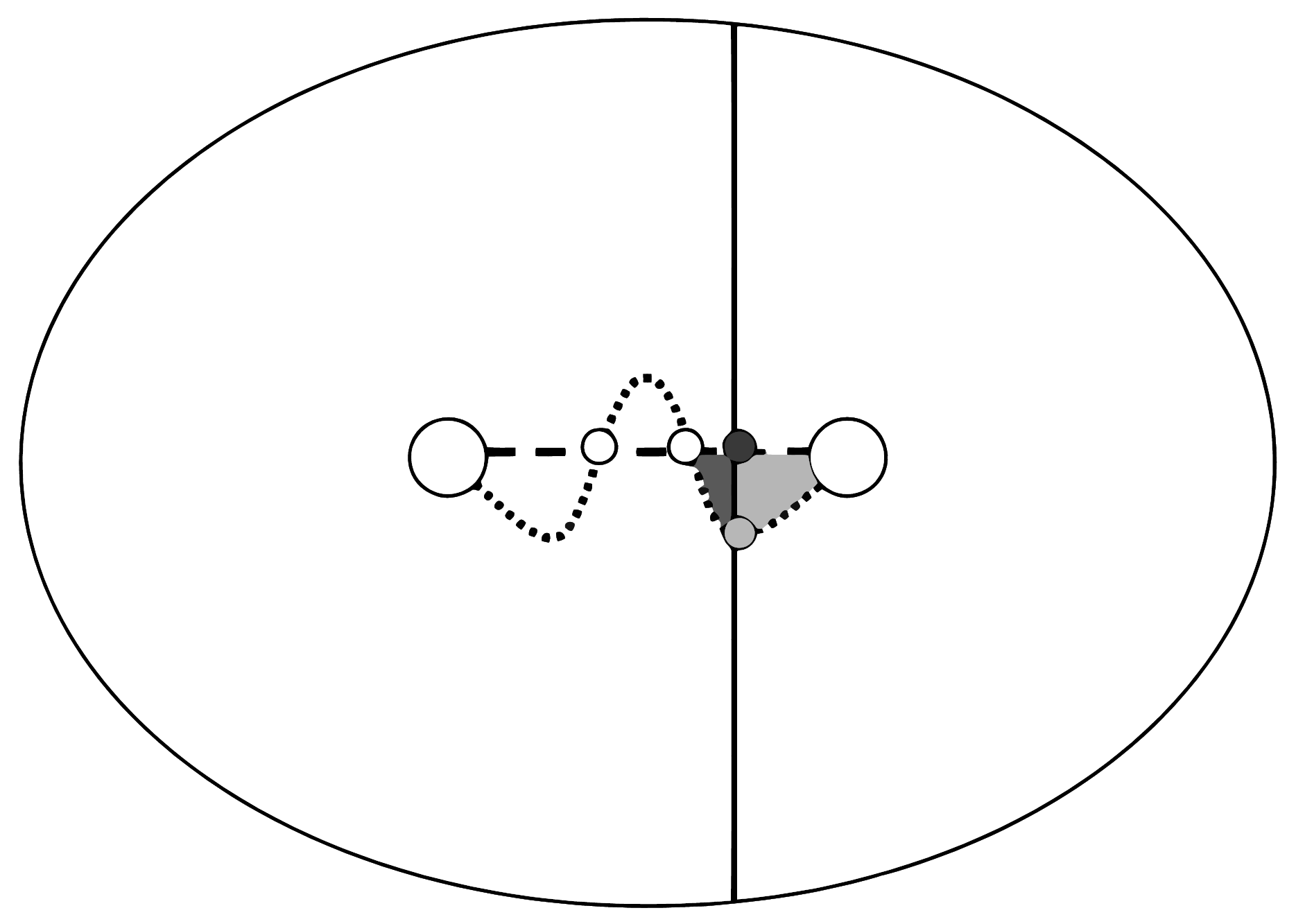}}\\
\subfloat[$x_{2} \mapsto s_{2}$]{\includegraphics[height = 24mm]{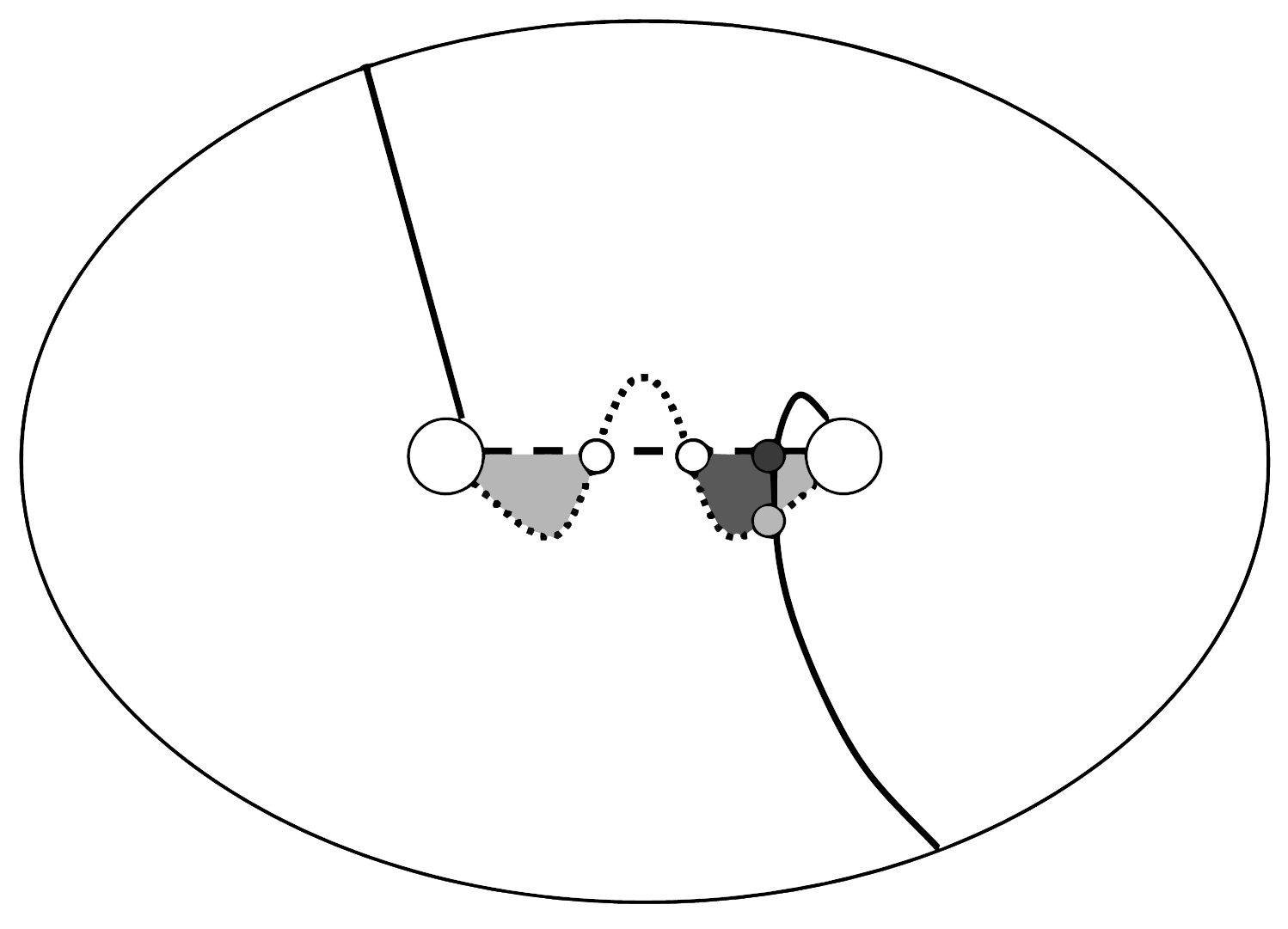}}
\subfloat[$x_{3} \mapsto s_{3}$]{\includegraphics[height = 24mm]{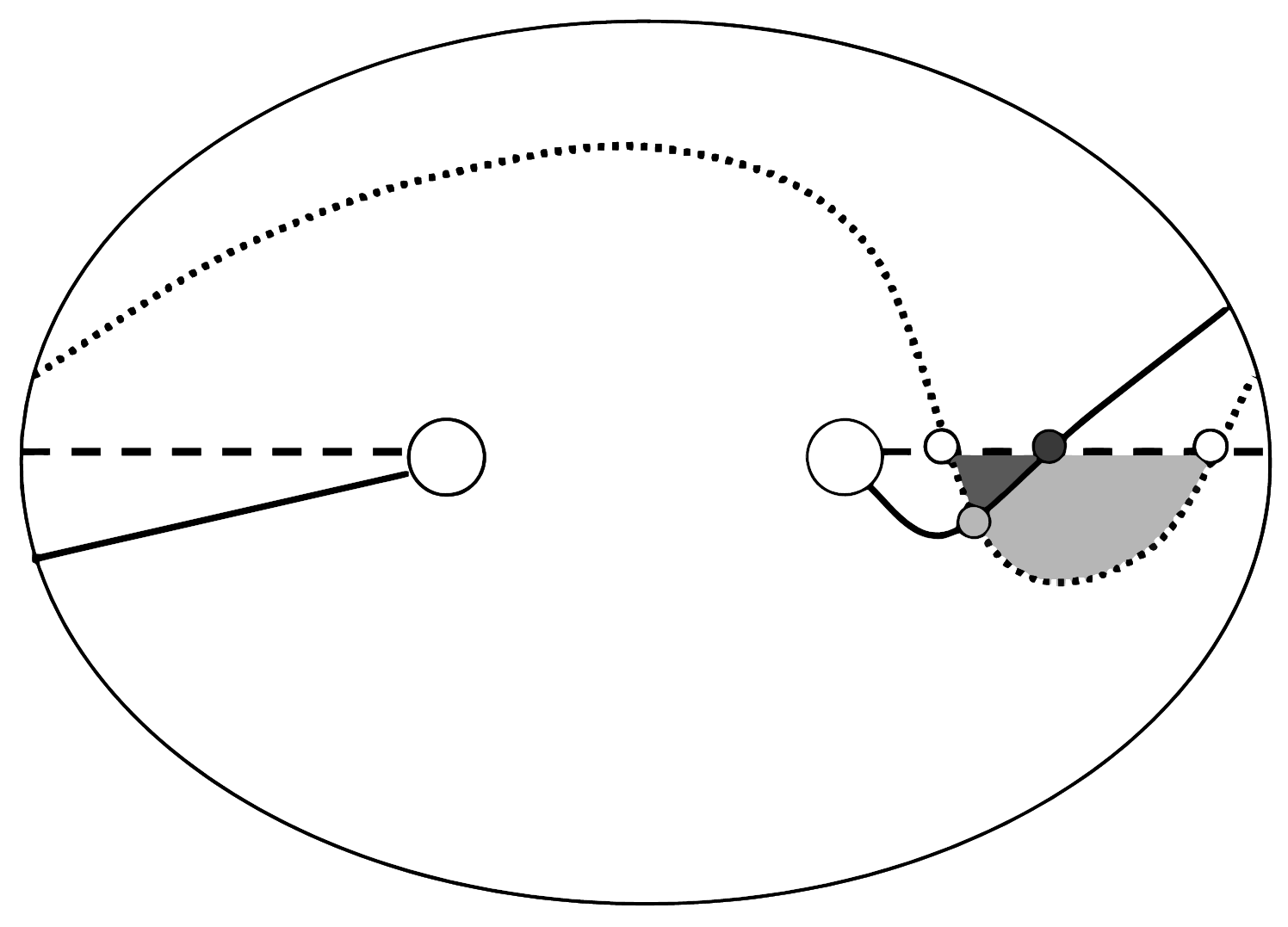}}
\caption[Local components of 3-gon domains associated to the first handleslide for  $b \mapsto b B$]{Local regions in domains of 3-gons $\psi^{+}$ (dark gray) and $\psi^{-}$ (light gray) for $g_{B}^{a}$.  White dots are components of $\thet{\ba''\ba},\thet{\ba\ba"} \in \tor{\ba} \cap \tor{\ba''}$. \label{fig:move3hs1}}
\end{figure}

We first address admissibility of the intermediate pointed Heegaard diagram $\left( \Sigma; \ba^2; \bb; +\infty \right)$.

\begin{figure}[h!]
\centering
\subfloat[$\Sigma \setminus \left( \cup_{i} \alpha_{i}^{1} \right) \setminus \left( \cup_{i} \beta_{i} \right)$]{
\labellist 
\small
\pinlabel* {$\mathcal{D}_{10}$} at 70 175
\pinlabel* {$\mathcal{D}_1$} at 120 330
\pinlabel* {$\mathcal{D}_2$} at 195 360
\pinlabel* {$\mathcal{D}_3$} at 255 370
\pinlabel* {$\mathcal{D}_4$} at 320 370
\pinlabel* {$\mathcal{D}_5$} at 535 255
\pinlabel* {$\mathcal{D}_9$} at 165 120
\pinlabel* {$\mathcal{D}_8$} at 255 120
\pinlabel* {$\mathcal{D}_{7}$} at 315 120
\pinlabel* {$\mathcal{D}_{6}$} at 390 60
\endlabellist 
\includegraphics[height = 44mm]{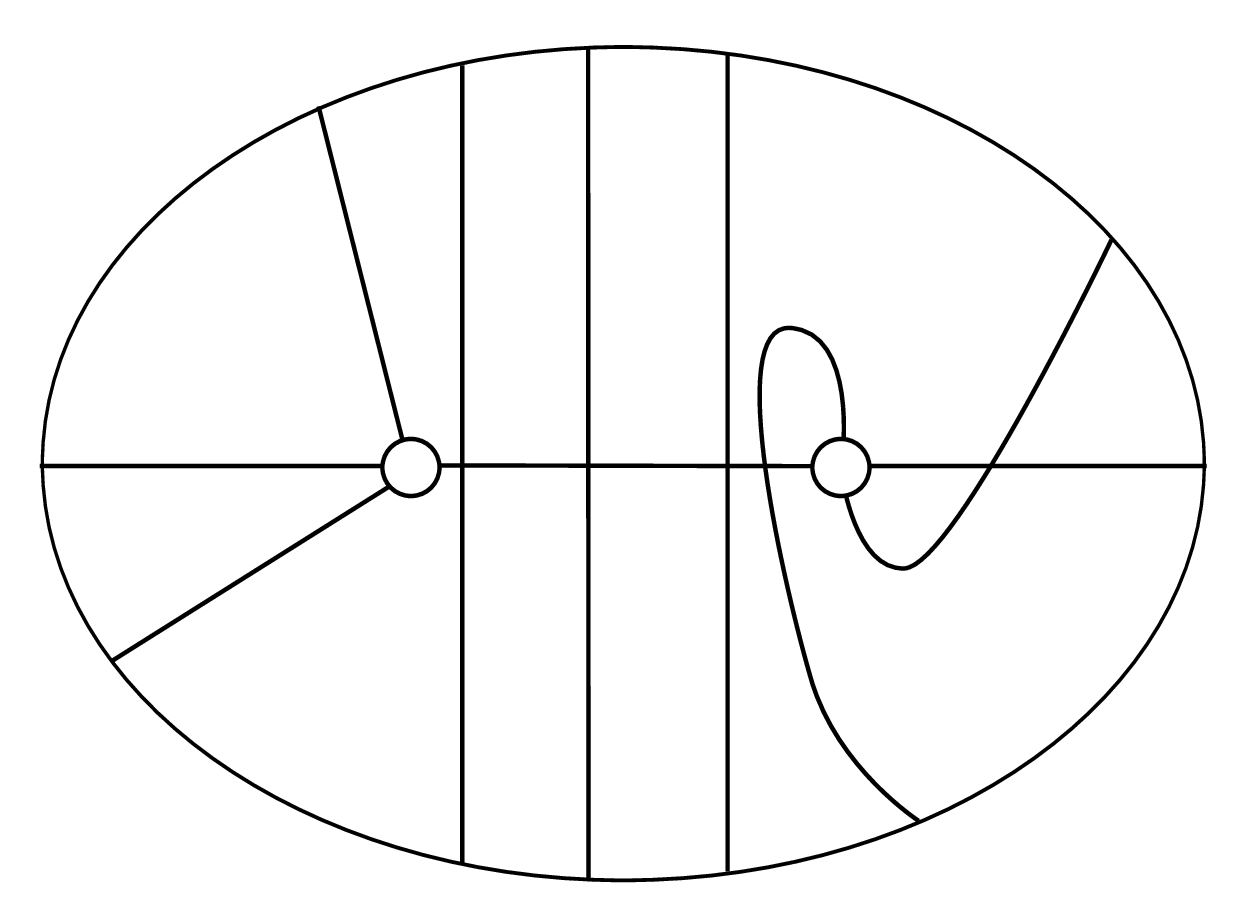}\label{fig:move3ad1}}
\subfloat[$\Sigma \setminus \left( \cup_{i} \alpha_{i}^{2} \right) \setminus \left( \cup_{i} \beta_{i} \right)$]{
\labellist 
\small
\pinlabel* {$\mathcal{D}'_{10}$} at 115 220
\pinlabel* {$\mathcal{D}'_1$} at 120 330
\pinlabel* {$\mathcal{D}'_2$} at 195 360
\pinlabel* {$\mathcal{D}'_3$} at 255 370
\pinlabel* {$\mathcal{D}'_4$} at 320 370
\pinlabel* {$\tld{\mathcal{D}}_1$} at 415 350
\pinlabel* {$\mathcal{D}'_5$} at 520 285
\pinlabel* {$\tld{\mathcal{D}}_2$} at 390 260
\pinlabel* {$\tld{\mathcal{D}}_3$} at 255 275
\pinlabel* {$\tld{\mathcal{D}}_4$} at 320 275
\pinlabel* {$\mathcal{D}'_9$} at 165 120
\pinlabel* {$\mathcal{D}'_8$} at 255 120
\pinlabel* {$\mathcal{D}'_{7}$} at 315 120
\pinlabel* {$\mathcal{D}'_{6}$} at 380 60
\endlabellist 
\includegraphics[height = 44mm]{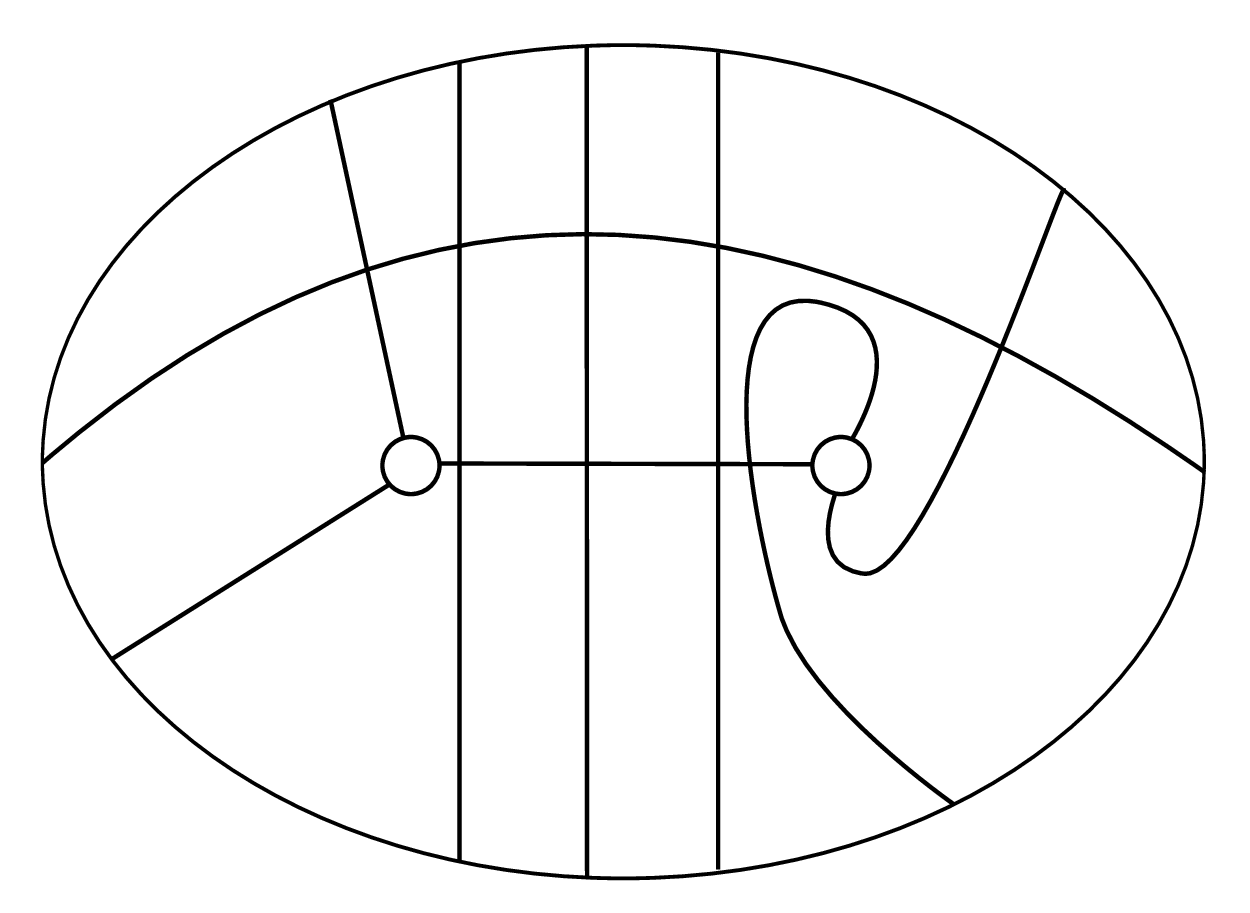}\label{fig:move3ad2}}
\caption[Domains before and after a handeslide]{Domains of on Heegaard surfaces before and after the handleslide $\ba^{1} \rightarrow \ba^2$ induced by the Birman move $b \mapsto bB$. \label{fig:move3ad}}
\end{figure}

Label the $m$ domains of $\left( \Sigma; \ba^1; \bb; +\infty \right)$ as indicated in Figure $\ref{fig:move3ad1}$, where $\mathcal{D}_{k}$ lies entirely outside of the picture for $k \geq 11$; label the $(m + 4)$ domains of $\left( \Sigma; \ba^2; \bb; +\infty \right)$ as indicated $\ref{fig:move3ad2}$, where $\mathcal{D}'_k$ corresponds to $\mathcal{D}_k$ for $k \geq 11.$  Now consider some periodic domain in $\left( \Sigma; \ba^2; \bb; +\infty \right)$
$$\mathcal{P}' = \tld{c}_1\tld{\mathcal{D}}_1 + \tld{c}_2\tld{\mathcal{D}}_2 + \tld{c}_1\tld{\mathcal{D}}_3 + \tld{c}_1\tld{\mathcal{D}}_4 + \sum_{i=1}^{m} c_i\mathcal{D}'_i.$$
Notice that
\begin{align*}
c_1 - c_{10} &= \tld{c}_1 - c_{10} = c_5 - c_9 = \tld{c}_{i} - c_i \quad \text{for} \quad 1 \leq i \leq 4 \quad \text{and}\\
\tld{c}_2 - c_9 &= \tld{c}_3 - c_8 =\tld{c}_4 - c_7 =c_{10} - c_6.
\end{align*}
As a result,
$$ \tld{c}_1 = c_1 \quad \text{and} \quad c_2 - c_9 = c_3 - c_8 = c_4 - c_7 = c_1 - c_6.$$
So, we can conclude that there is a periodic domain in $\left( \Sigma; \ba^1; \bb; +\infty \right)$ of the form
$$\mathcal{P} = \sum_{i=1}^{m} c_i\mathcal{D}_i.$$
But since $\left( \Sigma; \ba^1; \bb; +\infty \right)$ is admissible, there are both positive and negative integers among the $c_i$, and thus $\left( \Sigma; \ba^2; \bb; +\infty \right)$ is also admissible.

Let the injection $g_{B}$ act as $x_{1} x_{3} \by \mapsto y_{1}  y_{3} \by$, $x_{2} x_{3} \by \mapsto y_{2}  y_{3} \by$, $u  x_{3} \by \mapsto v  y_{3} \by$, $u' x_{3} \by \mapsto v'  y_{3} \by$, $x_{1} \bz \mapsto y_{1} \bz$, $x_{2} \bz \mapsto y_{2} \bz$, $u \bz \mapsto v \bz$, and $u' \bz \mapsto v' \bz$.  Note that $\bz $ is an $(n-1)$-tuple whose component on the $\alpha_{2}$ arc is not shown in the local fork diagram.

When viewed as a function on $\Ta \cap \Tb$, $g_{B}$ is a composition $g_{B}^{\ba,2} \circ g_{B}^{\ba,1}$ of triangle injections corresponding to the two handleslides.  Local regions in domains of 3-gons for $g_{B}^{\ba,1}$ (resp. $g_{B}^{\ba,2}$) can be seen in Figure \ref{fig:move3hs1} (resp. \ref{fig:move3hs2}).  One can verify that if two of these regions appear in the domain of a 3-gon associated with $g_{B}^{\ba,1}$, then there are neighborhoods of these regions which map to disjoint neighborhoods in the fork diagram downstairs (and likewise for those associated with $g_{B}^{\ba,2}$).  As a result, all 3-gons presented here avoid the anti-diagonal $\AD$.

\begin{figure}[h!]
\centering
\subfloat[$w \mapsto v$]{\includegraphics[height = 24mm]{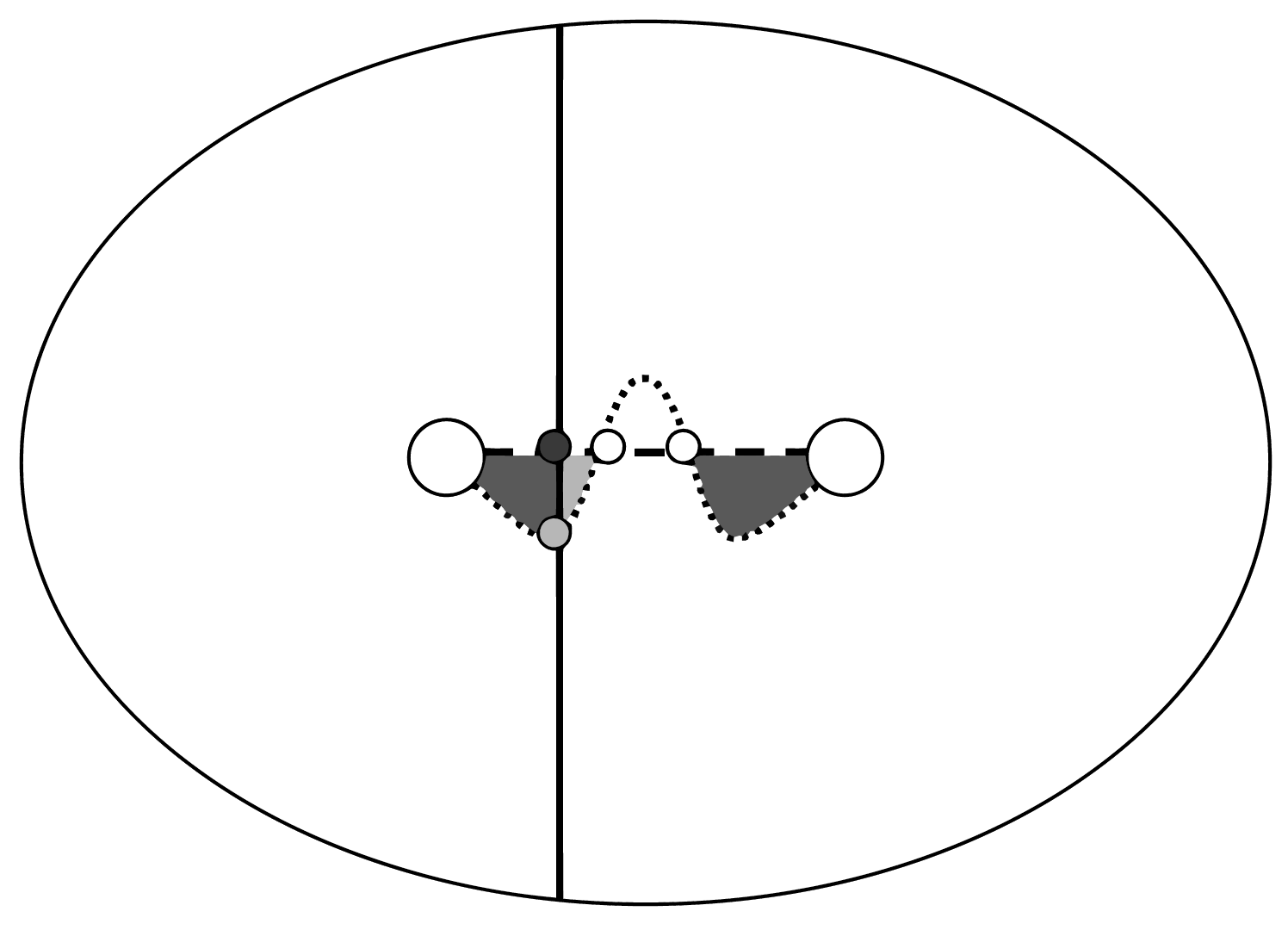}}
\subfloat[$s_{1} \mapsto y_{1}$]{\includegraphics[height = 24mm]{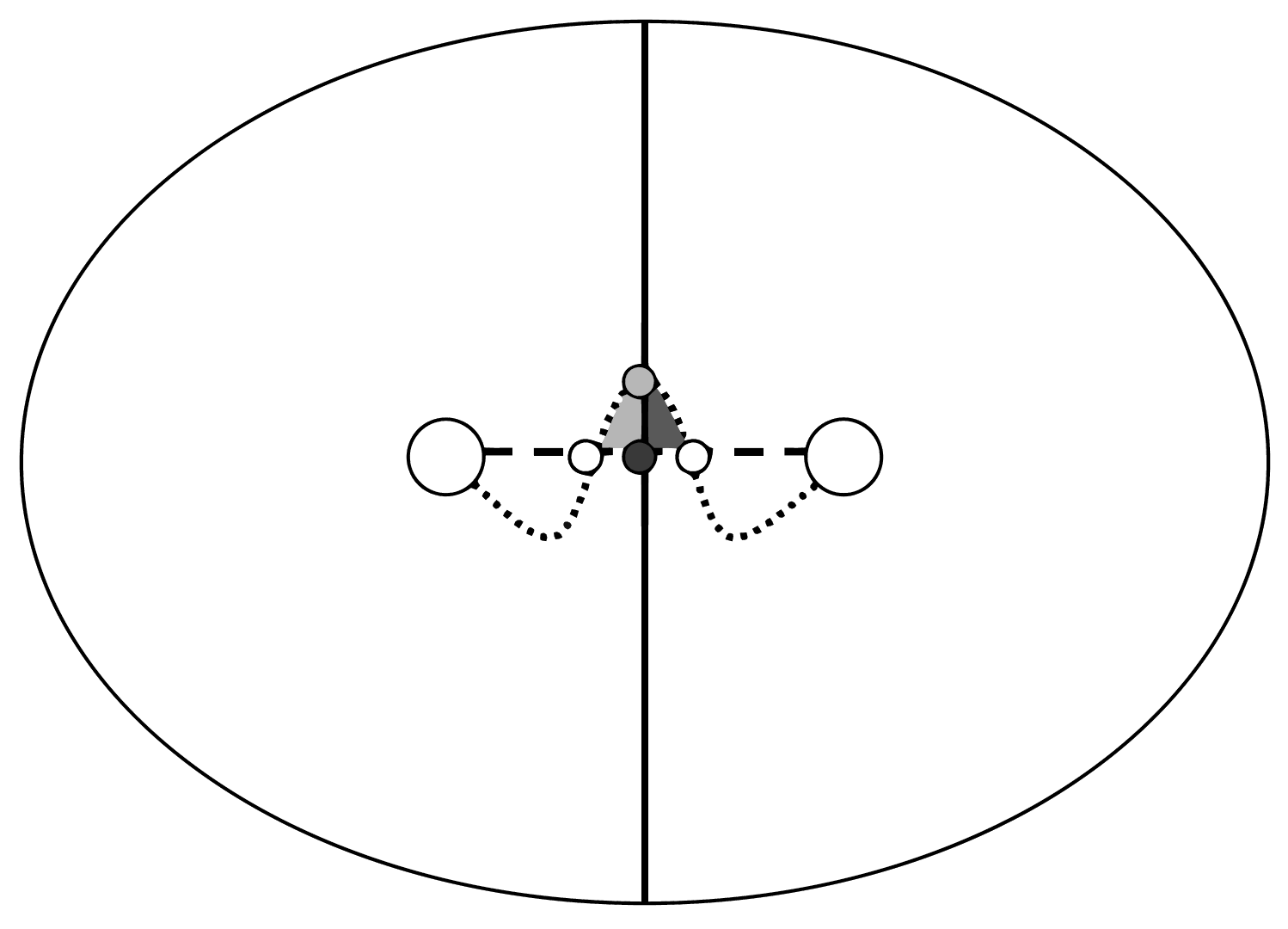}}
\subfloat[$w' \mapsto v'$]{\includegraphics[height = 24mm]{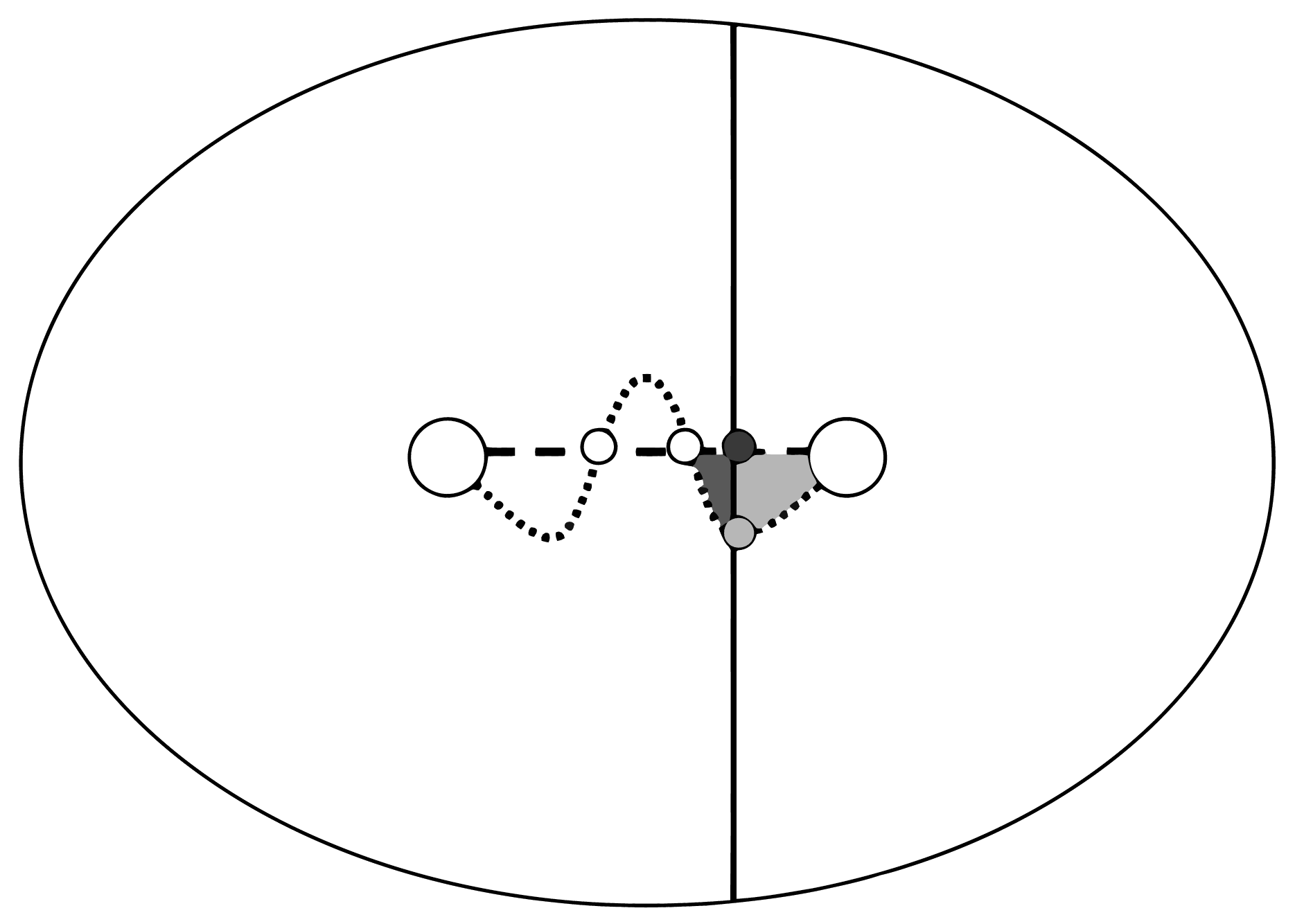}}\\
\subfloat[$s_{2} \mapsto y_{2}$]{\includegraphics[height = 24mm]{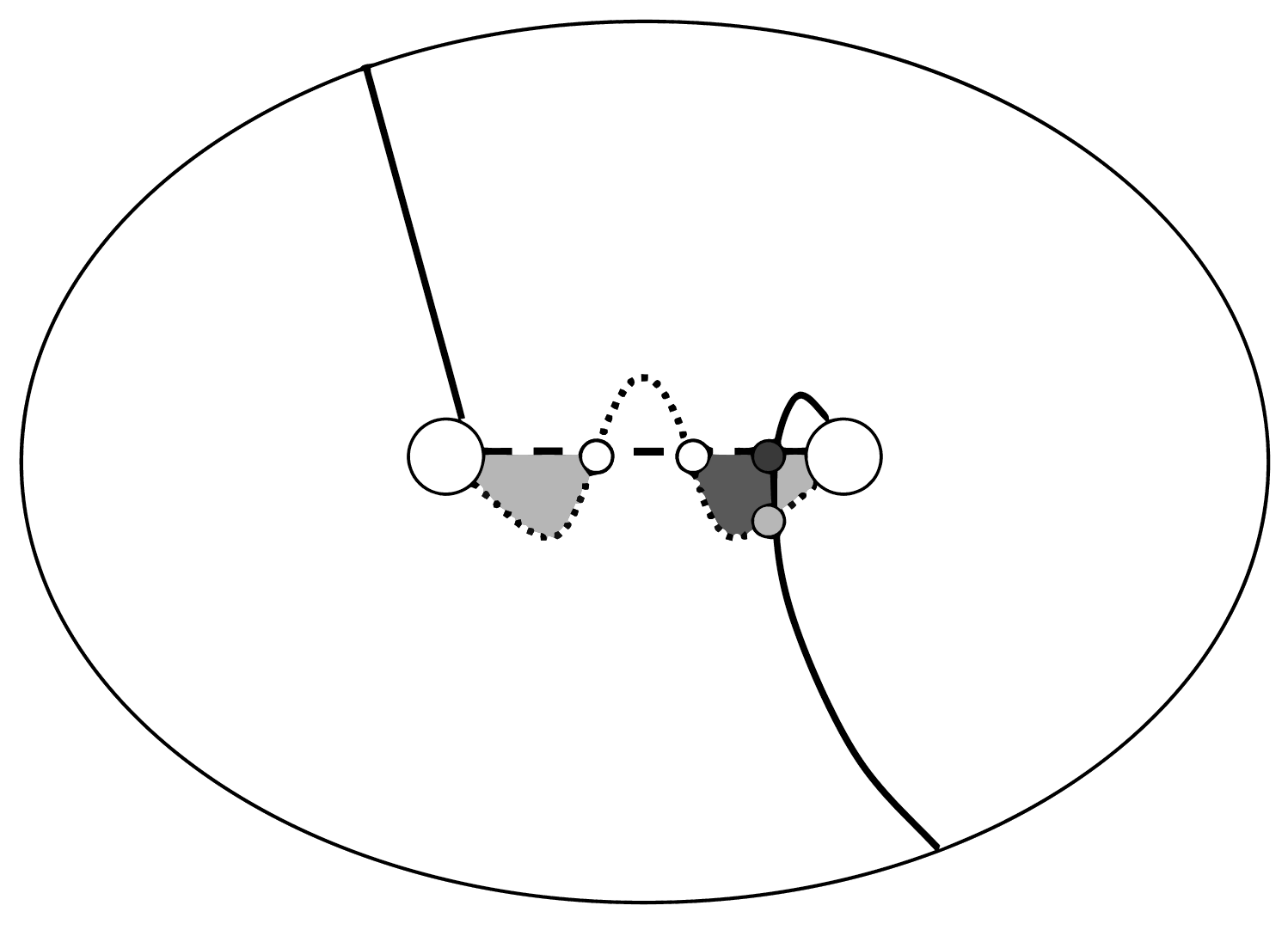}}
\subfloat[$s_{3} \mapsto y_{3}$]{\includegraphics[height = 24mm]{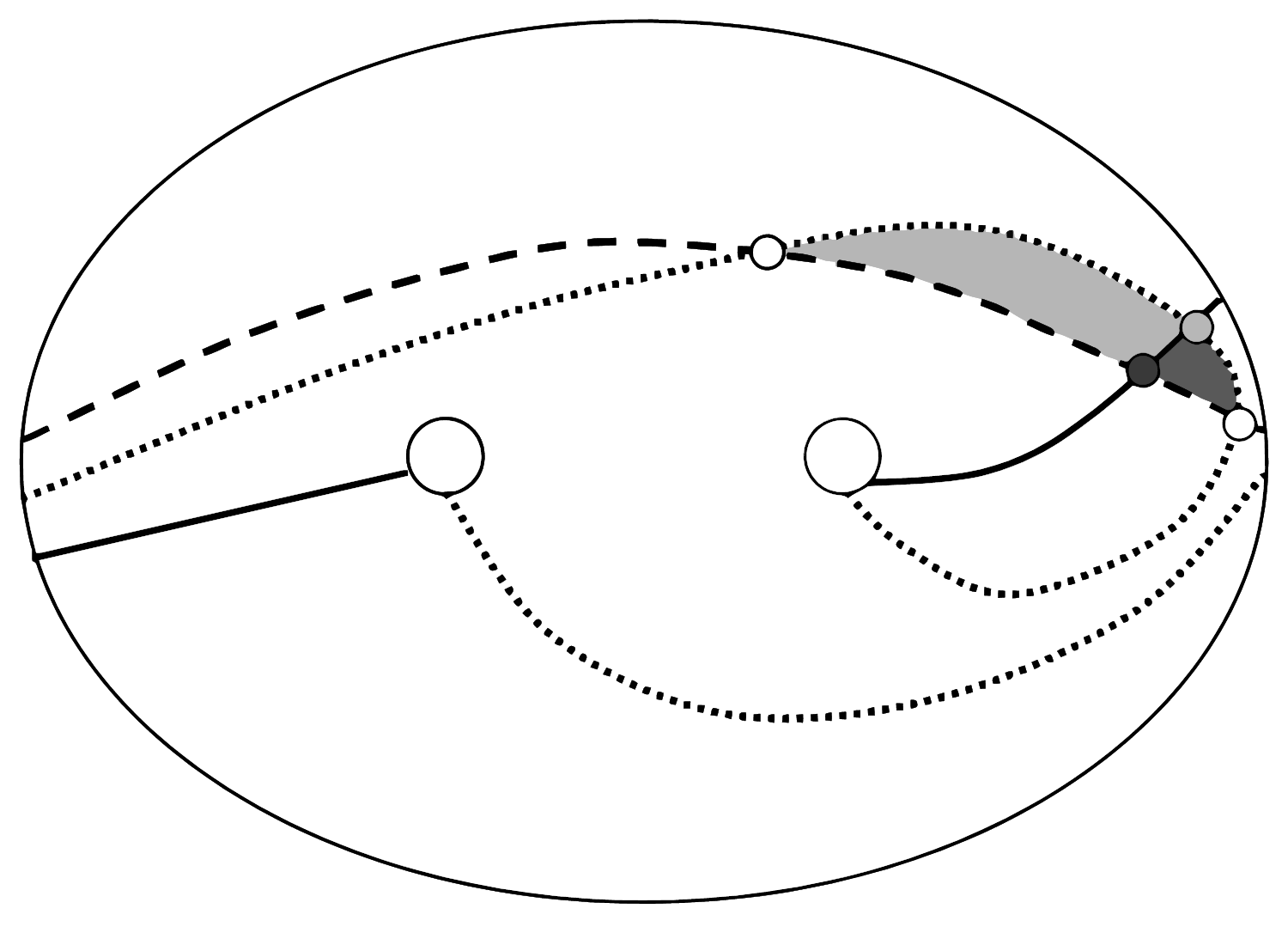}}
\caption[Local components of 3-gon domains associated to the second handleslide for  $b \mapsto b B$]{Local regions in domains of 3-gons $\psi^{+}$ (dark gray) and $\psi^{-}$ (light gray) for $g_{B}^{b}$.  White dots are components of $\thet{\ba'\ba''},\thet{\ba"\ba'} \in \tor{\ba''} \cap \tor{\ba'}$.\label{fig:move3hs2}}
\end{figure}

\begin{figure}[h!]
\centering
\subfloat[$x_{1}$]{\includegraphics[height = 20mm]{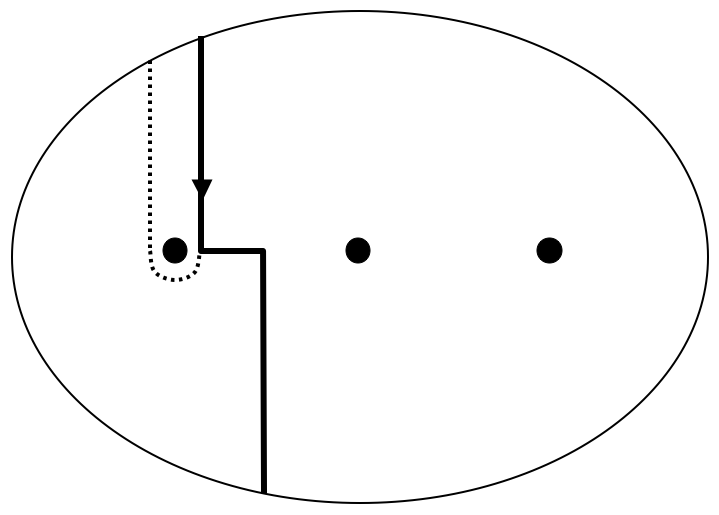}}\quad
\subfloat[$y_{1}$]{\includegraphics[height = 20mm]{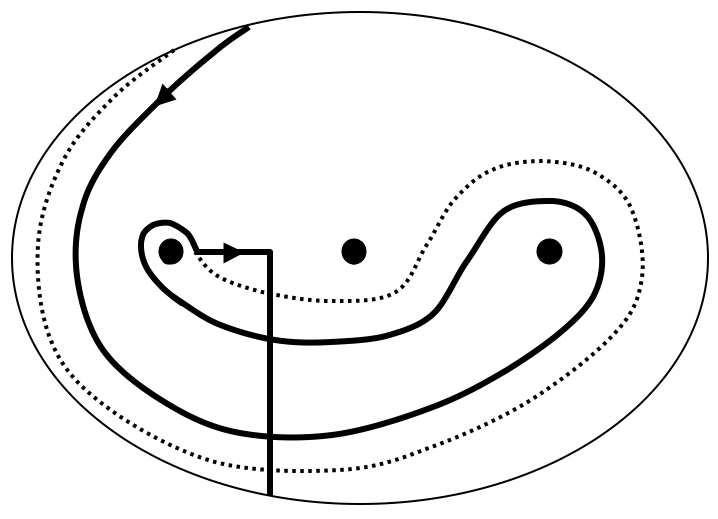}}\quad
\subfloat[$u$]{\includegraphics[height = 20mm]{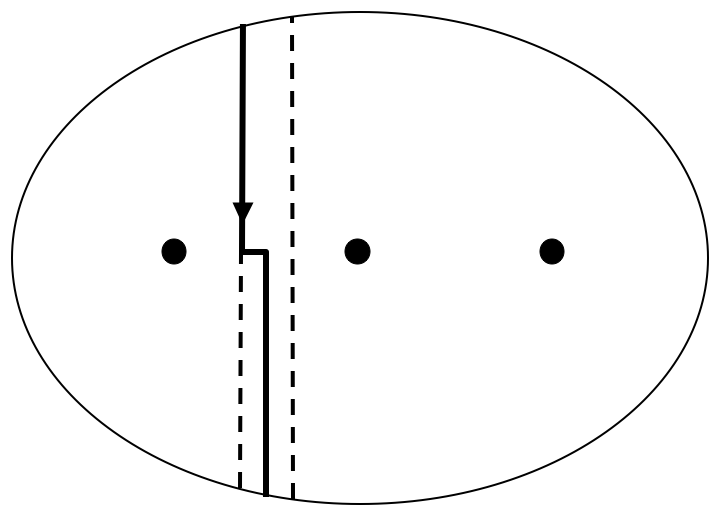}}\quad
\subfloat[$v$]{\includegraphics[height = 20mm]{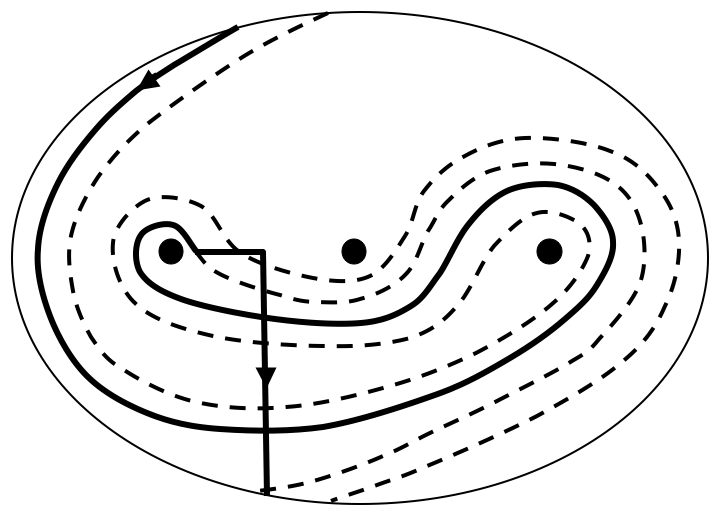}}\\
\subfloat[$x_{2}$]{\includegraphics[height = 20mm]{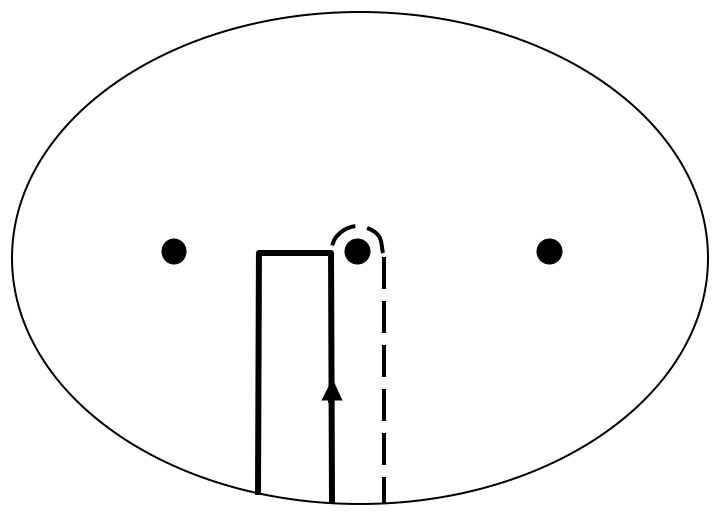}}\quad
\subfloat[$y_{2}$]{\includegraphics[height = 20mm]{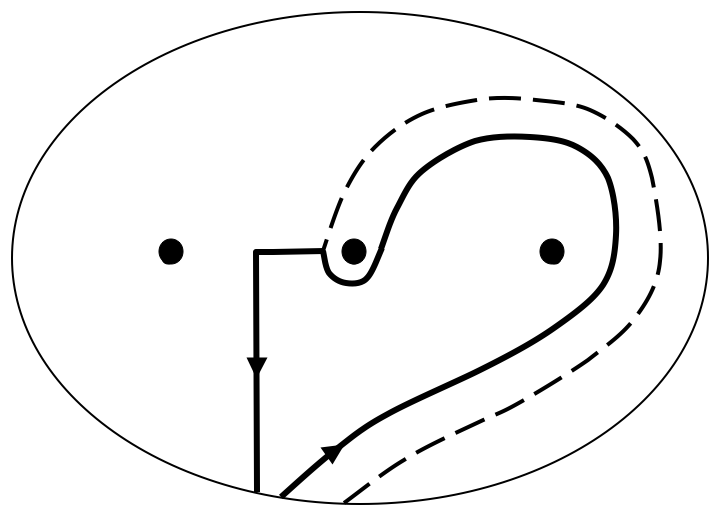}}\quad
\subfloat[$x_{3}$]{\includegraphics[height = 20mm]{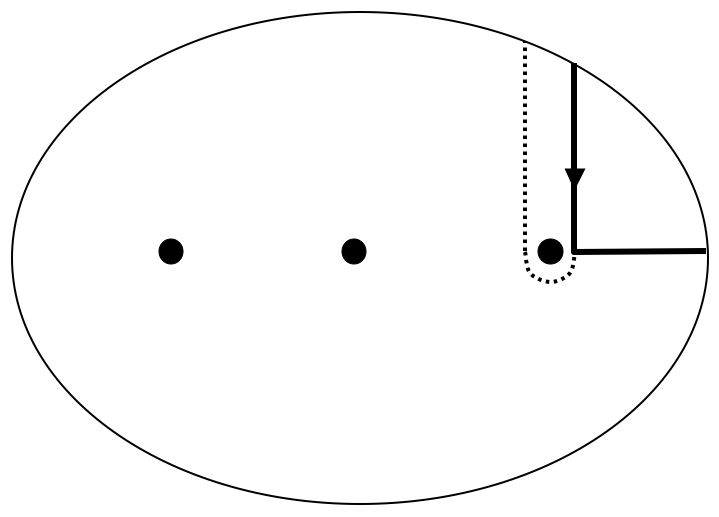}}\quad
\subfloat[$y_{3}$]{\includegraphics[height = 20mm]{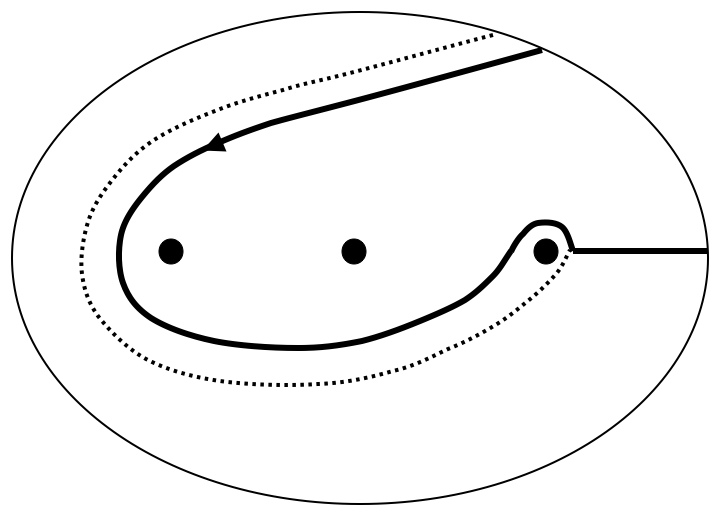}}
\caption[Grading loops associated to $b \mapsto b B$]{Loops associated to elements of $\Zcal$ affected by $b \mapsto b B$\label{fig:move3loops}}
\end{figure}

We see from Figure \ref{fig:move3loops} that $Q( y_{1} y_{3} \by ) = Q( x_{1} x_{3} \by  ) + 3$, $Q( v y_{3} \by ) = Q( u x_{3} \by  ) + 3$, $Q( y_{1} \bz ) = Q( x_{1} \bz  ) + 1$, $Q( y_{2} y_{3} \by ) = Q( x_{2} x_{3} \by  ) + 3$, $Q( y_{2} \bz ) = Q( x_{2} \bz  ) + 1$, $Q( v \bz ) = Q( u \bz  ) + 1$, and $P(g_{B}(\bw)) =  P(\bw)$ for all $\bw \in \G.$

Furthermore, one can check that $T( y_{1} y_{3} \by ) = T( x_{1} x_{3} \by  ) + 2$, $T( v y_{3} \by ) = T( u x_{3} \by  ) + 2$, $T( y_{1} \bz ) = T( x_{1} \bz  )$, $T( y_{2} y_{3} \by ) = T( x_{2} x_{3} \by  ) + 2$, $T( y_{2} \bz ) = T( x_{2} \bz  )$, and $T( v \bz ) = T( u \bz  )$.  This move preserves $n$ and $w$, but increases $\epsilon$ by 4.  Thus $s_{R}(b B) = s_{R}(b) + 1$ and $R(g_{B}(\bw)) = R(\bw)$ for all $\bw \in \G$.  The proof associated to the move $b \mapsto b B^{-1}$ is analogous.

\subsubsection{$b \in B_{2n} \leftrightarrow b \sigma_{2n} \in B_{2n + 2}$}\label{subsec:stab}
Fork diagrams before and after stabilization can be seem in Figure \ref{fig:movestab}, and the induced Heegaard diagrams in Figure \ref{fig:movestabhd}.  

\begin{figure}[h]
\centering
\subfloat[Local diagram for $b \in \B{2n}$]{
\labellist 
\small
\pinlabel* {$x_{2n}$} at 210 300
\endlabellist 
\includegraphics[height = 40mm]{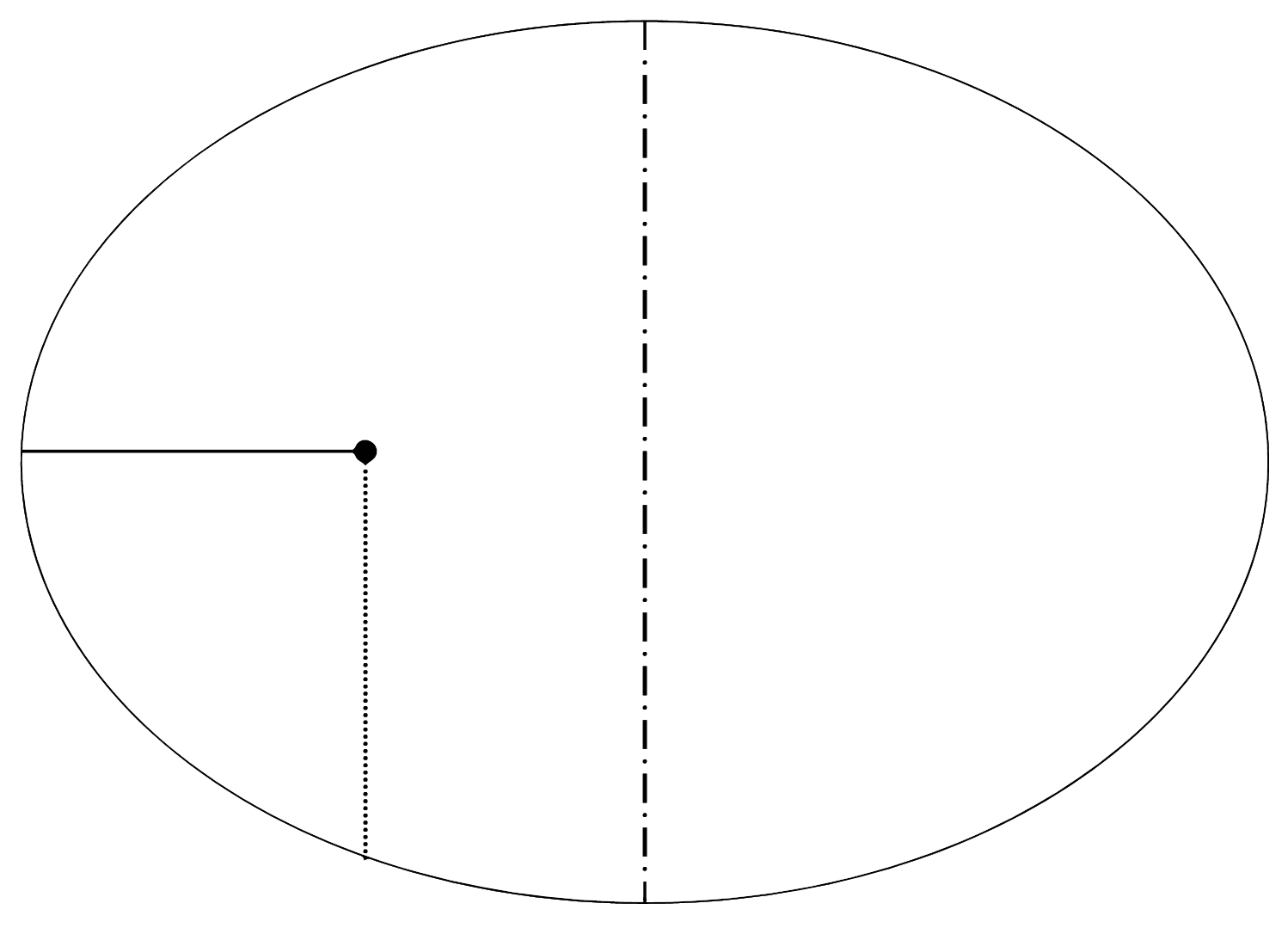}}\qquad
\subfloat[Local diagram for $b \sigma_{2n} \in \B{2n + 2}$]{
\labellist
\small
\pinlabel* {$y_{2n}$} at 215 230
\pinlabel* {$y_{2n+1}$} at 450 300
\pinlabel* {$y_{2n+2}$} at 640 230
\endlabellist 
\includegraphics[height = 40mm]{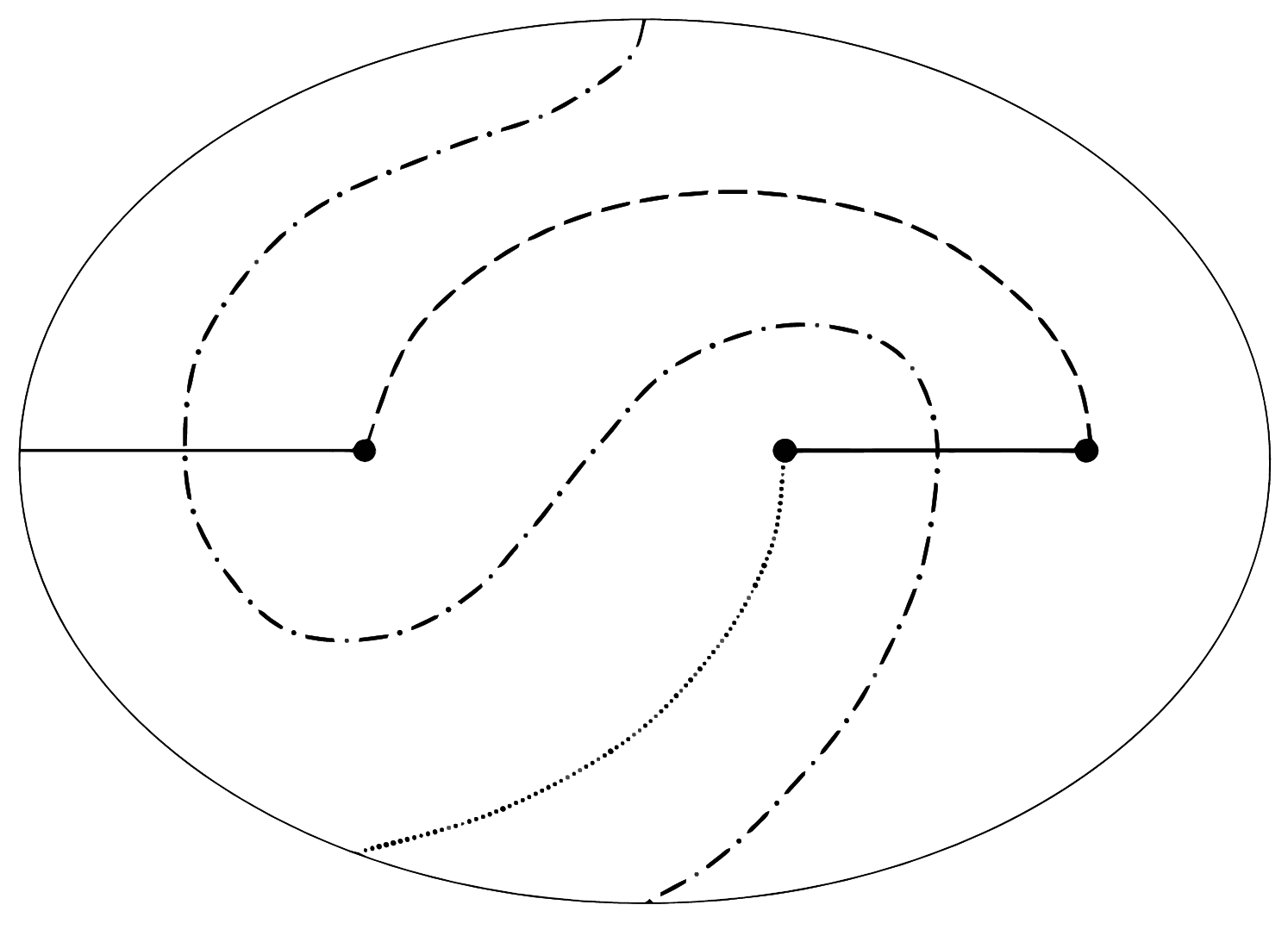}}
\caption{Fork diagrams associated to $b \in \B{2n} \mapsto b \sigma_{2n} \in \B{2n + 2}$ \label{fig:movestab}}
\end{figure}

\begin{figure}[h]
\centering
\subfloat[From $b \in \B{2n}$]{
\labellist 
\small
\pinlabel* {$x_{2n}$} at 405 290
\endlabellist 
\includegraphics[height = 40mm]{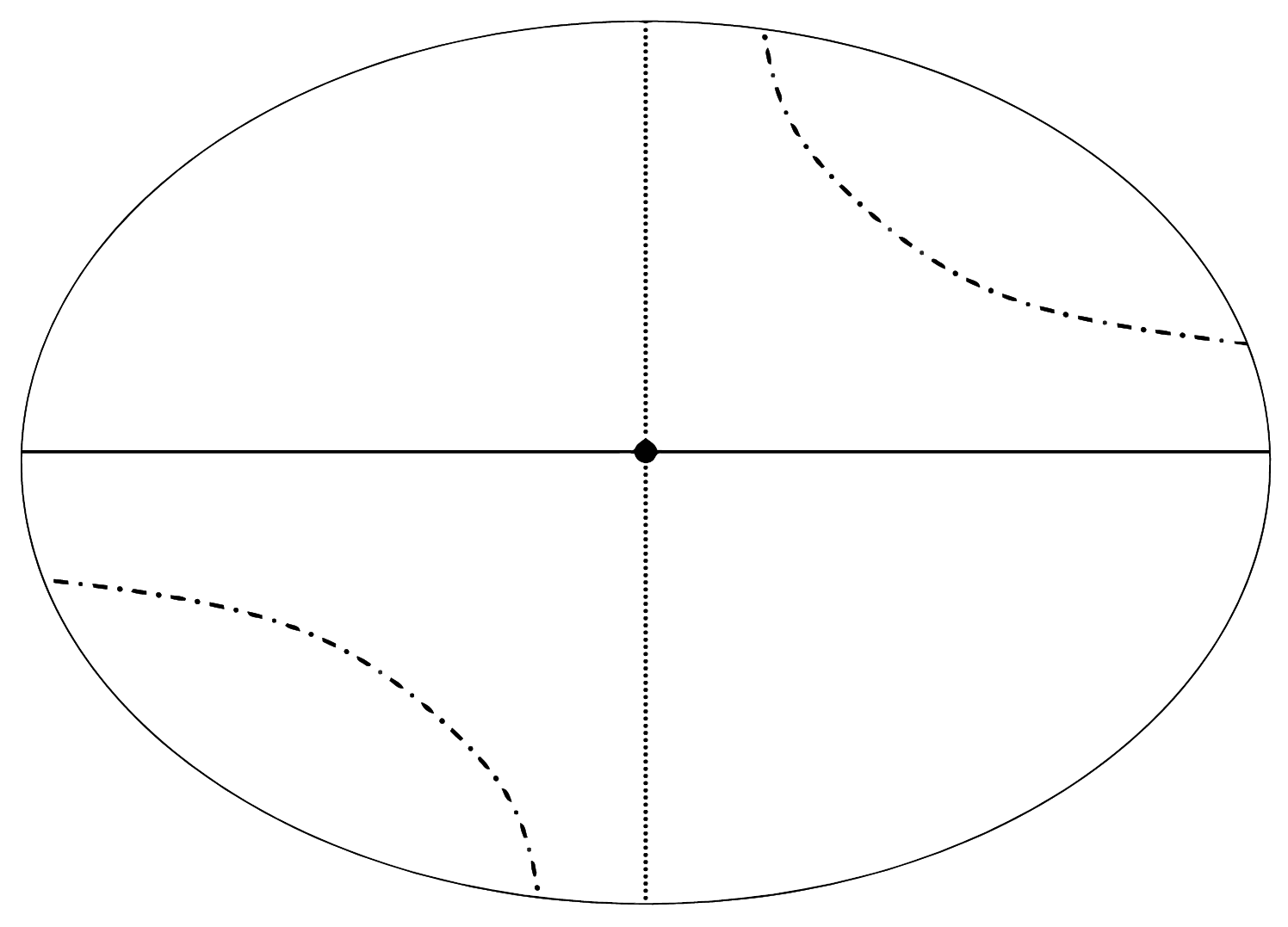}}\qquad
\subfloat[From $b \sigma_{2n} \in \B{2n + 2}$]{
\labellist 
\small
\pinlabel* {$y_{2n}$} at 565 290
\pinlabel* {$y_{2n+2}$} at 470 350
\pinlabel* {$y_{2n+1}$} at 300 355
\pinlabel* {$a$} at 250 263
\pinlabel* {\reflectbox{$a$}} at 470 263
\endlabellist 
\includegraphics[height = 40mm]{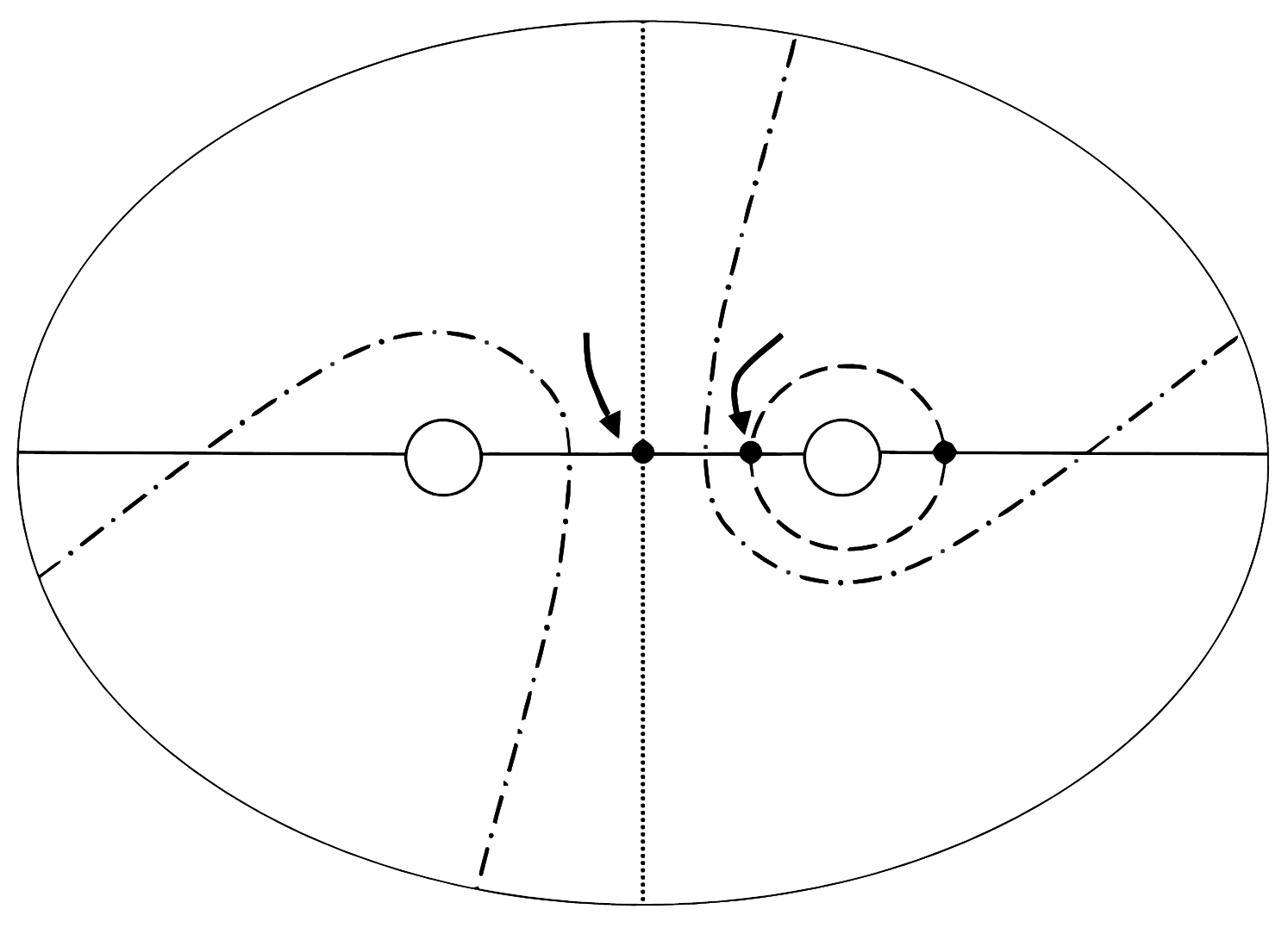}}
\caption[Heegaard diagrams covering Figure \ref{fig:movestab}]{Heegaard diagrams for  $\DBCs{K}$ covering the fork diagrams in Figure \ref{fig:movestab}}
\label{fig:movestabhd}
\end{figure}

The stabilization braid move corresponds to a Heegaard diagram stabilization followed by several handleslides, which can be seen in Figure \ref{fig:movestabhs}.  The destabilization braid move induces the inverse of this sequence of Heegaard moves.

\begin{figure}[h]
\centering
\subfloat[$\bb = \bb^1 \mapsto \bb^2$]{
\includegraphics[height = 24mm]{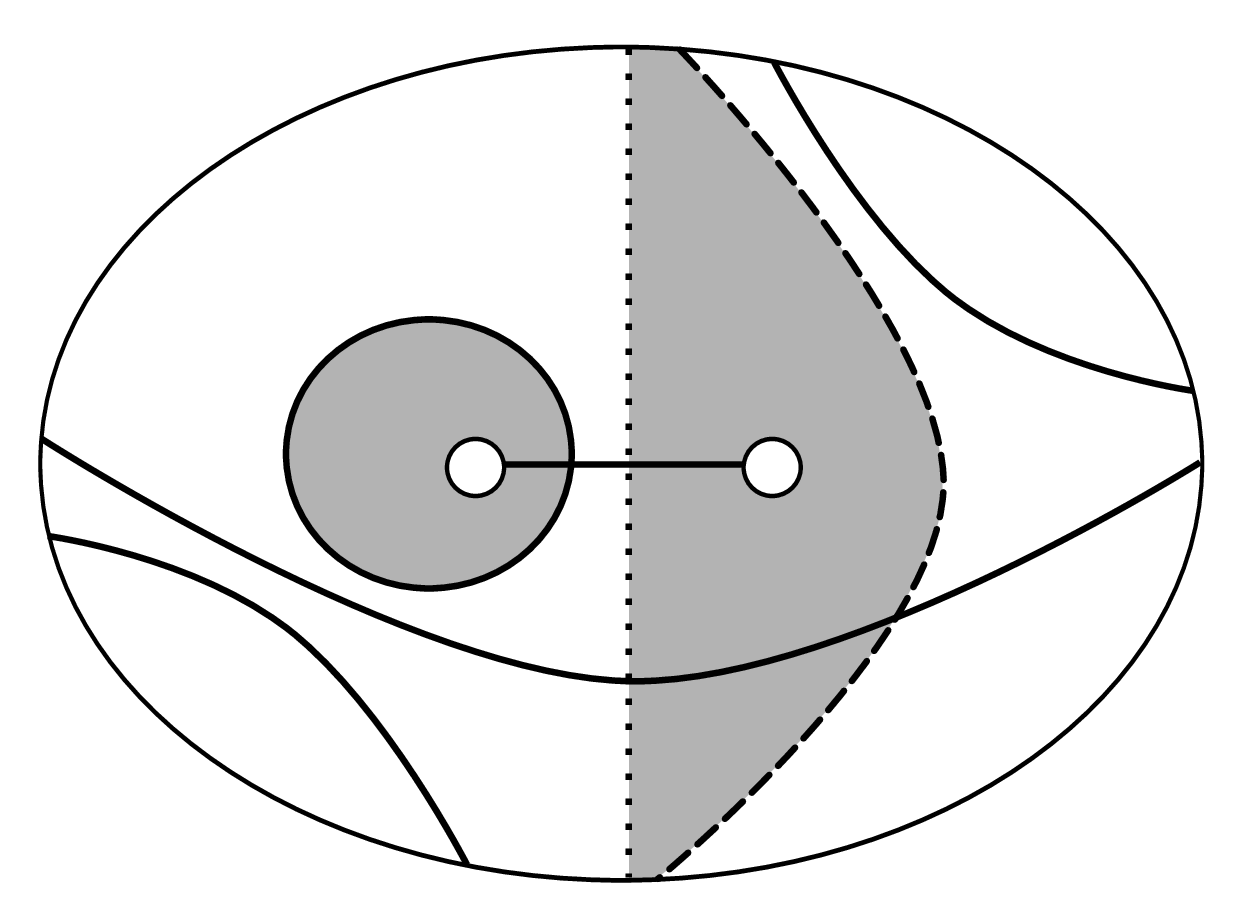}}
\subfloat[$\ba \mapsto \ba'$]{
\includegraphics[height = 24mm]{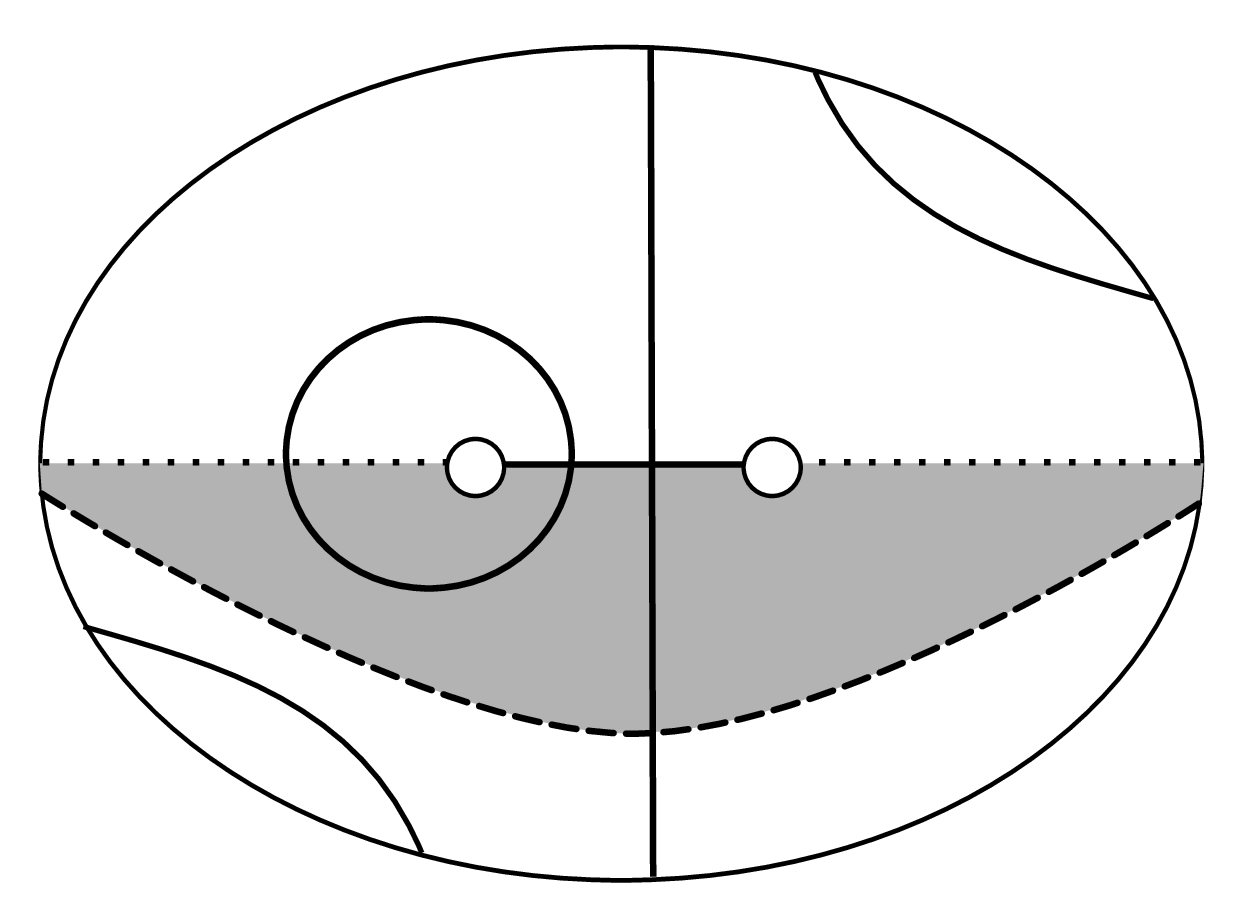}}
\subfloat[$\bb^2 \mapsto \bb^3$]{
\includegraphics[height = 24mm]{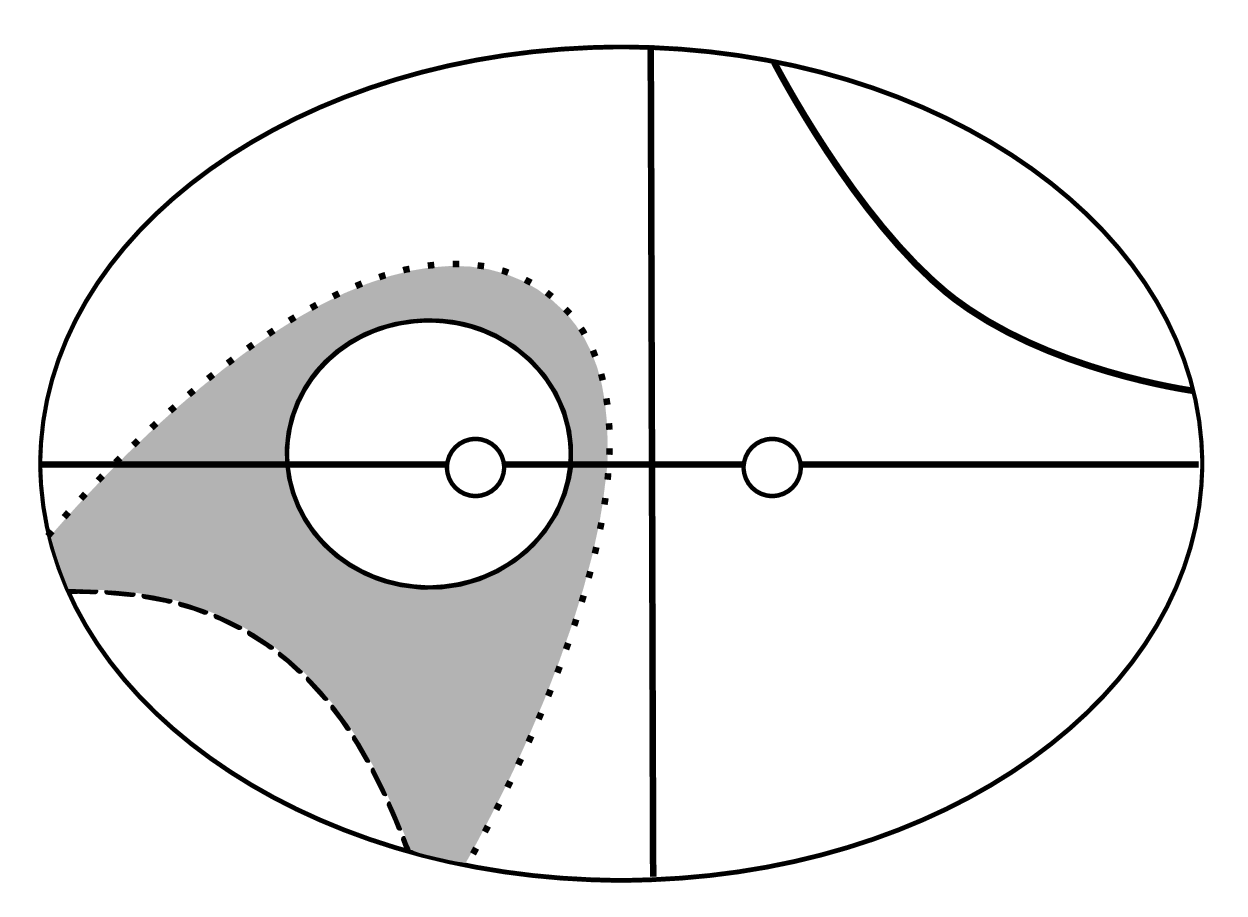}}
\subfloat[$\bb^3 \mapsto \bb^4 = \bb'$]{
\includegraphics[height = 24mm]{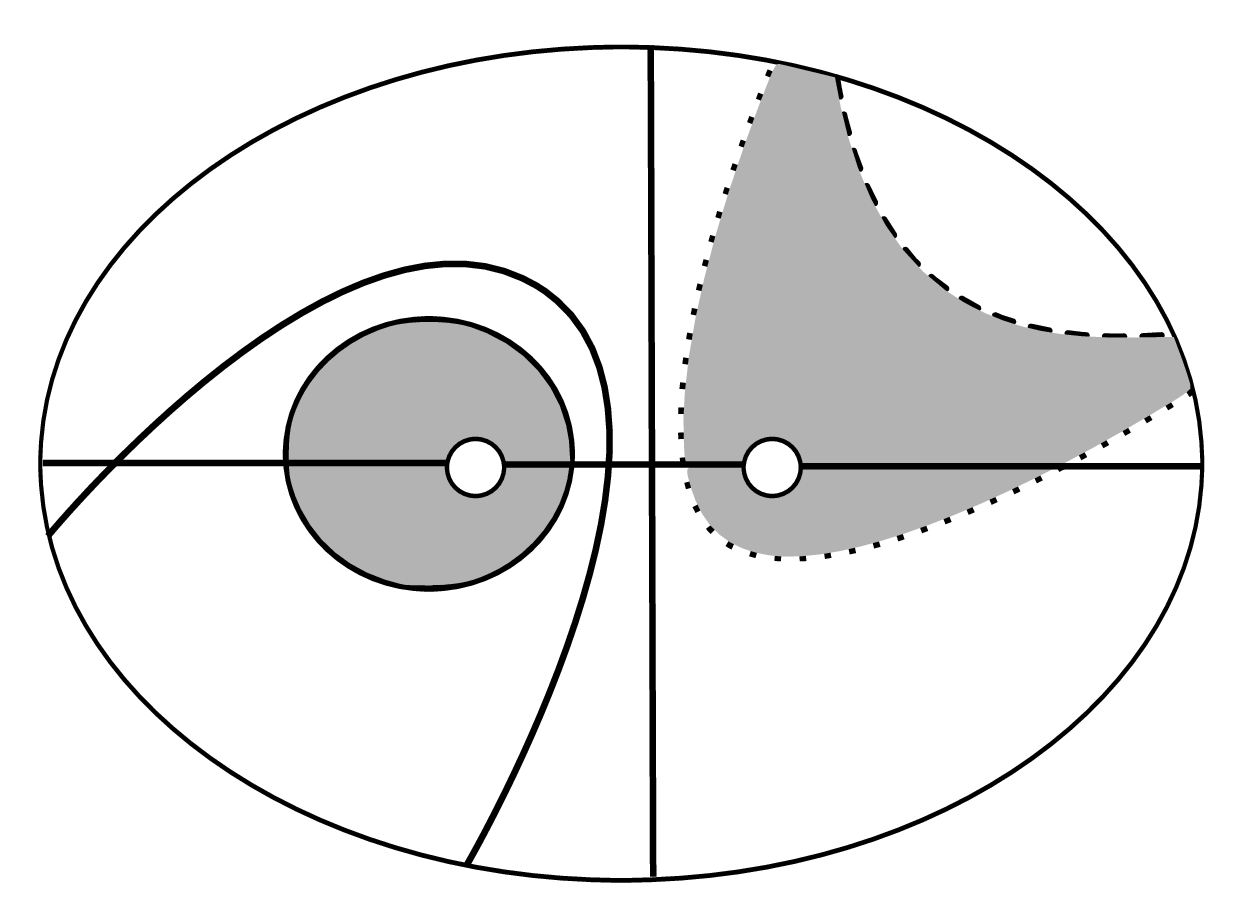}}
\caption[Handeslides associated to $b \mapsto b \sigma_{2n}$]{Four handleslides connecting Heegaard diagrams for $b$ and $b \sigma_{2n}$.  In each picture, the pair of pants is shaded, the new circle is dotted, and the old one is dashed.}
\label{fig:movestabhs}
\end{figure}

We first address admissibility.  Clearly stabilizing or destabilizing an admissible Heegaard diagram yields another admissible one.  By analyzing domains, it isn't hard to verify that the handleslides in Figure \ref{fig:movestabhs} preserve admissibility; we'll demonstrate this explicitly for third handleslide ($\bb^2 \mapsto \bb^3$), and leave the rest as an exercise to the reader.

Assume that $\left( \Sigma; \ba; \bb^2; +\infty \right)$ is admissible.  Label the $m$ domains of $\left( \Sigma; \ba; \bb^2; +\infty \right)$ as indicated in Figure $\ref{fig:movestabad1}$, where $\mathcal{D}_{k}$ lies entirely outside of the picture for $k \geq 7$; label the $(m + 2)$ domains of $\left( \Sigma; \ba; \bb^3; +\infty \right)$ as indicated in Figure \ref{fig:movestabad2}, where $\mathcal{D}'_k$ corresponds to $\mathcal{D}_k$ for $k \geq 7.$  Now consider some periodic domain in $\left( \Sigma; \ba; \bb^3; +\infty \right)$
$$\mathcal{P}' = \tld{c}_1\tld{\mathcal{D}}_1 + \tld{c}_2\tld{\mathcal{D}}_2 + \sum_{i=1}^{m} c_i\mathcal{D}'_i.$$

\begin{figure}[h!]
\centering
\subfloat[$\Sigma \setminus \left( \cup_{i} \alpha_{i} \right) \setminus \left( \cup_{i} \beta_{i}^{2} \right)$]{
\labellist 
\small
\pinlabel* {$\mathcal{D}_3$} at 230 330
\pinlabel* {$\mathcal{D}_2$} at 250 70
\pinlabel* {$\mathcal{D}_1$} at 100 115
\pinlabel* {$\mathcal{D}_4$} at 370 290
\pinlabel* {$\mathcal{D}_5$} at 390 80
\pinlabel* {$\mathcal{D}_6$} at 470 330
\endlabellist 
\includegraphics[height = 40mm]{MoveStabAD2}\label{fig:movestabad1}}\quad
\subfloat[$\Sigma \setminus \left( \cup_{i} \alpha_{i} \right) \setminus \left( \cup_{i} \beta_{i}^{3} \right)$]{
\labellist 
\small
\pinlabel* {$\mathcal{D}'_3$} at 245 360
\pinlabel* {$\mathcal{D}'_2$} at 265 65
\pinlabel* {$\mathcal{D}'_1$} at 140 110
\pinlabel* {$\tld{\mathcal{D}}_1$} at 45 390
\pinlabel* {$\tld{\mathcal{D}}_2$} at 45 50
\pinlabel* {$\tld{\mathcal{D}}_3$} at 555 30
\pinlabel* {$\mathcal{D}'_4$} at 370 290
\pinlabel* {$\mathcal{D}'_5$} at 390 80
\pinlabel* {$\mathcal{D}'_6$} at 470 330
\endlabellist 
\includegraphics[height = 40mm]{MoveStabAD3}\label{fig:movestabad2}}
\caption[Domains before and after a handleslide]{Domains on Heegaard surfaces before and after the handleslide $\bb^2 \mapsto \bb^3$ induced by Birman stabilization.}
\label{fig:movestabad}
\end{figure}

Now because $\mathcal{P}'$ is periodic, we have that
$$ \tld{c}_2 - c_1 = c_2 - c_1, \quad \text{and so} \quad \tld{c}_2 = c_2.$$
Therefore, there is a periodic domain in $\left( \Sigma; \ba; \bb^2; +\infty \right)$ given by
$$\mathcal{P} = \sum_{i=1}^{m} c_i\mathcal{D}_i.$$
Because $\left( \Sigma; \ba; \bb^2; +\infty \right)$ is admissible, there is at least one positive $c_i$ and at least one negative $c_i$.  So, $\mathcal{P}'$ has positive and negative coefficients, and thus $\left( \Sigma; \ba; \bb^3; +\infty \right)$ is also admissible.


Let $x_{2n+1}$ denote the additional intersection obtained via Heegaard stabilization.  We define the injection $g_{stab}$ as $x_{2n}x_{2n+1} \bv \mapsto y_{2n+1} y_{2n} \bv$ and $x_{2n+1}\bz \mapsto y_{2n+2} \bz$, 
where $\bv $ is an $(n-1)$-tuple not contained in the local picture, and $\bz $ is an $n$-tuple not contained within the local picture. In fact, $g_{stab}$ can be written as a composition $g_{stab}^{\bb,3}\circ g_{stab}^{\bb,2}\circ g_{stab}^{\ba} \circ g_{stab}^{\bb,1}$ of triangle injections corresponding to the four handleslides in Figure \ref{fig:movestabhs}.   All of the 3-gons required for $g_{stab}^{\bb,2}$ and $g_{stab}^{\bb,3}$ are 3-gons with ``small'' domains, and we won't exhibit these in figures.  Local regions in domains of 3-gons for $g_{stab}^{\bb,1}$ and $g_{stab}^{\ba}$ are exhibited in Figures \ref{fig:movestabhs1} and \ref{fig:movestabhs2}, respectively.  Notice that the 3-gon appearing in Figure \ref{fig:movestabhs2b} is of the second type (cf. Figure \ref{fig:trisplit}).

\begin{figure}[h!]
\centering
\subfloat[$x_{2n+1} \mapsto w_{2n+1}$]{
\labellist
\small
\pinlabel* $a$ at 250 355
\pinlabel* \reflectbox{$a$} at 470 355
\endlabellist
\includegraphics[height = 40mm]{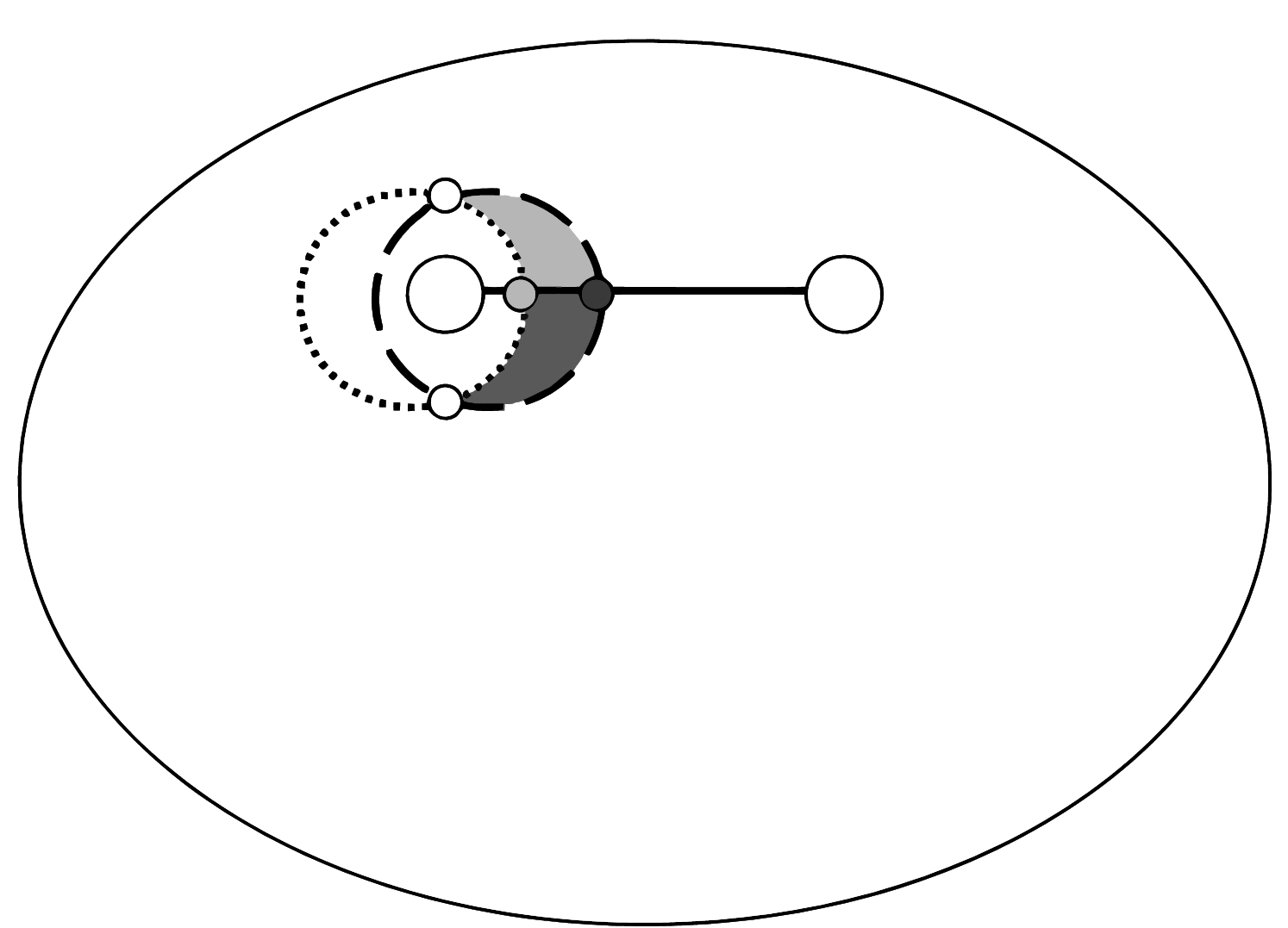}}\qquad
\subfloat[$x_{2n}x_{2n+1} \mapsto w_{2n}w_{2n+1}$]{
\labellist
\small
\pinlabel* $a$ at 250 368
\pinlabel* \reflectbox{$a$} at 470 368
\endlabellist
\includegraphics[height = 40mm]{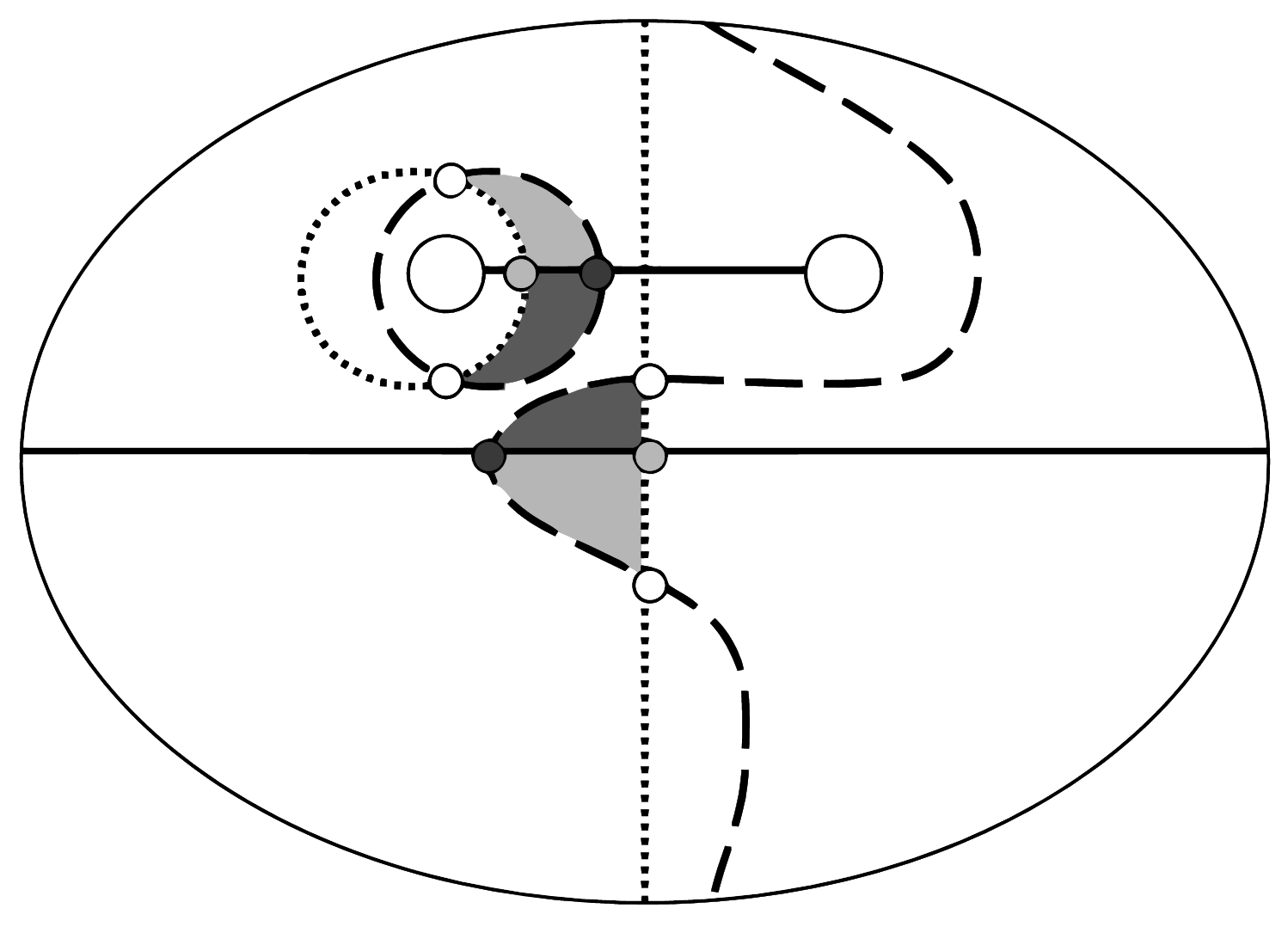}}
\caption[Local components of 3-gon domains associated to the first handleslide for $b \mapsto b \sigma_{2n}$]{Local regions in domains of the 3-gons $\psi^{+}$ (dark grey) and $\psi^{-}$ (light grey) for $g_{stab}^{\bb,1}$.  White dots are components of $\thet{\bb \bb^2},\thet{\bb^2\bb} \in \tor{\bb} \cap \tor{\bb^2}$.\label{fig:movestabhs1}}
\end{figure}

\begin{figure}[h!]
\centering
\subfloat[$w_{2n+1} \mapsto y_{2n+2}$]{
\labellist
\small
\pinlabel* $a$ at 250 263
\pinlabel* \reflectbox{$a$} at 470 263
\endlabellist
\includegraphics[height = 40mm]{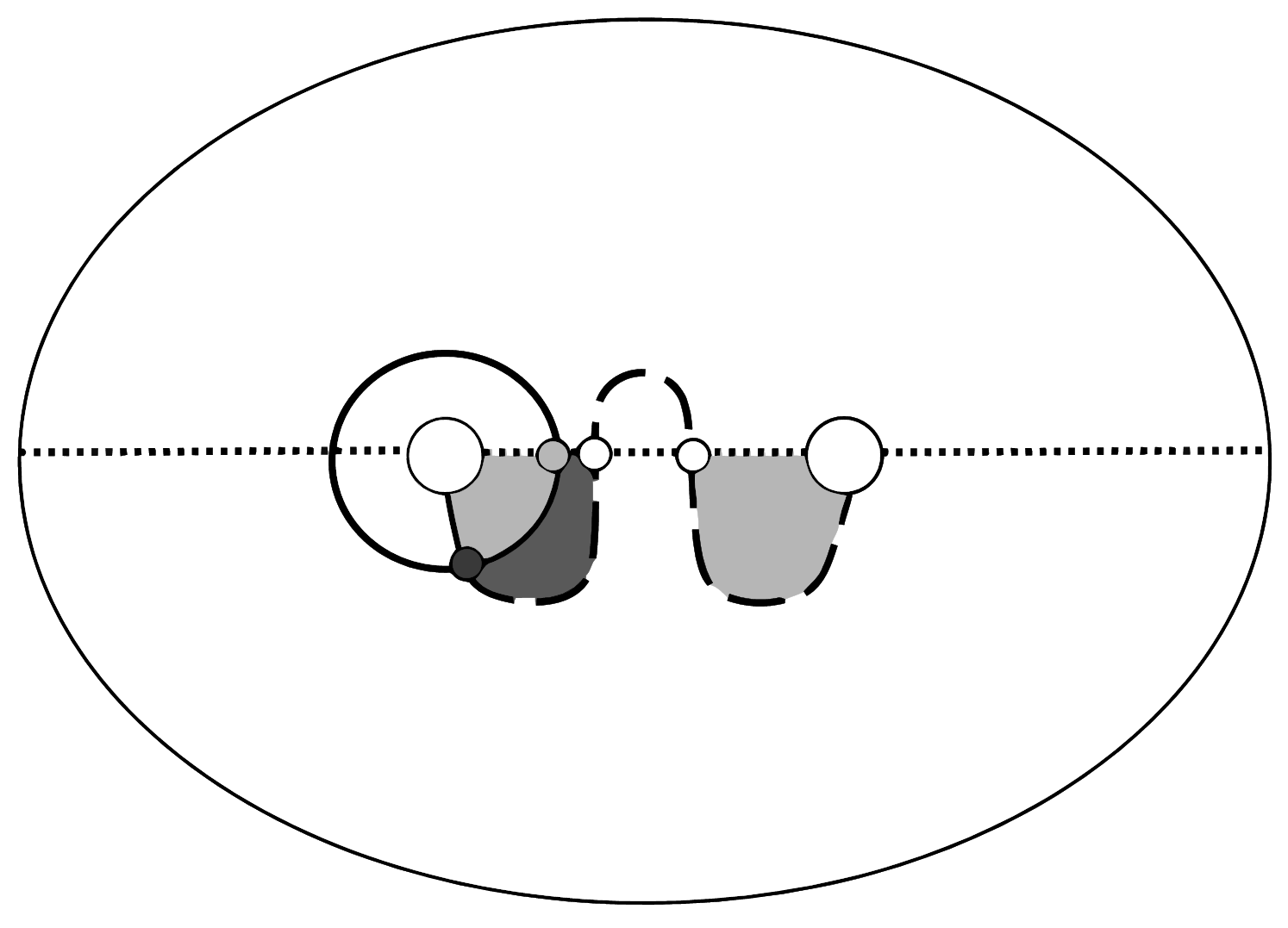}}\qquad
\subfloat[$w_{2n}w_{2n+1} \mapsto y_{2n+1} y_{2n}$]{
\labellist
\small
\pinlabel* $a$ at 250 263
\pinlabel* \reflectbox{$a$} at 470 263
\endlabellist
\includegraphics[height = 40mm]{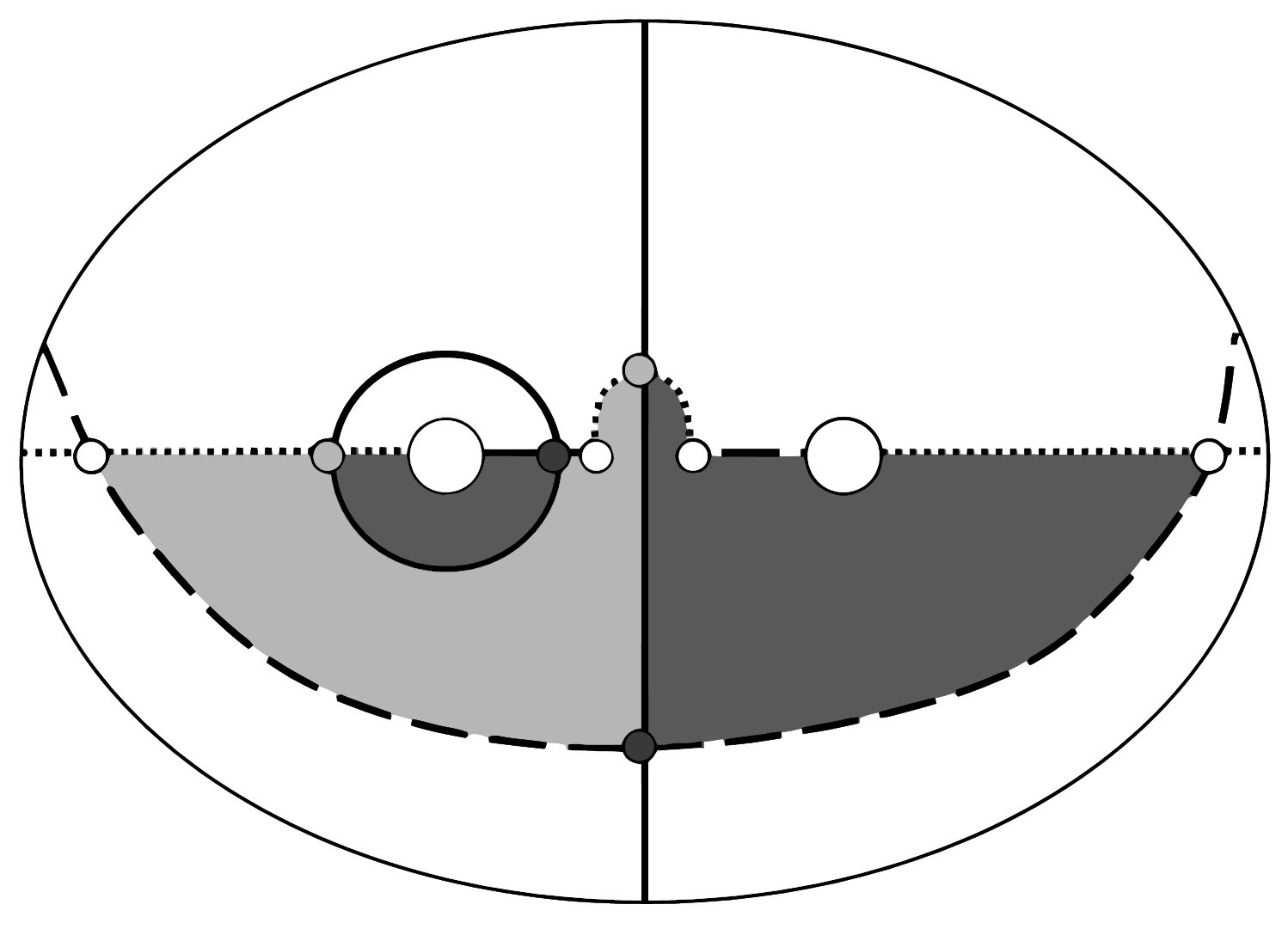}\label{fig:movestabhs2b}}
\caption[Local components of 3-gon domains associated to the second handleslide for $b \mapsto b \sigma_{2n}$]{Local regions in domains of the 3-gons $\psi^+$ (dark grey) and $\psi^-$ (light grey) for $g_{stab}^{\ba}$. White dots are components of $\thet{\ba \ba'},\thet{\ba'\ba} \in \tor{\ba} \cap \tor{\ba'}$ \label{fig:movestabhs2}}
\end{figure}

\begin{figure}[h!]
\centering
\includegraphics[height = 40mm]{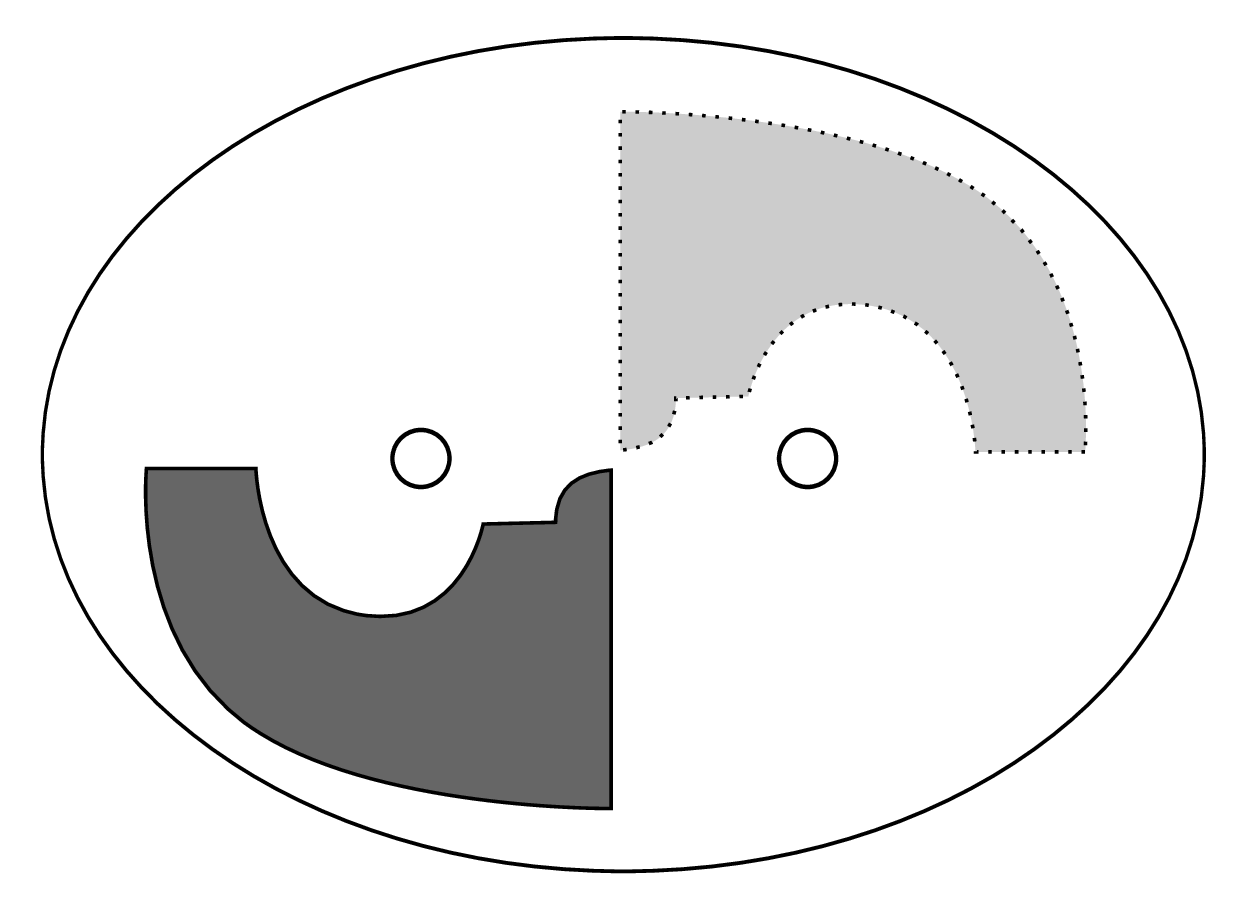}
\caption[Avoiding the anti-diagonal]{Arranging that a 3-gon avoids the anti-diagonal.  The dark grey region is the domain $\mathcal{D}$, and the light grey region is the other connected component of $\pi(\pi^{-1}(\mathcal{D}))$. \label{fig:stabtriad}}
\end{figure}

It can be arranged that these 3-gons avoid the anti-diagonal $\AD$ (making use of the discussion in Section \ref{sec:multiad}).  In particular, if $\mathcal{D}$ denotes one of the six-sided regions appearing in Figure \ref{fig:movestabhs2b} and $\pi:\Sigma \rightarrow S^2$ is the branched covering map, it suffices to show that $\pi(\pi^{-1}(\mathcal{D}))$ has two connected components.  This is illustrated in Figure \ref{fig:stabtriad}.

\begin{figure}[h]
\centering
\subfloat[$x_{2n}$]{\includegraphics[height = 20mm]{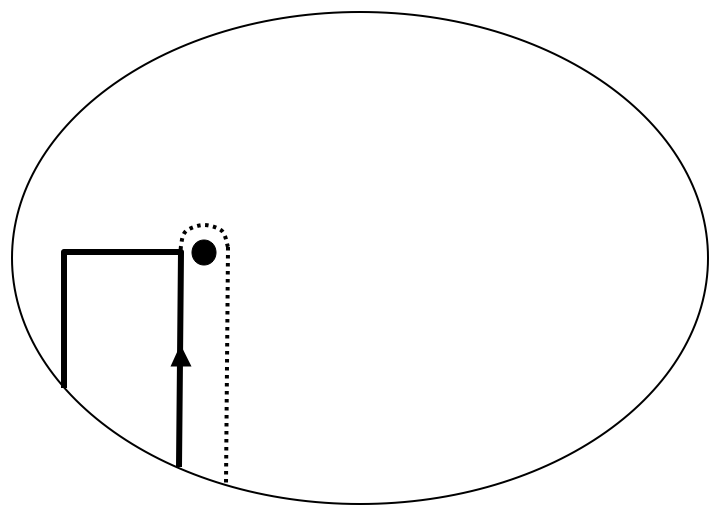}}\quad
\subfloat[$y_{2n+1}$]{\includegraphics[height = 20mm]{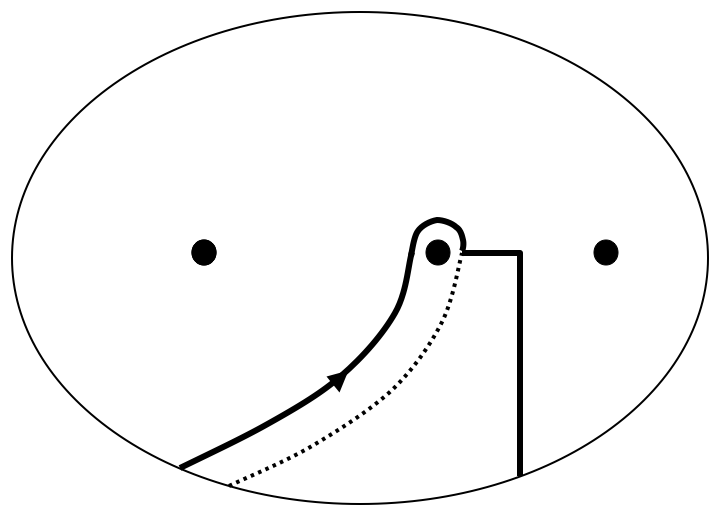}}\quad
\subfloat[$y_{2n}$]{\includegraphics[height = 20mm]{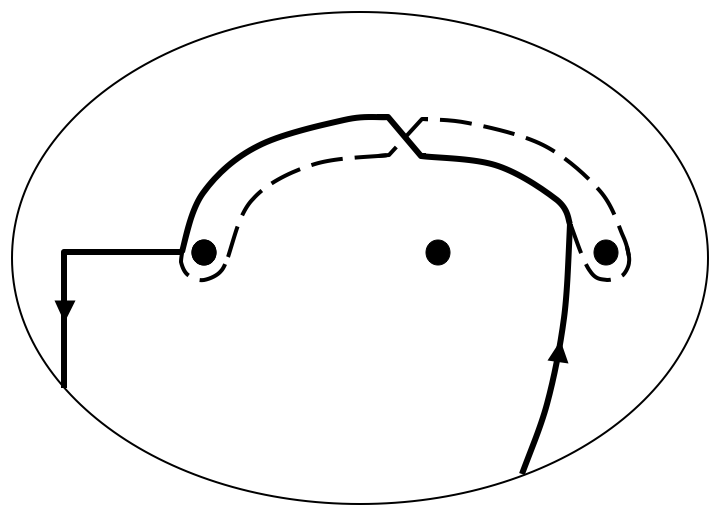}}\quad
\subfloat[$y_{2n+2}$]{\includegraphics[height = 20mm]{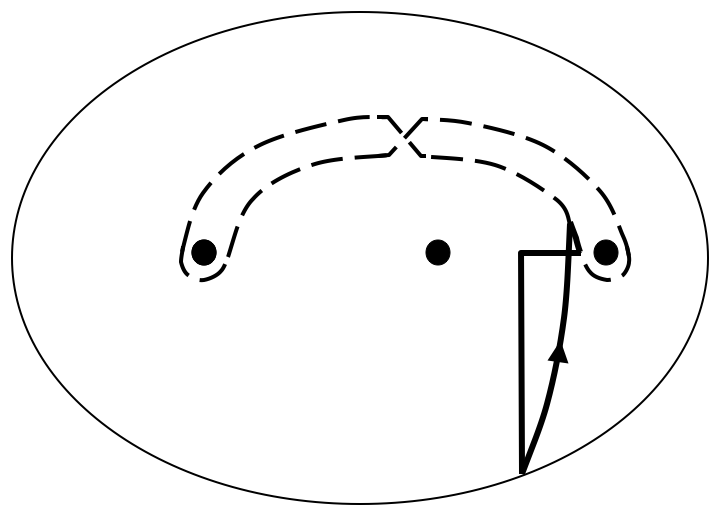}}
\caption[Grading loops associated to $b \mapsto b \sigma_{2n}$]{Loops for elements of $\Zcal$ affected by $b \in \B{2n} \leftrightarrow b \sigma_{2n} \in \B{2n + 2}$\label{fig:movestabloops}}
\end{figure}

One can verify that $Q(y_{2n+1} y_{2n} \bv )  = Q(x_{2n} \bv ) + 1$, $Q(y_{2n+2} \bz )  = Q(\bz )$, $P(y_{2n+1} y_{2n} \bv )  =  P(x_{2n} \bv )$, $P(y_{2n+2} \bz )  =  P(\bz )$, $T(y_{2n+1} y_{2n} \bv )  = T(x_{2n} \bv ) + 1$, and $T(y_{2n+2} \bz )  = T(\bz )$.  Stabilization adds two strands, increases $\epsilon$ by 1, and decreases $w$ by 1.  Thus $s_{R}(b \sigma_{2n}) = s_{R}(b)$ and so $R(y_{2n+1} y_{2n} \bv )  = R (x_{2n} \bv )$ and $R(y_{2n+2} \bz )  = R(\bz )$.

\begin{rmk}
We won't explicitly construct a triangle injection associated to the Birman destabilization move.  This move induces the inverse of the sequence of Heegaard moves depicted in Figure \ref{fig:movestabhd}, and we've already seen that intermediate diagrams are admissible.   To obtain 3-gons associated to this triangle injection, simply reverse the roles of  3-gons $\psi^{\pm}$ associated to the Birman stabilization move.
\end{rmk}

\subsection{Generality of local pictures}\label{sec:gen}
It remains to justify that our local pictures above were sufficiently general:
\begin{enumerate}[(i)]
\item There could be some $u \in \Ztil$ such that  $u\bx \in \Zcal$ and one of the loops used to compute gradings for $u\bx$ contain vertical arcs that pass through the local diagram, as seen in Figure \ref{fig:gen0}.  One can verify that the grading contributions of such arcs are preserved by Birman moves.

\begin{figure}[h]
\centering
\subfloat[Grading loop for $u\bx$]{
\includegraphics[height = 25mm]{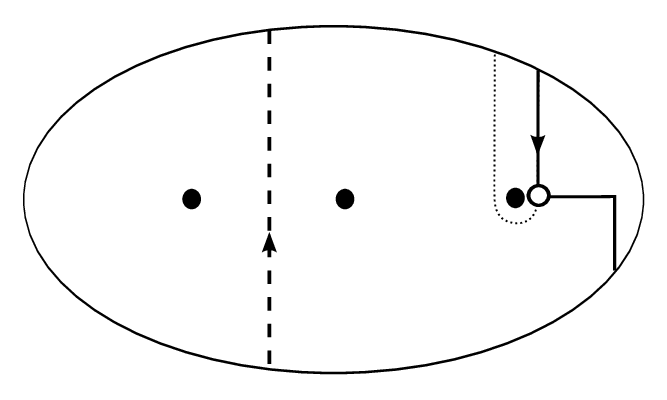}}\qquad
\subfloat[Grading loop for $u'\bx$]{
\includegraphics[height = 25mm]{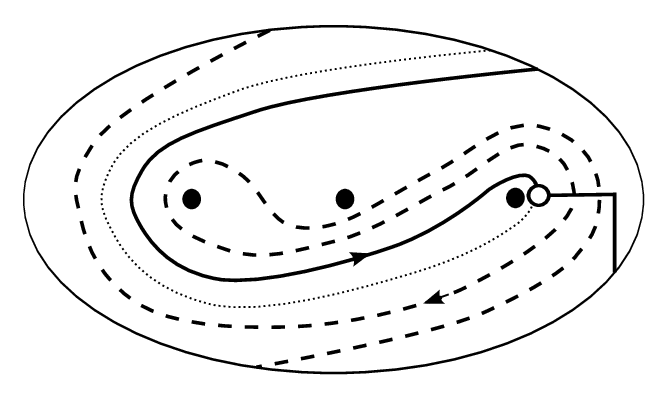}}
\caption[The effect of a Birman move on a pass-through arc]{The effect of the move $b \mapsto b B$ on a pass-through arc appearing in a grading loop}
\label{fig:gen0}
\end{figure}

\item \label{item:gen1}Several $\beta$ arcs intersecting the interior of the same $\alpha$ arc can be isotoped to be very close to one another and thus behave identically under moves.
\item We assumed above that all $\beta$ arcs shown belong to distinct $\beta_{i}$.  If two $\beta$ arcs share the same $\beta_{i}$, then fewer Bigelow generators are allowed.

\item The local pictures in Section \ref{sec:Rmoves} never have handles passing through them.  A handle $bh_{i}$ contributes a vertical arc as in (\ref{item:gen1}) to each loop associated to a point on $\beta_{i}$.

\item  For arcs terminating at punctures near the boundary of the local picture, we can modify the entrance trajectory (i.e. from above or from below) by applying an isotopy to the $\beta$ arc and shrinking the scope of the picture (see Figure \ref{fig:gen1}).

\begin{figure}[h]
\centering
\subfloat[Dashed arc enters from above]{
\includegraphics[height = 25mm]{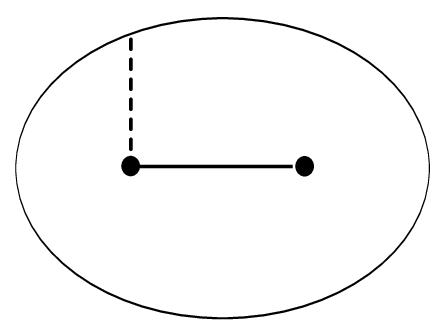}}\qquad
\subfloat[Now from below]{
\includegraphics[height = 25mm]{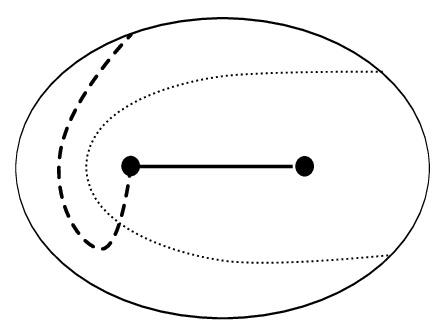}}
\caption[The entry trajectory of a $\beta$ arc terminating near the boundary]{Modifying the entry trajectory of a $\beta$ arc terminating near the boundary}
\label{fig:gen1}
\end{figure}

Entrance trajectories of $\beta$ arcs terminating far from the boundary can be modified in the same way, but at the expense of adding a pass-through arc to the local picture (see Figure \ref{fig:gen2}).
\begin{figure}[h]
\centering
\subfloat[Dashed arc enters from above]{
\includegraphics[height = 25mm]{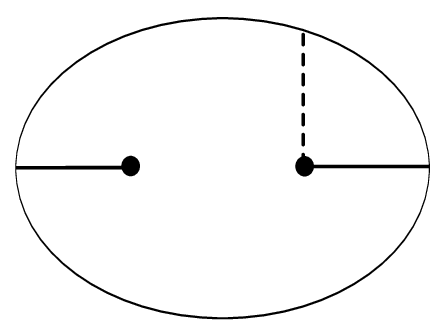}}\qquad
\subfloat[Now from below]{
\includegraphics[height = 25mm]{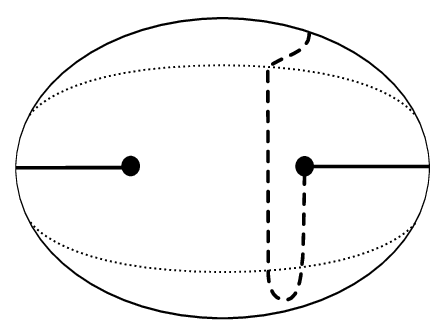}}
\caption[The entry trajectory of a $\beta$ arc terminating away from the boundary]{Modifying the entry trajectory of a $\beta$ arc terminating far from the boundary}
\label{fig:gen2}
\end{figure}

\end{enumerate}

\section{The left-handed trefoil and the lens space $L(3,1)$}

The fork diagram for the left-handed trefoil obtained from $\sigma_{2}^{3} \in \B{4}$ induces the admissible Heegaard diagram $\big( \Sigma_{2}, \{ \ah_{1}, \ah_{2} \},  \{ \bh_{1}, \bh_{2} \}, +\infty \big)$ for $\SUM{L(3,1)}$ shown in Figure \ref{fig:exhd}.  Label the intersections on $\ah_{1}$ from left to right as $s',t',x_{2},t,s,\text{ and }x_{1}$, and label those on $\ah_{2}$ from bottom top as $x_{4},v', u', x_{3}, u,\text{ and }v$.

\begin{figure}[h!]
\centering
\labellist 
\small
\pinlabel* {$+\infty$} at 660 370
\pinlabel* {$\ah_{1}$} at 405 245
\pinlabel* {$\ah_{2}$} at 355 335
\pinlabel* {$\bh_{1}$} at 405 88
\pinlabel* {$\bh_{2}$} at 55 390
\pinlabel* {$a$} at 385 186
\pinlabel* {$a$} at 236 186
\pinlabel* {$b$} at 578 186
\pinlabel* {$b$} at 731 186
\endlabellist 
\includegraphics[height = 50mm]{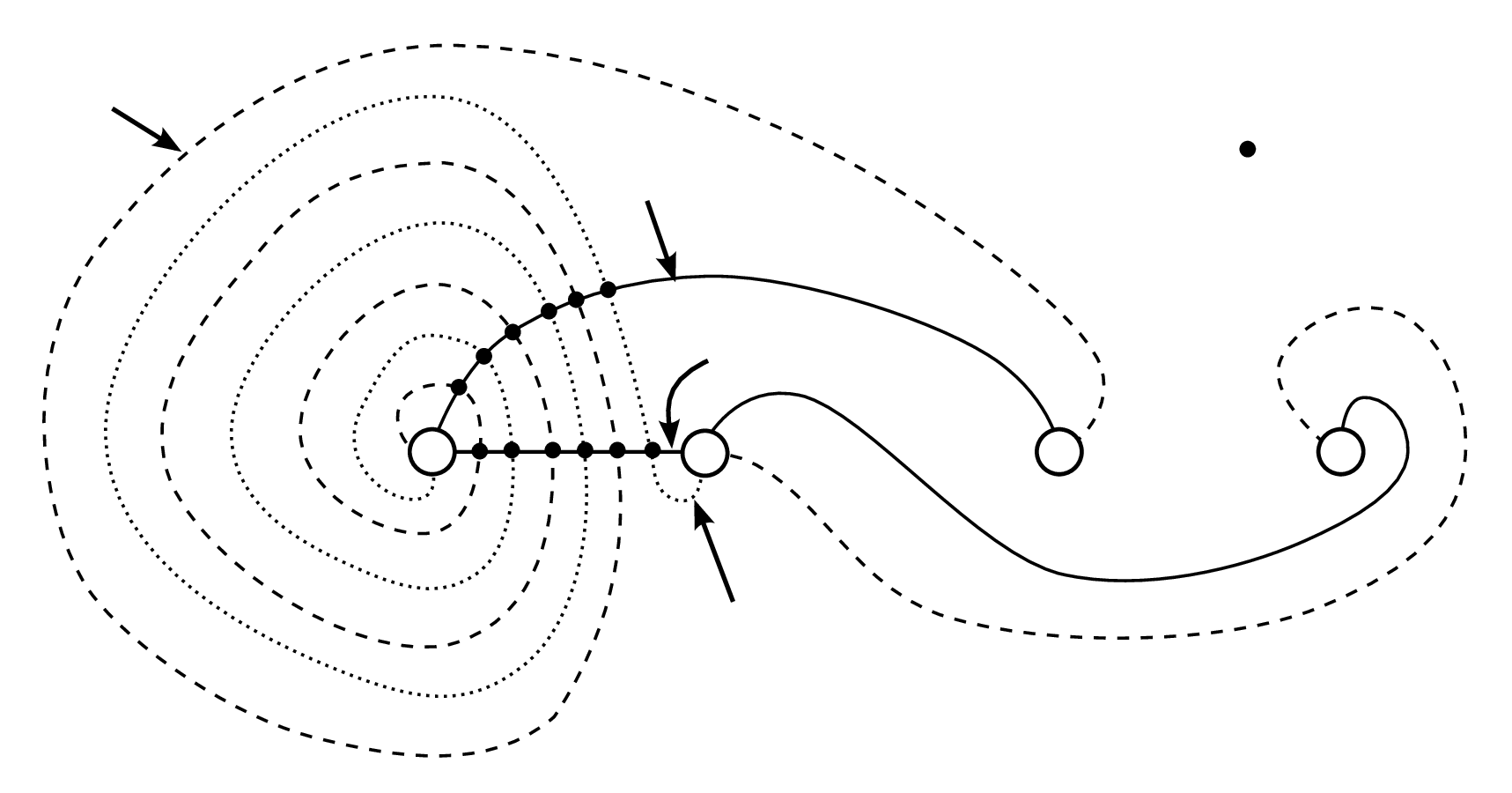}
\caption{The Heegaard diagram for $\SUM{L(3,1)}$ obtained from $\sigma_{2}^{3} \in \B{4}$}
\label{fig:exhd}
\end{figure}

Let's perform the calculation with $\Zcaltwo$ coefficients; this can be done combinatorially, since the diagram in Figure \ref{fig:exhd} is \textit{nice} in the sense of \cite{sw:CHF}.

The differential is thus
\begin{gather*}
\widehat{\partial}(x_{2}x_{3}) = ut' + u't, \quad
\widehat{\partial}(ut) = sx_{3} + vx_{2}, \quad
\widehat{\partial}(ut') =  \widehat{\partial}(u't) = sv' + s'v,\\
\widehat{\partial}(u't') = s'x_{3} + v'x_{2}, \quad
\widehat{\partial}(s'v') = \widehat{\partial}(sx_{3}) = \widehat{\partial}(vx_{2}) = u'x_{1} + t'x_{4},\\\widehat{\partial}(s'v) = \widehat{\partial}(sv') = x_{1}x_{4} + x_{1}x_{4} = 0, \quad
\widehat{\partial}(v'x_{2}) = \widehat{\partial}(s'x_{3}) = \widehat{\partial}(sv) = tx_{4} + ux_{1},\\
\text{and} \quad \widehat{\partial}(u'x_{1}) = \widehat{\partial}(t'x_{4}) = \widehat{\partial}(x_{1}x_{4}) = \widehat{\partial}(tx_{4}) = \widehat{\partial}(ux_{1}) = 0.
\end{gather*}
Now recall that $L(3,1)$, has three $\text{Spin}^{c}$ structures $\mathfrak{s}_{i}$, $i = 0, 1, 2$.  These induce three $\text{Spin}^{c}$ structures on $\SUM{L(3,1)}$ given by $\mathfrak{s_{i}} \# \mathfrak{s}$, where $\mathfrak{s}$ is the unique torsion $\text{Spin}^{c}$-structure on $S^{2} \times S^{1}$.  One should observe that the diagram in Figure \ref{fig:exhd} can be related via handleslides to one which is the disjoint union of a diagram for $\DBC{K}$ and the usual admissible genus-one diagram for $\SxS$.  As a result, all generators $\bx$ in Figure \ref{fig:exhd} have $\mathfrak{s}_{+\infty}(\bx) = \mathfrak{s_{i}} \# \mathfrak{s}$.  They partition this set of generators as
\begin{align*}
\mathfrak{U}_{0} &= \{ x_{2}x_{3}, ut', u't, s'v, sv', x_{1}x_{4} \}, \\
\mathfrak{U}_{1} &= \{ ut, s'v', sx_{3}, vx_{2}. u'x_{1}, t'x_{4} \}, \\
\mathfrak{U}_{2} &= \{ u't', v'x_{2}, s'x_{3}, sv, tx_{4}, ux_{1} \}.
\end{align*}

Notice that the differential always lowers the $R$-grading by 1 in this case, and thus the left-handed trefoil is evidently $\rho$-degenerate.  The $R$-grading then provides an absolute Maslov grading on the group $\widehat{HF}(\SUM{L(3,1)}; \Zcaltwo)$.

One can see that homology group decomposes with respect to the $R$-grading as
\begin{equation*}
\widehat{HF}(\SUM{L(3,1)}; \Zcaltwo) = \left[ \left( \Zcaltwo \right)^{\oplus 3} \right] _{R = 3/2} \oplus\left[ \left( \Zcaltwo \right)^{\oplus 3} \right] _{R = 1/2}.
\end{equation*}

\section{Reduced theory}

The sequel to the present paper \cite{et:R2} outlines a reduced theory which provides a filtration on the Heegaard Floer chain complex for $\DBC{K}$.  The reduced version is easier to compute, and it is shown in \cite{et:R2} that the spectral sequence discussed in the present paper is completely determined by an analogous reduced spectral sequence.  The reduced theory can be shown to have some nice formal properties with respect to taking connected sums and mirrors of knots, and can be used to show that all two-bridge knots are $\rho$-degenerate.  It would be tempting to speculate that all alternating knots are $\rho$-degenerate, but this is not known.

\section{Future directions}
\subsection{Relationship with $Kh_{symp,inv}$}

Given a pointed Heegaard diagram $\h$ for $\DBCs{K}$ coming from a braid, we saw that the filtration $\rho$ can only be defined on generators in torsion $\text{Spin}^{c}$ structures.  It would be interesting to investigate whether Heegaard diagrams encountered in this context actually contain generators in non-torsion $\text{Spin}^{c}$ structures; if not, then $E_{1}$ is in fact all of $\Kst{K}$.  In particular, we would obtain that when $K$ is $\rho$-degenerate,
\begin{equation*}
\HFx{K} \cong \Kst{K}.
\end{equation*}

\subsection{The Khovanov-Heegard Floer spectral sequence}

Ozsv\'ath and Szab\'o showed in \cite{os:bc} that the groups $\widehat{HF}(\DBC{L})$, $\widehat{HF}(\DBC{L_{0}})$, and $\widehat{HF}(\DBC{L_{1}})$ fit into a long exact sequence:
\begin{equation*}\label{eq:skein}
\ldots \longrightarrow \widehat{HF}(\DBC{L_{0}} \longrightarrow \widehat{HF}(\DBC{L_{1}} \longrightarrow \widehat{HF}(\DBC{L} \longrightarrow \ldots
\end{equation*}
where the diagrams for $L_{0}$ and $L_{1}$ exhibit the two smooth resolutions of some crossing $c$ in $L$ and coincide with $L$ away from $c$.  The existence of this sequence is a consequence of the surgery exact sequence for $\widehat{HF}$, and Ozsv\'ath and Szab\'o use it to construct a spectral sequence whose $E^{2}$ term is isomorphic to the reduced Khovanov homology $\tld{Kh}(\overline{L}; \Zcaltwo)$ of the mirror of $L$ and which converges to the Heegaard Floer homology group $\widehat{HF}(\DBC{L};\Zcaltwo)$.  Let  $\delta =(j - i)$ denote the quantum grading, the collapse of the bigrading on the group
\begin{equation*}
\tld{Kh}(L) = \displaystyle\bigoplus_{i,j} \tld{Kh}^{i,j}(L).
\end{equation*}

It was shown in \cite{cmoz:thin} that the class of quasi-alternating links is Khovanov-thin, with $\tld{Kh}^{i,j} (L)\neq 0$ only if $\delta = (j-i) = -\sigma(L)/2 = \sigma(\overline{L})/2$.  Notice that when $L$ is a two-bridge link, this is exactly the $\red{R}$-level supporting $\widehat{HF}(\DBC{\overline{L}}).$

Baldwin \cite{baldwin:ss} conjectured the existence of an induced $\delta$-grading on higher pages in the spectral sequence, and Greene \cite{jg:tree} conjectured that a term arising in his spanning tree model could provide a quantum grading on $\HF{K}$.  If the gradings conjectured by Greene and Baldwin indeed exist, it would be interesting to compare them to the $R$-grading for $\rho$-degenerate knots.

Furthermore, Szab\'o \cite{szabo:kh} constructed a geometric spectral sequence in $\mathbb{Z}/2\mathbb{Z}$ Khovanov homology.  Although this spectral sequence is not known to abut to the Heegaard Floer homology, the construction is similar to that for the spectral sequence in \cite{os:bc}.  Szab\'o's spectral sequence preserves the Khovanov $\delta$-grading, and it would be interesting to compare the induced grading on the $E_{\infty}$-page with the reduced function $\red{R}$ appearing in the sequel of the current article, \cite{et:R2}.

\section*{Acknowledgments}
It is my pleasure to thank Ciprian Manolescu for suggesting this problem to me and for his invaluable guidance as an advisor.  I would also like to thank Liam Watson and Tye Lidman for some instructive discussions, Stephen Bigelow for some helpful email correspondence related to his paper \cite{big:jones}, Yi Ni for some useful comments regarding relative Maslov gradings.  This paper has been rewritten from a previous version to account for an update to the paper \cite{ss:R2}.  I am indebted to Ivan Smith for pointing out this change.

I would also like to thank the acknowledge the anonymous Referee, whose corrections and suggestions led to countless improvements in this article.
\clearpage
\bibliography{references}
\bibliographystyle{plain}
\end{document}